\SilentMatrices \SelectTips{cm}{}
\newcommand{\cwbubble}[2]{
\begin{DGCpicture}
\DGCcoupon*(-0.4,-0.4)(.4,.4){ }
\DGCbubble(0,0){0.35}
\DGCdot*<{0.2,L}
\DGCdot*.{0.1,R}[r]{#2}
\DGCcoupon*(-.4,-.4)(.4,.4){\small{#1}}
\end{DGCpicture}
}
\newcommand{\ccwbubble}[2]{
\begin{DGCpicture}
\DGCcoupon*(-0.4,-0.4)(.4,.4){ }
\DGCbubble(0,0){0.35}
\DGCdot*>{0.2,L}
\DGCdot*.{0.1,R}[r]{#2}
\DGCcoupon*(-.4,-.4)(.4,.4){\small{#1}}
\end{DGCpicture}
}
\newcommand{\bigcwbubble}[2]{
\begin{DGCpicture}
\DGCcoupon*(-0.8,-0.8)(.8,.8){ }
\DGCbubble(0,0){0.5}
\DGCdot*<{0.25,L}
\DGCdot*.{0.25,R}[r]{#2}
\DGCcoupon*(-.6,-.6)(.6,.6){\small{#1}}
\end{DGCpicture}
}
\newcommand{\bigccwbubble}[2]{
\begin{DGCpicture}
\DGCcoupon*(-0.8,-0.8)(.8,.8){ }
\DGCbubble(0,0){0.5}
\DGCdot*>{0.25,L}
\DGCdot*.{0.25,R}[r]{#2}
\DGCcoupon*(-.6,-.6)(.6,.6){\small{#1}}
\end{DGCpicture}
}
\newcommand{\cwI}{
\begin{DGCpicture}
\DGCbubble(0,0){.3}
\DGCdot*<{.25,L}
\DGCcoupon*(-.5,-.5)(.5,.5){\small{$1$}}
\end{DGCpicture}}
\newcommand{\cwcapbubcup}[4]{
\begin{DGCpicture}
\DGCcoupon*(-0.3,-0.1)(1.3,2.1){ }
\ifstrequal{#4}{no}{}{
\DGCcoupon*(0,.8)(.3,1.4){#4}}
\ifstrequal{#1}{no}{}{
\DGCstrand(0,0)(1,0)/d/
\DGCdot*>{0.25,2}
\ifstrequal{#1}{$0$}{}{
\ifstrequal{#1}{$1$}{\DGCdot{0.25,1}}{
\DGCdot{0.25,1}[r]{\mbox{\scriptsize #1}}}}}
\ifstrequal{#3}{no}{}{
\DGCstrand/d/(0,2)(1,2)
\DGCdot*<{1.75,2}
\ifstrequal{#3}{$0$}{}{
\ifstrequal{#3}{$1$}{\DGCdot{1.75,1}}{
\DGCdot{1.75,1}[r]{\mbox{\scriptsize #3}}}}}
\ifstrequal{#2}{no}{}{
\DGCbubble(1,1){.3}
\DGCdot*>{1.2,L}
\DGCcoupon*(.65,.65)(1.35,1.35){\small{#2}}}
\end{DGCpicture}
}
\newcommand{\ccwcapbubcup}[4]{
\begin{DGCpicture}
\DGCcoupon*(-0.3,-0.1)(1.3,2.1){ }
\ifstrequal{#4}{no}{}{
\DGCcoupon*(0,.8)(.3,1.4){#4}}
\ifstrequal{#1}{no}{}{
\DGCstrand(0,0)(1,0)/d/
\DGCdot*<{0.25,2}
\ifstrequal{#1}{$0$}{}{
\ifstrequal{#1}{$1$}{\DGCdot{0.25,1}}{
\DGCdot{0.25,1}[r]{\mbox{\scriptsize #1}}}}}
\ifstrequal{#3}{no}{}{
\DGCstrand/d/(0,2)(1,2)
\DGCdot*>{1.75,2}
\ifstrequal{#3}{$0$}{}{
\ifstrequal{#3}{$1$}{\DGCdot{1.75,1}}{
\DGCdot{1.75,1}[r]{\mbox{\scriptsize #3}}}}}
\ifstrequal{#2}{no}{}{
\DGCbubble(1,1){.3}
\DGCdot*<{1.2,L}
\DGCcoupon*(.65,.65)(1.35,1.35){\small{#2}}}
\end{DGCpicture}
}
\newcommand{\RIII}[5]{
\begin{DGCpicture}[scale=0.85]
\DGCcoupon*(-0.3,-0.3)(2.3,2.3){}
\ifstrequal{#5}{no}{}{
\DGCcoupon*(2.1,.95)(2.3,1.15){#5}}
\DGCstrand(0,0)(2,2)
\DGCdot*>{2}
\ifstrequal{#4}{$0$}{}{
\ifstrequal{#4}{$1$}{\DGCdot{1.7}}{
\DGCdot{1.7}[r]{\mbox{\scriptsize #4}}}}
\DGCstrand(2,0)(0,2)
\DGCdot*>{2}
\ifstrequal{#2}{$0$}{}{
\ifstrequal{#2}{$1$}{\DGCdot{1.7}}{
\DGCdot{1.7}[r]{\mbox{\scriptsize #2}}}}
\ifstrequal{#1}{L}{\DGCstrand(1,0)(0,1)(1,2)}{\DGCstrand(1,0)(2,1)(1,2)}
\DGCdot*>{2}
\ifstrequal{#3}{$0$}{}{
\ifstrequal{#3}{$1$}{\DGCdot{1.7}}{
\DGCdot{1.7}[r]{\mbox{\scriptsize #3}}}}
\end{DGCpicture}
}
\newcommand{\twolines}[3]{
\begin{DGCpicture}
\DGCcoupon*(-.3,-.3)(1.3,1.3){}
\ifstrequal{#3}{no}{}{\DGCcoupon*(1.1,.6)(1.4,.9){#3}}
\DGCstrand(0,0)(0,1)
\DGCdot*>{1}
\ifstrequal{#1}{$0$}{}{
\ifstrequal{#1}{$1$}{\DGCdot{.4}}{
\DGCdot{.4}[r]{\mbox{\scriptsize #1}}}}
\DGCstrand(1,0)(1,1)
\DGCdot*>{1}
\ifstrequal{#2}{$0$}{}{
\ifstrequal{#2}{$1$}{\DGCdot{.4}}{
\DGCdot{.4}[r]{\mbox{\scriptsize #2}}}}
\end{DGCpicture}
}
\newcommand{\twolinesD}[3]{
\begin{DGCpicture}
\DGCcoupon*(-.3,-.3)(1.3,1.3){}
\ifstrequal{#3}{no}{}{\DGCcoupon*(-.4,.6)(-.1,.9){#3}}
\DGCstrand(0,0)(0,1)
\DGCdot*<{0}
\ifstrequal{#1}{$0$}{}{
\ifstrequal{#1}{$1$}{\DGCdot{.6}}{
\DGCdot{.6}[r]{\mbox{\scriptsize #1}}}}
\DGCstrand(1,0)(1,1)
\DGCdot*<{0}
\ifstrequal{#2}{$0$}{}{
\ifstrequal{#2}{$1$}{\DGCdot{.6}}{
\DGCdot{.6}[r]{\mbox{\scriptsize #2}}}}
\end{DGCpicture}
}
\newcommand{\crossing}[5]{
\begin{DGCpicture}
\DGCcoupon*(-.3,-.3)(1.3,1.3){}
\ifstrequal{#5}{no}{}{
\DGCcoupon*(1,.4)(1.3,.7){#5}}
\DGCstrand(0,0)(1,1)
\DGCdot*>{1}
\ifstrequal{#2}{$0$}{}{
\ifstrequal{#2}{$1$}{\DGCdot{.7}}{
\DGCdot{.7}[r]{\mbox{\scriptsize #2}}}}
\ifstrequal{#3}{$0$}{}{
\ifstrequal{#3}{$1$}{\DGCdot{.3}}{
\DGCdot{.3}[r]{\mbox{\scriptsize #3}}}}
\DGCstrand(1,0)(0,1)
\DGCdot*>{1}
\ifstrequal{#1}{$0$}{}{
\ifstrequal{#1}{$1$}{\DGCdot{.7}}{
\DGCdot{.7}[r]{\mbox{\scriptsize #1}}}}
\ifstrequal{#4}{$0$}{}{
\ifstrequal{#4}{$1$}{\DGCdot{.3}}{
\DGCdot{.3}[r]{\mbox{\scriptsize #4}}}}
\end{DGCpicture}
}
\newcommand{\crossingD}[5]{
\begin{DGCpicture}
\DGCcoupon*(-.3,-.3)(1.3,1.3){}
\ifstrequal{#5}{no}{}{\DGCcoupon*(-.3,.4)(0,.7){#5}}
\DGCstrand(0,0)(1,1)
\DGCdot*<{0}
\ifstrequal{#2}{$0$}{}{
\ifstrequal{#2}{$1$}{\DGCdot{.3}}{
\DGCdot{.3}[r]{\mbox{\scriptsize #2}}}}
\ifstrequal{#3}{$0$}{}{
\ifstrequal{#3}{$1$}{\DGCdot{.7}}{
\DGCdot{.7}[r]{\mbox{\scriptsize #3}}}}
\DGCstrand(1,0)(0,1)
\DGCdot*<{0}
\ifstrequal{#1}{$0$}{}{
\ifstrequal{#1}{$1$}{\DGCdot{.3}}{
\DGCdot{.3}[r]{\mbox{\scriptsize #1}}}}
\ifstrequal{#4}{$0$}{}{
\ifstrequal{#4}{$1$}{\DGCdot{.7}}{
\DGCdot{.7}[r]{\mbox{\scriptsize #4}}}}
\end{DGCpicture}
}
\newcommand{\oneline}[2]{
\begin{DGCpicture}
\DGCcoupon*(-.3,-.1)(0.3,2.1){}
\DGCstrand(0,0)(0,2)
\DGCdot*>{2}
\ifstrequal{#2}{no}{}{\DGCcoupon*(.1,1.4)(.4,1.7){#2}}
\ifstrequal{#1}{$0$}{}{
\ifstrequal{#1}{$1$}{\DGCdot{1}}{
\DGCdot{1}[r]{\mbox{\scriptsize #1}}}}
\end{DGCpicture}
}
\newcommand{\onelineD}[2]{
\begin{DGCpicture}
\DGCcoupon*(-.3,-.1)(0.3,2.1){}
\DGCstrand(0,0)(0,2)
\DGCdot*<{0}
\ifstrequal{#2}{no}{}{\DGCcoupon*(-.4,1.4)(-.1,1.7){#2}}
\ifstrequal{#1}{$0$}{}{
\ifstrequal{#1}{$1$}{\DGCdot{1}}{
\DGCdot{1}[r]{\mbox{\scriptsize #1}}}}
\end{DGCpicture}
}
\newcommand{\onelineshort}[2]{
\begin{DGCpicture}
\DGCcoupon*(-.3,-.1)(0.3,1.1){}
\DGCstrand(0,0)(0,1)
\DGCdot*>{1}
\ifstrequal{#2}{no}{}{\DGCcoupon*(.1,.7)(.4,1){#2}}
\ifstrequal{#1}{$0$}{}{
\ifstrequal{#1}{$1$}{\DGCdot{.5}}{
\DGCdot{.5}[r]{\mbox{\scriptsize #1}}}}
\end{DGCpicture}
}
\newcommand{\onelineDshort}[2]{
\begin{DGCpicture}
\DGCcoupon*(-.3,-.1)(0.3,1.1){}
\DGCstrand(0,0)(0,1)
\DGCdot*<{0}
\ifstrequal{#2}{no}{}{\DGCcoupon*(-.4,.7)(-.1,1){#2}}
\ifstrequal{#1}{$0$}{}{
\ifstrequal{#1}{$1$}{\DGCdot{.5}}{
\DGCdot{.5}[r]{\mbox{\scriptsize #1}}}}
\end{DGCpicture}
}
\newcommand{\curl}[5]{
\begin{DGCpicture}
\ifstrequal{#1}{L}{
\DGCcoupon*(-2,-.8)(.3,1.8){}
\DGCstrand(0,-.5)(0,.25)/u/(-1.5,.5)/d/(0,.75)/u/(0,1.5)
\ifstrequal{#4}{no}{}{\DGCcoupon*(-1.4,0)(-.4,1){\small{#4}}}
\ifstrequal{#5}{$0$}{}{\ifstrequal{#5}{$1$}{\DGCdot{.5,4}}{\DGCdot{.5,4}[l]{\mbox{\scriptsize #5}}}}
\ifstrequal{#3}{no}{}{\DGCcoupon*(-1.2,1.1)(-.9,1.4){#3}}
}{
\DGCcoupon*(-.3,-.8)(2,1.8){}
\DGCstrand(0,-.5)(0,.25)/u/(1.5,.5)/d/(0,.75)/u/(0,1.5)
\ifstrequal{#4}{no}{}{\DGCcoupon*(.4,0)(1.4,1){\small{#4}}}
\ifstrequal{#5}{$0$}{}{\ifstrequal{#5}{$1$}{\DGCdot{.5,4}}{\DGCdot{.5,4}[r]{\mbox{\scriptsize #5}}}}
\ifstrequal{#3}{no}{}{\DGCcoupon*(.9,1.1)(1.2,1.4){#3}}
}
\ifstrequal{#2}{D}{\DGCdot*<{-0.25} \DGCdot*<{1.25} \DGCdot*<{.75,2}}{\DGCdot*>{-0.25} \DGCdot*>{1.25} \DGCdot*>{.75,2}}
\end{DGCpicture}
}
\newcommand{\cappy}[4]{
\begin{DGCpicture}
\DGCcoupon*(-.3,-.1)(1.3,.8){}
\DGCstrand(0,0)(1,0)/d/
\ifstrequal{#1}{CCW}{\DGCdot*<{0,2}}{\DGCdot*>{0,1}}
\ifstrequal{#2}{$0$}{}{\ifstrequal{#2}{$1$}{\DGCdot{.3}}{\DGCdot{.3}[r]{\mbox{\scriptsize #2}}}}
\ifstrequal{#3}{no}{}{
\DGCbubble(1.7,.4){0.3}
\ifstrequal{#3}{CCW}{\DGCdot*>{.7,L}}{\DGCdot*<{.7,L}}
\DGCcoupon*(1.35,0)(2.05,.8){\small{$1$}}}
\ifstrequal{#4}{no}{}{\DGCcoupon*(.8,.5)(1.2,.8){#4}}
\end{DGCpicture}
}
\newcommand{\cuppy}[4]{
\begin{DGCpicture}
\DGCcoupon*(-.3,0.2)(1.3,1.1){}
\DGCstrand(0,1)/d/(1,1)/u/
\ifstrequal{#1}{CCW}{\DGCdot*>{1,1}}{\DGCdot*<{1,2}}
\ifstrequal{#2}{$0$}{}{\ifstrequal{#2}{$1$}{\DGCdot{.7}}{\DGCdot{.7}[r]{\mbox{\scriptsize #2}}}}
\ifstrequal{#3}{no}{}{
\DGCbubble(1.7,.6){0.3}
\ifstrequal{#3}{CCW}{\DGCdot*>{.9,L}}{\DGCdot*<{.9,L}}
\DGCcoupon*(1.35,.2)(2.05,1){\small{$1$}}}
\ifstrequal{#4}{no}{}{\DGCcoupon*(.8,.2)(1.2,.5){#4}}
\end{DGCpicture}
}
\newcommand{\bottomcurl}[5]{
\begin{DGCpicture}
\DGCcoupon*(-.3,-.1)(1.3,1.6){}
\DGCstrand(0,0)(1,1)/u/(0,1)/d/(1,0)/d/
\ifstrequal{#1}{CW}{\DGCdot*>{1.3}}{\DGCdot*<{1.3}}
\ifstrequal{#2}{no}{}{\DGCcoupon*(0,0.65)(1,1.3){\mbox{\scriptsize #2}}}
\ifstrequal{#3}{yes}{\DGCdot{.3,2}}{}
\ifstrequal{#4}{no}{}{
\DGCbubble(1.5,0.6){.3}
\ifstrequal{#4}{CCW}{\DGCdot*>{.8,L}}{\DGCdot*<{.8,L}}
\DGCcoupon*(1.3,0.4)(1.7,0.8){\small{$1$}}}
\ifstrequal{#5}{no}{}{\DGCcoupon*(1.1,0.9)(1.5,1.5){#5}}
\end{DGCpicture}
}
\newcommand{\crossingR}[4]{
\begin{DGCpicture}
\DGCcoupon*(-.3,-.3)(1.3,1.3){}
\DGCstrand(0,0)(1,1)
\DGCdot*>{1}
\ifstrequal{#1}{no}{}{\DGCdot{.65}}
\DGCstrand(1,0)(0,1)
\DGCdot*<{0}
\ifstrequal{#2}{no}{}{\DGCdot{.65}}
\ifstrequal{#3}{no}{}{
\DGCbubble(1.2,.5){0.3}
\DGCdot*<{.7,R}
\DGCcoupon*(0.9,0.2)(1.5,.8){\small{$1$}}}
\ifstrequal{#4}{no}{}{
\DGCcoupon*(-.3,.2)(.3,.8){#4}}
\end{DGCpicture}
}
\newcommand{\crossingL}[4]{
\begin{DGCpicture}
\DGCcoupon*(-.3,-.3)(1.3,1.3){}
\DGCstrand(0,0)(1,1)
\DGCdot*<{0}
\ifstrequal{#1}{no}{}{\DGCdot{.35}}
\DGCstrand(1,0)(0,1)
\DGCdot*>{1}
\ifstrequal{#2}{no}{}{\DGCdot{.35}}
\ifstrequal{#3}{no}{}{
\DGCbubble(-.2,.5){0.3}
\DGCdot*<{.7,L}
\DGCcoupon*(-.5,0.2)(.1,.8){\small{$1$}}}
\ifstrequal{#4}{no}{}{
\DGCcoupon*(.7,.2)(1.3,.8){#4}}
\end{DGCpicture}
}
\newcommand{\EC}{\mathcal{E}}
\newcommand{\VC}{\mathcal{V}}
\newcommand{\FC}{\mathcal{F}}
\newcommand{\UC}{\mathcal{U}}
\renewcommand{\a}{\alpha}
\renewcommand{\b}{\beta}
\newcommand{\g}{\gamma}
\renewcommand{\l}{\lambda}
\newcommand{\mfsl}{\mathfrak{sl}}
\newcommand{\co}{\colon}
\newcommand{\odd}{\textrm{odd}}
\newcommand{\even}{\textrm{even}}
\newcommand{\oa}{\overline{a}}
\newcommand{\ox}{\overline{x}}
\newcommand{\oy}{\overline{y}}
\newcommand{\od}{\overline{d}}
\theoremstyle{plain}
\newtheorem{thm}{Theorem}[section]
\newtheorem{prop}[thm]{Proposition}
\newtheorem{lemma}[thm]{Lemma}
\newtheorem{cor}[thm]{Corollary}
\theoremstyle{definition}
\newtheorem{defn}[thm]{Definition}
\newtheorem{eg}[thm]{Example}
\newtheorem{rmk}[thm]{Remark}
\numberwithin{equation}{section}
\def\1{\mathbbm 1}
\def\Z{\mathbb Z}
\def\N{\mathbb N}
\def\C{\mathbb C}
\def\F{\mathbb F}
\def\o{\otimes}
\def\lra{\longrightarrow}
\def\RHOM{\mathbf{R}\mathrm{HOM}}
\def\mc{\mathcal}
\def\mf{\mathfrak}
\def\End{\mathrm{End}}
\def\END{\mathrm{END}}
\newcommand{\dmod}{\!-\!\mathrm{mod}}
\newcommand{\mker}{\mathrm{Ker}}
\newcommand{\mim}{\mathrm{Im}}
\newcommand{\mH}{\mathrm{H}}  
\newcommand{\NH}{\mathrm{NH}} 
\newcommand{\pol}{\mathrm{Pol}}
\newcommand{\sym}{\mathrm{Sym}}
\newcommand{\Hom}{{\rm Hom}}
\newcommand{\HOM}{{\rm HOM}}
\newcommand{\Ind}{{\rm Ind}}
\def\dif{{\partial}}
\def\Ind{{\mathrm{Ind}}}
\def\lra{{\longrightarrow}}
\def\dmod{{\mathrm{-mod}}}   
\def\END{{\mathrm{END}}}
\def\mc{\mathcal}
\def\mf{\mathfrak}
\def\shuffle{\,\raise 1pt\hbox{$\scriptscriptstyle\cup{\mskip
               -4mu}\cup$}\,}
\newcommand{\refequal}[1]{\xy {\ar@{=}^{#1}
(-1,0)*{};(1,0)*{}};
\endxy}
\title{An approach to categorification of some small quantum groups II}
\author{Ben Elias, You Qi}
\date{\today}
\begin{document}
%

\maketitle

\begin{abstract}
We categorify an idempotented form of quantum $\mf{sl}_2$ and some of its simple representations at a prime root of unity.
\end{abstract}

\setcounter{tocdepth}{2}
\tableofcontents

\section{Introduction}

\paragraph{Background.}

In 1994, Crane and Frenkel proposed \cite{CF} that certain 3-dimensional topological field theories (3d TQFTs) could be categorified, and lifted to become 4d TQFTs. This goal has motivated a great deal of mathematics in the recent decades, including the categorification of many structures in quantum representation theory. We hope that this paper will be a stepping stone in an effort to translate the successes in categorified representation theory back into the original context of Crane and Frenkel's conjectures.

The Jones polynomial \cite{Jones} is a polynomial in $q$ associated to a link in the three-sphere. While defined combinatorially, it has a representation-theoretic interpretation involving the semisimple quantum group $U_q(\mf{sl}_2)$ at a generic complex value $q$. Witten \cite{Witten} gave a field-theoretic explanation of the Jones polynomial when $q$ is evaluated at a root of unity. Witten's construction uses Chern-Simons theory \cite{CS}, and yields an invariant of the three-manifold via a path integral. Subsequently, Reshetikhin and Turaev \cite{RT} gave a combinatorial approach to the Jones polynomial and the corresponding three-manifold invariant, using the representation theory of the
quantum group $u_q(\mf{sl}_2)$ at a root of unity. This three-manifold invariant admits generalizations to quantum groups in other types, which are now referred to as Witten-Reshetikhin-Turaev (WRT) invariants. They are also closely related to combinatorially defined three-manifold invariants constructed by Kuperberg \cite{Kup1,Kup2}, which instead use ``one-half'' of the quantum group $u^+_q(\mf{sl}_2)$. It appears that link invariants arising from quantum groups can be defined for generic $q$, but can be extended to three-manifold invariants only when $q$ is a root of unity.

Khovanov's ground-breaking discovery of link homology \cite{KhJones,KhTangleCob}, now known as Khovanov homology, gave strong evidence for the plausibility of Crane and Frenkel's conjecture that WRT invariants could be categorified. Khovanov homology gives a combinatorial categorification of the Jones polynomial at a generic variable $q$, replacing multiplication by $q$ by a categorical grading shift. Moreover, it is functorial with respect to (framed) link cobordisms, so that the whole theory appears to be the restriction of an extended 4d TQFT to the category of links embedded in the three-sphere and embedded link cobordisms in the three sphere times an interval.

The entire representation-theoretic interpretation of the Jones polynomial has since been categorified, describing Khovanov homology in the
framework of 2-representation theory (\cite{Rou2, Lau1, KL3}). Khovanov-Lauda \cite{KL1,KL2} and Rouquier \cite{Rou2} independently constructed monoidal categories that categorify one-half of quantum groups at generic $q$. Lauda \cite{Lau1} categorified the entire quantum group $U_q(\mf{sl}_2)$. Webster \cite{Web,Web2} categorified the tensor products of irreducible representations of quantum groups (adapting the geometric picture of Zheng \cite{Zheng1,Zheng2}), which are used to construct Reshetikhin-Turaev invariants of links. There are also more recent works by Cautis \cite{CautisClasp} and Lauda-Queffelec-Rose \cite{LQR}, in which Khovanov homology is directly reconstructed from $2$-representations of categorified quantum $\mf{sl}_2$.

A fundamental obstacle to transforming Khovanov homology into a categorified TQFT is that it relates to generic $q$ rather than evaluating $q$ at a root of unity. More precisely, Khovanov homology takes values in the category of graded vector spaces; the Grothendieck ring of graded vector spaces is identified with $\Z[q,q^{-1}]$. On the other hand, whatever categorified quantum three-manifold invariants are, they should live in a symmetric monoidal category whose Grothendieck ring is identified with the cyclotomic integers $\mc{O}_n := \Z[\zeta_n]$, where $\zeta_n$ represents a primitive $n$-th root of unity (see, for instance, \cite{BeLe}). One needs to construct the 2-representation theory of $u_q(\mf{sl}_2)$ at a root of unity, taking values in some categorification of the cyclotomic integers. Categorifying $\mc{O}_n$ is the natural first step, and it remains an interesting and fundamental open problem.

Bernstein-Khovanov \cite{Hopforoots} observed, however, that one could categorify $\mc{O}_p$ for $p$ prime: it can be identified with the Grothendieck ring of Mayer's \cite{Mayer1, Mayer2} homotopy category of finite dimensional $p$-complexes over a field $\Bbbk$ of characteristic $p>0$, which we denote by $\mc{C}(\Bbbk, \dif)$:
\begin{equation}
\mc{O}_p\cong K_0(\mc{C}(\Bbbk, \dif)).
\end{equation}
Khovanov suggested that one should find interesting algebra-objects in the homotopy category of $p$-complexes. Such algebra objects are given by $\Bbbk$-algebras with $p$-nilpotent derivations, which we refer to as \emph{$p$-DG algebras}. The Grothendieck group of a $p$-DG algebra will naturally be an $\mc{O}_p$-module. The theory of $p$-DG algebras has
been developed in \cite{QYHopf}, within the framework of \emph{hopfological algebra} \cite{Hopforoots}.

To agree with earlier conventions in representation theory, the derivation or \emph{differential} in a $p$-complex should have degree $2$, not degree $1$. With this convention, the Grothendieck ring of $p$-complexes is now isomorphic to
\begin{equation}
K_0(\mc{C}(\Bbbk, \dif))\cong \Z[q]/(1+q^2+\cdots +q^{2(p-1)}),
\end{equation}
which we will denote as $\mathbb{O}_p$. Inside $\mathbb{O}_p$, $q^2$ is a primitive $p$-th root of unity.

In this paper (and its prequel) we put $p$-DG algebras to work, and take the first steps towards constructing the 2-representation theory of $u_q(\mf{sl}_2)$ at a (prime) root of unity. Essentially, we take the algebras constructed by Khovanov-Lauda, Rouquier, Webster etc.~and find their $p$-DG analogues. In the prequel \cite{KQ}, a $p$-DG structure was placed on the nilHecke algebras to categorify one-half of the quantum group $u_q^+(\mf{sl}_2)$. In this paper, we place a $p$-DG structure on the $2$-category $\UC$ defined by Lauda \cite{Lau1}, and categorify the small form of the whole quantum group $u_q(\mf{sl}_2)$. In a sequel to this paper, the authors will categorify the Lusztig form of $U_q(\mf{sl}_2)$ at a prime root of unity as well, by placing a $p$-DG structure on the ``thick calculus" of \cite{KLMS}. Optimistically, we expect that, in a fully established 4d TQFT as envisioned by Crane and Frenkel, categorified small quantum groups will play an analogous role to gauge groups in Chern-Simons theory.

\paragraph{Outline.} To a $p$-DG algebra one can associate a number of module categories: the (abelian) category of $p$-DG modules, and the (triangulated) homotopy category and derived category. By definition, the Grothendieck group of a $p$-DG algebra is the Grothendieck group of its (triangulated) compact derived category, which could be quite difficult to compute in general. We briefly discuss what needs to be shown in order to prove that a given $p$-DG algebra has the desired Grothendieck group.

In some known (additive or abelian) categorifications of quantum groups \cite{KL1,Lau1} and Hecke algebras \cite{EliasKh}, one has a complete collection $\mathbb{X}$ of indecomposable objects (up to grading shift), such that the endomorphism ring $R=\End(\oplus_{M \in \mathbb{X}} M)$ is positively graded, and consists only of identity maps in degree zero. Equivalently, the indecomposables descend to a basis (such as the canonical basis) with certain positivity properties, relative to an Euler form on the Grothendieck group. In this case, the entire category is Morita-equivalent to $R\dmod$, and therefore it shares a Grothendieck group with the degree zero part $R_0$, which is semisimple. From this one can easily deduce many facts about the Grothendieck group.

Our goal is to use the same trick for $p$-DG algebras. In Chapter \ref{sec-positivepDGalgebras} we develop some general machinery to compute the Grothendieck groups of $p$-DG algebras which are $p$-DG Morita-equivalent to positively graded $p$-DG algebras. In this case one can relate the $p$-DG Grothendieck group to the ordinary Grothendieck group of the underlying algebra. Moreover, a K\"{u}nneth formula holds, allowing one to compare Grothendieck groups of tensor products of $p$-DG algebras. As an important example, we compute the $p$-DG Grothendieck group of the $p$-DG algebra of symmetric functions in Chapter \ref{section-symmetric-functions}. We expect our machinery to be widely (though not universally) applicable in categorification at prime roots of unity.

In order to show that a list of objects $\mathbb{X}$ yields an endomorphism algebra that is Morita-equivalent to the original category, one typically constructs a number of direct
sum decompositions. Algorithmically, one factors the identity morphism of any object as a sum of idempotents, each of which factors through an object in $\mathbb{X}$. In the
$p$-DG context, one needs a structure which is finer than a direct sum decomposition, which we dub a \emph{fantastic filtration}, or a \emph{Fc-filtration} for short. In Chapter
\ref{sec-standardfiltrations} we define this structure, give an algorithmic determination for when a factorization is a Fc-filtration, and prove that all the direct sum
decompositions that Lauda uses in \cite{Lau1} have this property. This feature of our paper should also be widely useful.

Let us describe the structure of this paper. In Chapter \ref{sec-positivepDGalgebras} we provide background on $p$-DG algebras in general, and develop tools to analyze the
Grothendieck rings of positive $p$-DG algebras. In Chapter \ref{section-symmetric-functions} we study the $p$-DG algebra of symmetric functions, which plays an important role in
Lauda's category $\UC$. In Chapter \ref{sec-pdifferentialsonU} we define $\UC$ and a multiparameter family of $p$-differentials, each of which equips $\UC$ with the structure of a
$p$-DG category. This family of $p$-differentials is exhaustive so long as $p \ne 2$. The classification of $p$-differentials is proven in Appendix
\ref{sec-classification-of-dif}, where the exceptional $2$-differentials are also briefly discussed. There are two particular specializations of the parameters which are
important, and which we denote $\dif_1$ and $\dif_{-1}$. In Chapter \ref{sec-standardfiltrations} we verify that the direct sum decompositions of Lauda yield Fc-filtrations if and only if $\dif=\dif_{\pm 1}$. In Chapter \ref{sec-decategorfication} we prove our main results.
We define an $\mathbb{O}_p$-integral form of the small quantum group $\dot{u}_{\mathbb{O}_p}(\mf{sl}_2)$, and establish an isomorphism (Theorem \ref{thm-itsanalghom})
$$K_0(\UC,\dif_{\pm 1}) \cong \dot{u}_{\mathbb{O}_p}(\mf{sl}_2)$$
of $\mathbb{O}_p$-algebras. As an application we also construct a categorification of some simple representations of $\dot{u}_{\mathbb{O}_p}$ of small highest weights (Theorem \ref{thm-cyclo-quot}),
using (universal) cyclotomic quotients. We leave a fuller discussion of categorification of finite dimensional highest weight $\mf{sl}_2$-modules to the sequel \cite{EQ2}.

\paragraph{Acknowledgements.} Both authors would like to express their deep gratitude towards their advisor Mikhail Khovanov for many years of inspirational education, and for supporting this collaboration. They would also like to thank the anonymous referee for helpful suggestions. 

The second author would like to thank Bernhard Keller for helpful discussions about DG categories during the introductory workshop ``Cluster Algebras and Commutative Algebra'', as well as the hospitality of the organizers of the workshop and MSRI. He would also like to thank Mikhail Kapranov for some enlightening discussion about $n$-complexes. The string diagrams in the paper are created via Krzysztof Putyra's LaTeX package \verb"diagcat".

The first author is supported by NSF fellowship DMS-1103862. The second author is partially supported by NSF RTG grant DMS-0739392 at Columbia.


\section{Positive \texorpdfstring{$p$}{p}-DG algebras}\label{sec-positivepDGalgebras}

Fix once and for all a base field $\Bbbk$ of characteristic $p > 0$. The unadorned tensor product $\o$ denotes the tensor product over $\Bbbk$. Throughout we adopt the French convention that $\N:=\{0,1,2,\dots\}$.

%
\subsection{Elements of \texorpdfstring{$p$}{p}-DG algebras}
%

We briefly review the basic notions of $p$-DG algebra. For details, see \cite{Hopforoots, QYHopf}. See also \cite[Chapter 2]{KQ} for an alternative summary of this material.

\begin{defn}\label{def-postive-pdg-algebra}Let $A\cong \oplus_{k\in \Z}A^k$ be a $\Z$-graded algebra over the ground field $\Bbbk$.
\begin{itemize}
\item[(i)] $A$ is called a \emph{$p$-differential graded ($p$-DG) algebra} if $A$ is equipped with an endomorphism $\dif_A$ of degree 2 which is \emph{$p$-nilpotent}, i.e. $\dif_A^p=0$, and satisfies the \emph{Leibniz rule}, i.e. for any $u,v\in A$,
    \[\dif_A(uv)=\dif_A(u)v+u\dif_A(v).\]
    A \emph{(left) $p$-DG module} $(M, \dif_M)$ over the $p$-DG algebra $A$ is a graded $A$-module $M$ with a linear endomorphism $\dif_M$ of degree 2 which is $p$-nilpotent, i.e. $\dif_M^p=0$, and satisfies the Leibniz rule for the $(A,\dif_A)$ action, i.e. for any $u\in A$, $m\in M$
    \[\dif_{M}(um)=\dif_A(u)m+u\dif_M(m).\]
    Analogously one has the notion of right $p$-DG modules. A morphism of $p$-DG modules is a map of $A$-modules that commutes with the differentials.
\item[(ii)] A $p$-DG algebra $A$ is called \emph{positive} if the following three conditions hold:
\begin{itemize}
\item[(ii.1)] $A$ is supported on non-negative degrees: $A\cong \oplus_{k\in \N}A^k$, and it is finite dimensional in each degree.
\item[(ii.2)] The homogeneous degree zero part $A^0$ is semisimple.
\item[(ii.3)] The differential $\dif_A$ acts trivially on $A^0$.
\end{itemize}
\item[(iii)]A positive $p$-DG algebra $A$ is called \emph{strongly positive} over $\Bbbk$ if $A^0$ is isomorphic to a product of matrix algebras over $\Bbbk$.
\end{itemize}
\end{defn}

Let $A$ be a positive $p$-DG algebra, and $\{\epsilon_i| i\in I\}$ be a complete list of pairwise non-isomorphic, indecomposable idempotents in $A^0$. Let $A^\prime:=\oplus_{k>0}A^k$ be the augmentation ideal with respect to the natural projection $A\twoheadrightarrow A^0$. Define for each $i\in I$ the indecomposable projective $A$-module $P_i:=A\cdot \epsilon_i$, and the simple $A$-module $S_i:=P_i/(A^\prime \cdot P_i)\cong A^0\cdot \epsilon_i$. When $A$ is strongly positive, the endomorphism algebra of each $S_i$ is isomorphic to $\Bbbk$.

\begin{eg}We give some easy examples of positive $p$-DG algebras.
\begin{itemize}
\item The ground field $\F_p$ equipped with the zero differential is strongly positive, while the extension filed $\F_{p^r} (r>1)$ with the trivial differential is positive but not strongly positive over $\F_p$.
\item Let $A:=\Bbbk[Q]/(R)$ be the path algebra associated with some oriented quiver $Q$ modulo a set $R$ of homogeneous relations, and let $c$ be a homogenous degree two element such that $c^p\in (R)$. Define a differential $\dif$ on $A$ by taking the commutator with $c$. Then $(A, \dif)$ is a strongly positive $p$-DG algebra.
\item The algebra of symmetric functions (equipped with a certain differential) is a strongly positive $p$-DG algebra, as is the algebra of symmetric polynomials in $n$ variables.  We will consider these algebras in detail in Chapter \ref{section-symmetric-functions}.
\end{itemize}
\end{eg}

The collection of all $p$-DG modules over a $p$-DG algebra $A$ forms an abelian category, which we denote by $A_\dif\dmod$. This category is
equipped with a grading shift endo-functor $\{1\}$, where $M\{1\}^k = M^{k+1}$. Given two $p$-DG modules $M,N$, we write $\Hom_A^i(M,N) =
\Hom^0_A(M,N\{i\})$ for the space of $A$-module maps from $M$ to $N$ of degree $i$. Then we set
\begin{equation}\label{eqn-total-hom}
\HOM_A(M,N) = \oplus_{i \in \Z} \Hom_A^i(M,N),
\end{equation}
which is a graded vector space. Similarly, we write $\Hom^i_{A_\dif}(M,N)$ for those homogeneous degree-$i$ maps which commute with the differentials, while letting $\HOM_{A_\dif}(M,N)$ be the total graded space. The graded space $\HOM_A(M,N)$ is equipped with a $p$-complex structure via
\begin{equation}\label{eqn-dif-on-total-hom}
\dif(f)(m) = \dif_N(f(m)) - f(\dif_M(m))
\end{equation}
for $f \in \HOM_A(M,N)$. Clearly $\HOM_{A_\dif}(M,N)$ is the kernel of $\dif$ inside $\HOM_A(M,N)$. Given $f \co M \lra N$ and $g \co N \lra P$, one can check that $\dif(g \circ f) = \dif(g)\circ f + g \circ \dif(f)$, so that this differential on Hom spaces still satisfies the Leibniz rule. It is easy to compute that $\dif^{p-1}(f) = \sum_{i=0}^{p-1}\dif_N^i\circ f \circ \dif_M^{p-1-i}$, because we work over a field of characteristic $p$.

Two morphisms $f_1,f_2:M\lra N$ of $p$-DG modules are called \emph{homotopic} if there exists an $A$-module map $h:M\lra N$ of degree $2-2p$ such that
\[
f_1-f_2=\dif^{p-1}(h)=\sum_{i=0}^{p-1}\dif_N^i\circ h \circ \dif_M^{p-1-i}.
\]
A morphism is called \emph{null-homotopic} if it is homotopic to zero. Null-homotopic morphisms form an ideal within $A_\dif\dmod$. The \emph{homotopy category} $\mc{C}(A,\dif)$ (or simply $\mc{C}(A)$ if the differential is understood) is the categorical quotient of $A_\dif\dmod$ by all null-homotopic morphisms. The category $\mc{C}(A)$ is triangulated, and we refer the reader to \cite[Section 2.2]{KQ} for more details about the triangulated structure.

The \emph{trivial} $p$-DG algebra is the ground field $\Bbbk$ with zero differential. Its categories of modules $\Bbbk_\dif\dmod$ and $\mc{C}(\Bbbk)$ will be called respectively the abelian and homotopy categories of \emph{$p$-complexes}. For an arbitrary $p$-DG algebra $A$, there is a forgetful functor $A_\dif\dmod\lra\Bbbk_\dif\dmod$ which takes a $p$-DG $A$-module to its underlying $p$-complex. We refer to the homotopy class of the underlying $p$-complex (viewed as an isomorphism class in $\mc{C}(\Bbbk)$) as the \emph{cohomology}\footnote{There is an explicit construction, called the \emph{slash cohomology}, of a minimal $p$-complex $\mH_/(M)$ representing the cohomology of $M$. See \cite[Section 2.1]{KQ} for details.} of $M$. A morphism $f:M\lra N$ of $p$-DG modules over $A$ is called a \emph{quasi-isomorphism} if, after applying the forgetful functor, $f$ descends to an isomorphism in $\mc{C}(\Bbbk)$. Quasi-isomorphisms in $\mc{C}(A)$ constitute a localizing class, and inverting them yields the derived category $\mc{D}(A,\dif)$ (or $\mc{D}(A)$ if the differential is understood). As in the usual DG case, $\mc{D}(A)$ is triangulated and idempotent complete (Karoubian).

\begin{rmk}\label{rmk-smash-product-algebra} Set $H:=\Bbbk[\dif]/(\dif^p)$, a graded Hopf algebra where $\deg(\dif)=2$. A $p$-DG algebra is the same as a graded $H$-module algebra. Thus for such an $A$ one can construct the \emph{smash
product algebra} $A_\dif$, which is isomorphic to $A \o H$ as a vector space. For the trivial $p$-DG algebra we have $\Bbbk_\dif \cong H$, and
in general $A_\dif$ always has $\Bbbk_\dif \cong 1 \o H$ as a subalgebra. A $p$-DG module over $A$ is the same as a module over $A_\dif$. This allows one to study $p$-DG algebras in the more general context of hopfological algebra, see \cite{Hopforoots,QYHopf}.

Because of the Hopf structure, one can tensor a $p$-DG module for $A$ with a $p$-complex to obtain another $p$-DG module for $A$. This gives an action of $\Bbbk_\dif\dmod$ on $A_\dif\dmod$ by exact functors, descending to a categorical action of $\mc{C}(\Bbbk)$ on $\mc{C}(A)$ and $\mc{D}(A)$.
\end{rmk}

As in usual homological algebra, it is not easy to determine the space of morphisms in the derived category, but one can simplify matters by restricting to a well-behaved class of objects.

\begin{defn}\label{def-nice-p-DG-mod}Let $A$ be a $p$-DG algebra and $M$ a $p$-DG module over it.
\begin{itemize}
\item[(i)] $M$ is said to have \emph{property (P)} if there is an exhaustive, possibly infinite, increasing filtration $F^\bullet$ on $M$ such that each subquotient $F^\bullet/F^{\bullet -1}$ is isomorphic to a direct sum of $p$-DG modules of the form $A\{r\}$ for various $r\in \Z$. When $A$ is positive, we extend this definition to allow subquotients isomorphic to $P_i\{r\}$ for $i \in I$ and $r \in \Z$.
\item[(ii)] $M$ is a \emph{cofibrant module} if $M$ is a direct summand of a property (P) module.
\end{itemize}
\end{defn}

Any property (P) module is obviously cofibrant. Notice that cofibrant modules are always projective as $A$-modules. If $M$ is a cofibrant module and $N$ is any $p$-DG module over $A$, then
\begin{equation} \label{eqn-mor-space-homotopy-category}
\Hom_{\mc{D}(A)}(M,N) \cong \Hom_{\mc{C}(A)}(M,N) \cong \dfrac{\mathrm{Ker}(\dif:\Hom_A^0(M,N)\lra \Hom_A^{2}(M,N))}{\mathrm{Im}(\dif^{p-1}:\Hom_A^{2-2p}(M,N)\lra \Hom_A^0(M,N))}.
\end{equation}

The following result says that there are always ``enough'' property (P) modules.
\begin{thm}\label{thm-bar-resolution}Let $A$ be a $p$-DG algebra and $M$ a $p$-DG module. Then there is a surjective quasi-isomorphism of of $p$-DG modules over $A$
\[\mathbf{p}(M)\lra M,\]
where $\mathbf{p}(M)$ has property (P). $\hfill\square$
\end{thm}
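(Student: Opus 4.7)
The plan is to construct $\mathbf{p}(M)$ via a bar-style resolution, adapted to the hopfological framework of Remark~\ref{rmk-smash-product-algebra}. Let $U$ denote the forgetful functor from $p$-DG $A$-modules to $p$-complexes, and let $F(V) := A \otimes V$, equipped with the differential $\dif_A \otimes 1 + 1 \otimes \dif_V$, be its left adjoint. The counit $\epsilon_M\co F(U(M)) \to M$, $a \otimes m \mapsto am$, is a surjective morphism of $p$-DG modules. Explicitly, I will take
\[
\mathbf{p}(M) \;:=\; \bigoplus_{n\ge 0} A \otimes \bar{A}^{\otimes n} \otimes M,
\]
where $\bar A := A/\Bbbk\cdot 1$, equipped with the sum of the internal $p$-differential on each tensor factor and the usual (signed) bar differential; the augmentation $\epsilon\co \mathbf{p}(M)\to M$ then extends $\epsilon_M$ from the $n=0$ summand and is surjective on the nose.

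A preliminary lemma is that $F(V)$ has property~(P) for every $p$-complex $V$, even when $\dif_V \ne 0$. Because the Hopf algebra $H = \Bbbk[\dif]/(\dif^p)$ is a Nakayama algebra, $V$ decomposes into indecomposables $V_k$ ($1\le k \le p$) possessing a cyclic basis $e_0,\dots,e_{k-1}$ with $\dif e_i = e_{i+1}$. On each $A\otimes V_k$ the ascending filtration $F^j := A\otimes \mathrm{span}(e_{k-j}, \dots, e_{k-1})$ is $\dif$-stable with successive quotients isomorphic to grading shifts of $A$, supplying property~(P). Next I filter $\mathbf{p}(M)$ by bar length, $F^n\mathbf{p}(M) := \bigoplus_{k\le n} A\otimes \bar A^{\otimes k} \otimes M$; the bar differential lowers $n$ while the internal differential preserves it, so this filtration is $\dif$-stable, and the subquotient $F^n/F^{n-1}$ is the free module $F(\bar A^{\otimes n}\otimes U(M))$, which has property~(P) by the lemma. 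Refining each subquotient exhibits property~(P) on $\mathbf{p}(M)$.

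The principal obstacle is to show that $\epsilon$ is a quasi-isomorphism. The natural candidate is the classical extra degeneracy $s\co a_0\otimes\cdots\otimes a_n\otimes m \mapsto 1\otimes a_0\otimes\cdots\otimes a_n\otimes m$, which contracts the augmented bar complex at the level of underlying $p$-complexes (this is where the unit $1\in A$ does the essential work). The technical subtlety is that in the $p$-DG setting a null-homotopy of $p$-complexes requires a map $h$ with $\dif^{p-1}(h) = \mathrm{id}$, as in~\eqref{eqn-dif-on-total-hom}, rather than the chain-level condition $\dif h + h \dif = \mathrm{id}$. I must therefore verify first that the combined bar-plus-internal differential on $\mathbf{p}(M)$ is genuinely $p$-nilpotent (following from the Leibniz rule, the $H$-stability of $\bar A\subset A$, and careful sign bookkeeping in characteristic $p$) and second that $s$ upgrades to an actual $p$-homotopy on the augmented complex. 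Granting these points, the cone of $\epsilon$ is $p$-contractible as a $p$-complex, so $\epsilon$ is a quasi-isomorphism and $\mathbf{p}(M)$ is the desired property-(P) cover of $M$.
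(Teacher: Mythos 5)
Your total complex $\mathbf{p}(M) = \bigoplus_{n\geq 0}\bigl(A\otimes\bar A^{\otimes n}\otimes M\bigr)\{2n\}$ with $D = b + \dif$ is a valid $p$-DG module ($D^p = 0$ follows in characteristic $p$ because $b$ and $\dif$ commute, $b^2 = 0$, and $\dif^p = 0$), and the property-(P) argument is fine: the bar-length filtration refined by your preliminary lemma is $\N$-indexed since the internal filtration of each $F(V)$ has at most $p$ steps. However, the quasi-isomorphism claim, which you defer as a ``verification,'' actually \emph{fails} for $p \ge 3$, and this sinks the proof. A $p$-DG null-homotopy must have degree $2-2p$, whereas the simplicial extra degeneracy $s$ raises bar degree by one and so has degree $-2$; these coincide only when $p = 2$, and for $p > 2$ there is simply nothing of the required degree available. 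Concretely, take $A = \Bbbk[x]/(x^2)$ with $\deg x = 2$ and $\dif_A = 0$, and $M = \Bbbk$ with trivial action and differential. Then $\bar A = \Bbbk\bar x$, the internal differential vanishes, and the bar differential gives $D(1\otimes\bar x^{\otimes n}) = x\otimes\bar x^{\otimes(n-1)}$ and $D(x\otimes\bar x^{\otimes n}) = 0$. As a $p$-complex, $\mathbf{p}(\Bbbk) \cong \Bbbk \oplus \bigoplus_{n\geq 0}V_2$, where $V_2$ is the two-dimensional indecomposable $p$-complex and $\epsilon$ is the projection onto the first summand. Since $V_2$ is not contractible when $p \ge 3$, the map $\epsilon$ is not an isomorphism in $\mc{C}(\Bbbk)$.

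The root cause is that adding the bar differential directly to the internal one packs consecutive bar degrees only a single $\dif$-step apart, while a $p$-DG homotopy needs $p-1$ steps of room to operate. The functorial construction the paper alludes to (cf.\ \cite{QYHopf}) is not the simplicial total complex but an iterated cone construction in the hopfological sense: the $p$-DG cone tensors with a contractible $p$-complex of dimension $p$, which is precisely what supplies the missing $2(p-1)$ in the degree count and lets the unit $1 \in A$ furnish a genuine $p$-homotopy. Your filtration and property-(P) observations carry over to that corrected construction, but the bare simplicial total complex does not yield a quasi-isomorphism.
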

The property (P) replacement in the theorem is also known as the \emph{bar resolution} of $M$, and it can be defined functorially.

We recall the definition of \emph{finite cell modules} for positive $p$-DG algebras (c.f. \cite[Definition 2.29]{KQ}).

\begin{defn}\label{def-finite-cell}Let $A$ be a positive $p$-DG algebra. A $p$-DG module $M$ is said to be a \emph{finite cell module} if there is a finite-step increasing filtration $F^\bullet$ on $M$ such that each subquotient
$F^\bullet(M)/F^{\bullet-1}(M)$ is either zero or isomorphic to $P_i\{l_i\}$ for some $i\in I$ and $l_i\in \Z$. The collection of all finite cell modules is denoted by $\mc{F}(A)$.
\end{defn}

\begin{rmk}\label{rmk-convolution-finite-cell}
By definition, if $M$ is a finite cell module, it fits into a convolution diagram in the homotopy category $\mc{C}(A)$ and derived category $\mc{D}(A)$
\begin{equation}\label{eqn-convolution-diagram}
\begin{gathered}
\xymatrix@C=0.95em{0=F_{n-1} \ar[rr] && F_{n} \ar[rr] \ar[dl] && F_{n+1} \ar@{-}[r]\ar[dl] &\cdots\ar[r] &F_{m-1} \ar[rr] && F_m=M, \ar[dl]\\
& Gr_n\ar[ul]_{[1]} && Gr_{n+1} \ar[ul]_{[1]} && && Gr_{m-1}\ar[ul]_{[1]} &
}
\end{gathered}
\end{equation}
where $n$ is the smallest integer that $F_n\neq 0$, $m$ is the smallest integer that $F_m=M$, and  $Gr_k := F^k(M)/F^{k-1}(M)$ is either $0$ or $P_i\{l_i\}$ for some $i\in I$ and $l_i\in \Z$.
\end{rmk}

Any finite cell module $M$ satisfies property (P) and thus is cofibrant. Moreover, considered as an object in the derived category, a finite cell module is \emph{compact} in the sense that, if $(N_i)_{i\in I}$ is an arbitrary set of $p$-DG modules, then the canonical map
\[
\oplus_{i\in I}\Hom_{\mc{D}(A)}(M, N_i)\lra \Hom_{\mc{D}(A)}(M,\oplus_{i\in I}N_i)
\]
is an isomorphism. We have the following characterization of compact modules in $\mc{D}(A)$ (\cite[Corollary 7.15]{QYHopf}).

\begin{thm}\label{thm-compact-mod}Let $A$ be a $p$-DG algebra. An object $N\in \mc{D}(A)$ is compact if and only if it is isomorphic to a direct summand of a finite cell module. \hfill$\square$
\end{thm}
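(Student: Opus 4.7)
The plan is to establish this via the standard framework of compactly generated triangulated categories, now in the hopfological setting of \cite{QYHopf}.

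For the easy direction, I would first show that $A$ itself is a compact object of $\mc{D}(A)$. Since $A$ (as a module over itself) is trivially a property (P) module, formula \eqref{eqn-mor-space-homotopy-category} gives $\Hom_{\mc{D}(A)}(A, N) \cong \mH^0_/(N)$, the degree-zero slash cohomology of $N$, and this functor visibly commutes with arbitrary coproducts. Hence each grading shift $A\{r\}$ is compact, and so is each $P_i\{r\}$ as a direct summand of $A$. Since compact objects form a thick subcategory of $\mc{D}(A)$, the convolution description of finite cell modules in Remark \ref{rmk-convolution-finite-cell} shows that every finite cell module, and hence every direct summand of one, is compact.

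For the converse, I would appeal to the standard Neeman--Ravenel principle: in a triangulated category $\mc{T}$ admitting small coproducts, with a set $\mc{G}$ of compact generators, the subcategory $\mc{T}^c$ of all compact objects coincides with the thick closure of $\mc{G}$. Applied to $\mc{D}(A)$ with $\mc{G} = \{A\{r\} \mid r \in \Z\}$ (equivalently $\{P_i\{r\}\}$ in the positive setting, since $A = \bigoplus_{i\in I} P_i$), this requires two inputs. First, $\mc{D}(A)$ admits set-indexed coproducts, which is part of the hopfological foundations. Second, $\mc{G}$ generates in the sense that $\Hom_{\mc{D}(A)}(A\{r\}[n], N) = 0$ for all $r, n$ forces $N \cong 0$ in $\mc{D}(A)$; but these Hom groups jointly compute the full slash cohomology of $N$ by the identification above, and an object of $\mc{D}(A)$ is zero precisely when its underlying $p$-complex is acyclic. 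The thick closure of $\{A\{r\}\}$ then coincides, by construction, with the class of direct summands of iterated cones of shifts of $A$, which is exactly the class of direct summands of finite cell modules.

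The main obstacle is therefore not the categorical machinery itself but verifying that the hopfological derived category $\mc{D}(A)$ behaves enough like an ordinary DG derived category to admit the Neeman-style argument: it must be triangulated, idempotent complete, admit small coproducts, and have slash cohomology that detects isomorphism in the derived category. These comparisons are carried out in \cite{QYHopf} by passing through the smash product algebra $A_\dif = A \otimes H$ of Remark \ref{rmk-smash-product-algebra}, and once they are in hand no further computation with specific cell modules is required.
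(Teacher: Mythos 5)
Your proposal is correct and follows essentially the same route as the cited proof: the paper does not prove Theorem \ref{thm-compact-mod} in-text but refers to \cite[Corollary 7.15]{QYHopf}, and the surrounding discussion (see the remarks before Corollary \ref{cor-compact-mod-p-dg-cat}) makes explicit that the argument there is exactly the Ravenel--Neeman compact-generation machinery you invoke, applied once one knows $\mc{D}(A)$ admits small coproducts, that $A$ and its shifts are compact generators detected by slash cohomology, and that the thick closure of $\{A\{r\}\}$ is the Karoubi envelope of finite cell modules. The one step you pass over quickly is the identification ``iterated cones of shifts of $A$ $=$ finite cell modules up to quasi-isomorphism'' (one direction is Remark \ref{rmk-convolution-finite-cell}; the other requires knowing that a cone of a morphism between finite cell modules can again be realized as a finite cell module), but this is a routine lemma and does not change the structure of the argument.
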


In other words, if we set $\mc{D}^c(A)$ to be the strictly full (triangulated) subcategory of $\mc{D}(A)$ consisting of compact modules, it is equivalent to the idempotent completion of $\mc{F}(A)$ in $\mc{D}(A)$. In particular $\mc{D}^c(A)$ is idempotent complete. The following characterization of $\mc{D}^c(A)$ is a direct consequence of Theorem \ref{thm-compact-mod}.

\begin{cor}\label{cor-compact-derived-category-smallest}Let $A$ be a $p$-DG algebra. Then $\mc{D}^c(A)$ is the smallest strictly full idempotent complete triangulated subcategory in $\mc{D}(A)$ containing $A\{r\}$ for all $r\in \Z$. \hfill$\square$
\end{cor}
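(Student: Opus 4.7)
The plan is to deduce the corollary directly from Theorem \ref{thm-compact-mod} by proving two inclusions between $\mc{D}^c(A)$ and the category $\mc{T}$, defined as the smallest strictly full, idempotent-complete triangulated subcategory of $\mc{D}(A)$ containing $\{A\{r\}\mid r\in\Z\}$.

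For the inclusion $\mc{T}\subseteq \mc{D}^c(A)$, I would verify that $\mc{D}^c(A)$ itself enjoys the three closure properties that define $\mc{T}$. It is strictly full and triangulated by the standard fact that compact objects in a triangulated category with small coproducts form a thick triangulated subcategory; it is idempotent complete as a direct consequence of Theorem \ref{thm-compact-mod}, which identifies $\mc{D}^c(A)$ with the idempotent completion of $\mc{F}(A)$ in $\mc{D}(A)$; and it contains every $A\{r\}$, since such a module is a one-step finite cell module and hence compact by Theorem \ref{thm-compact-mod}. Minimality of $\mc{T}$ then yields $\mc{T}\subseteq \mc{D}^c(A)$.

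For the opposite inclusion, Theorem \ref{thm-compact-mod} reduces the problem to showing every finite cell module $M\in\mc{F}(A)$ lies in $\mc{T}$, since $\mc{T}$ is closed under direct summands. Using the convolution picture of Remark \ref{rmk-convolution-finite-cell}, I would induct on the filtration length $m-n$: the base case $F_{n-1}=0$ is trivial, and the distinguished triangle
\[ F_{k}\lra F_{k+1} \lra Gr_{k+1} \lra F_{k}[1] \]
together with triangulated closure of $\mc{T}$ promotes $F_k\in\mc{T}$ to $F_{k+1}\in\mc{T}$, provided each $Gr_{k+1}$ is already known to lie in $\mc{T}$. In the general (not necessarily positive) case of Theorem \ref{thm-compact-mod} the subquotients are shifts of $A$ itself, so this is immediate; in the positive setting of Definition \ref{def-finite-cell} the subquotients are instead shifts of the indecomposable projectives $P_i=A\cdot\epsilon_i$, but the idempotents $\epsilon_i$ exhibit each $P_i$ as a direct summand of $A$, so idempotent completeness of $\mc{T}$ still places $P_i\{r\}$ in $\mc{T}$ and the induction goes through.

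There is no substantive obstacle: essentially all of the work has been done in Theorem \ref{thm-compact-mod}, and what remains is a routine two-step bookkeeping argument about closure properties in a triangulated category. The only minor point requiring care is matching the two conventions for ``finite cell module'' (general versus positive), which is resolved by the idempotent-completeness observation above.
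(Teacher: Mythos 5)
Your proof is correct and is exactly the argument the paper has in mind: the paper leaves this corollary as a "direct consequence" of Theorem \ref{thm-compact-mod}, noting just before it that $\mc{D}^c(A)$ is the idempotent completion of $\mc{F}(A)$. Your two inclusions (thickness of $\mc{D}^c(A)$ for one direction, the cone induction on the filtration from Remark \ref{rmk-convolution-finite-cell} plus idempotent completeness for the other) fill in precisely the routine bookkeeping the authors suppress, and your remark about $P_i = A\epsilon_i$ being a genuine $p$-DG summand of $A$ is justified because condition (ii.3) of Definition \ref{def-postive-pdg-algebra} forces $\dif(\epsilon_i) = 0$.
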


We will also need the \emph{finite derived category} $\mc{D}^f(A)$ of $A$, which is the strictly full subcategory in $\mc{D}(A)$ consisting of objects that are quasi-isomorphic to a finite dimensional module in $A_\dif\dmod$. One easily checks that $\mc{D}^f(A)$ is triangulated.

The Grothendieck group of the triangulated category $\mc{D}^c(A)$ will be denoted $K_0(A,\dif)$ (or $K_0(A)$). The Grothendieck group of $\mc{D}^f(A)$ will be denoted
$G_0(A,\dif)$ (or $G_0(A)$). In this paper, these are the most relevant notions of Grothendieck groups for a $p$-DG algebra $A$. They are natural modules over $\Z[q^{\pm}]$, where $q[M] = [M\{1\}]$.

The tensor product action of $\Bbbk_\dif\dmod$ on $A_\dif\dmod$ induces an action of $K_0(\Bbbk)$ on $K_0(A)$, and of $G_0(\Bbbk)$ on $G_0(A)$ (c.~f.~Remark \ref{rmk-smash-product-algebra}). Define
$$\mathbb{O}_p:=\Z[q]/(1+q^2+\cdots + q^{2(p-1)}).$$
It is explained in \cite[Chapter 2]{KQ} that
\[\mathbb{O}_p \cong K_0(\Bbbk)\cong G_0(\Bbbk).\]
Therefore, $K_0(A)$ is naturally a $\mathbb{O}_p$-module.

%
\subsection{Basic Morita theory}
%

If $A$, $B$ are $p$-DG algebras, a \emph{$p$-DG $(A,B)$-bimodule} $_AX_B$ naturally gives rise to the following functors on $p$-DG module categories.
\begin{itemize}
\item[i).]The \emph{derived tensor product} $_AX\o^{\mathbf{L}}_B:
\mc{D}(B)\lra \mc{D}(A)$ is the composition of functors
\begin{equation}\label{def-derived-tensor}
\mc{D}(B)\stackrel{\mathbf{p}}{\lra} \mc{P}(B) \xrightarrow{_A X\o_B(-)}
\mc{C}(A)\stackrel{Q}{\lra}\mc{D}(A),
\end{equation}
where $\mc{P}(B)$ is the full subcategory of $\mc{D}(B)$ or $\mc{C}(B)$ consisting of cofibrant objects, and $Q$ is the natural
localization functor.
\item[ii).] The \emph{derived hom functor} $\mathbf{R}\HOM(_AX_B,-)$
is the composition of functors
\begin{equation}\label{def-derived-hom}
\mc{D}(A) \xrightarrow{\HOM_A(\mathbf{p_A}(X),-)} \mc{C}(B) \stackrel{Q}{\lra}
\mc{D}(B),
\end{equation}
where $\mathbf{p_A}(X)$ denotes the bar resolution of
$X$ as a left $p$-DG $A$-module. Here we have used that in the construction of the
bar resolution, $\mathbf{p_A}(X)$ has a
natural right $p$-DG $B$-module structure.
\end{itemize}

The tensor product and hom functors satisfy an adjunction property as in the usual DG case:
\begin{equation}\label{eqn-tensor-hom-adjunction}
\Hom_{\mc{D}(A)}(X\o^{\mathbf{L}}_BN,{M})\cong \Hom_{\mc{D}(B)}({N},\mathbf{R}\HOM_A(X,M)),
\end{equation}
for any $M\in\mc{D}(A)$ and $N\in \mc{D}(B)$.

A morphism $\mu:X\lra Y$ of $p$-DG $(A,B)$-bimodules descends to a natural
transformation between the derived tensor product functors
$$\mu^{\mathbf{L}}: X\o_B^{\mathbf{L}}(-)\Longrightarrow Y\o_B^{\mathbf{L}}(-):
 \mc{D}(B)\lra \mc{D}(A).$$
The following proposition gives a criterion for deciding when a derived tensor
functor induces an equivalence of derived categories, and when such
a natural transformation is an isomorphism of functors.

\begin{prop}\label{prop-criterion-derived-equivalence}
\begin{itemize}
\item[i)] Let $X$ be a $p$-DG $(A,B)$-bimodule and suppose it is cofibrant
as a $p$-DG $A$-module. Then
$X\o^{\mathbf{L}}_B(-):\mc{D}(B)\lra \mc{D}(A)$ is an
equivalence of triangulated categories if and only if the following
two conditions hold:
\begin{itemize}
\item[1)] The natural map $B\lra \HOM_{A}(X,X)$ is a quasi-isomorphism.
\item[2)] $X$, when regarded as a p-DG $A$-module, is a compact generator of
$\mc{D}(A)$.
\end{itemize}
\item[ii)] Let $\mu:X\lra Y$ be a morphism of p-DG
$(A,B)$-bimodules. The natural transformation
$\mu^{\mathbf{L}}: X\o_B^{\mathbf{L}}(-)\Longrightarrow
Y\o_B^{\mathbf{L}}(-)$ is an isomorphism of functors if and only if
$\mu:X\lra Y$ is a quasi-isomorphism of $p$-DG bimodules.
\end{itemize}
\end{prop}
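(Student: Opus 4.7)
The plan is to follow the standard derived Morita theory argument (going back to Rickard and Keller for DG algebras) adapted to the hopfological setting, using the tensor-hom adjunction (\ref{eqn-tensor-hom-adjunction}) together with a devissage based on Corollary \ref{cor-compact-derived-category-smallest}. Throughout we exploit the fact that since $X$ is cofibrant over $A$, the derived Hom $\mathbf{R}\HOM_A(X,-)$ and the derived tensor $X \otimes_B^{\mathbf{L}} -$ can both be computed without further bar-resolving $X$.

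For part (i), the $(\Leftarrow)$ direction: assuming (1) and (2), I would show that the unit and counit of the adjunction $(X\otimes_B^{\mathbf{L}}-,\; \mathbf{R}\HOM_A(X,-))$ are natural isomorphisms. Evaluating the unit $\eta_N \colon N \to \mathbf{R}\HOM_A(X,\, X \otimes_B^{\mathbf{L}} N)$ at $N = B$ recovers (up to quasi-isomorphism) the map $B \to \HOM_A(X,X)$, which is a quasi-isomorphism by (1). Both $X \otimes_B^{\mathbf{L}} -$ and $\mathbf{R}\HOM_A(X, X \otimes_B^{\mathbf{L}} -)$ are exact and commute with arbitrary coproducts (the latter using compactness of $X$ from (2)); the full subcategory of $N$ where $\eta_N$ is an isomorphism is therefore a coproduct-closed triangulated subcategory that contains $B\{r\}$ for all $r$, hence all of $\mc{D}(B)$ by Corollary \ref{cor-compact-derived-category-smallest} combined with the fact that $B$ compactly generates $\mc{D}(B)$. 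The counit at $M = X$ becomes $X \otimes_B^{\mathbf{L}} B \to X$, an isomorphism, and the same devissage, now using the compact-generation hypothesis (2) on $X$, extends this to all $M \in \mc{D}(A)$. For $(\Rightarrow)$: if $X \otimes_B^{\mathbf{L}}-$ is an equivalence, then (2) follows because $B$ is a compact generator of $\mc{D}(B)$ and equivalences transport compact generators; moreover (1) follows since $\mathbf{R}\HOM_A(X,X)$ corresponds under the equivalence to $\mathbf{R}\HOM_B(B,B) \cong B$.

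For part (ii), if $\mu \colon X \to Y$ is a quasi-isomorphism of $p$-DG bimodules, then for any $N \in \mc{D}(B)$ I replace $N$ by its bar resolution $\mathbf{p}(N)$ (Theorem \ref{thm-bar-resolution}). Since $\mathbf{p}(N)$ has an exhaustive filtration whose subquotients are sums of $B\{r\}$'s, induction on this filtration reduces the claim $\mu \otimes \Id_{\mathbf{p}(N)}$ is a quasi-isomorphism to the case $N = B$, which just says $\mu\colon X \to Y$ is a quasi-isomorphism of $p$-DG $A$-modules -- and this is built into the hypothesis since the underlying $p$-complex is the same. Conversely, evaluating the natural transformation $\mu^{\mathbf{L}}$ at $N = B$ yields (up to canonical quasi-isomorphism) the map $\mu\colon X \to Y$ itself, so if $\mu^{\mathbf{L}}$ is an isomorphism of functors then $\mu$ is a quasi-isomorphism.

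The main obstacle will be the devissage step in part (i): one needs that a coproduct-closed triangulated subcategory of $\mc{D}(A)$ containing a compact generator is all of $\mc{D}(A)$ (the hopfological analog of the Neeman/Ravenel result on compactly generated triangulated categories). This, together with the stability of $\mathbf{R}\HOM_A(X,-)$ under arbitrary coproducts for compact $X$, is what enables the reduction from checking the adjunction unit/counit at the single test objects $B$ and $X$ to checking them everywhere. Fortunately both ingredients are available in the hopfological framework of \cite{Hopforoots, QYHopf} (the compactness-coproduct interaction is essentially built into the definition of compactness recalled after Definition \ref{def-finite-cell}), so the argument goes through; the remaining work is the bookkeeping verification that the tensor-hom adjunction on the derived level can be set up compatibly with the $p$-DG differentials coming from (\ref{eqn-dif-on-total-hom}).
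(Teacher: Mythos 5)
The paper omits the proof entirely, citing Proposition 8.8 of \cite{QYHopf}; the argument there is precisely the derived Morita theory you have outlined, namely a unit/counit devissage through the tensor-hom adjunction \eqref{eqn-tensor-hom-adjunction} combined with the Ravenel--Neeman compact-generation machinery of \cite{Ra,Nee}. Your proposal reproduces this standard approach correctly, including the key observation that Corollary \ref{cor-compact-derived-category-smallest} alone is not enough for the devissage (it only controls the \emph{compact} objects) and must be supplemented by the stronger localizing-subcategory statement, which you rightly flag and which is indeed available in the hopfological framework.
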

\begin{proof}Omitted. See Proposition 8.8 of \cite{QYHopf}.
\end{proof}

An important special case comes from a map of $p$-DG algebras
$\phi:B\lra A$, and the bimodule $_AX_B={_AA_B}$. We
will allow maps $\phi: B\lra A$ with $\dif_B\circ \phi= \phi\circ
\dif_A$ which are non-unital, so that $\phi(1_B)$ is only an
idempotent in $A$.
\begin{itemize}
\item[i).] The (derived) induction functor $\phi^*=\mathrm{Ind}_B^A$ is the
derived tensor functor associated with the bimodule $_AA_B$:
\begin{equation}\label{def-induction}
\phi^*=\mathrm{Ind}_B^A=A\o_B^{\mathbf{L}}(-):\mc{D}(B)\lra \mc{D}(A).
\end{equation}
\item[ii).] The (derived) restriction functor $\phi_*=\mathrm{Res}^A_B$ is the
forgetful functor via the map $\phi$,
\begin{equation}\label{eqn-restriction}
\phi_*=\mathrm{Res}^A_B:\mc{D}(A)\lra\mc{D}(B)
\end{equation}
\end{itemize}

We have the following $p$-DG analogue for Theorem 10.12.5.1 of \cite{BL}, which follows readily from Proposition \ref{prop-criterion-derived-equivalence}.

\begin{cor}\label{cor-qis-algebra-equivalence-derived-categories}Let
$\phi:B\lra A$ be a morphism of $p$-DG algebras that is a
quasi-isomorphism. Then the induction and restriction functors
$$\phi^*:\mc{D}(B)\lra \mc{D}(A), \ \ \ \ \phi_*:\mc{D}(A)\lra \mc{D}(B)$$
are mutually inverse equivalences of categories. \hfill $\square$
\end{cor}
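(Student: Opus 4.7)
The plan is to deduce the corollary directly from Proposition \ref{prop-criterion-derived-equivalence}(i), applied to the $p$-DG $(A,B)$-bimodule ${}_AX_B := {}_AA_B$, where $B$ acts on the right of $A$ via $\phi$. Then $\phi^* = A \o_B^{\mathbf{L}}(-)$ is exactly the derived tensor product associated to this bimodule, and we will then identify $\phi_*$ as its right adjoint using the tensor-hom adjunction \eqref{eqn-tensor-hom-adjunction}.

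To apply the proposition, first observe that $A$ is cofibrant as a left $p$-DG $A$-module: it has property (P) via the trivial one-step filtration $0 \subset A$, whose single subquotient is $A$ itself. Next, we verify condition (1): the natural map $B \lra \HOM_A(A,A)$. Any left $A$-module endomorphism of $A$ is given by right multiplication by some element, so the canonical identification $\HOM_A(A,A) \cong A$ (with the opposite multiplication) is an isomorphism of $p$-DG algebras, and the map $B \lra \HOM_A(A,A)$ becomes precisely $\phi \co B \lra A$. By hypothesis, $\phi$ is a quasi-isomorphism. Condition (2) asks that $A$ be a compact generator of $\mc{D}(A)$. Compactness is immediate, since $A$ is itself a finite cell module (with a one-step filtration whose subquotient is $A$); and by Corollary \ref{cor-compact-derived-category-smallest}, the shifts $A\{r\}$ already generate $\mc{D}^c(A)$ as an idempotent complete triangulated subcategory, so $A$ is a compact generator. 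Both hypotheses of Proposition \ref{prop-criterion-derived-equivalence}(i) are satisfied, whence $\phi^* \co \mc{D}(B) \lra \mc{D}(A)$ is an equivalence of triangulated categories.

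It remains to identify $\phi_*$ as the quasi-inverse. By the tensor-hom adjunction \eqref{eqn-tensor-hom-adjunction}, the functor $\phi^* = A \o_B^{\mathbf{L}}(-)$ has right adjoint $\mathbf{R}\HOM_A({}_AA_B, -)$. Since $A$ is cofibrant as a left $A$-module, no bar resolution is required, and for any $M \in \mc{D}(A)$ one has a natural isomorphism $\mathbf{R}\HOM_A(A, M) \cong M$ in $\mc{D}(B)$, where the right $B$-action corresponds to restricting the $A$-action along $\phi$. This is precisely $\phi_* M$. Thus $\phi_*$ is right adjoint to the equivalence $\phi^*$, hence is itself an equivalence and a quasi-inverse. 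The only mildly subtle point is the identification of $\HOM_A(A,A)$ as a $p$-DG algebra and the resulting interpretation of condition (1), but this is formal; the essential content is already encoded in Proposition \ref{prop-criterion-derived-equivalence}.
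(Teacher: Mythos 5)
Your proof is correct and follows exactly the route the paper indicates: the corollary is marked as following "readily from Proposition \ref{prop-criterion-derived-equivalence}," and you verify the two hypotheses of part (i) of that proposition for the bimodule ${}_AA_B$ and then identify $\phi_*$ as the adjoint via \eqref{eqn-tensor-hom-adjunction}. The only nitpick is in the verification of condition (2): Corollary \ref{cor-compact-derived-category-smallest} characterizes $\mc{D}^c(A)$, while "compact generator of $\mc{D}(A)$" requires the a priori stronger statement that the smallest localizing subcategory of $\mc{D}(A)$ containing the shifts of $A$ is all of $\mc{D}(A)$; this is standard (it follows from the existence of property (P) replacements, Theorem \ref{thm-bar-resolution}), but is not literally the content of the cited corollary.
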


%
\subsection{Grothendieck groups of positive \texorpdfstring{$p$}{p}-DG algebras}
%

Our main goal of this section is a numerical understanding of the Grothendieck groups of positive $p$-DG algebras in terms of the classical Grothendieck groups of $A$. If $A$ is a graded ring, let $K_0^\prime(A)$ be the usual Grothendieck group of finitely generated graded projective $A$-modules, and $G_0^\prime(A)$ be the Grothendieck group of graded finite length $A$-modules. Both of these are $\Z[q^{\pm}]$ modules under the grading shift. We will see in Corollary \ref{cor-K-group-positive} that, for a positive $p$-DG algebra $A$, there are isomorphisms
$$K_0(A)\cong K_0^\prime(A) \o_{\Z[q^{\pm}]} \mathbb{O}_p, \ \ \ \ \ G_0(A)\cong G_0^\prime(A) \o_{\Z[q^{\pm}]} \mathbb{O}_p.$$

The main idea is that, for positive $p$-DG algebras, any compact module in $\mc{D}(A)$ is quasi-isomorphic to a finite cell module. The symbol of a finite cell module in $K_0(A)$
is clearly in the span of the symbols of the projective modules $P_i$. It is straightforward to show that $\{[P_i]\}_{i \in I}$ forms an $\mathbb{O}_p$ basis of $K_0(A)$, just as
it forms a $\Z[q^\pm]$ basis of $K_0^\prime(A)$.

\begin{lemma}\label{lemma-simple-property-FA}
Let $A$ be a positive $p$-DG algebra. The following statements hold.
\begin{itemize}
\item[(i)] If $0\lra M_1 \lra M \lra M_2 \lra 0$ is a short exact sequence of $p$-DG modules and $M_1, M_2 \in \mc{F}(A)$, then $M\in \mc{F}(A)$. In particular the category $\mc{F}(A)$ is closed under taking finite direct sums.
\item[(ii)] The category $\mc{F}(A)$ is preserved under tensor multiplication by finite dimensional $p$-complexes.
\end{itemize}
\end{lemma}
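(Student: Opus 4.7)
For part (i), the plan is to concatenate the two given filtrations. Suppose $F^\bullet M_1$ is a finite cell filtration of $M_1$ of length $n$, and $G^\bullet M_2$ is one of $M_2$ of length $m$, so the subquotients are (zero or) $P_i\{l_i\}$'s. Since $M_1 \hookrightarrow M$ is a $p$-DG submodule and the projection $\pi \co M \to M_2$ commutes with the differential, the preimages $\pi^{-1}(G^j M_2)$ are $p$-DG submodules of $M$. I will take as my filtration of $M$ the chain
\[
0 = F^{-1} M_1 \subseteq F^0 M_1 \subseteq \cdots \subseteq F^n M_1 = M_1 \subseteq \pi^{-1}(G^0 M_2) \subseteq \cdots \subseteq \pi^{-1}(G^m M_2) = M.
\]
The successive subquotients in the first block are those of the $M_1$-filtration; in the second block, $\pi$ induces an isomorphism $\pi^{-1}(G^j M_2)/\pi^{-1}(G^{j-1} M_2) \cong G^j M_2/G^{j-1} M_2$ of $p$-DG modules, so these subquotients are those of the $M_2$-filtration. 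Hence every subquotient is either $0$ or some $P_i\{l_i\}$, which shows $M \in \mc{F}(A)$. The closure of $\mc{F}(A)$ under finite direct sums is then immediate from the split short exact sequence $0 \to M_1 \to M_1 \oplus M_2 \to M_2 \to 0$.

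For part (ii), I will reduce to the case where the $p$-complex is one-dimensional. Let $V$ be a finite dimensional $p$-complex. The indecomposable $p$-complexes over $\Bbbk$ are well-known to be the Jordan blocks $\Bbbk[\dif]/(\dif^k)\{r\}$ for $1 \le k \le p$ and $r \in \Z$; in any case, any finite dimensional $p$-complex admits a finite filtration with one-dimensional subquotients of the form $\Bbbk\{r\}$ (equipped with the trivial differential). Tensoring with a finite dimensional $p$-complex is an exact functor on $A_\dif\dmod$ (since $\Bbbk$ is a field), so if we tensor such a filtration with $M \in \mc{F}(A)$, we obtain a finite filtration of $M \o V$ by $p$-DG submodules whose subquotients are the grading shifts $M\{r\}$. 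Each $M\{r\}$ lies in $\mc{F}(A)$ by shifting the cell filtration of $M$, and iterated application of part (i) gives $M \o V \in \mc{F}(A)$, as desired.

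The argument is essentially formal once one has the right filtration in hand; I do not expect a genuine obstacle. The only subtle point is checking that the concatenated filtration in part (i) is by $p$-DG submodules, which relies on the fact that $M_1 \hookrightarrow M$ and $\pi \co M \twoheadrightarrow M_2$ are morphisms of $p$-DG modules, so both $M_1$ and each preimage $\pi^{-1}(G^j M_2)$ are stable under $\dif_M$.
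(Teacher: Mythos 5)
Your proof is correct and follows essentially the same route as the paper's (which is quite terse: part (i) is called "immediate from the definition," and part (ii) reduces to filtering an indecomposable $p$-complex $V$ by one-dimensional subquotients and invoking part (i)). The one cosmetic difference is that you filter $V$ first and tensor with $M$, obtaining subquotients $M\{r\}$ directly, whereas the paper first reduces to $P_i\o V$ and then filters $V$; both are the same technique and both are fine.
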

\begin{proof}The first part of the lemma is immediate from the definition. Using $(i)$ to prove $(ii)$ it suffices to see that $P_i\o V \in \mc{F}(A)$, where $V$ is an indecomposable $p$-complex. Any such $V$ has a filtration whose associated graded modules are one-dimensional $p$-complexes, and the result follows from $(i)$ again.
\end{proof}

\begin{cor}\label{cor-F-A-closed-under-all-but-summand}The image of $\mc{F}(A)$ in the homotopy category $\mc{C}(A)$ is closed under grading shifts, finite direct sums, cohomological shifts, and taking cones. $\hfill \square$
\end{cor}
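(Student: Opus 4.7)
The plan is to verify each of the four closure properties in turn, using Lemma \ref{lemma-simple-property-FA} as the main engine, together with the description of the triangulated structure on $\mc{C}(A)$ recalled from \cite[Section 2.2]{KQ}.

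Closure under grading shifts is essentially tautological: applying $\{r\}$ to a cell filtration $F^\bullet$ of $M$ gives a cell filtration of $M\{r\}$ whose subquotients are the $P_i\{l_i+r\}$. So $\mc{F}(A)$ is in fact closed under $\{r\}$ already at the level of $p$-DG modules, not merely up to homotopy. Closure under finite direct sums follows by induction on the number of summands from the split short exact sequence $0\to M_1\to M_1\oplus M_2\to M_2\to 0$ together with Lemma \ref{lemma-simple-property-FA}(i).

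For the cohomological shift, the plan is to exhibit it as a tensor action of $\mc{C}(\Bbbk)$ and then invoke part (ii) of the lemma. By Remark \ref{rmk-smash-product-algebra}, $\mc{C}(\Bbbk)$ acts on $\mc{C}(A)$ via tensor product, and this action is exact. In the triangulated structure on $\mc{C}(\Bbbk)$, the cohomological shift $[n]$ is isomorphic to the functor of tensoring with a specific finite-dimensional $p$-complex $V_n$, and the same $p$-complex $V_n$ realizes the cohomological shift on $\mc{C}(A)$. Thus for any $M\in\mc{F}(A)$ we have $M[n]\cong M\otimes V_n$, and Lemma \ref{lemma-simple-property-FA}(ii) gives $M[n]\in\mc{F}(A)$.

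Closure under cones is the only step requiring any care. Given a morphism $f\colon M\to N$ of $p$-DG modules with $M,N\in\mc{F}(A)$, the plan is to realize the mapping cone, as constructed in \cite[Section 2.2]{KQ}, as an extension fitting in a short exact sequence of $p$-DG modules of the shape $0\to N\to \mathrm{cone}(f)\to M\otimes V\to 0$ for a suitable finite-dimensional $p$-complex $V$. Once this is set up, part (ii) of the lemma gives $M\otimes V\in\mc{F}(A)$, and then part (i) gives $\mathrm{cone}(f)\in\mc{F}(A)$. The main obstacle is precisely this cone description: unlike in the DG case, the $p$-DG cone is not simply $N\oplus M[1]$ with a twisted differential, and one has to appeal to the specific $p$-DG mapping cone construction to identify the appropriate $V$ and verify that the resulting sequence is genuinely a short exact sequence of $p$-DG modules. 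Everything beyond this reduces to the two parts of Lemma \ref{lemma-simple-property-FA}.
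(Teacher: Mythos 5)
Your proof is correct and takes essentially the same approach as the paper, which simply cites Lemma \ref{lemma-simple-property-FA} and defers the description of the cohomological shift and cone construction to \cite[Section 2.2]{KQ}. You have just filled in the details: the cone of $f\colon M\to N$ is indeed the pushout of $N\leftarrow M\to M\otimes H$ (with $H=\Bbbk[\dif]/(\dif^p)$), which sits in a short exact sequence $0\to N\to \mathrm{cone}(f)\to M\otimes (H/\Bbbk)\to 0$, and $H/\Bbbk$ is exactly the finite-dimensional $p$-complex realizing $[1]$, so your appeal to parts (ii) then (i) of the lemma closes the argument.
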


\begin{proof} This follows from the previous lemma. In this paper we will not describe the cohomological shift functors; the interested reader should consult \cite[Section
2.2]{KQ} for additional information. \end{proof}

\begin{lemma}\label{lemma-FA-module-closed-under-quotient}Let $A$ be a positive $p$-DG algebra and $M\in \mc{F}(A)$ be a finite cell module. Suppose $M\cong M_1\oplus M_2$ is a decomposition of $M$ in $A_\dif\dmod$. Then $M_1, M_2$ are finite cell modules.
\end{lemma}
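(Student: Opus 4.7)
The plan is to induct on the cell length $n$ of $M$. The base case $n=1$, where $M \cong P_i\{l\}$, follows from showing this module is indecomposable in $A_\dif\dmod$. A $p$-DG endomorphism of $P_i\{l\}$ corresponds to a $\dif$-closed element of $e_i A e_i$; decomposing an idempotent by degree as $e = e^{(0)} + e^{(>0)}$, the degree-zero piece lies in $e_i A^0 e_i$, which is a division ring (since $A^0$ is semisimple and $e_i$ is primitive), so $e^{(0)} \in \{0, e_i\}$. A degree-matching argument using the positivity of $A$ (if $e^{(0)} = 0$ then $e^{(>0)}$ is in positive degrees and $e^2 = e$ forces its lowest-degree component to vanish, iteratively) then forces $e \in \{0,1\}$, so any decomposition is trivial.

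For $n \ge 2$, fix a cell filtration $0 = F_0 \subset F_1 \subset \cdots \subset F_n = M$ with $F_k/F_{k-1} \cong P_{i_k}\{l_k\}$. The key preliminary observation from positivity is that both the bottom cell $F_1$ and the top quotient $P_n := M/F_{n-1}$ are indecomposable and every proper $p$-DG submodule of each lies in its radical $A' \cdot e_{i_k}\{l_k\}$; in particular, two proper $p$-DG submodules cannot sum to the whole cell. My strategy is to try peeling either the bottom or the top cell off, reducing to a length-$(n-1)$ cell module where the inductive hypothesis applies.

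Top-peel (easy): if $\pi(M_j) = 0$ for some $j$, say $j = 2$, then $M_2 \subseteq F_{n-1}$, so $F_{n-1} = M_2 \oplus (F_{n-1} \cap M_1)$ as $p$-DG modules. The inductive hypothesis gives cell filtrations of both summands of $F_{n-1}$, and since $M_1/(F_{n-1} \cap M_1) \cong \pi(M_1) = P_n$, extending by one step yields a cell filtration of $M_1$. Bottom-peel (easy): symmetrically, if $F_1 \subseteq M_j$ for some $j$, say $j = 1$, then $F_1 \cap M_2 = 0$, and one checks that the original decomposition descends to $M/F_1 = M_1/F_1 \oplus \bar{M}_2$ (where $\bar M_2$ is the isomorphic image of $M_2$); the inductive hypothesis applied to the length-$(n-1)$ module $M/F_1$ then produces cell filtrations of both quotients, which lift back to cell filtrations of $M_1$ and $M_2$.

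The main obstacle is the doubly hard case, where neither peel applies: both $F_1 \cap M_j$ are proper submodules of $F_1$ and both $\pi(M_j)$ are nonzero. Then indecomposability of $P_n$ forces $\pi(M_1) = P_n$ (say), and my plan is to modify the decomposition to reduce to the top-peel case by constructing a $p$-DG map $\psi \colon M_2 \to M_1$ with $\pi \circ \psi = \pi|_{M_2}$; setting $M_2' := \{x - \psi(x) : x \in M_2\}$ then gives a $p$-DG summand isomorphic to $M_2$ and contained in $F_{n-1}$, with $M = M_1 \oplus M_2'$. An $A$-linear lift $\psi_0$ exists automatically because $P_n$ is $A$-projective, so $\pi|_{M_1}$ admits an $A$-linear section; the task is to correct $\psi_0$ by an element of $\Hom_A^0(M_2, M_1 \cap F_{n-1})$ so that the result commutes with $\dif$. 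The obstruction to such a correction is the class of $\dif \circ \psi_0 - \psi_0 \circ \dif$ in a specific $\dif$-cohomology of $\Hom_A(M_2, M_1 \cap F_{n-1})$, and the heart of the proof is to show this class vanishes. This is the technically delicate step; it is precisely where the positivity of $A$ is essential, as it constrains the relevant graded degrees and, combined with a suitable sub-induction on the filtration data of $M_1 \cap F_{n-1}$, forces the required acyclicity.
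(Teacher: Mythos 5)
Your strategy --- induction on cell length with peeling of the extreme cells --- differs from the paper's, which is a one-line argument: realize $M_1 \cong M/M_2$, equip it with the quotient filtration $(F_k + M_2)/M_2$, and assert that the subquotients have the required form because each $P_i$ is indecomposable. Your base case and both easy peel cases are correct, and the observation that $\pi(M_1) + \pi(M_2) = P_n$ forces one of the images to equal $P_n$ (proper graded $A_\dif$-submodules of $P_{i_n}\{l_n\}$ lie inside $A' e_{i_n}\{l_n\}$) is right.

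The gap is exactly where you flag it, and it is substantial. Constructing the $p$-DG lift $\psi$ of $\pi|_{M_2}$ through $\pi|_{M_1}$ is not automatic; the obstruction you identify is a class in $\mker(\dif^{p-1})/\mim(\dif)$ inside $\HOM_A(M_2, M_1 \cap F_{n-1})$, and nothing you have written shows this vanishes. Positivity of $A$ makes this Hom space a bounded-below $p$-complex, which is nowhere near acyclicity in the relevant degree. Worse, the assertion is essentially as hard as the statement being proved: a $p$-DG lift $\psi$ exists if and only if the surjection $\pi_2|_{F_{n-1}} \colon F_{n-1} \to M_2$ admits a $p$-DG section (take $\psi = -\pi_1 \circ s$; conversely $s = \iota_2 - \iota_1\psi$). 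In other words the hard case asserts that $M_2$ is isomorphic to a $p$-DG summand of the shorter cell module $F_{n-1}$ --- precisely the hypothesis you would need to invoke the induction, not something the induction hypothesis gives you. Waving at ``positivity forces acyclicity'' does not discharge this.

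As a sanity check, take $A = \Bbbk[x]/(x^3)$ with $\deg x = 2$ and $\dif = 0$, and $M = A\{2\} \oplus A$ with $\dif_M = 0$, $F_1 = \{(y, xy) : y \in A\{2\}\}$, $M_1 = 0 \oplus A$, $M_2 = A\{2\} \oplus 0$. This is your hard case: $\pi(M_2) = xA$ is a proper nonzero submodule of $M/F_1 \cong A$, and $F_1$ lies in neither $M_1$ nor $M_2$. Here the lift does exist (because $M_1 \cap F_1 = 0$, so the obstruction space is $0$), and the conclusion holds. But also observe that the quotient filtration $(F_k + M_2)/M_2$ on $M_1$ has subquotients $xA$ and $A/xA$, neither of which is zero or an indecomposable projective --- so the paper's terse argument is not a safe fallback in this twisted configuration and deserves scrutiny of its own. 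Your approach at least isolates the precise difficulty; to complete it you must actually prove that the obstruction vanishes, and that is the entire content of the lemma, not a technical afterthought.
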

\begin{proof} For a  positive $p$-DG algebra $A$, the smash product ring $A_\dif$ is semi-local. Now, realize $M_1\cong M/M_2$, and equip with it the quotient filtration. It satisfies the subquotient requirement since each $P_i~(i\in I)$ is indecomposable in $A_\dif\dmod$.
\end{proof}

\begin{rmk}[Warning] The direct sum of the quotient filtrations on $M_1$ and $M_2$ is in general different from the original filtration on $M$. For
instance, this can happen when $M_1 \cong M_2$ so that the splitting $M \cong M_1 \oplus M_2$ is not canonical. One can then filter $M$ with
subquotients isomorphic to $M_1$ in a way which does not agree with the chosen splitting. \end{rmk}

\begin{prop}\label{prop-postive-idempotent-lifting}
Let $A$ be a positive $p$-DG algebra, and denote by $\overline{\mc{F}}(A)$ the smallest strictly full subcategory in $\mc{C}(A)$ containing all finite cell modules. Then $\overline{\mc{F}}(A)$ is idempotent complete.
\end{prop}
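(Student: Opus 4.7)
The approach is to lift idempotents from the homotopy category $\mc{C}(A)$ back to the abelian category $A_\dif\dmod$, where Lemma~\ref{lemma-FA-module-closed-under-quotient} then provides the finite-cell summands. Since $\overline{\mc{F}}(A)$ is closed under isomorphism, I would take $M \in \mc{F}(A)$ to be a genuine finite cell module and $e \in \End_{\mc{C}(A)}^0(M)$ an idempotent; the goal is to realize $e$ as the projection onto a direct $A_\dif$-module summand of $M$.

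The first step is to observe that $R := \End_{A_\dif}^0(M)$ is a finite-dimensional $\Bbbk$-algebra. This follows because $M$ is finitely generated over $A$ (one generator per cell in the filtration of Definition~\ref{def-finite-cell}), and each graded component $M^k$ is finite-dimensional thanks to the positivity and per-degree finite-dimensionality of $A$; a degree-zero $A$-module endomorphism is then determined by a finite amount of data lying in finite-dimensional spaces. Consequently the quotient algebra $R/N$, where $N$ is the ideal of null-homotopic morphisms, coincides with $\End_{\mc{C}(A)}^0(M)$ and is also finite-dimensional. I would then verify that $N$ is a two-sided ideal of $R$ by computing that for $g \in R$ and $f = \dif^{p-1}(h) \in N$ the identities $gf = \dif^{p-1}(gh)$ and $fg = \dif^{p-1}(hg)$ hold, using that $g$ commutes with $\dif$.

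At this point I would invoke the classical theorem on idempotent lifting: in a finite-dimensional $\Bbbk$-algebra (more generally, in any semi-perfect or exchange ring in the sense of Nicholson), idempotents lift modulo any two-sided ideal. This supplies a lift $\tilde e \in R$ of $e$ with $\tilde{e}^2 = \tilde{e}$, i.e., a genuine degree-zero idempotent endomorphism of $M$ in $A_\dif\dmod$.

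The final step is to conclude: the idempotent $\tilde e$ yields a decomposition $M = \tilde e M \oplus (1-\tilde e) M$ in the abelian category, and Lemma~\ref{lemma-FA-module-closed-under-quotient} immediately guarantees that both summands are finite cell modules. Passing to $\mc{C}(A)$ then realizes the splitting of $e$ with both pieces in $\overline{\mc{F}}(A)$. The most delicate link in the argument is the finite-dimensionality of $R$; this is what allows one to trade the categorical idempotent-splitting problem for the classical ring-theoretic lifting theorem, after which the result falls out from Lemma~\ref{lemma-FA-module-closed-under-quotient} with no additional work.
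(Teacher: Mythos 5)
Your proof is correct, but it takes a genuinely different route from the paper's. The paper proves the idempotent lift by hand: it sets $\eta = \xi^2 - \xi$ (for $\xi$ a $\dif$-commuting representative of your $e$), notes $\eta$ lies in the null-homotopic ideal, applies Fitting's lemma inside the finite-dimensional algebra $\End^0_{A_\dif}(M)$ to split $M \cong \mim(\eta^k)\oplus\mker(\eta^k)$, corrects $\xi$ to a genuine idempotent on $\mker(\eta^k)$ via Newton's method, and observes that $\mim(\eta^k)$ is contractible because $\eta$ acts invertibly there while being a boundary. You instead cite the abstract ring-theoretic fact that in a finite-dimensional (hence semiperfect, hence exchange) algebra, idempotents lift modulo any two-sided ideal, applied to the null-homotopic ideal $N\subset R = \End^0_{A_\dif}(M)$. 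Both arguments rest on the same two preliminary observations -- finite-dimensionality of $R$, which the paper also deduces from positivity condition (ii.1), and that $N$ is a two-sided ideal, which the paper has already recorded when introducing the homotopy category -- and both conclude through Lemma~\ref{lemma-FA-module-closed-under-quotient} in exactly the same way. Your version is shorter and swaps the explicit Fitting/Newton construction for a blackbox reference; the paper's version is self-contained, essentially reproving the relevant case of the lifting theorem, and additionally surfaces the contractibility of the invertible Fitting component, a fact that is suggestive but not actually needed for the conclusion. One small point to tighten: when you invoke Nicholson, it is cleanest to say explicitly that your lift $\tilde e$ satisfies $\tilde e - x \in N$ for any chosen representative $x$ of $e$, since this is what guarantees that $\tilde e M$ and $e$ have the same image in $\mc{C}(A)$ before you appeal to Lemma~\ref{lemma-FA-module-closed-under-quotient}.
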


\begin{proof} Suppose $M$ is a finite cell module, and $\xi\in \End_{A_\dif}(M)$ descends to an idempotent in $\End_{\mc{C}(A)}(M)$. We will find a
genuine idempotent which is homotopic to $\xi$; its image in $A_\dif\dmod$ will then serve as the image of $\xi$ in $\mc{C}(A)$. Then the result follows from Lemma \ref{lemma-FA-module-closed-under-quotient}

Let $\eta = \xi^2 - \xi$, which by equation (\ref{eqn-mor-space-homotopy-category}) must be in the image of $\dif^{p-1}$ inside $\End_A(M)$. From
condition (ii.1) of Definition \ref{def-postive-pdg-algebra} it follows that $\End_{A_\dif}^0(M)$ is finite dimensional over $\Bbbk$. The image of $\dif^{p-1}$ inside it is an ideal. By Fitting's lemma\footnote{See, for instance, Benson \cite[Lemma 1.4.4]{Benson1} for the form of
the lemma that is used here.}, there exists $k \gg 0$ such that
\[ M\cong\mim(\eta^k)\oplus \mker(\eta^k), \]
inside $A_\dif\dmod$. The map $\xi$ respects this decomposition since it commutes with $\eta$.

On $\mker(\eta^k)$ the map $\eta$ is nilpotent. Therefore we may use Newton's method\footnote{See \cite[Theorem 1.7.3]{Benson1}.} to find an map
$\tilde{\xi} \in \End_{A_\dif}(M)$ whose restriction to $\mker(\eta^k)$ is an idempotent, and where $\tilde{\xi}-\xi$ is a polynomial in $\eta$
without constant term. Thus $\xi$ and $\tilde{\xi}$ are homotopic.

On the other hand, $\eta$ acts invertibly on $\mim(\eta^k)$. In particular, the identity of $\mim(\eta^k)$ is also in the image of $\dif^{p-1}$, and
thus $\mim(\eta^k)$ is contractible. Thus $\xi$ (resp. $\tilde{\xi}$) is homotopic to its composition with the projection to $\mker(\eta^k)$. In
particular, the projection of $\tilde{\xi}$ to $\mker(\eta^k)$ is a genuine idempotent homotopic to $\xi$.
\end{proof}

Since the localization functor $\mc{C}(A) \to \mc{D}(A)$ does not affect morphism spaces between finite cell modules, we may also use $\overline{\mc{F}}(A)$ to denote the smallest strictly full subcategory of $\mc{D}(A)$ containing all finite cell modules.

\begin{thm}\label{thm-FA-equivalent-compact-derived-category}Let $A$ be a positive $p$-DG algebra. Then $\overline{\mc{F}}(A)\subset \mc{D}^c(A)$ and the inclusion is an equivalence of triangulated categories.
\end{thm}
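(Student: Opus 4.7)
The plan is to derive the theorem by stitching together four ingredients already established in the section: (a) every finite cell module is compact in $\mc{D}(A)$; (b) $\overline{\mc{F}}(A)$ is triangulated, via Corollary \ref{cor-F-A-closed-under-all-but-summand}; (c) $\overline{\mc{F}}(A)$ is idempotent complete, via Proposition \ref{prop-postive-idempotent-lifting}; and (d) $\mc{D}^c(A)$ has the universal characterization of Corollary \ref{cor-compact-derived-category-smallest}. With these in hand, the proof is essentially an unpacking of definitions.

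First I would dispatch the forward inclusion $\overline{\mc{F}}(A) \subset \mc{D}^c(A)$. Finite cell modules are compact (noted immediately after Definition \ref{def-finite-cell}), so $\mc{F}(A) \subset \mc{D}^c(A)$. Because $\mc{D}^c(A)$ is a strictly full triangulated subcategory of $\mc{D}(A)$, and $\overline{\mc{F}}(A)$ is precisely the smallest strictly full subcategory of $\mc{D}(A)$ containing $\mc{F}(A)$ (which by Corollary \ref{cor-F-A-closed-under-all-but-summand} is closed in $\mc{C}(A)$, hence in $\mc{D}(A)$, under shifts, finite sums, and cones), we conclude $\overline{\mc{F}}(A) \subset \mc{D}^c(A)$. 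Fully-faithfulness of the inclusion is automatic since $\overline{\mc{F}}(A)$ is by definition a full subcategory.

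For the essential surjectivity $\mc{D}^c(A) \subset \overline{\mc{F}}(A)$, I would appeal to Corollary \ref{cor-compact-derived-category-smallest}: $\mc{D}^c(A)$ is the \emph{smallest} strictly full, idempotent complete, triangulated subcategory of $\mc{D}(A)$ containing the family $\{A\{r\}\}_{r\in\Z}$. But $\overline{\mc{F}}(A)$ is triangulated (Corollary \ref{cor-F-A-closed-under-all-but-summand}), idempotent complete (Proposition \ref{prop-postive-idempotent-lifting}), and contains every $A\{r\}$ (each is a one-step finite cell module with $F^0 = 0$ and $F^1 = A\{r\}$). Minimality then forces $\mc{D}^c(A) \subset \overline{\mc{F}}(A)$. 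A parallel, perhaps more concrete, argument is to invoke Theorem \ref{thm-compact-mod} directly: every compact object is a direct summand of a finite cell module, and any such summand lies in $\overline{\mc{F}}(A)$ by its idempotent completeness.

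There is really no new obstacle to overcome here; the conceptual work has already been done upstream. The main step is the idempotent-lifting in Proposition \ref{prop-postive-idempotent-lifting}, which rests on the Fitting decomposition of $\End^0_{A_\dif}(M)$ together with Newton's method, together with Theorem \ref{thm-compact-mod}, whose proof goes back to \cite{QYHopf}. Once the positive hypothesis has been used to establish those two facts, this theorem is a formal consequence.
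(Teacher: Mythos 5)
Your proof is correct and follows the paper's argument almost verbatim: the forward inclusion comes from compactness of finite cell modules, and essential surjectivity follows by applying the minimality in Corollary \ref{cor-compact-derived-category-smallest} to $\overline{\mc{F}}(A)$, which is triangulated (Corollary \ref{cor-F-A-closed-under-all-but-summand}) and idempotent complete (Proposition \ref{prop-postive-idempotent-lifting}). The only difference is expository: you spell out both inclusions and offer the alternative route via Theorem \ref{thm-compact-mod}, whereas the paper compresses the argument into two sentences.
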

\begin{proof}Corollary \ref{cor-F-A-closed-under-all-but-summand} shows that $\overline{\mc{F}}(A)$ is triangulated, while the above proposition shows that it is idempotent complete. The claim follows readily from Corollary \ref{cor-compact-derived-category-smallest}.
\end{proof}

Now we can give an upper bound of the Grothendieck group $K_0(A)$ for a positive $p$-DG algebra. By Theorem \ref{thm-FA-equivalent-compact-derived-category}, any compact module in $\mc{D}(A)$ is isomorphic to a finite cell module $M\in\mc{F}(A)$. Using the diagram (\ref{eqn-convolution-diagram}), the symbol of $M$ in the Grothendieck group can be written as an alternating sum of the symbols of the subquotients of the filtration on $M$, which by Definition \ref{def-finite-cell} are $[P_i\{l_i\}]=q^{l_i}[P_i]\in K_0(A)$ for $i\in I$ and $l_i\in \Z$. Since the usual Grothendieck group $K_0^\prime(A)$ of graded projective $A$-modules is freely generated over $\Z[q^{\pm}]$ by $\{[P_i]|i\in I\}$, we have a surjective map of $\mathbb{O}_p$-modules:
\[
K_0^\prime(A)\o_{\Z[q,q^{-1}]} \mathbb{O}_p\lra K_0(A).
\]
Our next goal will be to show that this map is also injective.

When $A$ is a $p$-DG algebra, the $\RHOM$-pairing between derived categories
$$\RHOM_A(-,-):\mc{D}^c(A)\times \mc{D}^f(A)\lra \mc{D}(\Bbbk)$$
descends to a map on Grothendieck groups
$$[\RHOM_A(-,-)]:K_0(A)\times G_0(A)\lra \mathbb{O}_p.$$
The derived functor $\RHOM$ was described in \eqref{def-derived-hom}. Notice that if $V$ is a
finite dimensional $p$-complex, $M\in \mc{D}^c(A)$ and $N\in \mc{D}^f(A)$
there is a canonical isomorphism of $p$-complexes
$$\RHOM_{A}(M \o V,N)\cong \HOM_A(\mathbf{p}(M) \o V, N) \cong \HOM_A(\mathbf{p}(M),N \o V^*) \cong \RHOM_A(M,N)\o V^*.$$
On the Grothendieck group level, this says that the pairing above is
$\mathbb{O}_p$-\emph{sesquilinear}. Let an overbar denote the automorphism of $\mathbb{O}_p$ sending $q \mapsto q^{-1}$. Sesquilinearity of a bilinear form $\{-,-\}$ states that
\[ a \lbrace [M],[N] \rbrace = \lbrace [M],a[N] \rbrace = \lbrace \overline{a}[M],[N] \rbrace \]
for any $a \in \mathbb{O}_p$.

Now we focus on positive $p$-DG algebras. Since the smash product algebra $A_\dif$ is a semi-local graded ring whose degree zero part is isomorphic to $A^0$, the Jacobson radical $J(A_\dif)$ of this algebra consists of everything in positive degrees, and $A_\dif/J(A_\dif)\cong A^0$. It follows that any finite dimensional module over $A_\dif$ admits a finite filtration whose subquotients are graded shifts of $S_i,~(i\in I)$, so that $G_0(A)$ is $\mathbb{O}_p$-generated by the symbols of the simples $[S_i]$. Applying the $\RHOM$-pairing between the systems of modules $\{P_i|i\in I\}$, $\{S_j|j\in I\}$, we have
\[
\RHOM(P_i,S_j)\cong
\left\{
\begin{array}{ll}
D_i & \textrm{if $i=j$,}\\
0 & \textrm{otherwise,}
\end{array}
\right.
\]
where $D_i \cong \End_{A}(S_i)$ is a finite dimensional division algebra over $\Bbbk$ concentrated in degree zero. Necessarily $\dif$ acts trivially on $D_i$. Set $d_i=\mathrm{dim}_\Bbbk D_i$. Now if $\sum_{i\in I}r_i [P_i]=0$ is a linear relation, with $r_i\in \mathbb{O}_p$, we apply $[\RHOM_A(-,S_j)]$ to get
\[
0=\sum_{i\in I} \overline{r_i}[\RHOM_A(P_i,S_j)]=\overline{r_j}[D_j]=\overline{r_j} d_j[\Bbbk] \in K_0(\Bbbk).
\]
Since $K_0(\Bbbk)\cong \mathbb{O}_p$ has no $\Z$-torsion, it follows that $r_j=0$ for each $j$. Thus there could not have been any $\mathbb{O}_p$-linear relation between the symbols $[P_i]\in K_0(A)$ from the start. Likewise one shows that there can be no linear relation among the symbols $[S_i]$ in $G_0(A)$. This discussion gives us the following.

\begin{cor}\label{cor-K-group-positive}Let $A$ be a positive $p$-DG algebra. Then there are isomorphisms of Grothendieck groups
\[
K_0(A)\cong K_0^\prime(A)\o_{\Z[q,q^{-1}]}\mathbb{O}_p, \ \ \ \ G_0(A)\cong G_0^\prime(A)\o_{\Z[q,q^{-1}]}\mathbb{O}_p,
\]
where $K_0^\prime$ (resp. $G_0^\prime$) stands for the usual Grothendieck group of graded projective (resp. graded finite dimensional) $A$-modules.
\hfill$\square$
\end{cor}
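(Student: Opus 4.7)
The plan is to establish the $K_0$ statement by proving both surjectivity and injectivity of the obvious map $K_0'(A) \otimes_{\Z[q^\pm]} \mathbb{O}_p \to K_0(A)$; the $G_0$ statement will then follow by a parallel argument, using the simples $S_i$ in place of the projectives $P_i$.

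For surjectivity I would invoke Theorem \ref{thm-FA-equivalent-compact-derived-category}: every compact module is isomorphic to a finite cell module $M$, and the convolution \eqref{eqn-convolution-diagram} expresses $[M]$ as an alternating sum of symbols $q^{l_i}[P_i]$. Since $\{[P_i]\}_{i\in I}$ is a free $\Z[q^\pm]$-basis of $K_0'(A)$, the natural map becomes surjective after tensoring with $\mathbb{O}_p$.

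For injectivity I would exploit the sesquilinear pairing $K_0(A) \times G_0(A) \to K_0(\Bbbk) \cong \mathbb{O}_p$ induced by $[\RHOM_A(-,-)]$. The crucial calculation is
\[
[\RHOM_A(P_i, S_j)] = \delta_{ij}\, d_j [\Bbbk], \qquad d_j := \dim_\Bbbk \End_A(S_j),
\]
which relies on conditions (ii.2) and (ii.3) of Definition \ref{def-postive-pdg-algebra}: these force $\End_A(S_j)$ to be a degree-zero division algebra on which $\dif$ acts trivially, so its class in $K_0(\Bbbk)$ is just $d_j [\Bbbk]$. Given any relation $\sum_i r_i [P_i] = 0$, pairing with $[S_j]$ yields $\overline{r}_j d_j = 0$ in $\mathbb{O}_p$; since $\mathbb{O}_p \cong \Z[\zeta_p]$ is torsion-free over $\Z$ and $d_j$ is a positive integer, this forces $\overline{r}_j = 0$ and hence $r_j = 0$. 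The $G_0$ half is identical after observing that semi-locality of the smash product $A_\dif$ (Jacobson radical $\oplus_{k>0}(A_\dif)^k$, quotient $A^0$) gives every finite-dimensional $p$-DG module a composition series with subquotients among shifts of the $S_i$, so that $\{[S_i]\}$ generates $G_0(A)$; injectivity then uses the same pairing, read from the other side against the $[P_j]$.

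The only real subtlety is the sesquilinearity of the pairing rather than bilinearity: one must check that the coefficient-extraction step survives the bar involution. It does, because the bar map is a bijection of $\mathbb{O}_p$ and $d_j$ is a nonzero integer in a torsion-free ring. In practice the corollary is essentially free once Theorem \ref{thm-FA-equivalent-compact-derived-category} and the $\RHOM$-pairing are in hand, so I do not expect a substantial obstacle; the work lies in the supporting results already proved, not in the assembly.
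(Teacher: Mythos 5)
Your proposal is correct and matches the paper's own argument essentially step for step: surjectivity via Theorem \ref{thm-FA-equivalent-compact-derived-category} and the convolution diagram \eqref{eqn-convolution-diagram}, and injectivity via the sesquilinear $\RHOM$-pairing computed on $\{P_i\}$ against $\{S_j\}$, with the $G_0$ half done symmetrically using semi-locality of $A_\dif$. Your remark on sesquilinearity versus bilinearity is the same observation the paper makes implicitly by passing to $\overline{r_j}d_j[\Bbbk]$ and using that $\mathbb{O}_p$ is $\Z$-torsion-free.
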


%
\subsection{A K\"{u}nneth formula}
%

We specialize to the case when $A$ is strongly positive, as in Definition \ref{def-postive-pdg-algebra}. Recall that in this case $A^0\cong \prod_{i\in I}\mathrm{M}(n_i,\Bbbk)$ is
a product of matrix algebras with coefficients in the ground field. If $A_1$, $A_2$ are two such $p$-DG algebras, then so is their tensor product $A_1\o A_2$. This follows because
for any $n,m\in \N$, $\mathrm{M}(n,\Bbbk)\o \mathrm{M}(m,\Bbbk) \cong \mathrm{M}(nm,\Bbbk)$. By applying Corollary \ref{cor-K-group-positive} to $A_1$, $A_2$ and $A_1\o A_2$, we
obtain the following K\"{u}nneth-type property for their Grothendieck groups.

\begin{cor}\label{cor-Kunneth-strongly-positive}
Let $A_1$, $A_2$ be two strongly positive $p$-DG algebras relative to the ground field $\Bbbk$. Then their tensor product is also strongly positive relative to $\Bbbk$, and there are isomorphisms of Grothendieck groups
\[
K_0(A_1\o A_2)\cong K_0(A_1)\o_{\mathbb{O}_p}K_0(A_2),  \ \ \ \ G_0(A_1\o A_2)\cong G_0(A_1)\o_{\mathbb{O}_p}G_0(A_2),
\]
which are identifications of $\mathbb{O}_p$-modules. \hfill$\square$
\end{cor}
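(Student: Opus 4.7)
The plan is to reduce the claim to the classical Künneth formula for ordinary Grothendieck groups, and then invoke Corollary~\ref{cor-K-group-positive} to deduce the $p$-DG statement.

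First, I would check that $A_1 \otimes A_2$ is itself strongly positive. Positive grading and finite-dimensionality in each degree are inherited from the factors. The natural differential $\dif := \dif_1 \otimes 1 + 1 \otimes \dif_2$ satisfies the Leibniz rule, and its $p$-th power equals
\[
\dif^p = \sum_{k=0}^{p} \binom{p}{k} \dif_1^k \otimes \dif_2^{p-k},
\]
which vanishes since $\dif_i^p = 0$ and $\binom{p}{k} \equiv 0 \pmod p$ for $0<k<p$ in characteristic $p$. The differential acts trivially on $(A_1\otimes A_2)^0 = A_1^0 \otimes A_2^0$ because each $\dif_i$ does on $A_i^0$. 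Finally, strong positivity of $A_1$ and $A_2$ gives $A_i^0 \cong \prod_{r\in I_i} \mathrm{M}(n_r^{(i)}, \Bbbk)$, and using $\mathrm{M}(n,\Bbbk)\otimes \mathrm{M}(m,\Bbbk)\cong \mathrm{M}(nm,\Bbbk)$ we obtain $(A_1\otimes A_2)^0 \cong \prod_{(r,s)\in I_1\times I_2}\mathrm{M}(n_r^{(1)}n_s^{(2)},\Bbbk)$, a product of matrix algebras over $\Bbbk$.

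Second, the description of $(A_1\otimes A_2)^0$ above shows that a complete list of pairwise non-isomorphic indecomposable idempotents in the degree-zero part is given by $\{\epsilon_r^{(1)}\otimes \epsilon_s^{(2)} \mid r \in I_1, s \in I_2\}$. It follows that the indecomposable projective $p$-DG modules of $A_1\otimes A_2$ are
\[
P_{r,s} = (A_1\otimes A_2)\cdot(\epsilon_r^{(1)}\otimes \epsilon_s^{(2)}) \cong P_r^{(1)} \otimes P_s^{(2)},
\]
and similarly the simple modules are $S_{r,s}\cong S_r^{(1)} \otimes S_s^{(2)}$ (with one-dimensional endomorphism ring, again thanks to strong positivity). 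Since $K_0'(A_i)$ and $G_0'(A_i)$ are free $\Z[q^{\pm}]$-modules on $\{[P_r^{(i)}]\}$ and $\{[S_r^{(i)}]\}$ respectively, and the same is true for $A_1\otimes A_2$, we obtain the classical Künneth identifications
\[
K_0'(A_1\otimes A_2) \cong K_0'(A_1)\otimes_{\Z[q^{\pm}]} K_0'(A_2), \qquad G_0'(A_1\otimes A_2) \cong G_0'(A_1)\otimes_{\Z[q^{\pm}]} G_0'(A_2).
\]

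Third, applying Corollary~\ref{cor-K-group-positive} to $A_1$, $A_2$, and $A_1\otimes A_2$, followed by associativity of tensor products of modules, one computes
\[
K_0(A_1\otimes A_2) \cong K_0'(A_1\otimes A_2)\otimes_{\Z[q^{\pm}]} \mathbb{O}_p \cong \bigl(K_0'(A_1)\otimes_{\Z[q^{\pm}]} \mathbb{O}_p\bigr) \otimes_{\mathbb{O}_p} \bigl(K_0'(A_2)\otimes_{\Z[q^{\pm}]} \mathbb{O}_p\bigr) \cong K_0(A_1)\otimes_{\mathbb{O}_p} K_0(A_2),
\]
and the same argument applies verbatim to $G_0$. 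The main subtlety — and the reason that strong positivity (rather than mere positivity) is needed here — lies in Step 2: over a field $\Bbbk$ that is not a splitting field, the tensor product of two semisimple algebras can fail to be semisimple, and tensor products of simple modules can decompose. The assumption that each $A_i^0$ is a product of matrix algebras over $\Bbbk$ (equivalently, that the endomorphism division algebras $D_i^{(k)}$ all equal $\Bbbk$) is precisely what is needed both to make $A_1\otimes A_2$ strongly positive and to identify projectives and simples as bare tensor products.
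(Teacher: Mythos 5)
Your proof is correct and takes essentially the same approach as the paper: establish that $A_1 \otimes A_2$ is strongly positive, then deduce the statement from Corollary~\ref{cor-K-group-positive} applied to $A_1$, $A_2$, and $A_1\otimes A_2$. The paper leaves the intermediate step (classical K\"unneth for $K_0'$ and $G_0'$, via identifying projectives and simples of the tensor product as bare tensor products) implicit, and your write-up usefully fills it in; your final remark about where strong positivity, rather than mere positivity, enters is exactly the point the paper's subsequent non-example with $\F_{p^r}$ is meant to illustrate.
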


\begin{eg}[A non-example] The above result fails when we remove the ``strongly positive'' hypothesis, because it fails for the ordinary Grothendieck groups $K_0^\prime$ and $G_0^\prime$. Consider $\F_{p^r}$ as a $p$-DG algebra with the zero differential over $\F_p$. It is easy to see that $K_0(\F_{p^r})\cong G_0(\F_{p^r})\cong \mathbb{O}_p$, which is spanned by the symbol $[\F_{p^r}]$. However $\F_{p^r}\o_{\F_p} \F_{p^r}\cong \F_{p^r}^{\oplus r}$, so that
\[K_0(\F_{p^r}\o_{\F_p} \F_{p^r})\cong G_0(\F_{p^r}\o_{\F_p} \F_{p^r})\cong \mathbb{O}_p^{\oplus r},\]
and the K\"{u}nneth property fails.
\end{eg}

\begin{rmk}
The K\"{u}nneth property for ordinary DG algebras that are strongly positive is a direct consequence of the results in Keller-Nicolas \cite{KeNi} and Schn\"{u}rer \cite{SchPos}. The discussion in this chapter is partly motivated by their work.

The method adopted here generalizes immediately, in the context of hopfological algebra, to any strongly positive $H$-module algebra, where $H$ is a graded finite dimensional Hopf (super) algebra. (See Remark \ref{rmk-smash-product-algebra}). Unfortunately, the proof here is essentially ``numerical," only giving an isomorphism of Grothendieck groups rather than a comparison on the level of spectra.

We would like to pose the following general question to the reader: Under what restrictions on a $p$-DG algebra, or more generally, an $H$-module algebra, does the K\"{u}nneth formula hold?
\end{rmk}

The $p$-DG algebras we are most concerned with in this paper are not strongly positive, but they are $p$-DG Morita equivalent to strongly positive $p$-DG algebras (in the sense of Proposition \ref{prop-criterion-derived-equivalence}).

\begin{cor}\label{cor-Moritaly-positive-Kunneth}
Let $A_1$, $A_2$ be two $p$-DG algebras which are Morita equivalent to strongly positive $p$-DG algebras. Then their tensor product is also Morita equivalent to a strongly positive $p$-DG algebra, and the K\"{u}nneth formula holds. \hfill$\square$
\end{cor}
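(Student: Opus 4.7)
The plan is to promote the Morita equivalences for $A_1$ and $A_2$ individually to a Morita equivalence for the tensor product, and then invoke Corollary \ref{cor-Kunneth-strongly-positive} applied to the strongly positive sides.

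By hypothesis, for $i=1,2$ there exists a strongly positive $p$-DG algebra $B_i$ and a $p$-DG $(A_i,B_i)$-bimodule $X_i$, cofibrant as a left $p$-DG $A_i$-module, such that $X_i \otimes^{\mathbf{L}}_{B_i}(-)\colon \mc{D}(B_i)\lra \mc{D}(A_i)$ is an equivalence. First I would tensor the two bimodules: $X_1 \otimes X_2$ is naturally a $p$-DG $(A_1\otimes A_2, B_1\otimes B_2)$-bimodule, and one needs to check it satisfies the two conditions of Proposition \ref{prop-criterion-derived-equivalence}(i). Cofibrancy of $X_1\otimes X_2$ over $A_1\otimes A_2$ is inherited from the cofibrancy of each $X_i$ over $A_i$: a property (P) filtration on each $X_i$ by copies of $A_i\{r\}$ (or $P_j\{r\}$'s) tensors to a property (P) filtration of $X_1\otimes X_2$ by copies of $(A_1\otimes A_2)\{r\}$'s (or their projective analogues), and direct summands of property (P) modules give property (P) modules upon tensor.

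Next, the compact generator property transfers because the thick idempotent-complete triangulated closure of $X_i$ in $\mc{D}(A_i)$ contains $A_i$; tensoring in the two slots, the closure of $X_1\otimes X_2$ in $\mc{D}(A_1\otimes A_2)$ contains $A_1\otimes A_2$, which generates by Corollary \ref{cor-compact-derived-category-smallest}. For the endomorphism condition, one computes
\[
\HOM_{A_1\otimes A_2}(X_1\otimes X_2, X_1\otimes X_2)\supseteq \HOM_{A_1}(X_1,X_1)\otimes \HOM_{A_2}(X_2,X_2),
\]
and using the cofibrancy of $X_i$ together with the quasi-isomorphisms $B_i\lra \HOM_{A_i}(X_i,X_i)$, one obtains a quasi-isomorphism
\[
B_1\otimes B_2 \lra \HOM_{A_1\otimes A_2}(X_1\otimes X_2,X_1\otimes X_2).
\]
By Proposition \ref{prop-criterion-derived-equivalence}(i), $(X_1\otimes X_2)\otimes^{\mathbf{L}}_{B_1\otimes B_2}(-)$ is a derived equivalence $\mc{D}(B_1\otimes B_2)\simeq \mc{D}(A_1\otimes A_2)$. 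Since $B_1\otimes B_2$ is strongly positive by the remark preceding Corollary \ref{cor-Kunneth-strongly-positive}, this establishes the first assertion.

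Derived equivalences restrict to equivalences between compact derived categories and between finite derived categories (as both notions are intrinsic to the triangulated structure), so they induce isomorphisms $K_0(A_i)\cong K_0(B_i)$ and $K_0(A_1\otimes A_2)\cong K_0(B_1\otimes B_2)$ of $\mathbb{O}_p$-modules, and similarly for $G_0$. Combining with the K\"unneth isomorphism from Corollary \ref{cor-Kunneth-strongly-positive} on the strongly positive side yields the desired chain
\[
K_0(A_1\otimes A_2)\cong K_0(B_1\otimes B_2)\cong K_0(B_1)\otimes_{\mathbb{O}_p}K_0(B_2)\cong K_0(A_1)\otimes_{\mathbb{O}_p} K_0(A_2),
\]
and likewise for $G_0$. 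The main technical point I expect to require care is the verification that $\HOM_{A_1\otimes A_2}(X_1\otimes X_2,X_1\otimes X_2)$ computes correctly as a derived tensor product of the individual $\HOM$ $p$-complexes; the cofibrancy hypotheses on each $X_i$ are exactly what makes this compatibility work.
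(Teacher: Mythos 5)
Your approach is the one the paper intends (the paper elides the proof with a $\square$, viewing the result as a direct consequence of Proposition~\ref{prop-criterion-derived-equivalence} and Corollary~\ref{cor-Kunneth-strongly-positive}): tensor the two Morita bimodules, verify the two conditions of the criterion, and push the K\"unneth isomorphism across the resulting derived equivalences. Your checks of cofibrancy and of the compact-generation condition for $X_1\otimes X_2$ are the right ones, and your closing observation about where care is needed is the correct place to look. To close that last point: since each $X_i$ is both compact in $\mc{D}(A_i)$ and cofibrant, it is a direct summand (in the abelian category) of a finite cell module, hence finitely generated projective as an $A_i$-module; for finitely generated projective modules the natural map $\HOM_{A_1}(X_1,X_1)\otimes\HOM_{A_2}(X_2,X_2)\lra\HOM_{A_1\otimes A_2}(X_1\otimes X_2,X_1\otimes X_2)$ is an isomorphism of $p$-complexes (not merely an inclusion, so your $\supseteq$ should be a $\cong$). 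Combined with the fact that tensoring two quasi-isomorphisms of $p$-complexes over the field $\Bbbk$ yields a quasi-isomorphism, the map $B_1\otimes B_2\lra\HOM_{A_1\otimes A_2}(X_1\otimes X_2,X_1\otimes X_2)$ is a quasi-isomorphism, completing the verification of condition~(1) of the criterion. With that one strengthening, the argument is complete.
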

	
This version of the K\"{u}nneth property for the usual Grothendieck groups of certain (DG) algebras has played a significant role in many known examples of categorification, see for instance \cite{KL1,KL2,EliasKh}.

%
\subsection{\texorpdfstring{$p$}{p}-DG categories}\label{subsec-p-DG-cats}
%

There are no references for this section, although it is a straightforward extension of well-known material.

We give a slight generalization of the notion of $p$-DG algebras. This is the analogue in the $p$-DG setting of the corresponding notion in the usual DG theory. See \cite{Ke1, KeICM}.

\begin{defn}\label{def-p-dg-cat}
 A graded $\Bbbk$-linear category $\mc{A}$ is called a $p$-DG category if the morphism spaces between any objects
$X,Y\in \mc{A}$ are equipped with a degree $2$, $p$-nilpotent operator $\dif$
\[\dif:\Hom^{\bullet}_{\mc{A}}(X,Y)\lra \Hom^{\bullet+2}_{\mc{A}}(X,Y),\]
which acts via the Leibnitz rule on the composition of morphisms
\[
\begin{array}{rccl}
\dif: &\HOM_{\mc{A}}(Y,Z)\times \HOM_{\mc{A}}(X,Y) & \lra & \HOM_{\mc{A}}(X,Z), \\
 & (g, f)  & \mapsto & \dif(g\circ f)=\dif(g)\circ f+ g\circ \dif(f).
\end{array}
\]
\end{defn}

\begin{eg}
Given a $p$-DG algebra $A$, one can construct a $p$-DG category by considering an enriched version of $A_\dif\dmod$. The objects are $p$-DG modules over $A$, but morphisms are given by the $p$-complex $\HOM_A(M,N)$ instead of $\Hom^0_{A_\dif}(M,N)$. Taking $A=\Bbbk$ with the zero differential, one recovers the $p$-DG category $\Bbbk_\dif\dmod $ of $p$-complexes.
\end{eg}

\begin{defn}\label{def-mod-over-p-DG-cat}
A left (resp. right) $p$-DG module $\mc{M}$ over a $p$-DG category $\mc{A}$ is a covariant functor
\[\mc{M}:\mc{A}\lra \Bbbk_\dif\dmod \ \ \ \ (\textrm{resp.}~\mc{M}:\mc{A}^{op}\lra \Bbbk_\dif\dmod ),\]
that commutes with the $\dif$-actions on $\mc{A}$ and $\Bbbk_\dif\dmod$.
\end{defn}

For example, given any object $X\in \mc{A}$, the representable functor
$$\HOM_{\mc{A}}(X,-):\mc{A}\lra \Bbbk_\dif\dmod, \ \ \ \ (\textrm{resp.}~\HOM_{\mc{A}}(-,X):\mc{A}^{op}\lra \Bbbk_\dif\dmod)$$
is a left (resp. right) $p$-DG module. Such a $p$-DG module is called \emph{representable}. It is easy to check that the category of left (resp. right) $p$-DG modules over a $p$-DG category is abelian.

\begin{rmk}\label{rmk-shrinking-category-to-algebra}
We will informally treat $p$-DG categories as $p$-DG algebras with many idempotents $1_X$, one for each object $X\in \mc{A}$. Conversely any $p$-DG algebra is just a $p$-DG category with a unique object. Given a $p$-DG category and a finite set of objects $\mathbb{X}=\{X_i|i=1, \dots, n\}$, it will be helpful to consider the $p$-DG endomorphism algebra
\[
\END_{\mc{A}}(\mathbb{X}):=\oplus_{i,j=1}^n\HOM_{\mc{A}}(X_i,X_j),
\]
and modules obtained by induction from this smaller algebra to the whole category $\mc{A}$.
\end{rmk}

The following definition generalizes the corresponding notions over $p$-DG algebras (Definitions \ref{def-nice-p-DG-mod} and \ref{def-finite-cell}).

\begin{defn}\label{def-nice-p-DG-mod-over-category}Let $\mc{A}$ be a $p$-DG category, and $\mc{M}$ be a $p$-DG module over $\mc{A}$.
\begin{itemize}
\item[(i)] $\mc{M}$ is said to satisfy \emph{property (P)} if there exits an exhaustive, possibly infinite, increasing filtration $F^\bullet$, on $\mc{M}$ such that each subquotient $F^\bullet/F^{\bullet -1}$ is isomorphic to a direct sum of representable $p$-DG modules.
\item[(ii)] $\mc{M}$ is a \emph{cofibrant module} if it is a direct summand of a property (P) module;
\item[(iii)] $\mc{M}$ is called a \emph{finite cell module} if it is a property (P) module with a finite filtration each of whose subquotients is isomorphic to a representable $p$-DG module.
\end{itemize}
\end{defn}

The hopfological constructions in the previous sections can be followed verbatim for $p$-DG categories. There is a notion of (simplicial) cofibrant replacements for $p$-DG modules, and of the homotopy category $\mc{C}(\mc{A})$ and derived category $\mc{D}(\mc{A})$. In particular, one can characterize the compact derived category in analogy to Theorem \ref{thm-compact-mod} and Corollary \ref{cor-compact-derived-category-smallest}, using the powerful machinery of Ravenel-Neeman \cite{Ra, Nee} on compactly generated triangulated categories.

\begin{cor}\label{cor-compact-mod-p-dg-cat}A $p$-DG module $\mc{M}$ over $\mc{A}$ is compact if and only if it is a direct summand in the derived category $\mc{D}(\mc{A})$ of a finite cell module. The compact derived category $\mc{D}^c(\mc{A})$ is the smallest strictly full Karoubian triangulated subcategory in $\mc{D}(\mc{A})$ that contains all the grading shifts of all representable modules. \hfill$\square$
\end{cor}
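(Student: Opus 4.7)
The plan is to mirror the argument for $p$-DG algebras (Theorem \ref{thm-compact-mod} and Corollary \ref{cor-compact-derived-category-smallest}) in the many-object setting, using Neeman's machinery for compactly generated triangulated categories. There are essentially four steps.

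First, I would show that for each $X \in \mc{A}$ and $r \in \Z$, the representable module $\HOM_{\mc{A}}(X,-)\{r\}$ is a compact object of $\mc{D}(\mc{A})$. The key point is the $p$-DG Yoneda identification
\[
\Hom_{\mc{D}(\mc{A})}\bigl(\HOM_{\mc{A}}(X,-)\{r\},\, \mc{N}\bigr) \;\cong\; \mH\bigl(\mc{N}(X)\{-r\}\bigr),
\]
which is obtained by running the usual cofibrant-replacement argument (the hopfological analogue of \eqref{eqn-mor-space-homotopy-category}) against the representable, noting that representables are themselves cofibrant. Since evaluation at $X$ and passage to slash cohomology both commute with arbitrary coproducts, compactness follows.

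Second, I would verify that the collection $\mathcal{G} := \{\HOM_{\mc{A}}(X,-)\{r\} \mid X \in \mc{A},\, r \in \Z\}$ is a \emph{generating} set: if $\mc{N} \in \mc{D}(\mc{A})$ satisfies $\Hom_{\mc{D}(\mc{A})}(G, \mc{N}) = 0$ for every $G \in \mathcal{G}$, then the Yoneda formula above forces $\mc{N}(X)$ to be acyclic as a $p$-complex for every object $X$, hence $\mc{N} \simeq 0$ in $\mc{D}(\mc{A})$. Therefore $\mc{D}(\mc{A})$ is compactly generated by $\mathcal{G}$.

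Third, I would invoke Neeman's theorem \cite{Nee} (building on Ravenel \cite{Ra}): in a compactly generated triangulated category $\mc{T}$, the subcategory $\mc{T}^c$ of compact objects coincides with the idempotent completion, taken inside $\mc{T}$, of the smallest strictly full triangulated subcategory containing a chosen set of compact generators. Applied to $\mc{T} = \mc{D}(\mc{A})$ and $\mathcal{G}$, this identifies $\mc{D}^c(\mc{A})$ with the Karoubian closure (inside $\mc{D}(\mc{A})$) of the triangulated hull of $\mathcal{G}$. By Definition \ref{def-nice-p-DG-mod-over-category}(iii) and Remark \ref{rmk-convolution-finite-cell}, finite cell modules are exactly the iterated cones on shifts of representables, so this triangulated hull is precisely the image of $\mc{F}(\mc{A})$ in $\mc{D}(\mc{A})$. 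Taking Karoubian closure then yields both assertions of the corollary at once: an object is compact iff it is a summand of a finite cell module, and $\mc{D}^c(\mc{A})$ is the smallest Karoubian strictly full triangulated subcategory containing all grading shifts of all representables.

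The main obstacle is the first step, since one must ensure that the bar-resolution formalism of Theorem \ref{thm-bar-resolution} and the Hom-space description \eqref{eqn-mor-space-homotopy-category} extend functorially from $p$-DG algebras to $p$-DG categories; once this is in hand, compactness of representables and the coproduct-preservation needed to invoke Neeman's theorem are formal. One can alternatively reduce to the algebra case via Remark \ref{rmk-shrinking-category-to-algebra}, passing through $\END_{\mc{A}}(\mathbb{X})$ for finite subsets $\mathbb{X} \subset \mathrm{Ob}(\mc{A})$, but the direct categorical argument via Neeman is cleaner and handles the case when $\mathrm{Ob}(\mc{A})$ is infinite without additional bookkeeping.
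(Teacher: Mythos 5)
Your proposal is correct and is precisely the argument the paper gestures at: the paper leaves this corollary as an immediate consequence of the Ravenel--Neeman machinery for compactly generated triangulated categories, mirroring the algebra-level statements in Theorem \ref{thm-compact-mod} and Corollary \ref{cor-compact-derived-category-smallest}, and your four steps (representables are compact via Yoneda, they generate, Neeman identifies $\mc{D}^c$ with the Karoubi closure of their triangulated hull, that hull equals the image of $\mc{F}(\mc{A})$) are exactly the details being suppressed. One small hygienic point worth keeping in mind is that smallness of $\mc{A}$ (which the paper assumes) is what ensures the representables form a \emph{set} of compact generators, a hypothesis Neeman's theorem genuinely requires.
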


Hence we also have the notion of Grothendieck groups for a $p$-DG category.

Finally, notice that the \emph{$2$-category} of $p$-DG categories is symmetric monoidal, and we will use $\o$ to stand for the tensor product of $p$-DG categories.

\vspace{0.06in}

Now let us discuss the notion of a $p$-DG $2$-category. In this paper, all $2$-categories will be strict and small. When $\mc{U}$ is a $2$-category and $\l,\mu$ are objects, we will typically write ${}_\mu \UC_\l$ for the $1$-category $\Hom_{\mc{U}}(\l,\mu)$, following Lauda \cite[Section 5.1]{Lau1}. We write $\1_\l$ for the identity $1$-morphism of the object $\l$, and $1_M$ for the identity $2$-morphism of the $1$-morphism $M$.We omit any multiplication symbol when composing $1$-morphisms, or using $1$-morphism composition to compose $2$-morphisms \emph{horizontally}. We reserve $\circ$ for the usual, \emph{vertical} composition of $2$-morphisms.

\begin{defn}\label{def-p-DG-2-cat}
A $p$-DG $2$-category $(\mc{U},\dif)$ consists of a usual $2$-category $\UC$, together with a $p$-nilpotent \emph{(2-categorical) derivation}, or a \emph{(2-categorical) differential} on $2$-morphisms, which satisfies the Leibniz rule for both horizontal and vertical multiplication of $2$-morphisms.
\end{defn}

More explicitly, a $p$-DG $2$-category consists of the following data.
 \begin{itemize}
 \item[(i)] A set of objects $I=\{\l ,\mu, \nu \ldots \}$, and for any $\l, \mu\in I$,  ${{}_\mu \UC_\l}=\Hom_{\mc{U}}(\l,\mu)$ forms a $p$-DG category.
 \item[(ii)] For any $1$-morphism ${{}_\mu E_\l}, {{}_\mu E^{\prime}_\l} $ of ${{}_\mu \UC_\l}$, the morphism space
    \[\HOM_{{}_\mu \UC_\l}({{}_\mu E_\l}, {{}_\mu E^{\prime}_\l})\]
    is a $p$-complex.
 \item[(iii)] For any objects $\l,\mu, \nu \in I$, and $1$-morphisms ${{}_\l E_\mu}, {{}_\l E^\prime_\mu} , {{}_\l E^{\prime\prime}_\mu} \in {{}_\l \UC _\mu}$, the (vertical) composition
     \begin{eqnarray*}\HOM_{{}_\mu \UC_\l}({{}_\mu E^\prime _\l}, {{}_\mu E^{\prime \prime}_\l})\times \HOM_{{}_\mu \UC_\l}({{}_\mu E_\l}, {{}_\mu E^{\prime} _\l}) & \lra &\HOM_{{}_\mu \UC _\l}({{}_\mu E_\l}, {{}_\mu E^{\prime \prime} _\l}),\\
     (f\ , \ g)  &\mapsto & f\circ g,
     \end{eqnarray*}
 satisfies the Leibniz rule as in Definition \ref{def-p-dg-cat}.
 \item[(iv)] For any objects $\l,\mu, \nu \in I$, and $1$-morphisms ${_\l E _\mu}, {_\l E^\prime _\mu} \in {_\l \UC _\mu}$, ${_\mu F _\nu}, {_\mu F^\prime _\nu} \in {_\mu \UC _\nu}$, the (horizontal) composition
     \begin{eqnarray*}
     \HOM_{_\l \UC_\mu}({_\l E _\mu}, {_\l E^{\prime}_\mu})\times \HOM_{_\l \UC_\l}({_\mu F_\nu}, {_\mu F^{\prime} _\nu}) & \lra &\HOM_{_\l \UC _\nu}({_\l EF_\nu}, {_\l E^{\prime }F^{\prime} _\nu}),\\
     (f\ , \ h)  &\mapsto & fh,
     \end{eqnarray*}
 satisfies the Leibniz rule
     \[ \dif(fh)=\dif(f)h+f\dif(h).\]
 \end{itemize}

\begin{eg}\label{eg-p-DG-ALG} A key example to keep in mind is the $p$-DG $2$-category of $p$-DG algebras, denoted by $^p\mc{DGA}$. Its objects are $p$-DG algebras $\{A_\l|\l \in
I\}$ (with the usual restrictions coming from categorical hygiene). The $1$-morphisms between two $p$-DG algebras are given by $p$-DG bimodules, and the $2$-morphisms are
homomorphisms of $p$-DG bimodules. Fixing two $p$-DG algebras $A_\l$ and $A_\mu$, the morphism-category between them is identified with the category of left $p$-DG modules over
$A_\mu\o A_\l^{op}$.
\end{eg}

\begin{defn}Let $\mc{U}$ be a $p$-DG $2$-category. We define the homotopy and derived categories of $\mc{U}$ to be
\[
\mc{C}(\UC):=\bigoplus_{\l,\mu \in I}\mc{C}({_\mu\UC_\l}), \quad \quad \mc{D}(\mc{U}):=\bigoplus_{\l,\mu \in I}\mc{D}({_\mu \UC_{\l}}),
\]
and the Grothendieck groups to be
\[
K_0(\mc{U}):=\bigoplus_{\l,\mu \in I}K_0({_\mu\UC_\l}), \quad \quad G_0(\mc{U}):=\bigoplus_{\l,\mu \in I}G_0({_\mu\UC_\l}).
\]
\end{defn}

In nice cases, these Grothendieck groups inherit the natural structure of idempotented rings. This phenomenon will be exemplified in Section \ref{grothring}.


\section{\texorpdfstring{$p$}{p}-DG symmetric functions} \label{section-symmetric-functions}

%
\subsection{Symmetric polynomials}
%

Let $\Bbbk[x]$ be the polynomial ring in one variable, graded with $\deg(x)=2$. If we set $\dif(x) = x^2$, the Leibniz rule implies that $\dif(x^k)
= kx^{k+1}$; this is a valid differential, giving $\Bbbk[x]$ the structure of a $p$-DG algebra. The ideal generated by $x$ is contractible, so that
$\Bbbk[x]$ is quasi-isomorphic to $\Bbbk$. Now let $\Bbbk[x_1,x_2,\ldots,x_n]$ be the $n$-fold tensor product, so that $\dif(x_i) = x_i^2$. Again,
this is a $p$-DG algebra which is quasi-isomorphic to $\Bbbk$. Inside this ring we have $\sym_n$, the symmetric polynomials, which is preserved by
$\dif$ since $\dif$ commutes with the action of $S_n$. It is well-known that $\sym_n$ is isomorphic to a graded polynomial algebra either on the
elementary symmetric polynomials $\{e_1, \dots , e_n\}$, or on the complete symmetric polynomials $\{h_1, \dots ,h_n\}$, where
$\mathrm{deg}(e_k)=\mathrm{deg}(h_k)=2k$.

\begin{lemma}\label{lemma-dif-on-sym-poly} The differential $\dif$ acts on the polynomial generators of $\sym_n$ as follows.
\[
\dif(e_k) = e_1 e_k - (k+1) e_{k+1} \  \textrm{ for $1 \le k \le n-1$}, \ \ \ \dif(e_n) = e_1 e_n,
\]
while for all $k$,
\[
\dif(h_k) = (k+1) h_{k+1} - h_1 h_k.
\]
\end{lemma}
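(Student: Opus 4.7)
My plan is to prove the formula for $\dif(e_k)$ by a direct combinatorial expansion, and the formula for $\dif(h_k)$ by a generating-function manipulation.

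For the elementary symmetric polynomials, I expand $e_k = \sum_{i_1 < \cdots < i_k} x_{i_1}\cdots x_{i_k}$ and apply the Leibniz rule together with $\dif(x_i) = x_i^2$, obtaining
$$\dif(e_k) = \sum_{i_1 < \cdots < i_k} \sum_{j=1}^{k} x_{i_1}\cdots x_{i_j}^2 \cdots x_{i_k}.$$
That is, $\dif(e_k)$ is the sum of all degree-$(k+1)$ monomials in which one variable appears with multiplicity $2$ and the remaining $k-1$ with multiplicity $1$. Now I expand $e_1 e_k$ and sort each term $x_a \cdot x_{i_1}\cdots x_{i_k}$ by whether $a \in \{i_1,\dots,i_k\}$ or $a$ is a new index. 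The former contribution recovers exactly the sum above, i.e., $\dif(e_k)$; the latter is a sum of squarefree monomials of degree $k+1$, in which each monomial occurs $k+1$ times (once for each choice of which of its variables plays the role of $a$), and so equals $(k+1)e_{k+1}$. Rearranging yields $\dif(e_k) = e_1 e_k - (k+1) e_{k+1}$ for $1 \le k \le n-1$. For $k = n$ the "new index" piece is empty, and the formula collapses to $\dif(e_n) = e_1 e_n$; equivalently, use the convention $e_{n+1} = 0$.

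For the complete symmetric polynomials, I work in $\sym_n[[t]]$ with the generating series $H(t) := \prod_i (1 - x_i t)^{-1} = \sum_{k \ge 0} h_k t^k$, where $t$ is taken to satisfy $\dif(t) = 0$. The Leibniz rule applied to the product gives
$$\dif H(t) = \sum_i \frac{x_i^2\, t}{(1 - x_i t)^2} \prod_{j \neq i}(1 - x_j t)^{-1} = \Bigl(\sum_i \frac{x_i^2\, t}{1 - x_i t}\Bigr) H(t),$$
while ordinary logarithmic differentiation with respect to $t$ yields $H'(t) = H(t) \sum_i x_i/(1 - x_i t)$. Subtracting $h_1 H(t) = H(t)\sum_i x_i$ and combining fractions,
$$H'(t) - h_1 H(t) = H(t) \sum_i \Bigl(\frac{x_i}{1-x_i t} - x_i\Bigr) = H(t) \sum_i \frac{x_i^2\, t}{1 - x_i t} = \dif H(t).$$
Since $H'(t) = \sum_{k \ge 0} (k+1) h_{k+1} t^k$, extracting the coefficient of $t^k$ gives $\dif(h_k) = (k+1) h_{k+1} - h_1 h_k$ for all $k \ge 0$.

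There is no serious obstacle in either argument; both identities reduce to routine manipulations once the combinatorial or analytic setup is fixed. The only mild care needed is the boundary case $e_n$, which is handled uniformly by setting $e_{n+1} = 0$, and ensuring the generating-function computation is interpreted in $\sym_n[[t]]$ with $\dif$ acting only on the $x_i$'s. As a sanity check, one could alternatively deduce the $h_k$ formula from the $e_k$ formula by induction using the Newton-type identity $\sum_{j=0}^k (-1)^j e_j h_{k-j} = 0$: applying $\dif$ to this relation and solving for $\dif(h_k)$ gives an independent derivation of the same result.
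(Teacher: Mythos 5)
Your proof is correct. For the $h_k$ formula, you and the paper take essentially the same route: the paper proves the analogous statement for symmetric functions (Lemma~\ref{lemma-dif-on-sym-function}) by applying $\dif$ to the generating series $\prod(1-x_it)^{-1}$ and matching against $\frac{d}{dt}$, then points out that setting $x_i=0$ for $i>n$ recovers the polynomial case; your computation is the same, just organized a bit more cleanly by subtracting $h_1 H(t)$ and combining fractions at the outset rather than splitting the sum into two pieces.

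Where you genuinely diverge is in the $e_k$ formula. The paper declares it an ``exercise'' and indicates that the intended argument mirrors the $h_k$ generating-function computation (using $\prod(1+x_it) = \sum e_k t^k$ in place of $\prod(1-x_it)^{-1}$). You instead give a direct combinatorial proof: expand $\dif(e_k)$ via Leibniz into the sum of all monomials with a single squared variable, then recognize this inside $e_1 e_k$ by sorting the auxiliary index according to whether it collides with the chosen $k$-subset, with the squarefree leftover counted $(k+1)$ times to give $(k+1)e_{k+1}$. Both are sound; your combinatorial argument is more elementary and makes the boundary behavior $e_{n+1}=0$ completely transparent, while the paper's generating-function route is more uniform across both families and passes to the inverse limit (Lemma~\ref{lemma-dif-on-sym-function}) without modification. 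Your closing remark that one could also derive the $h_k$ formula from the $e_k$ formula via the Newton-type relation $\sum_j (-1)^j e_j h_{k-j}=0$ is a nice independent consistency check, though it would require a small induction to make into a standalone proof.
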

\begin{proof}
Exercise. Alternatively, we prove an analogous statement for symmetric functions in Lemma \ref{lemma-dif-on-sym-function} below. Setting $x_i=0$ for $i>n$ in that proof will yield this result.
\end{proof}

\begin{lemma}\label{lemma-small-sym-qis-to-k}
When $n < p$, $\sym_n$ is quasi-isomorphic to $\Bbbk$.
\end{lemma}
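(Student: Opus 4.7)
The plan is to realize $\sym_n$ as a $p$-DG direct summand of the polynomial ring $\Bbbk[x_1,\dots,x_n]$ (equipped with $\dif(x_i)=x_i^2$), and then transfer the fact that this larger algebra is quasi-isomorphic to $\Bbbk$ down to $\sym_n$.

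First I would verify that $\Bbbk[x_1,\dots,x_n]\simeq \Bbbk$. For a single variable, $\Bbbk[x]$ decomposes as a $p$-complex into $\Bbbk\cdot 1$ together with, for each $k\geq 0$, the length-$p$ subcomplex spanned by $\{x^{kp+1},\dots,x^{(k+1)p}\}$. Each such subcomplex is isomorphic to the contractible $p$-complex $H=\Bbbk[\dif]/(\dif^p)$, because
\[
\dif^{p-1}(x^{kp+1})=\prod_{j=1}^{p-1}(kp+j)\cdot x^{(k+1)p}=(p-1)!\,x^{(k+1)p}
\]
is nonzero in $\Bbbk$ by Wilson's theorem. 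Since tensoring any $p$-complex with a contractible one yields a contractible $p$-complex, the $n$-fold tensor $\Bbbk[x_1,\dots,x_n]\cong\bigotimes_{i=1}^{n}\Bbbk[x_i]$ splits as $\Bbbk$ plus a contractible summand, so it is quasi-isomorphic to $\Bbbk$.

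Next, under the hypothesis $n<p$ the order $n!$ of $S_n$ is invertible in $\Bbbk$, so the Reynolds averaging operator
\[
R\co\Bbbk[x_1,\dots,x_n]\lra \sym_n,\qquad R(f)=\frac{1}{n!}\sum_{\sigma\in S_n}\sigma(f)
\]
is well-defined. Because $\dif$ is manifestly $S_n$-equivariant, $R$ commutes with $\dif$; combined with the identity $R\circ\iota=\Id_{\sym_n}$ for the inclusion $\iota$, this exhibits a direct-summand decomposition $\Bbbk[x_1,\dots,x_n]\cong \sym_n\oplus \ker(R)$ in the category of $p$-DG modules, and hence in the underlying category of $p$-complexes.

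Passing to the homotopy category $\mc{C}(\Bbbk)$, additivity on direct sums gives $[\sym_n]\oplus[\ker R]=[\Bbbk[x_1,\dots,x_n]]=[\Bbbk]$. Since $[\Bbbk]$ is indecomposable in $\mc{C}(\Bbbk)$, one of these summands is $[\Bbbk]$ and the other must vanish; the class of $1\in \sym_n$ is nonzero (it is $\dif$-closed, lies in degree $0$, and $\sym_n$ contains no elements of sufficiently negative degree to hit it via any $\dif^j$), forcing $[\sym_n]=[\Bbbk]$, i.e., $\sym_n\simeq \Bbbk$. The main technical inputs, both standard, are that the $p$-DG splitting restricts to a splitting of the underlying $p$-complex (immediate, since $\iota$ and $R$ commute with $\dif$) and that the tensor product of a $p$-complex with a contractible one is contractible.
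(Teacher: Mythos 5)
Your proof is correct and uses the same key idea as the paper: because $n<p$, the Reynolds averaging idempotent is defined, commutes with $\dif$, and exhibits $\sym_n$ as a $p$-DG direct summand of $\Bbbk[x_1,\dots,x_n]\simeq\Bbbk$. The only cosmetic difference is at the very end: the paper observes directly that the augmentation ideal $\sym_n^\prime$ is a summand of the contractible $p$-complex $\Bbbk[x_1,\dots,x_n]^\prime$, hence itself contractible, whereas you invoke indecomposability of $\Bbbk$ in $\mc{C}(\Bbbk)$ together with the nonvanishing of $[1]$ — both finish the argument correctly, and yours also helpfully spells out the one-variable contractibility via Wilson's theorem, which the paper leaves implicit.
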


\begin{proof}
There is clearly an injection sending $\Bbbk \to \Bbbk \cdot 1$, with cokernel the augmentation ideal $\sym_n^\prime$. When $n < p$, the representations of $S_n$ over $\Bbbk$ are actually semisimple. Therefore $\sym_n$ is not just a submodule of $\Bbbk[x_1,x_2,\ldots,x_n]$ but actually a summand, under an idempotent which commutes with $\dif$. In particular, the positive degree symmetric polynomials $\sym_n^\prime$ are a summand of the contractible $p$-complex $\Bbbk[x_1,\ldots,x_n]^\prime$, and are therefore contractible.
\end{proof}

This lemma does rely on the fact that $n<p$. When $n \ge p$, $\sym_n^\prime$ is still a submodule of a contractible $p$-complex, but that does not mean it is contractible itself.

\begin{eg}
Suppose that $n=p$. Then $\sym_p$ is not quasi-isomorphic to $\Bbbk$, but is actually quasi-isomorphic to $\Bbbk[e_p^p]$ with the trivial
differential. We will check this soon. Note for now that $\Bbbk[e_1,e_2,\ldots,e_{p-1}]$ is a $p$-DG submodule of $\sym_p$ since $\dif(e_{p-1}) = e_1 e_{p-1}- pe_p = e_1 e_{p-1}$.
\end{eg}

Now we want to understand various $p$-DG module structures on the rank-one free module over $\sym_n$. For an integer $a \in \{0, 1, \dots , p-1\}$ or its residue in $\F_p$, we use the notation $\sym_n(a):=(\sym_n\cdot 1_a, \dif_a)$ to represent the $p$-DG module which, as a $\sym_n$-module, is freely generated by $1_a$, and where $\dif_a(1_a)=ae_11_a$. This is the general form of such a $p$-DG module. After all, every degree two element of $\sym_n$ is equal to $a e_1$ for some $a \in \Bbbk$, and $\dif_a^p=0$ if and only if $a \in \F_p$ (a simple computation). Note that, up to a grading shift, $\sym_n(a)$ can be realized within $\sym_n(0)$ as the ideal generated by $e_n^a$.

\begin{lemma}\label{lemma-small-rank-one-ideal-contractible}
Suppose that $1 \le a \le n < p$. Then the ideal generated by $e_n^a$ inside $\sym_n$ is acyclic.
\end{lemma}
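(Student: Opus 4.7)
The plan is to induct on $a$, proving the statement for all $n$ in the allowable range $a \le n < p$ simultaneously. The key tool is the short exact sequence of $p$-DG $\sym_n$-modules
\[
0 \longrightarrow (e_n^a) \longrightarrow (e_n^{a-1}) \longrightarrow (e_n^{a-1})/(e_n^a) \longrightarrow 0.
\]
I first want to identify the rightmost term. As an $\sym_n$-module it is cyclic on the class of $e_n^{a-1}$, with $e_n$ acting as zero, so the action factors through $\sym_n/(e_n)$. By Lemma~\ref{lemma-dif-on-sym-poly}, modulo $(e_n)$ the formula $\dif(e_{n-1}) = e_1 e_{n-1} - n e_n$ collapses to $\dif(e_{n-1}) = e_1 e_{n-1}$, which is precisely the formula for the top generator in $\sym_{n-1}$; thus $\sym_n/(e_n) \cong \sym_{n-1}$ as $p$-DG algebras. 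Since $\dif(e_n^{a-1}) = (a-1) e_1 e_n^{a-1}$ matches $\dif(e_{n-1}^{a-1}) = (a-1) e_1 e_{n-1}^{a-1}$ on the generator of $(e_{n-1}^{a-1}) \subset \sym_{n-1}$, comparing the degrees of the two generators yields an isomorphism of $p$-DG $\sym_{n-1}$-modules, hence of underlying $p$-complexes,
\[
(e_n^{a-1})/(e_n^a) \;\cong\; (e_{n-1}^{a-1})\{2(a-1)\}.
\]

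For the base case $a = 1$, the sequence reads $0 \to (e_n) \to \sym_n \to \sym_{n-1} \to 0$. Since $n - 1 < n < p$, Lemma~\ref{lemma-small-sym-qis-to-k} produces quasi-isomorphisms from each of $\sym_n$ and $\sym_{n-1}$ to $\Bbbk$ via augmentation, so the projection $\sym_n \to \sym_{n-1}$ is itself a quasi-isomorphism by two-out-of-three. Viewing the short exact sequence as a distinguished triangle in the homotopy category of $p$-complexes, a quasi-isomorphism in a triangle forces the third vertex to vanish, so $(e_n)$ is acyclic. Since the only acyclic indecomposable $p$-complex is $V_p$, any acyclic $p$-complex decomposes into copies of $V_p$ and is therefore contractible.

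For the inductive step $a \ge 2$, apply the inductive hypothesis to $(n, a-1)$ (legal because $1 \le a-1 \le n$) to conclude $(e_n^{a-1})$ is contractible, and to $(n-1, a-1)$ (legal because $1 \le a-1 \le n-1 < p$) to conclude $(e_{n-1}^{a-1})$ is contractible. Both the middle and right terms of the short exact sequence then vanish in the homotopy category of $p$-complexes, forcing the leftmost term $(e_n^a)$ to vanish there as well.

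The only technical point requiring care is the identification of $(e_n^{a-1})/(e_n^a)$ with a shifted copy of $(e_{n-1}^{a-1})$, which is a direct unwinding of Lemma~\ref{lemma-dif-on-sym-poly}; I anticipate no substantive obstacle.
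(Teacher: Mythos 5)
Your proof is correct and follows essentially the same path as the paper's: the same two short exact sequences $0 \to (e_n) \to \sym_n \to \sym_{n-1} \to 0$ and $0 \to (e_n^a) \to (e_n^{a-1}) \to (e_n^{a-1})/(e_n^a) \to 0$, with the quotient in the second identified (up to grading shift) with what the paper calls $\sym_{n-1}(a-1)$. The only cosmetic difference is that you induct on $a$ over all admissible $n$ at once whereas the paper phrases it as a double induction on $n$ and then $a$; the underlying recursion invokes the $(n,a-1)$ and $(n-1,a-1)$ cases in both treatments.
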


\begin{cor}\label{cor-small-rank-one-mod-contractible}
For $1 \le a \le n < p$, the $p$-DG module $\sym_n(a)$ is acyclic. \hfill$\square$
\end{cor}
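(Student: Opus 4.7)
The plan is simply to identify $\sym_n(a)$ with the ideal generated by $e_n^a$ inside $\sym_n(0)=\sym_n$, up to an overall grading shift, and then invoke the preceding lemma.

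First I would check that the submodule $\sym_n \cdot e_n^a \subset \sym_n$ carries the $p$-DG structure of $\sym_n(a)$ (shifted in degree). Using Lemma \ref{lemma-dif-on-sym-poly}, we have $\dif(e_n)=e_1 e_n$, and hence by the Leibniz rule
\[
\dif(e_n^a)=a\, e_n^{a-1}\dif(e_n)=a\, e_1\, e_n^a.
\]
Thus the $\sym_n$-linear map $\sym_n(a)\{2na\}\lra \sym_n$ sending $1_a\mapsto e_n^a$ is an injective homomorphism of $p$-DG modules onto the ideal $(e_n^a)$.

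Next, by Lemma \ref{lemma-small-rank-one-ideal-contractible}, this ideal is contractible in $\sym_n\dfmod$ (viewed as a $p$-DG module), i.e.\ its identity morphism is homotopic to zero. Since contractibility is preserved under grading shifts and under isomorphism of $p$-DG modules, $\sym_n(a)$ is itself contractible. The main (and only nontrivial) step is the previous lemma; here we are just transporting its conclusion across the isomorphism $\sym_n(a)\{2na\}\cong (e_n^a)$.
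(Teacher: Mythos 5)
Your proof is correct and matches the paper's intended argument exactly: the paper explicitly remarks (just before Lemma \ref{lemma-small-rank-one-ideal-contractible}) that up to a grading shift $\sym_n(a)$ is the ideal $(e_n^a)$ inside $\sym_n$, and the $\hfill\square$ after the corollary signals that it is an immediate consequence of the lemma via this identification. Your explicit check that $\dif(e_n^a)=a\,e_1 e_n^a$ intertwines the differentials is precisely the computation that makes the identification valid, and the (inessential) sign convention on the grading shift does not affect contractibility.
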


\begin{proof} We show this by induction on $n$, and within each fixed $n$, by induction on $a$. For $n=a=1$, the result is already known.

Suppose that $a=1$. There is a short exact sequence of $p$-DG modules
\[
 0 \to (e_n) \to \sym_n \to \sym_{n-1} \to 0.
\]

The first map is the inclusion of the ideal, while the second is a quotient. The quotient is a quasi-isomorphism between two $p$-DG modules which are
quasi-isomorphic to $\Bbbk$. Therefore the ideal is contractible.

Now suppose that $1 < a \le n$. There is a short exact sequence
\[
0 \to (e_n^a) \to (e_n^{a-1}) \to \sym_{n-1}(a-1) \to 0.
\]
The first map is an inclusion of ideals. Here, $\sym_{n-1}(a-1)$ is realized as the classes modulo $(e_n^a)$ of the polynomials $f e_n^{a-1}$ for $f \in \Bbbk[e_1,e_2,\ldots,e_{n-1}] \subset \sym_n$. Now the induction hypothesis implies that the center and right terms are acyclic, and therefore so is the left term.
\end{proof}

The important corollary of this is the special case when $n=p-1$.

\begin{cor}\label{cor-sym-p-1-all-ideal-contractible}
For any $a \in \F_p\backslash \{0\}$, $\sym_{p-1}(a)$ is a contractible $p$-complex. Moreover, $\sym_{p-1}(0)$ is quasi-isomorphic to $\Bbbk$. \hfill$\square$
\end{cor}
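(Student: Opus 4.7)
The corollary is essentially an immediate specialization of the two preceding results to the case $n = p-1$, so the plan is to simply observe the two applications in turn.

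For the first assertion, I would note that the nonzero elements of $\F_p$ correspond to integers $a$ with $1 \le a \le p-1$. Setting $n = p-1$, every such $a$ satisfies $1 \le a \le n < p$, so the hypotheses of Corollary \ref{cor-small-rank-one-mod-contractible} are met verbatim. That corollary then gives contractibility of $\sym_{p-1}(a)$. (One should make clear that the $p$-DG module $\sym_{p-1}(a)$ only depends on the residue class of $a$ modulo $p$, since $\dif_a(1_a) = ae_1 1_a$ and the Leibniz rule forces $a$ to enter only through its value in $\F_p$; thus restricting to representatives $1,\ldots,p-1$ covers all of $\F_p\setminus\{0\}$.)

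For the second assertion, observe that $\sym_{p-1}(0)$ is by definition the algebra $\sym_{p-1}$ equipped with its intrinsic $p$-DG structure from Lemma \ref{lemma-dif-on-sym-poly}. Since $p-1 < p$, Lemma \ref{lemma-small-sym-qis-to-k} applies directly and yields that $\sym_{p-1}(0)$ is quasi-isomorphic to $\Bbbk$.

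Since both pieces reduce to previously established facts, there is no real obstacle here; the only thing worth emphasizing is the small bookkeeping remark that the differential $\dif_a$ depends only on $a \bmod p$, which is why the statement is phrased in terms of $\F_p$ rather than integers.
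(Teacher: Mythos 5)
Your proof is correct and matches what the paper intends: the paper marks this as an immediate corollary with no written proof, and you have correctly identified it as the specialization $n = p-1$ of Corollary \ref{cor-small-rank-one-mod-contractible} (for $a \ne 0$) and Lemma \ref{lemma-small-sym-qis-to-k} (for $a = 0$). Your bookkeeping remark that $\sym_n(a)$ depends only on $a \bmod p$ is a helpful clarification of why the statement is phrased over $\F_p$.
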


An obvious implication of this corollary is that if $N$ is a direct summand of $\sym_{p-1}(a)$ as a $p$-complex, then $N$ is contractible unless $a=0$ and
$N$ contains $1_a$, in which case $N$ is quasi-isomorphic to $\Bbbk$.

%
\subsection{Symmetric functions}
%

Symmetric polynomials form a natural graded inverse system, which allows us to take the inverse limit as $n$ goes to infinity to obtain symmetric functions. This is the ring
\begin{equation}\label{eqn-sym-function}
\Lambda =\underleftarrow{\lim} \sym_n \cong \Bbbk[e_1,e_2,\ldots].
\end{equation}
It is equipped with a natural $p$-differential, arising from its formal inclusion inside $\Bbbk[x_1,x_2, \dots]$, and compatible with any truncation $\Lambda \twoheadrightarrow \sym_n$. The infinite-variable polynomial algebra is not finite-dimensional in each degree so it is not a positive $p$-DG algebra in the sense of Definition \ref{def-postive-pdg-algebra}, but the subalgebra $(\Lambda,\dif)$ is a (strongly) positive $p$-DG algebra.

\begin{lemma} \label{lemma-dif-on-sym-function} The differential acts on the elementary and complete symmetric functions as follows.
\[
\dif(e_k) = e_1 e_k - (k+1) e_{k+1}.\qquad  \dif(h_k) = (k+1) h_{k+1} - h_1 h_k.
\]
\end{lemma}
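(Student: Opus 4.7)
The plan is to prove both identities by direct computation, using only the definitions of $e_k$ and $h_k$ as symmetric-monomial sums together with the rule $\dif(x_i) = x_i^2$ and the Leibniz rule. Since the differential on $\Lambda$ is defined as the inverse limit of the compatible differentials on each $\sym_n$, I can work formally with the infinite variable set; each identity in question has bounded degree, so it descends to a statement inside some $\sym_n$ with $n$ sufficiently large, where the computation is unambiguous.

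For the identity involving $e_k$, I would expand
\[
\dif(e_k) \;=\; \sum_{i_1 < \cdots < i_k}\ \sum_{j=1}^{k} x_{i_1}\cdots x_{i_j}^2 \cdots x_{i_k}
\]
by Leibniz, and separately expand $e_1 e_k = (\sum_l x_l) \cdot e_k$. Splitting the latter according to whether $l$ coincides with some $i_a$ or not, the case $l\notin\{i_1,\dots,i_k\}$ produces a sum over strictly ordered $(k+1)$-tuples in which each ordered monomial appears exactly $k+1$ times (once for each choice of which factor plays the role of $l$), accounting for $(k+1)e_{k+1}$, while the case $l=i_a$ reproduces $\dif(e_k)$ term-for-term. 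Rearranging yields the claimed formula for $\dif(e_k)$.

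For the identity involving $h_k$, I would use generating-function calculus. Write $H(t) = \prod_i (1-x_i t)^{-1}$; Leibniz applied factor-by-factor gives
\[
\dif H(t) \;=\; H(t)\cdot \sum_i \frac{x_i^2 t}{1-x_i t}.
\]
The partial-fraction identity $\frac{x_i^2 t}{1 - x_i t} = -x_i + \frac{x_i}{1-x_i t}$ sums across $i$ to $-h_1 + H'(t)/H(t)$, where $H'(t)$ denotes the formal $t$-derivative; hence $\dif H(t) = -h_1 H(t) + H'(t)$. Extracting the coefficient of $t^k$ then produces the desired identity for $\dif(h_k)$. A purely algebraic alternative applies $\dif$ to the Newton-type relation $\sum_{j=0}^{k}(-1)^j e_j h_{k-j} = 0$, substitutes the already-established formula for $\dif(e_j)$, and inducts on $k$; it is worth running this second route as a sanity check to confirm that the coefficient pattern matches exactly the one asserted in the statement.

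The only real obstacle is bookkeeping in the first part: one must be scrupulous in distinguishing the strictly ordered monomial sum defining $e_{k+1}$ from the ordered-factor expansion of $e_1 e_k$, and count multiplicities correctly. The generating-function route for $h_k$ avoids such subtleties at the cost of working with formal power series in $t$ with coefficients in $\Lambda$, which is legitimate because every extracted coefficient is a bona fide element of bounded degree inside $\Lambda$.
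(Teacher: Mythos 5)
Your proof is correct, and for the $h_k$ identity it is essentially the paper's own argument: apply $\dif$ factor-by-factor to $H(t)=\prod_i(1-x_it)^{-1}$, split $\frac{x_i^2t}{1-x_it}$ as $-x_i+\frac{x_i}{1-x_it}$, obtain $\dif H(t)=-h_1H(t)+H'(t)$, and compare coefficients. For the $e_k$ identity the paper merely says ``the first follows by a similar computation'' (i.e.\ via the generating function $\prod_i(1+x_it)$), whereas you give a direct combinatorial expansion of $e_1e_k$ split according to whether the extra index collides with the chosen $k$-subset; your multiplicity count ($k+1$ occurrences of each squarefree degree-$(k+1)$ monomial) is right, so this is a perfectly good, slightly more elementary substitute. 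One remark worth making explicit: your coefficient extraction yields $\dif(h_k)=(k+1)h_{k+1}-h_1h_k$, which shows the formula as printed in the lemma statement (with $(k+1)h_k$) contains a typo; the version with $h_{k+1}$ is the correct one and matches Lemma \ref{lemma-dif-on-sym-poly}.
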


\begin{proof} This can be shown most easily with generating functions. We prove the second formula. The first follows by
a similar computation.

Consider the following generating function over $\Z[t]$ for the
elementary symmetric functions (we set $h_0:=1$),
$$\prod_{i=1}^{\infty}(1-x_it)^{-1}=\sum_{m=0}^{\infty}h_mt^m, $$
and let $\dif$ act on it as a $\Z[t]$-linear derivation determined by
$\dif(x_i)=x_i^2$. Applying $\dif$ to both sides
gives us
$$\begin{array}{rcl}
\sum_{m=0}^{\infty}\dif(h_m)t^m & = & \sum_{i=1}^{\infty}
\left(\dfrac{x_i^2t}{(1-x_it)^2}\cdot\prod_{j\neq i}(1-x_jt)^{-1}\right) \vspace{0.05in}\\
& = & \sum_{i=1}^{\infty}\left(\dfrac{-x_i+x_i^2t}{(1-x_it)^2}\cdot\prod_{j\neq i}(1-x_jt)^{-1}\right)\vspace{0.05in}\\
& & +\sum_{i=1}^{\infty}\left(\dfrac{x_i}{(1-x_it)^2}\cdot\prod_{j\neq i}(1-x_jt)^{-1}\right)\\
& = & \sum_{i=1}^{\infty}(-x_i) \prod_{j=1}^{\infty}(1-x_jt)^{-1} +
\frac{\dif}{\dif t}\prod_{j=1}^m(1-x_jt)^{-1}\vspace{0.05in}\\
& = &(-h_1)\cdot \prod_{j=1}^{\infty}(1-x_jt)^{-1})+\frac{\dif}{\dif
t}\prod_{j=1}^{\infty}(1-x_jt)^{-1})\\
& = &(-h_1) \cdot \sum_{m=0}^{\infty}h_mt^m
+\sum_{m=-1}^{n-1}(m+1)h_{m+1}t^{m}.
\end{array}
$$
Comparing coefficients of $t$ on both sides gives the claimed
formula. \end{proof}

Now we state the main result of this section. Again, let us denote by $\Lambda(a):=\Lambda\cdot 1_a$ ($a \in \F_p$) the rank-one $p$-DG module over $\Lambda$ with the differential defined by $\dif_a(f1_a)=\dif(f)1_a+ afe_11_a$. When no confusion can be caused, we will drop $1_a$ from the expression.

\begin{prop}\label{prop-cohomology-of-sym-function}
\begin{itemize}
\item[(i)] The inclusion of the $p$-DG subalgebra $\Bbbk[e_p^p,e_{2p}^p,e_{3p}^p,\ldots]$ equipped with the zero differential into $\Lambda$ is a quasi-isomorphism of $p$-DG algebras.
\item[(ii)] When $a \in \F_p\backslash\{0\}$, the $p$-DG $\Lambda$-module $\Lambda(a)$ is acyclic.
\end{itemize}
\end{prop}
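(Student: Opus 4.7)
The plan is to leverage the filtration $\Lambda = \bigcup_{r \ge 1} \Lambda_{rp-1}$ by the $p$-DG subalgebras $\Lambda_{rp-1} := \Bbbk[e_1, \ldots, e_{rp-1}]$. These are $\dif$-stable because $\dif(e_{rp-1}) = e_1 e_{rp-1} - rp\,e_{rp} = e_1 e_{rp-1}$ in characteristic $p$, and the defining formulas for $\dif$ match those of $\sym_{rp-1}$, so $\Lambda_{rp-1} \cong \sym_{rp-1}$ as $p$-DG algebras. Since cohomology commutes with this exhausting filtered colimit in $\mc{C}(\Bbbk)$, both parts reduce to an inductive analysis of the finite-variable rings $\sym_{rp-1}$ and $\sym_{rp-1}(a)$.

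For part (i), I first observe that $e_{kp}^p$ is $\dif$-closed, since $\dif(x^p) = p\,x^{p-1}\dif(x) = 0$ in characteristic $p$, so $\iota$ is a well-defined $p$-DG map. I then prove by induction on $r$ that $\Bbbk[e_p^p, \ldots, e_{(r-1)p}^p] \hookrightarrow \sym_{rp-1}$ is a quasi-isomorphism. The base case $r=1$ is Lemma~\ref{lemma-small-sym-qis-to-k}. For the inductive step, view $\sym_{(r+1)p-1} \cong \sym_{rp-1}[e_{rp}, \ldots, e_{(r+1)p-1}]$ as a polynomial algebra on $p$ new generators, and decompose it as a $\sym_{rp-1}$-module along the monomial basis $m = \prod_{k=rp}^{(r+1)p-1} e_k^{a_k}$. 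Leibniz gives $\dif(m) = N e_1 m - \sum_{k=rp}^{(r+1)p-2} a_k(k+1)\, e_{k+1} m/e_k$ where $N = \sum a_k$ (the boundary term at $k = (r+1)p-1$ vanishes since $(r+1)p \equiv 0 \pmod p$). Filtering the $N$-layer by the decreasing weight $w(m) := \sum_k k\,a_k$ makes the off-diagonal shift terms strictly raise $w$, so the associated graded differential is purely diagonal and identifies each monomial summand with a shifted $\sym_{rp-1}(N \bmod p)$. By the inductive hypothesis all $N \not\equiv 0 \pmod p$ layers are acyclic; the surviving layers assemble, after accounting for higher-page differentials, into $\Bbbk[e_p^p, \ldots, e_{(r-1)p}^p] \otimes \Bbbk[e_{rp}^p]$, advancing the induction. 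This mirrors the direct decomposition of $\sym_p$ over $\sym_{p-1}$ given in the Example.

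For part (ii), the same strategy applies to $\sym_{rp-1}(a)$ with the base case $r=1$ supplied by Corollary~\ref{cor-sym-p-1-all-ideal-contractible}. In the inductive step the twist on a monomial of degree $N$ in new variables becomes $(N+a) \bmod p$: layers with $N+a \not\equiv 0 \pmod p$ are acyclic at $E_1$, while layers with $N \equiv -a$ contribute transient cohomology. The crucial observation is that every such finite-stage class becomes a coboundary at a later level of the filtration, because the newly adjoined variables $e_{(r+1)p}, \ldots, e_{(r+2)p-1}$ provide explicit preimages. Since $a \ne 0$, no class persists as $r \to \infty$, so in the colimit $\Lambda(a) = \bigcup_r \sym_{rp-1}(a)$ every cohomology class is killed, establishing $\Lambda(a) \simeq 0$ as a $p$-complex.

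The main obstacle is the inductive step: one must rigorously control the higher-page differentials of the spectral sequence associated to the $w$-filtration, which encode the interplay between the off-diagonal shifts and the diagonal twist, and verify that they act precisely as expected. This is where the bulk of the work lies. A subsidiary point is the compatibility of cohomology with the filtered colimit indexed by $r$, which is standard in the $p$-DG setting for an exhausting filtration by compact subcomplexes.
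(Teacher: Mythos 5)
Your strategy — induct over the truncations $\sym_{rp-1}$ and pass to the colimit — is genuinely different from the paper's, but as sketched it has a real gap that your final paragraph acknowledges without resolving. Here is why the gap is serious, and why the paper's choice of $\sym_{p-1}$ (rather than a growing base ring) avoids it.

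The crux is the $N$-layer with $N+a \equiv 0 \pmod p$, where your associated-graded pieces are shifts of $\sym_{rp-1}(0)$. By your own inductive hypothesis this module is \emph{not} contractible; it carries the cohomology $\Bbbk[e_p^p,\ldots,e_{(r-1)p}^p]$. So your $E_1$ page is genuinely nonzero, and you must show it dies at higher pages. But this is exactly the part of the argument you leave unproved, and there is no straightforward spectral-sequence machine for $p$-complexes to invoke: short exact sequences of $p$-DG modules do not induce long exact sequences in slash cohomology, and a filtration with acyclic associated graded need not have acyclic total object unless you can produce actual contracting homotopies. The paper avoids spectral sequences entirely for precisely this reason. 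In Step~II the authors use a filtration of each $X_{\od}$ whose subquotients all have the \emph{same} twist $a+D$ and are therefore simultaneously contractible; in Step~III (when $a+D\equiv 0$) they do not filter further, but instead produce an honest $p$-complex \emph{splitting} $X_{\od} = N_{\od}\oplus \sym_{p-1}'\cdot M_{\od}$. The second summand is contractible because $\sym_{p-1}'$ is. Your analogue would have $\sym_{rp-1}'$ in that slot, which is \emph{not} contractible for $r \ge 2$, so the split-off trick fails with your base ring. You would be left to analyze a genuine extension problem, which is what the "higher-page differentials" rhetoric is sweeping under the rug.

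Your description of part (ii) is also factually off. You say that $\sym_{rp-1}(a)$ for $a\ne 0$ has "transient cohomology" that only becomes a coboundary once $e_{(r+1)p},\ldots,e_{(r+2)p-1}$ are adjoined. In fact each $\sym_{rp-1}(a)$ with $a\in\F_p\setminus\{0\}$ is already contractible at finite level: the paper's block decomposition applies verbatim to $\sym_{rp-1}$ with the finitely many blocks $d_1,\ldots,d_{r-1}$, and when $a\ne 0$ every $X_{\od}$ is contractible because either $a+D\not\equiv 0$ (Step~II) or some $d_i\not\equiv 0 \pmod p$ makes $N_{d_i}$, hence $N_{\od}=\bigotimes_i N_{d_i}$, contractible. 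There is no cohomology to kill in the colimit. This suggests the monomial/weight bookkeeping in your inductive step is not tracking the right structure.

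Finally, the key structural idea that makes the paper's proof tractable is the factorization $N_{\od}\cong \bigotimes_{i} N_{d_i}$: because the shift differential $\dif'$ never mixes blocks, the reduced $p$-complex splits as a tensor product of identical, block-sized pieces $N_d$, and the final Lemma disposes of $N_d$ by embedding it back into $\sym_{p-1}(-d)$ and applying Corollary~\ref{cor-sym-p-1-all-ideal-contractible} once more. Your induction over $r$ adds one block at a time and buries the others inside the base ring, so you lose this factorization and have to confront the cohomology of $\sym_{rp-1}(0)$ head on. If you want to pursue the inductive route, you would need to (a) explicitly decompose $\sym_{rp-1}(0)$ into its minimal model $\Bbbk[e_p^p,\ldots,e_{(r-1)p}^p]$ plus a contractible $p$-complex \emph{as a direct sum}, and (b) show that the remaining differential on the minimal-model summand times the new-variable monomials is contractible when $a\ne 0$ — which is essentially reproving the paper's Lemma on $N_d$, but with a less transparent bookkeeping.
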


\begin{proof} Note that there is an obvious inclusion of $p$-DG algebras from $\Bbbk[e_p^p,e_{2p}^p,e_{3p}^p,\ldots]$ into $\Lambda$, so it suffices
for part (i) to show that this map is a quasi-isomorphism. What remains, for both parts, is merely a question about the underlying $p$-complexes.

The subalgebra $\sym_{p-1} \subset \Lambda$ is $\dif$-stable by Lemma \ref{lemma-dif-on-sym-function}, so we can regard $\Lambda(a)$ as a $p$-DG
module over $\sym_{p-1}$ via restriction. We will construct a filtration on $\Lambda(a)$ whose subquotients are isomorphic to $\sym_{p-1}(a)$ for
various values of $a \in \F_p$. Then the result will follow from Corollary \ref{cor-sym-p-1-all-ideal-contractible}.

\vspace{0.1in}

\emph{Step I.} We begin by organizing the elementary symmetric functions into \emph{blocks}.

Consider an arbitrary monomial $f \in \Lambda(a)$ in terms of $e_i$, and write it as the finite product $$f = g f_1 f_2 \ldots,$$ where $g$ is the
product of all $e_i$ for $1 \le i \le p-1$, $f_1$ is the product of all $e_i$ for $p \le i \le 2p-1$, and in general $f_k$ is the product of all
$e_i$ for $kp \le i < (k+1)p$. We think of this as grouping the monomial $f$ into \emph{blocks} $f_i\in \Bbbk[e_{ip},\dots , e_{(i+1)p-1}]$. The
``zero-th block" $g$ is not thought of as a block, but as a coefficient in $\sym_{p-1}$. For $i \ge 1$ let $d_i$ be the sum of all the exponents in
the block $f_i$. For instance, if $$f = e_1^5 e_2 e_p^3 e_{p+1} e_{2p+1}^9=(e_1^5 e_2)\cdot (e_p^3 e_{p+1})\cdot (e_{2p+1}^9),$$ then $d_1=4$ and
$d_2=9$. We use the term ``exponent" instead of ``degree:" $\Lambda$ is already graded with $\deg(e_i)=2i$, but we are interested in the exponent,
to which each $e_i$ contributes equally. Let $D = \sum_{i \ge 1} d_k$.

Block notation helps us to compute the action of the differential. Let $f_k$ be a monomial in the $k$-th block, with exponent $d_k$. It is not hard to see that $\dif(f_k) = d_k e_1 f_k + f'_k$ for some $f'_k$ in the same block with the same exponent. When $d_k=1$, this follows from the formula $\dif(e_m) = e_1 e_m - (m+1) e_{m+1}$, and the general case is straightforward. For this proof, we will write $\dif '$ for the linear map defined by $\dif '(e_m) = -(m+1)e_{m+1}$ and satisfying the Leibniz rule. A formula for $\dif'$ is given in \eqref{difonNd} below. Clearly $f'_k = \dif '(f_k)$.

Now let $f = g f_1 f_2 \ldots$ be a monomial as above. Then
\begin{eqnarray*}
\dif_a(f) & =  & (ae_1+\dif(g))\prod_{i}f_i+g\sum_i\dif(f_i)\prod_{j\neq i}f_j \\
	& =  & ((D+a)e_1 + \dif(g))\prod_i f_i + g \sum_i \dif '(f_i) \prod_{j \neq i}f_j.
\end{eqnarray*}

Fix $\od = (d_1,d_2,\ldots)$ a sequence of non-negative integers with $d_k=0$ for all $k >> 0$, and set $D = \sum_k d_k$. Define $M_{\od}$ to be the set of all monomials $f = f_1 f_2 \ldots$ where $f_k$ has exponent $d_k$ (and $f_0 = 1$). Let $X_{\od}$ be the $\sym_{p-1}$-span of $M_{\od}$ , consisting of all polynomials of the form $gf$ for $f \in M_{\od}$ and $g \in \sym_{p-1}$. It is now clear from the above observation that $X_{\od}$ is preserved by $\dif$, and that $\Lambda$ splits as a $p$-DG module over $\sym_{p-1}$ into a direct sum of all $X_{\od}$.

\vspace{0.1in}

\emph{Step II.} We study the structure of each block $X_{\od}$ as a \emph{filtered} $p$-DG $\sym_{p-1}$-module. In the notation of the last paragraph, let $f=\prod_k f_k\in M_{\od}$. Then each $f_k$ can be encoded as a composition of $d_k$ with $p$ rows, based on the exponents of each $e_i$ for $kp \le i < (k+1)p$. Place a partial order on $M_{\od}$ similar to the usual dominance order on compositions, so that the operation which sends $e_i^b e_{i+1}^c \mapsto e_i^{b-1} e_{i+1}^{c+1}$ within a given block will increase the order. If $f \in M_{\od}$ then $\dif '(f)$ is a sum of monomials in $M_{\od}$ which are strictly greater in the partial order. From the previous step, if $g\in \sym_{p-1}$ and $f \in M_{\od}$ we have
\begin{equation}
\label{difonXd}
\dif_a(g f) = g \dif '(f) + (\dif(g) + (a+D) e_1 g) f.
\end{equation}
In particular, the $\sym_{p-1}$ spans of ideals in $M_{\od}$ are $p$-DG submodules.

Each $M_{\od}$ is finite, so that the partial order has a filtration by ideals whose subquotients are singletons $\{f\}$. Clearly this descends to a filtration on $X_{\od}$, whose subquotients are spanned by $g f$ for fixed $f$. This subquotient is isomorphic as a $p$-DG module over $\sym_{p-1}$ to $\sym_{p-1}(a+D)$. Therefore, by Corollary \ref{cor-sym-p-1-all-ideal-contractible}, it is acyclic if $a+D \not\equiv 0$ (mod $p$), and does not contribute to the cohomology. On the other hand, whenever $a+D \equiv 0$ (mod $p$), each subquotient of the above filtration is quasi-isomorphic to the one-dimensional space $\Bbbk f$.

\vspace{0.1in}

\emph{Step III.} We now examine the $p$-DG $\sym_{p-1}$-modules $X_{\od}(a)$ when $a+D\equiv 0$ (mod $p$). Let $N_{\od}$ denote the $\Bbbk$-span of $M_{\od}$. According to equation \eqref{difonXd}, $N_{\od}$ is preserved by $\dif_a$, as is $\sym_{p-1}^\prime \cdot M_{\od}$. This gives a splitting of $X_{\od}(a)$, compatible with the filtration of the previous step. In particular, $\sym_{p-1}^\prime \cdot M_{\od}$ is contractible as a $p$-complex, and $X_{\od}(a)$ is quasi-isomorphic to $N_{\od}$. The differential on $N_{\od}$ is given by $\dif '$, or more explicitly, by
\begin{equation} \label{difonNd}
\dif_{N_{\od}}(\prod e_i^{b_i}) = \sum_i \left(- (i+1) b_i e_{i+1} e_i^{b_i-1} \prod_{j \ne i} e_j^{b_j}\right).
\end{equation}

Thus it remains to examine the $p$-complex $N_{\od}$. Note that each block is acted on independently by the differential. Moreover, shifting the indices in $e_i$ by $p$ will not affect the formula, because the only coefficient which depends on the index is $(i+1)$. Therefore, as a $p$-complex $N_{\od}$ is actually a tensor product $\bigotimes_i N_{d_i}$, where $N_d$ is spanned by monomials $y_0^{b_0} y_1^{b_1} \cdots y_{p-1}^{b_{p-1}}$ with $\sum_i b_i = d$, and with a formula for $\dif_N$ identical to \eqref{difonNd} above.

Let us observe that the differential on $N_{\od}$ is independent of our original parameter $a$. Also, as a tensor product, $N_{\od}$ will be contractible if any $N_{d_i}$ is contractible. Suppose that we can prove that $N_d$ is contractible unless $d = 0$ mod $p$, when it is quasi-isomorphic to $\Bbbk \cdot y_0^d$ with zero differential. If this is the case, then $N_{\od}$ is contractible unless each $d_i=0$ mod $p$, meaning that $D = 0$ (mod $p$) and therefore $a = 0$. This will prove that $\Lambda(a)$ is contractible when $a \ne 0$. Moreover, when $a=0$, $\Lambda(0)$ is quasi-isomorphic to the sum over all $\od = (c_1 p,c_2 p,\ldots)$ of $\Bbbk \cdot e_p^{c_1 p} e_{2p}^{c_2 p} \cdots$, which is exactly the image of $\Bbbk[e_p^p,e_{2p}^p,\ldots]$.

Thus we have reduced the problem to the following lemma. \end{proof}

\begin{lemma} The $p$-complex $N_d$ is contractible unless $d\equiv 0$ (mod $p$). If $d \equiv 0$ (mod $p$) then $N_d$ is quasi-isomorphic to $\Bbbk \cdot y_0^d$ with the trivial differential.
\end{lemma}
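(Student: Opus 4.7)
The plan is to introduce a key auxiliary derivation and then handle the two congruence classes of $d \bmod p$ separately. First I would define a derivation $E$ on $\Bbbk[y_0,\ldots,y_{p-1}]$ by $E(y_0) = 0$ and $E(y_i) = y_{i-1}$ for $1 \le i \le p-1$, extending by the Leibniz rule. A direct check on generators yields $[\dif, E] = \Theta$, where $\Theta$ is the exponent Euler operator scaling each monomial $\prod_i y_i^{b_i}$ by $\sum_i b_i$; since both sides are derivations agreeing on generators, they agree on all of $A$. Consequently $[\dif, E]$ acts as $d \cdot \mathrm{Id}$ on $N_d$, which is exactly why the cases $p \mid d$ and $p \nmid d$ behave so differently.

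For $d \not\equiv 0 \pmod p$, I would set $H := d^{-1} E$ on $N_d$, so that $[\dif, H] = \mathrm{Id}$. An elementary induction gives $[\dif^k, H] = k\,\dif^{k-1}$ for $1 \le k \le p-1$, hence iteratively $\dif^k(H^k v) = k!\, v$ for any $v \in \ker \dif$. Taking $k = p-1$, where $(p-1)!$ is invertible modulo $p$, shows $v \in \mathrm{im}(\dif^{p-1})$. Thus $\ker \dif = \mathrm{im}(\dif^{p-1})$, which by the Jordan-block classification of finite-dimensional $p$-complexes forces $N_d$ to consist of size-$p$ Jordan blocks only, i.e.\ to be contractible.

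For $d = mp$, I would argue by induction on $m$; the base $m = 0$ is immediate since $N_0 = \Bbbk$. For the inductive step, multiplication by $y_0^p$ defines a morphism of $p$-DG modules $\mu \colon N_{(m-1)p} \hookrightarrow N_{mp}$ which is well-defined because $\dif(y_0^p) = p\,y_0^{p-1}(-y_1) = 0$, with image $y_0^p N_{(m-1)p}$. If I can show the cokernel $C_m := N_{mp}/y_0^p N_{(m-1)p}$ is contractible, then $\mu$ is a quasi-isomorphism, and combined with the inductive hypothesis $N_{(m-1)p} \simeq \Bbbk \cdot y_0^{(m-1)p}$ this yields $N_{mp} \simeq y_0^p \cdot \Bbbk \cdot y_0^{(m-1)p} = \Bbbk \cdot y_0^{mp}$, closing the induction.

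The hard part will be showing $C_m$ is contractible, because on $N_{mp}$ one has $[\dif, E] = 0$, so the easy contracting-homotopy trick from the first case is unavailable. My plan is to construct an auxiliary derivation $\alpha$ satisfying $[\dif, \alpha] = E$: the ansatz $\alpha(y_0) = \alpha(y_1) = 0$ and $\alpha(y_i) = \tfrac{1}{2} y_{i-2}$ for $i \ge 2$ satisfies the required recursion $(i+1)\alpha_{i+1} - (i-1)\alpha_i = 1$ and descends to $A/y_0^p A \supset C_m$ because $\alpha(y_0) = 0$. One would then like to combine $\alpha$ with a pseudoinverse of $E$ on $C_m$ to assemble $\tilde H$ with $[\dif, \tilde H] = \mathrm{Id}$. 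The obstacle is that $\ker E|_{N_{mp}}$ is large --- it contains the entire Frobenius-type subring $\Bbbk[y_0, y_1^p, \ldots, y_{p-1}^p]$ --- so a naive $\tilde H = \alpha E^{-1}$ is undefined. Overcoming this will likely require either a more careful combinatorial construction of the homotopy that exploits the fact that $y_0^p$-multiples are killed in $C_m$, or a switch to a spectral-sequence argument using the filtration of $C_m$ by $y_0$-exponent (whose associated graded decomposes into twisted copies of $S^\bullet V_{p-1}$) and then showing that the residual non-contractible cohomology collapses at higher pages.
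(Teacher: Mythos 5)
Your argument for $d \not\equiv 0 \pmod p$ is correct and genuinely different from the paper's: the relation $[\dif, d^{-1}E] = \mathrm{Id}$ together with $\dif^p = 0$ does force $\ker\dif = \mathrm{im}(\dif^{p-1})$, and by the Jordan-block classification over $\Bbbk[\dif]/(\dif^p)$ this yields contractibility. However, for $d \equiv 0 \pmod p$ you do not have a proof --- you have a plan that you yourself acknowledge does not close. The Euler trick is vacuous there since $[\dif, E]$ acts by zero on $N_{mp}$, and while the auxiliary derivation $\alpha$ with $[\dif, \alpha] = E$ is well-defined and descends to $C_m = N_{mp}/y_0^p N_{(m-1)p}$, the large kernel of $E$ blocks you from assembling a contracting homotopy, and the suggested spectral-sequence alternative is never actually carried out. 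The induction via multiplication by $y_0^p$ would indeed finish the proof \emph{if} you could show $C_m$ contractible, but that is precisely the step left open, and it is the heart of the matter.

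The paper sidesteps the case split entirely. It maps $N_d$ into $\sym_{p-1}(-d)$ by dropping $y_0$, sending $y_0^{b_0}y_1^{b_1}\cdots y_{p-1}^{b_{p-1}} \mapsto e_1^{b_1}\cdots e_{p-1}^{b_{p-1}}$, and checks this intertwines the differentials because the term $-b_0\, y_0^{b_0-1}y_1^{b_1+1}\cdots$ produced by $\dif_N$ matches the term $(q-d)e_1 f$ produced by $\dif_{-d}$. The image (monomials of exponent $\le d$) is then a $p$-complex \emph{summand} of $\sym_{p-1}(-d)$, precisely because the exponent-raising coefficient $q-d$ vanishes at the boundary exponent $q\equiv d\pmod p$. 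One then invokes Corollary \ref{cor-sym-p-1-all-ideal-contractible} and the remark following it, which settle both congruence classes of $d$ simultaneously. If you want to salvage your inductive approach you would need to actually exhibit a contraction of $C_m$; otherwise, the summand-of-$\sym_{p-1}(-d)$ embedding is the cleaner route, and reuses machinery already established in the chapter.
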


\begin{proof} Before we examined a module by filtering it with subquotients isomorphic to $\sym_{p-1}$. Now we take the opposite route, embedding $N_d$ inside $\sym_{p-1}$.
	
Consider the map from $N_d$ to $\sym_{p-1}(-d)$ which sends $y_0^{b_0} y_1^{b_1} \cdots y_{p-1}^{b_{p-1}} \mapsto e_1^{b_1} \cdots e_{p-1}^{b_{p-1}}$. It is an exercise to check that this map intertwines the differentials on both sides. In this exercise, the interesting term in $\dif_N(\prod_{i \ge 0} y_i^{b_i})$ is $-b_0 y_0^{b_0-1} y_1^{b_1+1} \cdots$, while the interesting term in $\dif_{-d}(\prod_{i > 0} e_i^{b_i})$ is $(-d+\sum_{i > 0} b_i) e_1^{b_1+1} \cdots$. These terms intertwine, because $d = \sum_{i \ge 0} b_i$.

The image of $N_d \to \sym_{p-1}(-d)$ is the span of polynomials in $\sym_{p-1}(-d)$ with exponent $\le d$. This is actually a summand of $\sym_{p-1}(-d)$. For any monomial $f$ with exponent $q$, $\dif_{-d}(f)$ will be a sum of other monomials with exponent $q$ and the term $(q-d)e_1f$, which has higher exponent. When $q\equiv d$ (mod $p$), this higher exponent term vanishes. Now the comment after Corollary \ref{cor-sym-p-1-all-ideal-contractible}, gives the desired result.
\end{proof}

We write $\mH(\Lambda)$ for the cohomology $p$-DG algebra of $\Lambda$, i.e.~$\mH(\Lambda)=\Bbbk[e_p^p, e_{2p}^p, e_{3p}^p,\dots]$ with zero differential. It is isomorphic to the tensor product of one-variable polynomial $p$-DG algebras
\begin{equation}\label{eqn-tensor-Lambdap}
\mH(\Lambda) \cong \bigotimes_{k\in \N\backslash\{0\}}\Bbbk[e_{kp}^p].
\end{equation}

\begin{cor}\label{cor-derived-category-of-lambda}
Let $\iota:\mH(\Lambda)\lra \Lambda$ be the natural inclusion of $p$-DG algebras.
\begin{itemize}
\item[(i)] The derived induction functor along the natural inclusion $\iota: \mH(\Lambda) \lra \Lambda$
\[\iota^*:\mc{D}(\mH(\Lambda)) \lra \mc{D}(\Lambda) \]
induces an equivalence of triangulated categories. A quasi-inverse is given by the restriction functor $\iota_*$.
\item[(ii)] There are isomorphism of Grothendieck groups as $\mathbb{O}_p$-modules
\[K_0(\Lambda)\cong \mathbb{O}_p , \ \ \ \  G_0(\Lambda) \cong \mathbb{O}_p.\]
\end{itemize}
\end{cor}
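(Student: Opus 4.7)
The plan is to recognize that part (i) is an essentially immediate application of the already-established Morita theory, and then to transport the Grothendieck group computation of part (ii) across that equivalence to the much simpler algebra $\mH(\Lambda)$.

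For part (i), the main input is Proposition \ref{prop-cohomology-of-sym-function}(i), which identifies $\iota \co \mH(\Lambda)\lra \Lambda$ as a quasi-isomorphism of $p$-DG algebras. Corollary \ref{cor-qis-algebra-equivalence-derived-categories} then applies directly to give that the induction and restriction functors along $\iota$ are mutually inverse equivalences between $\mc{D}(\mH(\Lambda))$ and $\mc{D}(\Lambda)$. Since any triangulated equivalence preserves compactness, this restricts to an equivalence of compact derived subcategories; and since the restriction functor $\iota_*$ is just the forgetful functor (so preserves the underlying $p$-complex and hence cohomological finiteness), the equivalence also preserves $\mc{D}^f$.

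For part (ii), I first check that $\mH(\Lambda)\cong\bigotimes_{k\geq 1}\Bbbk[e_{kp}^p]$ is strongly positive: the generator $e_{kp}^p$ sits in degree $2kp^2$, so $\mH(\Lambda)$ is non-negatively graded with finite dimensional graded pieces, its degree zero part is $\Bbbk$, and the differential is zero. Corollary \ref{cor-K-group-positive} then reduces the computation to the classical Grothendieck groups. Because $\mH(\Lambda)$ is a graded-local ring with residue field $\Bbbk$, its only indecomposable graded projective is $\mH(\Lambda)$ itself (up to shift), and its only graded simple module is $\Bbbk$. Thus both $K_0^\prime(\mH(\Lambda))$ and $G_0^\prime(\mH(\Lambda))$ are free of rank one over $\Z[q^\pm]$. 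Tensoring with $\mathbb{O}_p$ and then transporting along the equivalence from (i) yields the claimed isomorphisms $K_0(\Lambda)\cong G_0(\Lambda)\cong \mathbb{O}_p$. Honestly, there is no real obstacle in this proof: the genuinely difficult content has already been absorbed into Proposition \ref{prop-cohomology-of-sym-function} and the general positive-$p$-DG machinery of Section \ref{sec-positivepDGalgebras}; what remains is purely formal.
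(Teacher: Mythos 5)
Your proof is correct and follows the same strategy as the paper. Part (i) is exactly the paper's argument, combining Proposition~\ref{prop-cohomology-of-sym-function} with Corollary~\ref{cor-qis-algebra-equivalence-derived-categories}. For part (ii) you take a small detour: you apply Corollary~\ref{cor-K-group-positive} to $\mH(\Lambda)$ and then transport the answer across the equivalence from (i), whereas the paper applies that corollary directly to $\Lambda$, which was already noted in Section~\ref{section-symmetric-functions} to be a strongly positive $p$-DG algebra and is likewise graded-local with residue field $\Bbbk$, so $K_0^\prime(\Lambda)\cong G_0^\prime(\Lambda)\cong \Z[q^{\pm}]$ by the same reasoning you give for $\mH(\Lambda)$. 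Both routes are equally short; yours requires the additional observation that the derived equivalence restricts to $\mc{D}^c$ and $\mc{D}^f$, which you correctly supply, while the paper's avoids needing (i) at all to prove (ii).
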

\begin{proof}The derived equivalence follows from Proposition \ref{prop-cohomology-of-sym-function} and Corollary \ref{cor-qis-algebra-equivalence-derived-categories}.
The second part follows from Corollary \ref{cor-K-group-positive}.
\end{proof}


\section{\texorpdfstring{$p$}{p}-Differentials on \texorpdfstring{$\mathcal{U}$}{U}} \label{sec-pdifferentialsonU}
In this chapter we review Lauda's category $\mc{U}$ and define a $p$-DG $2$-category structure on it. We assume the reader is familiar with string diagrams for $2$-categories. An introduction to the topic can be found in \cite[Section 4]{Lau1}.

%
\subsection{The category \texorpdfstring{$\mc{U}$}{U}}\label{defn-u}
%

\begin{defn}\label{def-u-dot} The $2$-category $\UC$ is an additive graded $\Bbbk$-linear category. It has one object for each $\lambda \in \Z$, where $\Z$ is the weight lattice of $\mfsl_2$. The $1$-morphisms are (direct sums of grading shifts of) composites of the generating $1$-morphisms $\1_{\lambda}$, $\1_{\l+2} \EC \1_\l$ and $\1_\l \FC \1_{\l+2}$, for each $\l \in \Z$. Each $\1_{\l+2} \EC \1_\l$ will be drawn the same, regardless of the object $\l$. One imagines that there is a single $1$-morphism $\EC$ which increases the weight by $2$ (from right to left). We draw the identity $1$-morphism by empty diagrams labeled by $\lambda$, and the other $1$-morphisms as oriented strands. 

\begin{align*}
\begin{tabular}{|c|c|c|}
	\hline
	$1$-\textrm{morphism Generator} &
	\begin{DGCpicture}
	\DGCstrand(0,0)(0,1)
	\DGCdot*>{0.5}
	\DGCcoupon*(0.1,0.25)(1,0.75){$^\l$}
	\DGCcoupon*(-1,0.25)(-0.1,0.75){$^{\l+2}$}
    \DGCcoupon*(-0.25,1)(0.25,1.15){}
    \DGCcoupon*(-0.25,-0.15)(0.25,0){}
	\end{DGCpicture}&
	\begin{DGCpicture}
	\DGCstrand(0,0)(0,1)
	\DGCdot*<{0.5}
	\DGCcoupon*(0.1,0.25)(1,0.75){$^{\l+2}$}
	\DGCcoupon*(-1,0.25)(-0.1,0.75){$^\l$}
    \DGCcoupon*(-0.25,1)(0.25,1.15){}
    \DGCcoupon*(-0.25,-0.15)(0.25,0){}
	\end{DGCpicture} \\ \hline
	\textrm{Name} & $\EC$ & $\FC$ \\
	\hline
\end{tabular}
\end{align*}

The weight of any region in a diagram is determined by the weight of any single region. When no region is labelled, the ambient weight is irrelevant.

The $2$-morphisms will be generated by the following pictures.
	
\begin{align*}
\begin{tabular}{|c|c|c|c|c|}
  \hline
  \textrm{Generator} &
  \begin{DGCpicture}
  \DGCstrand(0,0)(0,1)
  \DGCdot*>{0.75}
  \DGCdot{0.45}
  \DGCcoupon*(0.1,0.25)(1,0.75){$^\l$}
  \DGCcoupon*(-1,0.25)(-0.1,0.75){$^{\l+2}$}
  \DGCcoupon*(-0.25,1)(0.25,1.15){}
  \DGCcoupon*(-0.25,-0.15)(0.25,0){}
  \end{DGCpicture}&
  \begin{DGCpicture}
  \DGCstrand(0,0)(0,1)
  \DGCdot*<{0.25}
  \DGCdot{0.65}
  \DGCcoupon*(0.1,0.25)(1,0.75){$^{\l}$}
  \DGCcoupon*(-1,0.25)(-0.1,0.75){$^{\l-2}$}
  \DGCcoupon*(-0.25,1)(0.25,1.15){}
  \DGCcoupon*(-0.25,-0.15)(0.25,0){}
  \end{DGCpicture} &
  \begin{DGCpicture}
  \DGCstrand(0,0)(1,1)
  \DGCdot*>{0.75}
  \DGCstrand(1,0)(0,1)
  \DGCdot*>{0.75}
  \DGCcoupon*(1.1,0.25)(2,0.75){$^\l$}
  \DGCcoupon*(-1,0.25)(-0.1,0.75){$^{\l+4}$}
  \DGCcoupon*(-0.25,1)(0.25,1.15){}
  \DGCcoupon*(-0.25,-0.15)(0.25,0){}
  \end{DGCpicture} &
  \begin{DGCpicture}
  \DGCstrand(0,0)(1,1)
  \DGCdot*<{0.25}
  \DGCstrand(1,0)(0,1)
  \DGCdot*<{0.25}
  \DGCcoupon*(1.1,0.25)(2,0.75){$^\l$}
  \DGCcoupon*(-1,0.25)(-0.1,0.75){$^{\l-4}$}
  \DGCcoupon*(-0.25,1)(0.25,1.15){}
  \DGCcoupon*(-0.25,-0.15)(0.25,0){}
  \end{DGCpicture} \\ \hline
  \textrm{Degree}  & 2   & 2 & -2 & -2 \\
  \hline
\end{tabular}
\end{align*}

\begin{align*}
\begin{tabular}{|c|c|c|c|c|}
  \hline
  \textrm{Generator} &
  \begin{DGCpicture}
  \DGCstrand/d/(0,0)(1,0)
  \DGCdot*>{-0.25,1}
  \DGCcoupon*(1,-0.5)(1.5,0){$^\l$}
  \DGCcoupon*(-0.25,0)(1.25,0.15){}
  \DGCcoupon*(-0.25,-0.65)(1.25,-0.5){}
  \end{DGCpicture} &
  \begin{DGCpicture}
  \DGCstrand/d/(0,0)(1,0)
  \DGCdot*<{-0.25,2}
  \DGCcoupon*(1,-0.5)(1.5,0){$^\l$}
  \DGCcoupon*(-0.25,0)(1.25,0.15){}
  \DGCcoupon*(-0.25,-0.65)(1.25,-0.5){}
  \end{DGCpicture}&
  \begin{DGCpicture}
  \DGCstrand(0,0)(1,0)/d/
  \DGCdot*<{0.25,1}
  \DGCcoupon*(1,0)(1.5,0.5){$^\l$}
  \DGCcoupon*(-0.25,0.5)(1.25,0.65){}
  \DGCcoupon*(-0.25,-0.15)(1.25,0){}
  \end{DGCpicture}&
  \begin{DGCpicture}
  \DGCstrand(0,0)(1,0)/d/
  \DGCdot*>{0.25,2}
  \DGCcoupon*(1,0)(1.5,0.5){$^\l$}
  \DGCcoupon*(-0.25,0.5)(1.25,0.65){}
  \DGCcoupon*(-0.25,-0.15)(1.25,0){}
  \end{DGCpicture}  \\ \hline
  \textrm{Degree} & $1+\l$ & $1-\l$ & $1+\l$ & $1-\l$ \\
  \hline
\end{tabular}
\end{align*}
\end{defn}

We will give a list of relations shortly, after we discuss some notation. For a product of $m$ dots on a single strand, we draw a single dot labeled by
$m$. Here is the case $m=3$.

\begin{align*}
\begin{DGCpicture}[scale=.75]
\DGCstrand(0,0)(0,2)
\DGCdot{.4}
\DGCdot{.8}
\DGCdot{1.2}
\DGCdot*>{1.75}
\end{DGCpicture}
~=~
\begin{DGCpicture}[scale=.75]
\DGCstrand(0,0)(0,2)
\DGCdot{1}[r]{$^3$}
\DGCdot*>{1.75}
\end{DGCpicture}
\end{align*}

A \emph{closed diagram} is a diagram without boundary, constructed from the generators above. The simplest non-trivial closed diagram is a \emph{bubble}, which can be
oriented clockwise or counter-clockwise.
\begin{align*}
\begin{DGCpicture}
\DGCbubble(0,0){0.5}
\DGCdot*<{0.25,L}
\DGCdot{-0.25,R}[r]{$_m$}
\DGCdot*.{0.25,R}[r]{$\l$}
\end{DGCpicture}
\qquad \qquad
\begin{DGCpicture}
\DGCbubble(0,0){0.5}
\DGCdot*>{0.25,L}
\DGCdot{-0.25,R}[r]{$_m$}
\DGCdot*.{0.25,R}[r]{$\l$}
\end{DGCpicture}
\end{align*}
In an arbitrary closed diagram, the interior of an oriented circle might contain another closed diagram. We use the term bubble only when the interior is empty.

A simple calculation shows that the degree of a bubble with $m$ dots in a region labeled $\l$ is $2(m+1-\l)$ if the bubble is clockwise, and $2(m+1+\l)$ if the bubble is counter-clockwise. Instead of keeping track of the number $m$ of dots a bubble has, it will be much more illustrative to keep track of the degree of the bubble, which is in $2\Z$. We will use the following shorthand to refer to a bubble of degree $2k$.

\begin{align*}
\bigcwbubble{$k$}{$\l$}
\qquad \qquad
\bigccwbubble{$k$}{$\l$}
\end{align*}

We will never label the region on the inside of a bubble with a weight; an integer inside a bubble always refers to (half) the degree. Analogous notation is used in \cite{KLMS}, where a spade is used to designate the
number of dots which would yield degree $0$. Bubbles have a life of their own, independent of their presentation in terms of caps, cups, and dots, and the
notation emphasizes this fact.

Note that $\l$ can be any integer, but $m\ge 0$ because it counts dots. Therefore, we can only construct a clockwise (resp. counter-clockwise) bubble of degree $k$ when $k \ge 1-\l$ (resp. $k \ge 1+\l$). These are called \emph{real bubbles}. Following Lauda, we also allow bubbles drawn as above with arbitrary $k \in \Z$. Bubbles with $k$ outside of the appropriate range are not yet defined in terms of the generating maps; we call these \emph{fake bubbles}. Using the relations below, one can express any fake bubble in terms of real bubbles.

Now we list the relations. Whenever the region label is omitted, the relation applies to all ambient weights.

\begin{itemize}
\item[(i)] {\bf Biadjointness relations.}
\begin{subequations} \label{biadjoint}
\begin{align} \label{biadjoint1}
\begin{DGCpicture}[scale=0.85]
\DGCstrand(0,0)(0,1)(1,1)(2,1)(2,2)
\DGCdot*>{0.75,1}
\DGCdot*>{1.75,1}
\end{DGCpicture}
~=~
\begin{DGCpicture}[scale=0.85]
\DGCstrand(0,0)(0,2)
\DGCdot*>{0.5}
\end{DGCpicture}
~=~
\begin{DGCpicture}[scale=0.85]
\DGCstrand(2,0)(2,1)(1,1)(0,1)(0,2)
\DGCdot*>{0.75}
\DGCdot*>{1.75}
\end{DGCpicture}
\qquad \qquad
\begin{DGCpicture}[scale=0.85]
\DGCstrand(0,0)(0,1)(1,1)(2,1)(2,2)
\DGCdot*<{0.75,1}
\DGCdot*<{1.75,1}
\end{DGCpicture}
~=~
\begin{DGCpicture}[scale=0.85]
\DGCstrand(0,0)(0,2)
\DGCdot*<{0.5}
\end{DGCpicture}
~=~
\begin{DGCpicture}[scale=0.85]
\DGCstrand(2,0)(2,1)(1,1)(0,1)(0,2)
\DGCdot*<{0.75}
\DGCdot*<{1.75}
\end{DGCpicture}
\end{align}

\begin{align} \label{biadjointdot}
\begin{DGCpicture}
\DGCstrand(0,0)(0,.5)(1,.5)/d/(1,0)/d/
\DGCdot*<{1}
\DGCdot{.3,1}
\end{DGCpicture}
~=~
\begin{DGCpicture}
\DGCstrand(0,0)(0,.5)(1,.5)/d/(1,0)/d/
\DGCdot*<{1}
\DGCdot{.3,2}
\end{DGCpicture}
\qquad \qquad \qquad
\begin{DGCpicture}
\DGCstrand(0,0)(0,.5)(1,.5)/d/(1,0)/d/
\DGCdot*>{1}
\DGCdot{.3,1}
\end{DGCpicture}
~=~
\begin{DGCpicture}
\DGCstrand(0,0)(0,.5)(1,.5)/d/(1,0)/d/
\DGCdot*>{1}
\DGCdot{.3,2}
\end{DGCpicture}
\end{align}

\begin{align} \label{biadjointcrossing}
\begin{DGCpicture}[scale=0.75]
\DGCstrand(0,0)(1,1)/u/(2,1)/d/(2,0)/d/
\DGCdot*<{1.5}
\DGCstrand(1,0)(0,1)/u/(3,1)/d/(3,0)/d/
\DGCdot*<{2.5}
\end{DGCpicture}
~=~
\begin{DGCpicture}[scale=0.75]
\DGCstrand(0,0)(0,1)(3,1)/d/(2,0)/d/
\DGCdot*<{2.5}
\DGCstrand(1,0)(1,1)(2,1)/d/(3,0)/d/
\DGCdot*<{1.5}
\end{DGCpicture}
\qquad \qquad
\begin{DGCpicture}[scale=0.75]
\DGCstrand(0,0)(1,1)/u/(2,1)/d/(2,0)/d/
\DGCdot*>{1.5}
\DGCstrand(1,0)(0,1)/u/(3,1)/d/(3,0)/d/
\DGCdot*>{2.5}
\end{DGCpicture}
~=~
\begin{DGCpicture}[scale=0.75]
\DGCstrand(0,0)(0,1)(3,1)/d/(2,0)/d/
\DGCdot*>{2.5}
\DGCstrand(1,0)(1,1)(2,1)/d/(3,0)/d/
\DGCdot*>{1.5}
\end{DGCpicture}
\end{align}
\end{subequations}

\item[(ii)] {\bf Positivity and Normalization of bubbles.} Positivity states that all bubbles (real or fake) of negative degree should be zero.
\begin{subequations} \label{negzerobubble}
\begin{align} \label{negbubble}
\bigcwbubble{$k$}{}~=~0~=~\bigccwbubble{$k$}{}
\qquad
\textrm{if}~k<0,
\end{align}

Normalization states that degree $0$ bubbles are equal to the empty diagram (i.e. the identity $2$-morphism of the identity $1$-morphism).

\begin{align} \label{zerobubble}
\bigcwbubble{$0$}{}~=~1~=~\bigccwbubble{$0$}{}
\end{align}
\end{subequations}

\item[(iii)] {\bf Infinite Grassmannian relations.} This family of relations can be expressed most succinctly in terms of generating functions.

\begin{equation}\label{eqn-infinite-Grassmannian}
\left( \cwbubble{$0$}{}+t~\cwbubble{$1$}{}+t^2~\cwbubble{$2$}{}+\ldots \right) \cdot
\left( \ccwbubble{$0$}{}+t~\ccwbubble{$1$}{}+t^2~\ccwbubble{$2$}{}+\ldots \right)  =  1~.
\end{equation}

The cohomology ring of the ``infinite dimensional Grassmannian" is the ring $\Lambda$ of symmetric functions. Inside this ring, there is an analogous relation $e(t)h(t)=1$, where $e(t) = \sum_{i \ge 0} (-1)^i e_i t^i$ is the total Chern class of the tautological bundle, and $h(t) = \sum_{i \ge 0} h_i t^i$ is the total Chern class of the dual bundle. Lauda has proved that the bubbles in a single region generate an algebra inside $\UC$ isomorphic to $\Lambda$.

Looking at the homogeneous component of degree $m$, we have the following equation.
\begin{align}
\sum_{a + b = m} \bigcwbubble{$a$}{} \bigccwbubble{$b$}{} = \delta_{m,0} \label{infgrass}
\end{align}
Because of the positivity of bubbles relation, this equation holds true for any $m \in \Z$, and the sum can be taken over all $a,b \in \Z$.

Using these equations one can express all (positive degree) counter-clockwise bubbles in terms of clockwise bubbles, and vice versa.
Consequentially, all fake bubbles can be expressed in terms of real bubbles.

\begin{rmk} This relation is actually redundant, but we include it for pedagogical reasons. \end{rmk}
\item[(iv)] {\bf NilHecke relations.} The upward pointing strands satisfy nilHecke relations
\begin{subequations} \label{NHrels}
\begin{align}
\begin{DGCpicture}
\DGCstrand(0,0)(1,1)(0,2)
\DGCdot*>{2}
\DGCstrand(1,0)(0,1)(1,2)
\DGCdot*>{2}
\end{DGCpicture}
=0, \label{NHrelR2}
\end{align}
\begin{align}
\RIII{L}{$0$}{$0$}{$0$}{no} = \RIII{R}{$0$}{$0$}{$0$}{no},
\label{NHrelR3}
\end{align}
\begin{align}
\crossing{$0$}{$0$}{$1$}{$0$}{no} - \crossing{$0$}{$1$}{$0$}{$0$}{no} = \twolines{$0$}{$0$}{no} = \crossing{$1$}{$0$}{$0$}{$0$}{no} -
\crossing{$0$}{$0$}{$0$}{$1$}{no}. \label{NHreldotforce}
\end{align}
\end{subequations}
\item[(v)] {\bf Reduction to bubbles.} The following equalities hold for all $\l \in \Z$.
\begin{subequations} \label{bubblereduction}
\begin{align}
\curl{R}{U}{$\l$}{no}{$0$} = -\sum_{a+b=-\l} \oneline{$b$}{$\l$} \bigcwbubble{$a$}{},
\end{align}
\begin{align}
\curl{L}{U}{$\l$}{no}{$0$} = \sum_{a+b=\l} \bigccwbubble{$a$}{$\l$} \oneline{$b$}{no}
\end{align}
\end{subequations}
These sums only take values for $a,b \ge 0$. Therefore, when $\l \ne 0$, either the right curl or the left curl is zero.
\item[(vi)] {\bf Identity decomposition.} The following equations hold for all $\l \in \Z$.
\begin{subequations} \label{IdentityDecomp}
\begin{align}
\begin{DGCpicture}
\DGCstrand(0,0)(0,2)
\DGCdot*>{1}
\DGCdot*.{1.25}[l]{$\l$}
\DGCstrand(1,0)(1,2)
\DGCdot*<{1}
\DGCdot*.{1.25}[r]{$\l$}
\end{DGCpicture}
~=~-~
\begin{DGCpicture}
\DGCstrand(0,0)(1,1)(0,2)
\DGCdot*>{0.25}
\DGCdot*>{1}
\DGCdot*>{1.75}
\DGCstrand(1,0)(0,1)(1,2)
\DGCdot*.{1.25}[l]{$\l$}
\DGCdot*<{0.25}
\DGCdot*<{1}
\DGCdot*<{1.75}
\end{DGCpicture}
~+~
\sum_{a+b+c=\l-1}~
\cwcapbubcup{$a$}{$b$}{$c$}{$\l$} \label{IdentityDecompPos}\end{align}
\begin{align}
\begin{DGCpicture}
\DGCstrand(0,0)(0,2)
\DGCdot*<{1}
\DGCdot*.{1.25}[l]{$\l$}
\DGCstrand(1,0)(1,2)
\DGCdot*>{1}
\DGCdot*.{1.25}[r]{$\l$}
\end{DGCpicture}
~=~-~
\begin{DGCpicture}
\DGCstrand(0,0)(1,1)(0,2)
\DGCdot*<{0.25}
\DGCdot*<{1}
\DGCdot*<{1.75}
\DGCstrand(1,0)(0,1)(1,2)
\DGCdot*.{1.25}[l]{$\l$}
\DGCdot*>{0.25}
\DGCdot*>{1}
\DGCdot*>{1.75}
\end{DGCpicture}
~+~
\sum_{a+b+c=-\l-1}~
\ccwcapbubcup{$a$}{$b$}{$c$}{$\l$} \label{IdentityDecompNeg}\end{align}
\end{subequations}
The sum in the first equality vanishes for $\l \le 0$, and the sum in the second equality vanishes for $\l \ge 0$.

The terms on the right hand side form a collection of orthogonal idempotents, which we shall explore in detail in the next chapter.
\end{itemize}

%
\subsection{Some properties of \texorpdfstring{$\mc{U}$}{U}}
\label{remarksandequalities}
%

In this section we gather together some known properties of $\mc{U}$ that will be useful later.

\begin{rmk}\label{rmk-splitting-U-into-two} There are no $1$-morphisms which change the weight by an odd number. Therefore $\UC \cong \UC_{\even} \oplus \UC_{\odd}$, where $\UC_{\even}$ only has even weights $\l \in 2\Z$, and $\UC_{\odd}$ only has odd weights.  \end{rmk}

\begin{rmk} In \cite{KL1}, Khovanov and Lauda define a monoidal category $\UC^+$ which categorifies the positive half of $U_q(\mathfrak{sl}_2)$. One can think of this category as spanned by diagrams with only upward-oriented strands. The objects are given by $\EC^m$, $m \in \N$ with the obvious monoidal structure; the morphisms are generated by the dot and the crossing only; the relations are given by the nilHecke relations \eqref{NHrels} only. In $\UC^+$, $\END(\EC^m) = \NH_m$ is the nilHecke algebra on $m$ strands. Regions are not labeled in diagrams for $\UC^+$, but for any $\l \in \Z$ there is a well-defined functor $\UC^+ \to {}_\l \UC$ which labels the leftmost region by $\l$, sending $\EC^m \mapsto \1_\l \EC^m \1_{\l-2m}$.
\end{rmk}

\begin{rmk} \label{rmk-morphisms-in-u} In Theorem 8.3 of \cite{Lau1}, Lauda has classified the space of $2$-morphisms in $\UC$. Using biadjunction and the isomorphisms implied by \eqref{IdentityDecomp}, any nonzero $2$-morphism space is related to the endomorphism spaces $\END_{{_{\l}\UC_{\l-2m}}}(\1_\l \EC^m \1_{\l-2m})$ for various $m \ge 0$ and $\l \in \Z$. Let $\Lambda$ represent the ring of symmetric functions, identified with the clockwise bubbles in the region labeled $\l$. The map $\Lambda \o \NH_m \to \END_{_\l\UC_{\l-2m}}(\1_\l \EC^m\1_{\l-2m})$ is an isomorphism.
\end{rmk}

Whenever we assert that certain diagrams form a basis for the $2$-morphisms in a certain degree, we are implicitly using Lauda's classification of morphisms.
This classification implies that all complicated closed diagrams will reduce to clockwise bubbles.

\paragraph{Symmetries of $\UC$.}The $2$-category $\UC$ enjoys various symmetries (see \cite[Section 5.6]{Lau1}). We list them below. To keep track of the covariance of these
functors, we introduce some auxiliary $2$-categories, each of which shares the same objects $\l \in \Z$ as $\UC$. In $\UC^{op}$ the $1$-morphisms are reversed, so that horizontal concatenation is reversed but vertical concatenation is unchanged. In $\UC^{co}$, the $2$-morphisms are reversed. In $\UC^{coop}$ both the $1$-morphisms and the $2$-morphisms are reversed.

\vspace{0.1in}

($I$) The $2$-functor $\widetilde{\omega}:\UC\lra \UC$ rescales the crossing by $-1$, inverts the orientation of each strand, and negates each region label. More precisely, it is given on the generators of $\UC$ as follows.
  \[\begin{array}{ccccc}
   \widetilde{\omega}
  \left(~
  \begin{DGCpicture}
  \DGCstrand(0,0)(1,1)
  \DGCdot*>{1}
  \DGCstrand(1,0)(0,1)
  \DGCdot*>{1}
  \DGCcoupon*(0.9,0.25)(1.25,0.75){$^\l$}
  \end{DGCpicture}
  \right) = -
  \begin{DGCpicture}
  \DGCstrand(0,0)(1,1)
  \DGCdot*<{0}
  \DGCstrand(1,0)(0,1)
  \DGCdot*<{0}
  \DGCcoupon*(0.9,0.25)(1.25,0.75){$^{-\l}$}
  \end{DGCpicture}, &&
  \widetilde{\omega}\left(~
  \begin{DGCpicture}
  \DGCstrand(0,0)(1,0)/d/
  \DGCdot*>{0.25,2}
  \DGCcoupon*(1,0)(1.25,0.5){$^\l$}
  \end{DGCpicture}
  \right)=
  \begin{DGCpicture}
  \DGCstrand(0,0)(1,0)/d/
  \DGCdot*<{0.25,2}
  \DGCcoupon*(1,0)(1.35,0.5){$^{-\l}$}
  \end{DGCpicture}, &&
  \widetilde{\omega}\left(
  \begin{DGCpicture}
  \DGCstrand(0,0)(1,0)/d/
  \DGCdot*<{0.25,2}
  \DGCcoupon*(1,0)(1.25,0.5){$^\l$}
  \end{DGCpicture}
  \right)=
  \begin{DGCpicture}
  \DGCstrand(0,0)(1,0)/d/
  \DGCdot*>{0.25,2}
  \DGCcoupon*(1,0)(1.35,0.5){$^{-\l}$}
  \end{DGCpicture},\\
  &&&&\\
  \widetilde{\omega}
  \left(
  \begin{DGCpicture}
  \DGCstrand(0,0)(1,1)
  \DGCdot*<{0}
  \DGCstrand(1,0)(0,1)
  \DGCdot*<{0}
  \DGCcoupon*(0.9,0.25)(1.25,0.75){$^{\l}$}
  \end{DGCpicture}
  ~\right)= -
  \begin{DGCpicture}
  \DGCstrand(0,0)(1,1)
  \DGCdot*>{1}
  \DGCstrand(1,0)(0,1)
  \DGCdot*>{1}
  \DGCcoupon*(0.9,0.25)(1.25,0.75){$^{-\l}$}
  \end{DGCpicture},&&
  \widetilde{\omega}\left(~
  \begin{DGCpicture}
  \DGCstrand/d/(0,0)(1,0)
  \DGCdot*<{-0.25,2}
  \DGCcoupon*(1,-0.5)(1.25,0){$^\l$}
  \end{DGCpicture}
  \right)=
  \begin{DGCpicture}
  \DGCstrand/d/(0,0)(1,0)
  \DGCdot*>{-0.25,2}
  \DGCcoupon*(1,-0.5)(1.5,0){$^{-\l}$}
  \end{DGCpicture},
  &&
    \widetilde{\omega}\left(~
  \begin{DGCpicture}
  \DGCstrand/d/(0,0)(1,0)
  \DGCdot*>{-0.25,2}
  \DGCcoupon*(1,-0.5)(1.25,0){$^\l$}
  \end{DGCpicture}
  \right)=
  \begin{DGCpicture}
  \DGCstrand/d/(0,0)(1,0)
  \DGCdot*<{-0.25,2}
  \DGCcoupon*(1,-0.5)(1.5,0){$^{-\l}$}
  \end{DGCpicture}.
  \end{array}
   \]

\vspace{0.1in}

($II$) The $2$-functor $\widetilde{\sigma}:\UC\lra \UC^{op}$ rescales a crossing by $-1$, reflects any diagram along a vertical axis, and negates each region label. More precisely:
  \[
  \begin{array}{ccccc}
  \widetilde{\sigma}
  \left(~
  \begin{DGCpicture}
  \DGCstrand(0,0)(1,1)
  \DGCdot*>{1}
  \DGCstrand(1,0)(0,1)
  \DGCdot*>{1}
  \DGCcoupon*(0.9,0.25)(1.25,0.75){$^\l$}
  \end{DGCpicture}
  \right) = -
  \begin{DGCpicture}
  \DGCstrand(0,0)(1,1)
  \DGCdot*>{1}
  \DGCstrand(1,0)(0,1)
  \DGCdot*>{1}
  \DGCcoupon*(-0.35,0.25)(0.1,0.75){$^{-\l}$}
  \end{DGCpicture}, &&
  \widetilde{\sigma}\left(~
  \begin{DGCpicture}
  \DGCstrand(0,0)(1,0)/d/
  \DGCdot*>{0.25,2}
  \DGCcoupon*(1,0)(1.25,0.5){$^\l$}
  \end{DGCpicture}
  \right)=
  \begin{DGCpicture}
  \DGCstrand(0,0)(1,0)/d/
  \DGCdot*<{0.25,2}
  \DGCcoupon*(1,0)(1.35,0.5){$^{-\l}$}
  \end{DGCpicture}, &&
  \widetilde{\sigma}\left(
  \begin{DGCpicture}
  \DGCstrand(0,0)(1,0)/d/
  \DGCdot*<{0.25,2}
  \DGCcoupon*(1,0)(1.25,0.5){$^\l$}
  \end{DGCpicture}
  \right)=
  \begin{DGCpicture}
  \DGCstrand(0,0)(1,0)/d/
  \DGCdot*>{0.25,2}
  \DGCcoupon*(1,0)(1.35,0.5){$^{-\l}$}
  \end{DGCpicture},\\
  &&&&\\
  \widetilde{\sigma}
  \left(
  \begin{DGCpicture}
  \DGCstrand(0,0)(1,1)
  \DGCdot*<{0}
  \DGCstrand(1,0)(0,1)
  \DGCdot*<{0}
  \DGCcoupon*(-0.35,0.25)(0.1,0.75){$^{\l}$}
  \end{DGCpicture}
  ~\right)=-
  \begin{DGCpicture}
  \DGCstrand(0,0)(1,1)
  \DGCdot*<{0}
  \DGCstrand(1,0)(0,1)
  \DGCdot*<{0}
  \DGCcoupon*(0.9,0.25)(1.25,0.75){$^{-\l}$}
  \end{DGCpicture},&&
  \widetilde{\sigma}\left(~
  \begin{DGCpicture}
  \DGCstrand/d/(0,0)(1,0)
  \DGCdot*<{-0.25,2}
  \DGCcoupon*(1,-0.5)(1.25,0){$^\l$}
  \end{DGCpicture}
  \right)=
  \begin{DGCpicture}
  \DGCstrand/d/(0,0)(1,0)
  \DGCdot*>{-0.25,2}
  \DGCcoupon*(1,-0.5)(1.5,0){$^{-\l}$}
  \end{DGCpicture},
  &&
  \widetilde{\sigma}\left(~
  \begin{DGCpicture}
  \DGCstrand/d/(0,0)(1,0)
  \DGCdot*>{-0.25,2}
  \DGCcoupon*(1,-0.5)(1.25,0){$^\l$}
  \end{DGCpicture}
  \right)=
  \begin{DGCpicture}
  \DGCstrand/d/(0,0)(1,0)
  \DGCdot*<{-0.25,2}
  \DGCcoupon*(1,-0.5)(1.5,0){$^{-\l}$}
  \end{DGCpicture}.
  \end{array}
   \]

\vspace{0.1in}

($III$) The $2$-functor $\widetilde{\psi}:\UC\lra \UC^{co}$ reflects any diagram by a horizontal axis, and inverts the orientation of all strands. More precisely:
  \[
  \begin{array}{ccccc}
  \widetilde{\psi}
  \left(~
  \begin{DGCpicture}
  \DGCstrand(0,0)(1,1)
  \DGCdot*>{1}
  \DGCstrand(1,0)(0,1)
  \DGCdot*>{1}
  \DGCcoupon*(0.9,0.25)(1.25,0.75){$^\l$}
  \end{DGCpicture}
  \right) =
  \begin{DGCpicture}
  \DGCstrand(0,0)(1,1)
  \DGCdot*>{1}
  \DGCstrand(1,0)(0,1)
  \DGCdot*>{1}
  \DGCcoupon*(0.9,0.25)(1.25,0.75){$^{\l}$}
  \end{DGCpicture}, &&
  \widetilde{\psi}\left(~
  \begin{DGCpicture}
  \DGCstrand(0,0)(1,0)/d/
  \DGCdot*>{0.25,2}
  \DGCcoupon*(1,0)(1.25,0.5){$^\l$}
  \end{DGCpicture}
  \right)=
  \begin{DGCpicture}
  \DGCstrand/d/(0,0)(1,0)
  \DGCdot*<{-0.25,2}
  \DGCcoupon*(1,-0.5)(1.25,0){$^\l$}
  \end{DGCpicture}, &&
  \widetilde{\psi}\left(
  \begin{DGCpicture}
  \DGCstrand(0,0)(1,0)/d/
  \DGCdot*<{0.25,2}
  \DGCcoupon*(1,0)(1.25,0.5){$^\l$}
  \end{DGCpicture}
  \right)=
  \begin{DGCpicture}
  \DGCstrand/d/(0,0)(1,0)
  \DGCdot*>{-0.25,2}
  \DGCcoupon*(1,-0.5)(1.25,0){$^\l$}
  \end{DGCpicture},\\
  &&&&\\
  \widetilde{\psi}
  \left(
  \begin{DGCpicture}
  \DGCstrand(0,0)(1,1)
  \DGCdot*<{0}
  \DGCstrand(1,0)(0,1)
  \DGCdot*<{0}
  \DGCcoupon*(-0.35,0.25)(0.1,0.75){$^{\l}$}
  \end{DGCpicture}
  ~\right)=
  \begin{DGCpicture}
  \DGCstrand(0,0)(1,1)
  \DGCdot*<{0}
  \DGCstrand(1,0)(0,1)
  \DGCdot*<{0}
  \DGCcoupon*(-0.35,0.25)(0.1,0.75){$^{\l}$}
  \end{DGCpicture},&&
  \widetilde{\psi}\left(~
  \begin{DGCpicture}
  \DGCstrand/d/(0,0)(1,0)
  \DGCdot*<{-0.25,2}
  \DGCcoupon*(1,-0.5)(1.25,0){$^\l$}
  \end{DGCpicture}
  \right)=
  \begin{DGCpicture}
  \DGCstrand(0,0)(1,0)/d/
  \DGCdot*>{0.25,2}
  \DGCcoupon*(1,0)(1.25,0.5){$^\l$}
  \end{DGCpicture},
  &&
  \widetilde{\psi}\left(~
  \begin{DGCpicture}
  \DGCstrand/d/(0,0)(1,0)
  \DGCdot*>{-0.25,2}
  \DGCcoupon*(1,-0.5)(1.25,0){$^\l$}
  \end{DGCpicture}
  \right)=
  \begin{DGCpicture}
  \DGCstrand(0,0)(1,0)/d/
  \DGCdot*<{0.25,2}
  \DGCcoupon*(1,0)(1.25,0.5){$^\l$}
  \end{DGCpicture}.
  \end{array}
   \]

\vspace{0.1in}
($IV$) The $2$-functor $\widetilde{\tau}:\UC\lra \UC^{coop}$ rotates any diagram by $180^\circ$. More precisely:
  \[
  \begin{array}{ccccc}
  \widetilde{\tau}
  \left(~
  \begin{DGCpicture}
  \DGCstrand(0,0)(1,1)
  \DGCdot*>{1}
  \DGCstrand(1,0)(0,1)
  \DGCdot*>{1}
  \DGCcoupon*(0.9,0.25)(1.25,0.75){$^\l$}
  \end{DGCpicture}
  \right) =
  \begin{DGCpicture}
  \DGCstrand(0,0)(1,1)
  \DGCdot*<{0}
  \DGCstrand(1,0)(0,1)
  \DGCdot*<{0}
  \DGCcoupon*(-0.35,0.25)(0.1,0.75){$^{\l}$}
  \end{DGCpicture}, &&
  \widetilde{\tau}\left(~
  \begin{DGCpicture}
  \DGCstrand(0,0)(1,0)/d/
  \DGCdot*>{0.25,2}
  \DGCcoupon*(1,0)(1.25,0.5){$^\l$}
  \end{DGCpicture}
  \right)=
  \begin{DGCpicture}
  \DGCstrand/d/(0,0)(1,0)
  \DGCdot*<{-0.25,2}
  \DGCcoupon*(1,-0.5)(1.25,0){$^\l$}
  \end{DGCpicture}, &&
  \widetilde{\tau}\left(
  \begin{DGCpicture}
  \DGCstrand(0,0)(1,0)/d/
  \DGCdot*<{0.25,2}
  \DGCcoupon*(1,0)(1.25,0.5){$^\l$}
  \end{DGCpicture}
  \right)=
  \begin{DGCpicture}
  \DGCstrand/d/(0,0)(1,0)
  \DGCdot*>{-0.25,2}
  \DGCcoupon*(1,-0.5)(1.25,0){$^\l$}
  \end{DGCpicture},\\
  &&&&\\
  \widetilde{\tau}
  \left(
  \begin{DGCpicture}
  \DGCstrand(0,0)(1,1)
  \DGCdot*<{0}
  \DGCstrand(1,0)(0,1)
  \DGCdot*<{0}
  \DGCcoupon*(-0.35,0.25)(0.1,0.75){$^{\l}$}
  \end{DGCpicture}
  ~\right)=
  \begin{DGCpicture}
  \DGCstrand(0,0)(1,1)
  \DGCdot*>{1}
  \DGCstrand(1,0)(0,1)
  \DGCdot*>{1}
  \DGCcoupon*(0.9,0.25)(1.25,0.75){$^{\l}$}
  \end{DGCpicture},&&
  \widetilde{\tau}\left(~
  \begin{DGCpicture}
  \DGCstrand/d/(0,0)(1,0)
  \DGCdot*<{-0.25,2}
  \DGCcoupon*(1,-0.5)(1.25,0){$^\l$}
  \end{DGCpicture}
  \right)=
  \begin{DGCpicture}
  \DGCstrand(0,0)(1,0)/d/
  \DGCdot*>{0.25,2}
  \DGCcoupon*(1,0)(1.25,0.5){$^\l$}
  \end{DGCpicture},
  &&
  \widetilde{\tau}\left(~
  \begin{DGCpicture}
  \DGCstrand/d/(0,0)(1,0)
  \DGCdot*>{-0.25,2}
  \DGCcoupon*(1,-0.5)(1.25,0){$^\l$}
  \end{DGCpicture}
  \right)=
  \begin{DGCpicture}
  \DGCstrand(0,0)(1,0)/d/
  \DGCdot*<{0.25,2}
  \DGCcoupon*(1,0)(1.25,0.5){$^\l$}
  \end{DGCpicture}.
  \end{array}
   \]

%
\subsection{Derivations on \texorpdfstring{$\UC$}{U}}\label{deriv-on-u}
%
In this section, we define a multiparameter family of $2$-categorical derivations on Lauda's $2$-category $\UC$.

\begin{defn} \label{def-dif-on-U} Fix parameters $x_\l, \ox_\l,y_\l,\oy_\l,a_\l,\oa_\l \in \Bbbk$ for each $\l \in \Z$. These parameters are required to satisfy

\begin{subequations} \label{eqn-parameter-req}
\begin{equation} a_\l - \oa_\l = x_{\l+2}-x_\l - 2y_\l \label{paramA} \end{equation}
\begin{equation} x_\l + \ox_\l = -\l \label{paramX} \end{equation}
\begin{equation} y_\l + \oy_\l = - 1\label{paramY} \end{equation}
\end{subequations}

Let $\dif$ be the 2-categorical derivation on $\UC$, defined on generators as follows.

\begin{subequations} \label{eqn-dif-on-gen}
\begin{align}\label{dotsdif}
\dif \left( \onelineshort{$1$}{$\l$} \right) =  \onelineshort{$2$}{$\l$} \quad \quad \quad \quad
\dif \left( \onelineDshort{$1$}{$\l$} \right) =  \onelineDshort{$2$}{$\l$}
\end{align}
\begin{align}
\dif \left( \crossing{$0$}{$0$}{$0$}{$0$}{$\l$} \right) = a_\l \twolines{$0$}{$0$}{$\l$} + (-1-a_\l) \crossing{$1$}{$0$}{$0$}{$0$}{$\l$} + (-1 + a_\l) \crossing{$0$}{$1$}{$0$}{$0$}{$\l$} \label{crossingdif}
\end{align}
\begin{align}
\dif \left( \crossingD{$0$}{$0$}{$0$}{$0$}{$\l$} \right) = \oa_\l \twolinesD{$0$}{$0$}{$\l$} + (-1-\oa_\l) \crossingD{$1$}{$0$}{$0$}{$0$}{$\l$} +
(-1 + \oa_\l) \crossingD{$0$}{$1$}{$0$}{$0$}{$\l$}
\end{align}
\begin{align}
\dif \left( \cappy{CW}{$0$}{no}{$\l$} \right)  = x_{\l-2} \cappy{CW}{$1$}{no}{$\l$} + y_{\l-2} \cappy{CW}{$0$}{CW}{$\l$}
\end{align}
\begin{align}
\dif \left( \cuppy{CW}{$0$}{no}{$\l$} \right) = \ox_{\l-2} \cuppy{CW}{$1$}{no}{$\l$} + \oy_{\l-2} \cuppy{CW}{$0$}{CW}{$\l$}
\end{align}
\begin{align}
\dif \left( \cappy{CCW}{$0$}{no}{$\l$} \right) = -(\ox_\l + 2 \oy_\l) \cappy{CCW}{$1$}{no}{$\l$} - \oy_\l \cappy{CCW}{$0$}{CW}{$\l$}
\end{align}
\begin{align}
\dif \left( \cuppy{CCW}{$0$}{no}{$\l$} \right) = -(x_\l + 2 y_\l) \cuppy{CCW}{$1$}{no}{$\l$} - y_\l \cuppy{CCW}{$0$}{CW}{$\l$}
\end{align}
\end{subequations}

It is easy to see that the differential can be defined independently for $\UC_{\odd}$ and $\UC_{\even}$.
\end{defn}

\begin{prop}\label{prop-classification-of-dif-on-U} For any choice of parameters in Definition \ref{def-dif-on-U}, $\dif$ is a $2$-categorical derivation on $\mc{U}$. It is
$p$-nilpotent if and only if the parameters lie in the prime field $\F_p$. If $p \ne 2$, any 2-categorical derivation is of the form $b \cdot \dif$ for some constant $b
\in \Bbbk$ and some $\dif$ as above. \end{prop}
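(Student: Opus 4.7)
The plan is to prove the three assertions in turn. The third (classification when $p\neq 2$) is a longer structural result that I will treat only at a high level here, since the excerpt itself defers the full argument to Appendix~\ref{sec-classification-of-dif}.

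\emph{Step 1: $\dif$ is a derivation.} Since the $2$-categorical Leibniz rule is built into the definition of $\dif$, what needs checking is that for each defining relation $L=R$ of $\UC$, one has $\dif(L)=\dif(R)$ modulo the relations. I would work through the families of relations in order: the biadjointness isotopies \eqref{biadjoint}, the bubble normalization and positivity \eqref{negzerobubble}, the infinite Grassmannian equation \eqref{eqn-infinite-Grassmannian}, the nilHecke relations \eqref{NHrels}, the curl reductions \eqref{bubblereduction}, and the identity decompositions \eqref{IdentityDecomp}. In each case one applies $\dif$ via Leibniz, substitutes the formulas \eqref{eqn-dif-on-gen}, and then simplifies using Lauda's classification of $2$-morphism spaces (Remark \ref{rmk-morphisms-in-u}), reducing everything to comparisons of bubble expressions and dotted nilHecke diagrams. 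The parameter constraints \eqref{eqn-parameter-req} should emerge precisely as the conditions required for the two sides to match: \eqref{paramA} is forced by comparing $\dif$ applied to the two dot-forcing equalities in \eqref{NHreldotforce}; \eqref{paramX} is forced by compatibility of the cap-cup formulas with the biadjointness zigzag \eqref{biadjoint1}; and \eqref{paramY} by compatibility with the $0$-bubble normalization \eqref{zerobubble} (since a degree-$0$ bubble must remain an identity under $\dif$).

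\emph{Step 2: $p$-nilpotency.} On a single upward strand, $\dif(x)=x^2$ gives $\dif^n(x)=n!\,x^{n+1}$ inductively, and so $\dif^p(x)=0$ automatically in characteristic $p$. For crossings, cups, and caps, the formulas \eqref{eqn-dif-on-gen} are $\Bbbk$-linear in the six parameters associated to the ambient region. Iterating $\dif$ $p$ times on each generator and reducing modulo the relations of $\UC$, the resulting coefficients are polynomials in the parameters; by the Frobenius identity $(\alpha+\beta)^p=\alpha^p+\beta^p$ together with the combinatorics carried out for the analogous one-parameter nilHecke case in \cite[\S4]{KQ}, these polynomials reduce to $\Bbbk$-linear combinations of Frobenius differences $c^p-c$ (one for each parameter $c$ in the list $a_\l,\oa_\l,x_\l,\ox_\l,y_\l,\oy_\l$). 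These differences vanish identically on $\Bbbk$ precisely when every parameter satisfies $c^p=c$, i.e.~lies in $\F_p$; conversely, assuming all parameters are in $\F_p$, one checks $p$-nilpotency on each generator directly and then extends to all of $\UC$ by Leibniz.

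\emph{Step 3: Classification for $p\neq 2$.} Any $2$-categorical derivation on $\UC$ is determined by its action on the generating $2$-morphisms. Since such a derivation has degree $2$, it sends each generator into a finite-dimensional piece of the appropriate morphism space, and Lauda's classification gives an explicit basis of each such piece. Expanding $\dif'$ as a general linear combination of basis elements with unknown coefficients in each region, the derivation property together with the relations of $\UC$ cuts out an affine subspace of possibilities. Solving these constraints (the detailed combinatorics being the content of Appendix~\ref{sec-classification-of-dif}) should yield exactly an overall scalar $b\in\Bbbk$ rescaling the dot differential, together with the six parameter families subject to \eqref{eqn-parameter-req}. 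The hypothesis $p\neq 2$ enters when one inverts a factor of $2$ arising from the two-strand nilHecke relation \eqref{NHreldotforce}; in characteristic $2$ the factor vanishes and an extra derivation is allowed.

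\emph{Main obstacle.} The main difficulty is the sheer volume of diagrammatic checking in Step~1, particularly for the identity decomposition \eqref{IdentityDecomp} and the infinite Grassmannian \eqref{eqn-infinite-Grassmannian}, where the sums are long (or formally infinite) and one must argue that $\dif$ on fake bubbles is consistent with the definition via generating functions. The classification in Step~3 is bookkeeping once one has a basis, and Step~2 is comparatively clean since the computation essentially factors through the nilHecke analysis of \cite{KQ}.
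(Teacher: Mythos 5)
Your overall plan matches the appendix: apply $\dif$ by Leibniz to each defining relation, extract parameter constraints, treat $p$-nilpotence on generators, and classify via Lauda's basis theorem. But the specific attributions in Step~1 are wrong in a way that matters, and Step~2 glosses over the one genuinely delicate point.

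In Step~1 you claim \eqref{paramA} is forced by applying $\dif$ to the dot-forcing relation \eqref{NHreldotforce}, and that \eqref{paramX} is forced by the biadjointness zigzag \eqref{biadjoint1}. Neither can be right. The relation \eqref{NHreldotforce} involves only upward strands, hence only the parameters $a_\l$ and $b$ and none of $\oa_\l, x_\l, \ox_\l, y_\l, \oy_\l$; it cannot possibly yield a relation of the form $a_\l - \oa_\l = x_{\l+2}-x_\l - 2y_\l$. What \eqref{NHreldotforce} actually gives is a set of internal constraints ($b_\l$ independent of $\l$, certain bubble-coefficients vanishing or $2$-torsion, and the shape of $\dif$ on the crossing). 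The constraint \eqref{paramA} in fact comes from the biadjointness of the \emph{sideways crossings} \eqref{biadjointcrossing}, which mixes upward and downward crossings with cups and caps. Similarly, the zigzag \eqref{biadjoint1} relates a clockwise cap to a \emph{counterclockwise} cup (and vice versa); it determines the parameters of the counterclockwise generators from the clockwise ones but cannot produce the relation $x_\l + \ox_\l = -\l$ between the two clockwise parameters. That relation comes instead from positivity of bubbles: requiring $\dif$ of the (real) degree $-2$ bubble to vanish forces \eqref{paramX}, and then requiring $\dif$ of the degree $0$ bubble to vanish forces \eqref{paramY}. Your attribution of \eqref{paramY} to the $0$-bubble normalization is partially right, but it is only available once \eqref{paramX} is already in hand.

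In Step~2 your necessity argument is on the right track --- applying $\dif^k$ to a cap, the coefficient of the $k$-dotted cap is a shifted factorial $x_{\l-2}(x_{\l-2}+1)\cdots(x_{\l-2}+k-1)$, whose vanishing at $k=p$ is precisely the Frobenius condition $x_{\l-2}^p=x_{\l-2}$ --- and the observation that $\dif^p$ is again a derivation in characteristic $p$ handles the extension from generators to all $2$-morphisms. But for sufficiency you cannot just say ``one checks directly.'' The expansion of $\dif^p$ on a cap has many mixed terms (a cap carrying $\a$ dots together with $\b_i$ bubbles of degree $2i$ with $\a + \sum i\b_i = p$), and the shifted-factorial computation only kills the two extreme terms ($\a=p$ or $\b_1=p$). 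One must separately argue that every mixed term has coefficient zero. This is a nontrivial combinatorial point: each mixed term arises in a number of ways equal to a nontrivial multinomial coefficient, hence a multiple of $p$. Without this lemma the sufficiency direction is not established, and it is not covered by the one-parameter nilHecke analysis you invoke, since the cap has two parameters $x$ and $y$ in play simultaneously.
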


The proof of the proposition will be deferred until Appendix \ref{sec-classification-of-dif}, but it is a straightforward computation. That appendix also contains a brief discussion of some more exotic $2$-categorical differentials which can exist in characteristic $2$.

Until Appendix \ref{sec-classification-of-dif}, we assume any derivation $\dif$ on $\UC$ is as in Definition \ref{def-dif-on-U} (without any rescaling).

A quick calculation shows that the differential $\dif$ acts on bubbles in a simple and uniform way, independent of the parameters or the ambient weight!

\begin{cor}\label{cor-dif-action-on-bubbles}The differentials in Definition \ref{def-dif-on-U} act on bubbles by the following formulas.
\begin{subequations} \label{difbubformula-b=1}
\begin{align}
\dif \left( \bigcwbubble{$k$}{} \right) = (k+1) \bigcwbubble{${k+1}$}{} -  \bigcwbubble{$k$}{} \bigcwbubble{$1$}{} \label{difcwbub-b=1}
\end{align}
\begin{align}
\dif \left( \bigccwbubble{$k$}{} \right) = (k+1) \bigccwbubble{${k+1}$}{} -  \bigccwbubble{$k$}{} \bigccwbubble{$1$}{} \label{difccwbub-b=1} \end{align}
\end{subequations}
\end{cor}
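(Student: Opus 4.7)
The plan is a direct Leibniz calculation: every real bubble decomposes as cap $\circ$ dots $\circ$ cup, and applying $\dif$ via the formulas in \eqref{eqn-dif-on-gen} yields the result once the parameter constraints \eqref{eqn-parameter-req} collapse the coefficients.

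For real clockwise bubbles, a bubble of degree $2k$ in region $\l$ has $m := k + \l - 1 \ge 0$ dots lying between a CW cap and a CW cup, each carrying the region label $\l$. Applying $\dif$ using \eqref{dotsdif} and the CW cap/cup formulas in \eqref{eqn-dif-on-gen}, the $m$ dots contribute $m \cdot \bigcwbubble{$k+1$}{}$; the cap contributes $x_{\l-2}\, \bigcwbubble{$k+1$}{} + y_{\l-2}\, \bigcwbubble{$k$}{}\, \bigcwbubble{$1$}{}$; and the cup contributes $\ox_{\l-2}\, \bigcwbubble{$k+1$}{} + \oy_{\l-2}\, \bigcwbubble{$k$}{}\, \bigcwbubble{$1$}{}$, where in both cases the bubble produced adjacent to the modified generator is already clockwise and sits in the ambient region. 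Collecting and applying \eqref{paramX} and \eqref{paramY} with $\l$ replaced by $\l - 2$ gives $x_{\l-2} + \ox_{\l-2} + m = (2-\l) + (k+\l-1) = k+1$ and $y_{\l-2} + \oy_{\l-2} = -1$, establishing \eqref{difcwbub-b=1} for real CW bubbles.

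The counter-clockwise case \eqref{difccwbub-b=1} runs in parallel but with one extra wrinkle: the CCW cap/cup $\dif$-formulas produce \emph{clockwise} adjacent bubbles $\bigcwbubble{$1$}{}$, which must be rewritten as $-\bigccwbubble{$1$}{}$ using the $m=1$ instance of the Infinite Grassmannian relation \eqref{infgrass} together with \eqref{zerobubble}. The parameter arithmetic (now using \eqref{paramX} and \eqref{paramY} without any index shift, and taking $m = k - \l - 1$) again simplifies to $k+1$ and $-1$; this conversion via \eqref{infgrass} is the main substantive step beyond the otherwise routine Leibniz bookkeeping. Finally, for fake bubbles I would apply $\dif$ to the Infinite Grassmannian identity \eqref{eqn-infinite-Grassmannian}: Leibniz together with the real-bubble formulas produces a linear recursion that uniquely determines $\dif$ on fake bubbles, and \eqref{difbubformula-b=1} is readily seen to solve it. The striking feature of the corollary is the complete disappearance of the family parameters $x_\l, \ox_\l, y_\l, \oy_\l$ at the level of bubbles --- exactly what the relations \eqref{paramX}--\eqref{paramY} are engineered to guarantee.
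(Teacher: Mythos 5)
Your proposal is correct and is essentially the same argument the paper makes (in Appendix~\ref{sec-classification-of-dif}, under the ``Positivity and Normalization of bubbles'' and ``Infinite Grassmannian relations'' paragraphs, where the computation is carried out with the general scalar $b$ before specializing to $b=1$): decompose a real bubble as cap~$\circ$~dots~$\circ$~cup, apply the Leibniz rule to the generators of Definition~\ref{def-dif-on-U}, and observe that the constraints \eqref{paramX}--\eqref{paramY} collapse the coefficients to $k+1$ and $-1$; the extension to fake bubbles then comes from differentiating the infinite Grassmannian relation. Your account of the counter-clockwise wrinkle (a clockwise degree-$2$ bubble appearing from the CCW cap/cup formulas and being traded for $-\ccwbubble{$1$}{}$ via the $m=1$ Grassmannian relation) is exactly right and is the one detail the paper's terse ``quick calculation'' leaves implicit.
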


Comparing this to Lemma \ref{lemma-dif-on-sym-function}, we see that the map $\Lambda \to \END_{_\l\UC_\l}(\1_\l)$ which sends $h_k \mapsto \cwbubble{$k$}{}$ and $(-1)^k e_k \mapsto
\ccwbubble{$k$}{}$ is an isomorphism of $p$-DG algebras. So is the dual map, which reverses the roles of $h_k$ and $(-1)^k e_k$.

The four symmetries on $\UC$ will each intertwine a $p$-differential as in Definition \ref{def-dif-on-U} with another such $p$-differential, having a different set of parameters.

\begin{cor}\label{cor-symmetries-intertwine-dif}Let $\dif$ be a $2$-categorical differential on $\UC$ parametrized by $a_\l, \oa_\l$, $x_\l, \ox_\l$ and $y_\l, \oy_\l$ as in Definition \ref{def-dif-on-U}. Then,
\begin{itemize}
\item[(i)] under conjugation by $\widetilde{\omega}$, the the $a$, $x$ and $y$-parameters for the new differential $\widetilde{\omega}\dif\widetilde{\omega}^{-1}$ are related to that of $\dif$ by
    \[
    \begin{array}{ccccc}
    a_\l^\omega= \oa_{-\l -4}, && x^\omega_{\l-2}= -(\ox_{-\l}+2\oy_{-\l}), && y^{\omega}_{\l-2}=\oy_{-\l}, \\
    \oa_\l^\omega= a_{-\l -4}, && \ox^\omega_{\l-2}= -(x_{-\l}+2y_{-\l}), && \oy^{\omega}_{\l-2}=y_{-\l};
    \end{array}
    \]
\item[(ii)] under conjugation by $\widetilde{\sigma}$, the $a$, $x$ and $y$-parameters for the new differential $\widetilde{\sigma}\dif\widetilde{\sigma}^{-1}$ are related to that of $\dif$ by
    \[
    \begin{array}{ccccc}
    a_\l^\sigma= -a_{-\l -4}, && x^\sigma_{\l-2}= -(\ox_{-\l}+2\oy_{-\l}), && y^{\sigma}_{\l-2}=\oy_{-\l}, \\
    \oa_\l^\sigma= -\oa_{-\l -4}, && \ox^\sigma_{\l-2}= -(x_{-\l}+2y_{-\l}), && \oy^{\sigma}_{\l-2}=y_{-\l};
    \end{array}
    \]
\item[(iii)] under conjugation by $\widetilde{\psi}$, the $a$, $x$ and $y$-parameters for the new differential $\widetilde{\psi}\dif\widetilde{\psi}^{-1}$ are related to that of $\dif$ by
    \[
    \begin{array}{ccccc}
    a_\l^\psi= -a_{\l}, && x^\psi_{\l-2}= \ox_{\l-2}, && y^{\psi}_{\l-2}=\oy_{\l-2}, \\
    \oa_\l^\psi= -\oa_{\l}, && \ox^\psi_{\l-2}= x_{\l-2}, && \oy^{\psi}_{\l-2}=y_{\l-2};
    \end{array}
    \]
\item[(iv)] under conjugation by $\widetilde{\tau}$, the $A$, $X$ and $Y$-parameters for the new differential $\widetilde{\tau}\dif\widetilde{\tau}^{-1}$ are related to that of $\dif$ by
    \[
    \begin{array}{ccccc}
    a_\l^\tau= \oa_{\l}, && x^\tau_{\l-2}= \ox_{\l-2}, && y^{\tau}_{\l-2}=\oy_{\l-2}, \\
    \oa_\l^\tau= a_{\l}, && \ox^\tau_{\l-2}= x_{\l-2}, && \oy^{\tau}_{\l-2}=y_{\l-2}.
    \end{array}
    \]
\end{itemize}
\end{cor}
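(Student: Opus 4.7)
The plan is to verify each of the four statements by checking what the conjugated differential does on each generator of $\mc{U}$, and then reading off the new parameters by comparing against the universal formulas of Definition~\ref{def-dif-on-U}. Since each $2$-functor is determined on generators and the Leibniz rule is automatic, this reduces to six (respectively eight) elementary calculations per symmetry: one for each of the dot generators (upward, downward), the two crossings, and the four cap/cup generators. Because each symmetry $F \in \{\widetilde{\omega},\widetilde{\sigma},\widetilde{\psi},\widetilde{\tau}\}$ is invertible and is an isomorphism of $2$-categories (possibly to $\UC^{op}$, $\UC^{co}$, or $\UC^{coop}$), the definition $\dif^F := F \circ \dif \circ F^{-1}$ automatically yields a $2$-categorical derivation; it remains only to identify the parameters.

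First I will do the dot calculations, which are trivial: each symmetry sends a dot to a dot (possibly with the orientation or position reversed), and the formula $\dif(\onelineshort{$1$}{}) = \onelineshort{$2$}{}$ has no parameters, so no constraints on $(a,x,y)$ come from this. Next I will do the two crossings; here the only subtlety is that $\widetilde{\omega}$ and $\widetilde{\sigma}$ rescale the crossing by $-1$, but since the right-hand side of \eqref{crossingdif} is also bilinear in crossings, the $-1$ cancels between the two sides of $\dif^F(F(\text{crossing})) = F(\dif(\text{crossing}))$, so the $a$-parameters transform by pure orientation-tracking: $\widetilde{\omega}$ swaps upward and downward crossings (giving $a^\omega_\l = \oa_{-\l-4}$ at weight $\l$, the shift by $-4$ coming from the fact that both endpoints of the crossing have their ambient weight negated), and similarly for the other three. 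For the $\widetilde{\sigma}$ case the extra sign on the crossing is not cancelled on the parameter $a_\l$ (since reflecting the formula \eqref{crossingdif} horizontally interchanges the roles of the two error terms and picks up one overall sign from the rescaling), yielding the negation $a^\sigma_\l = -a_{-\l-4}$. For $\widetilde{\psi}$, the horizontal reflection preserves the crossing up to no sign but effectively negates $a_\l$ because $\widetilde{\psi}$ interchanges $\dif$-images that differ by a sign on the bilinear correction terms; this gives $a^\psi_\l = -a_\l$. For $\widetilde{\tau}$, the $180^\circ$-rotation simply interchanges upward and downward crossings without a sign, yielding $a^\tau_\l = \oa_\l$.

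Next I will do the four cap/cup generators, which is where the $x$'s and $y$'s are read off. The four symmetries permute the four generators $\{\text{CW cap}, \text{CW cup}, \text{CCW cap}, \text{CCW cup}\}$ in known ways, and the ambient weights on the right-hand side of the $\dif$-formulas must be tracked carefully. For example under $\widetilde{\psi}$: a CW cap reflects horizontally and then has its orientation reversed to become a CW cup, so applying $\dif^\psi$ to $\widetilde{\psi}(\text{CW cap}) = \text{CW cup}$ and using $\dif^\psi \circ \widetilde{\psi} = \widetilde{\psi} \circ \dif$ directly gives $\ox^\psi_{\l-2} = x_{\l-2}$ and $\oy^\psi_{\l-2} = y_{\l-2}$. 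The analogous computation on a CCW cup reproduces $x^\psi_{\l-2} = \ox_{\l-2}$ and $y^\psi_{\l-2} = \oy_{\l-2}$. For $\widetilde{\omega}$, the orientation-reversal sends CW caps to CCW cups (with weight negation), so one reads $x^\omega_{\l-2} = -(\ox_{-\l} + 2\oy_{-\l})$ directly from the formula for $\dif$ on a CCW cup; the shift $\l - 2 \leftrightarrow -\l$ comes from the weight-region conventions. The calculations for $\widetilde{\sigma}$ and $\widetilde{\tau}$ are analogous.

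The main obstacle, and essentially the only one, is the bookkeeping of the weight labels: one must keep straight the ambient weight on each side of a cap/cup (which differ by $\pm 2$) and how the symmetries $\widetilde{\omega}$ and $\widetilde{\sigma}$ negate weights, so that the index $\l-2$ on $x^F_{\l-2}$ on the left-hand side is matched with the appropriate index on the right-hand side. Once I fix the convention that the parameter $x_{\l-2}$ (resp.\ $y_{\l-2}$) controls the differential of a CW cap whose left region has weight $\l - 2$ (and whose right region has weight $\l$), and similarly for the other cup/cap generators, the formulas in items (i)--(iv) follow by direct substitution. Finally, I will remark that the compatibility conditions \eqref{eqn-parameter-req} among the transformed parameters are automatic because the symmetries preserve the relations of $\UC$, so the transformed $(a^F, x^F, y^F)$ automatically satisfy the same three constraints.
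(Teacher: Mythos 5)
Your proposal takes essentially the same approach as the paper, which simply leaves this as a direct computation on generators. One caution is worth flagging in your narrative, though: your heuristic that the $-1$ from rescaling the crossing under $\widetilde{\omega}$ ``cancels between the two sides'' is not accurate as stated, because the right side of \eqref{crossingdif} contains the two-lines term, which carries no crossing and hence no $-1$. If one naively tracks only the rescaling signs, one would conclude $a^\omega_\l = -\oa_{-\l-4}$, which is wrong. What actually happens is that $\widetilde{\omega}$ transports the dots from the top of the upward crossing to the bottom of the downward crossing; rewriting those bottom-dot terms back to top-dot terms via the dot-forcing relation \eqref{NHreldotforce} produces precisely the compensating sign that gives $a^\omega_\l = +\oa_{-\l-4}$. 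The same mechanism underlies your $\widetilde{\psi}$ computation (where the sign comes entirely from dot-forcing, since $\widetilde{\psi}$ does not rescale crossings), while for $\widetilde{\sigma}$ the left--right reflection sends top dots to top dots, so no dot-forcing correction is needed and the $-1$ from rescaling survives honestly into $a^\sigma_\l = -a_{-\l-4}$. If you carry out the computation explicitly you will of course land on the right formulas, but the sign bookkeeping is slightly more delicate than your sketch suggests.
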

\begin{proof} This follows from a simple computation on the generators, which we leave as an exercise.
\end{proof}

\section{Fantastic Filtrations}
\label{sec-standardfiltrations}

\subsection{Idempotents and \texorpdfstring{$p$}{p}-DG filtrations}

The basic technique in additive categorification is (tautologically) the direct sum decomposition. The main computational tool is the following trivial lemma.

\begin{lemma} Let $A$ be an algebra, $M$ an $A$-module, and $I$ a finite set. For $1 \le i \le n$ fix $A$-modules $N_i$ and morphisms $v_i \co M \lra N_i$ and $u_i \co N_i \lra M$ which satisfy: \label{lemma-splitting-A-modules}
\begin{subequations}
\begin{eqnarray}
v_i u_i &=& \1_{N_i}, \label{theygive1N} \\
v_i u_j &=& 0 \textrm{ for } i \ne j, \label{theyareorthogonal} \\
\1_M &=& \sum_i u_i v_i. \label{theydecompose} \end{eqnarray}
\end{subequations}
Then $M \cong \oplus_i N_i$ as $A$-modules. \hfill $\square$
\end{lemma}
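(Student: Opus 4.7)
The plan is to exhibit explicit mutually inverse $A$-module maps between $M$ and $\bigoplus_i N_i$ built directly from the data $\{u_i,v_i\}$. Concretely, I would define
\[
\phi \co M \lra \bigoplus_i N_i, \qquad \phi(m) := (v_i(m))_{i \in I},
\]
and
\[
\psi \co \bigoplus_i N_i \lra M, \qquad \psi\bigl((n_i)_{i\in I}\bigr) := \sum_{i \in I} u_i(n_i).
\]
Both maps are $A$-linear because they are built out of $A$-linear maps (and finite sums commute with the $A$-action), and they are well-defined because $I$ is finite.

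The verification then splits into two routine checks. First, $\psi \circ \phi = \1_M$ is exactly the content of hypothesis \eqref{theydecompose}:
\[
\psi(\phi(m)) = \sum_{i} u_i v_i(m) = \Bigl(\sum_i u_i v_i\Bigr)(m) = \1_M(m) = m.
\]
Second, $\phi \circ \psi = \1_{\oplus N_i}$ follows from hypotheses \eqref{theygive1N} and \eqref{theyareorthogonal}: for $(n_i)\in \bigoplus_i N_i$, the $j$-th component of $\phi(\psi((n_i)))$ is
\[
v_j\Bigl(\sum_i u_i(n_i)\Bigr) = \sum_i v_j u_i(n_i) = v_j u_j(n_j) = n_j,
\]
using orthogonality to kill the $i \ne j$ terms and \eqref{theygive1N} to recover the diagonal term.

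There is no real obstacle here; the statement is essentially a definition-unwinding, and the three bookkeeping identities are tailored exactly to make the two compositions come out to the identity. The only thing worth flagging is the finiteness of $I$, which ensures that the sum defining $\psi$ (and the sum in \eqref{theydecompose}) is a legitimate morphism rather than a formal expression.
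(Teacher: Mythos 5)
Your proof is correct and is exactly the standard argument: the paper marks this lemma with a tombstone and gives no proof, treating it as immediate, and your explicit maps $\phi,\psi$ with the two composition checks are the obvious way to fill that in.
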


One should think that \eqref{theydecompose} implies a decomposition of $M$ into the images of the idempotents $u_i v_i \in \End_A(M)$, and that we have isomorphisms
\begin{equation}\label{eqn-iso-Im-N}
\xymatrix{\mim(u_iv_i)  && \lltwocell_{v_i}^{u_i}{'} N_i \ .}
\end{equation}

Let us restate this in a ring-theoretic way. In the proposition below, one should think that $R = \End_A(M \bigoplus (\bigoplus_i N_i))$, and that $\epsilon = \1_M$.

\begin{prop} Let $R$ be a ring, $I$ a finite set, and suppose that for $i \in I$, elements $u_i,v_i \in R$ satisfy:
\begin{subequations} \label{orthogonalidempotents}
\begin{eqnarray} u_i v_i u_i &=& u_i \\
v_i u_i v_i &=& v_i, \\
v_i u_j &=& 0 \ \textrm{ for } i \ne j.
\end{eqnarray}
\end{subequations}
Then $\epsilon = \sum_i u_i v_i$ is an idempotent, and we have a direct sum decomposition $R\epsilon \cong \oplus_i R v_i u_i$. \hfill$\square$
\end{prop}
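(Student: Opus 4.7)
The plan is to produce the decomposition by first recognizing the $e_i := u_i v_i$ as pairwise orthogonal idempotents summing to $\epsilon$, and then, for each $i$, constructing mutually inverse left $R$-module homomorphisms between $R e_i$ and $R v_i u_i$ via right multiplication.

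First I would verify that $e_i$ is idempotent: $e_i^2 = u_i v_i u_i v_i = (u_i v_i u_i) v_i = u_i v_i = e_i$ by the first relation. Orthogonality $e_i e_j = u_i (v_i u_j) v_j = 0$ for $i \neq j$ is immediate from the third relation. Consequently $\epsilon = \sum_i e_i$ satisfies $\epsilon^2 = \sum_{i,j} e_i e_j = \sum_i e_i = \epsilon$, and $R \epsilon = \bigoplus_i R e_i$, since any relation $\sum_i r_i e_i = 0$ right-multiplied by $e_j$ forces $r_j e_j = 0$.

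Setting $f_i := v_i u_i$, the second relation shows that $f_i$ is idempotent and satisfies $u_i f_i = u_i$ and $f_i v_i = v_i$. To identify $R e_i$ with $R f_i$ I would define $\alpha_i \co R e_i \lra R f_i$ by $x \mapsto x u_i$ and $\beta_i \co R f_i \lra R e_i$ by $y \mapsto y v_i$. These land in the correct ideals because $x u_i = (x u_i) f_i$ and $y v_i = (y v_i) e_i$. The compositions $\beta_i \alpha_i(x) = x u_i v_i = x e_i = x$ and $\alpha_i \beta_i(y) = y v_i u_i = y f_i = y$ are both identities, yielding $R e_i \cong R v_i u_i$; summing over $i$ gives the stated decomposition.

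The only genuine subtlety is that, lacking the stronger hypothesis $v_i u_i = 1$ from the previous lemma, the ideal $R e_i$ need not literally equal $R f_i$, so one has to write down and check the isomorphism $R e_i \cong R f_i$ explicitly. The two regularity relations $u_i v_i u_i = u_i$ and $v_i u_i v_i = v_i$ are tailored exactly to make the right multiplications by $u_i$ and $v_i$ serve as mutual inverses between these two ideals.
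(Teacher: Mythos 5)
Your proof is correct and follows the same route the paper sketches: you observe that the $e_i = u_i v_i$ are pairwise orthogonal idempotents so that $R\epsilon = \bigoplus_i Re_i$, and then exhibit the isomorphisms $Re_i \cong Rv_iu_i$ via right multiplication by $u_i$ and $v_i$, exactly as in the paper's diagram \eqref{eqn-iso-Rxy-Ryx}. The only difference is that you spell out the verifications that the paper leaves implicit.
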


That is, clearly $R\epsilon \cong \oplus_i R u_i v_i$ (since $\{u_i v_i|i\in I\}$ is a collection of orthogonal idempotents), and we have isomorphisms
\begin{equation}\label{eqn-iso-Rxy-Ryx}
\xymatrix{Ru_iv_i  & \ltwocell_{\cdot u_i}^{\cdot v_i}{'} Rv_iu_i \ ,}
\end{equation}
In this situation, we will say that the data $\{u_i,v_i|i\in I\}$ constitute a \emph{factorization} of idempotents.

Now suppose that $R$ is a $p$-DG algebra. Direct sum decompositions in $R\dmod$ rarely yield direct sum decompositions in $R_\dif\dmod$. After all, if $\varepsilon \in R$ is an idempotent, then $R\varepsilon$ is an $R_\dif\dmod$ summand if and only if $\dif(\varepsilon)=0$. If $\dif(\varepsilon)=0$ then $R\varepsilon$ is cofibrant, and its image in $\mc{D}(R)$ is compact. This is too much to hope for in general. Even when $R\varepsilon$ is $\dif$-closed, which is true if and only if $\dif(\varepsilon)=\dif(\varepsilon)\varepsilon$, it will still be the case that its complement $R(1-\varepsilon)$ is not $\dif$-closed unless $\dif(\varepsilon)=0$. We must somehow accommodate idempotents whose images are not $\dif$-closed.

The following is a mild generalization of the ideas presented in \cite[Section 4.2]{KQ}.

\begin{lemma}\label{lemma-condition-idempotents-to-filtrations} Suppose that $(R,\dif)$ is a $p$-DG algebra, $I$ is a finite set, and $u_i,v_i \in R$ satisfy \eqref{orthogonalidempotents}. Let $\epsilon = \sum_i u_i v_i$. Let $<$ be a total order on $I$. Define an $I$-indexed $R$-module filtration $F^\bullet$ of $R\epsilon$ by $F^{\le i}:= \sum_{k\le i}Ru_kv_k$ and $F^\emptyset := 0$, so that \[F^{\le i}/F^{< i} \cong R v_i u_i\] as $R$ modules. Then the following conditions are equivalent.
\begin{itemize}
\item[(i)] $F^\bullet$ is a filtration by $p$-DG modules, $Rv_i u_i$ is a $p$-DG module, and the subquotient isomorphism is one of $p$-DG modules.
\item[(ii)] The equations
\begin{subequations} \label{conditionsforfiltration}
\begin{equation} v_i\dif(u_i) = 0, \label{condition1forfiltration} \end{equation}
\begin{equation} u_i \dif(v_i) \in F^{<i} \label{condition2forfiltration} \end{equation}
\end{subequations}
hold for all $i\in I$.
\end{itemize}
\end{lemma}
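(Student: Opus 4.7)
The plan is to prove the two implications separately by direct computation, leveraging the orthogonal factorization relations \eqref{orthogonalidempotents}. I would begin by recording three structural observations used throughout:
(a) the elements $\varepsilon_i := u_i v_i$ form a system of orthogonal idempotents, so $R\epsilon = \bigoplus_i R u_i v_i$ and the projection of $z \in R\epsilon$ onto the $i$-th summand is right multiplication by $\varepsilon_i$;
(b) $R v_i u_i = R u_i$ as left ideals, since $u_i = u_i v_i u_i \in R v_i u_i$;
(c) $v_i = v_i u_i v_i \in R u_i v_i \subseteq F^{\le i}$, so $R v_i \subseteq F^{\le i}$.
The $R$-module isomorphism $F^{\le i}/F^{<i} \cong R v_i u_i$ is then realized by $\phi \co \overline{r u_i v_i} \mapsto r u_i$, with inverse $r v_i u_i \mapsto r v_i$.

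For (ii) $\Rightarrow$ (i), I would expand $\dif(r u_j v_j)$ by the Leibniz rule into three summands and verify each lies in $F^{\le j}$: $\dif(r) u_j v_j$ sits in $R u_j v_j$ trivially; $r \dif(u_j) v_j$ sits in $R v_j \subseteq F^{\le j}$ by (c) alone; and $r u_j \dif(v_j)$ sits in $R \cdot F^{<j} = F^{<j}$ by \eqref{condition2forfiltration}. For $R v_i u_i$ being $\dif$-stable, compute $\dif(v_i u_i) = \dif(v_i) u_i + v_i \dif(u_i) = \dif(v_i) u_i \in R u_i = R v_i u_i$, using \eqref{condition1forfiltration} and (b). For the $p$-DG property of $\phi$, I would compare $\dif(\phi(\overline{r u_i v_i})) = \dif(r) u_i + r \dif(u_i)$ with $\phi$ applied to the $R u_i v_i$-component of $\dif(r u_i v_i)$: the $r u_i \dif(v_i)$ summand contributes nothing by \eqref{condition2forfiltration}, while $r \dif(u_i) v_i$ is already in $R u_i v_i$ because $r \dif(u_i) v_i \cdot u_i v_i = r \dif(u_i) v_i$. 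Applying $\phi$ reduces matters to the identity $\dif(u_i) v_i u_i = \dif(u_i)$, which follows by differentiating $u_i = u_i v_i u_i$, using \eqref{condition1forfiltration} to kill $u_i v_i \dif(u_i)$, and expanding $u_i \dif(v_i) \in F^{<i}$ against $v_j u_i = 0$ for $j < i$ to kill $u_i \dif(v_i) u_i$.

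For (i) $\Rightarrow$ (ii), the $p$-DG compatibility of $\phi$ on the class $\overline{u_i v_i}$ yields the identity $\dif(u_i) = \dif(u_i) v_i u_i + u_i \dif(v_i) u_i$; comparing with the full Leibniz expansion of $u_i = u_i v_i u_i$ forces $u_i v_i \dif(u_i) = 0$. Multiplying on the left by $v_i$ and invoking $v_i u_i v_i = v_i$ bootstraps this into $v_i \dif(u_i) = 0$, which is \eqref{condition1forfiltration}. Granted this, the $\dif$-stability of $F^{\le i}$ together with the containment $\dif(u_i) v_i \in R u_i v_i \subseteq F^{\le i}$ from the previous paragraph gives $u_i \dif(v_i) = \dif(u_i v_i) - \dif(u_i) v_i \in F^{\le i}$. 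Its $R u_i v_i$-component, obtained by right multiplication by $\varepsilon_i$, equals $u_i \dif(v_i) u_i v_i$; differentiating $v_i = v_i u_i v_i$ and applying \eqref{condition1forfiltration} to kill the middle term show this component vanishes, so $u_i \dif(v_i) \in F^{<i}$, giving \eqref{condition2forfiltration}.

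The main obstacle to flag is the step (i) $\Rightarrow$ \eqref{condition1forfiltration}: the $p$-DG iso property only produces the apparently weaker identity $u_i v_i \dif(u_i) = 0$, and one must bootstrap via $v_i u_i v_i = v_i$ to reach $v_i \dif(u_i) = 0$. Similarly, \eqref{condition2forfiltration} is not extracted from a single axiom but requires combining $\dif$-stability of the filtration with \eqref{condition1forfiltration} already established. Everything else is Leibniz-rule bookkeeping, organized throughout by the orthogonal idempotents $\varepsilon_i$ and the cross-section $R u_i v_i \hookrightarrow F^{\le i} \twoheadrightarrow F^{\le i}/F^{<i}$.
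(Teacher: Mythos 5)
Your proof of (ii) $\Rightarrow$ (i) matches the paper's argument: the same Leibniz-rule bookkeeping organized around the orthogonal idempotents $\varepsilon_i = u_i v_i$ and the relations $u_i v_i u_i = u_i$, $v_i u_i v_i = v_i$. The paper explicitly proves only (ii) $\Rightarrow$ (i) and leaves (i) $\Rightarrow$ (ii) to the reader; your converse argument—extracting $u_i v_i \dif(u_i) = 0$ by comparing the $p$-DG compatibility of $\phi$ on $\overline{u_i v_i}$ against the Leibniz expansion of $u_i = u_i v_i u_i$, bootstrapping to $v_i \dif(u_i) = 0$ via left multiplication by $v_i$, and then deducing \eqref{condition2forfiltration} from $\dif$-stability of $F^{\le i}$ plus vanishing of the $Ru_iv_i$-component—is correct and fills in exactly the gap the paper declines to spell out.
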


\begin{proof} We show that $(ii)$ implies $(i)$, and leave the reader to prove the converse. We need to show the following for each $i \in I$:
\begin{itemize}
\item[1)] The module $Rv_iu_i$ is $\dif$-closed. After all, \[\dif(v_iu_i)=\dif(v_i)u_i=\dif(v_i)u_iv_iu_i\in Rv_iu_i.\]
\item[2)] Each layer of the filtration $F^{\le i}$ is $\dif$-closed. This can be shown inductively. Assume that $\dif(u_kv_k) \in F^{\le k}$ for $k<i$. Then we need only check that $\dif(u_iv_i) \in F^{\le i}$. We have \[\dif(u_iv_i)=\dif(u_i)v_i+u_i\dif(v_i)=\dif(u_i)v_iu_iv_i+u_i\dif(v_i)\in Ru_iv_i+F^{< i} \subset F^{\le i}.\]
\item[3)] The diagram
\[\xymatrix{F^{\le i}/F^{<i} & \ltwocell_{\cdot u_i}^{\cdot v_i}{'} Rv_iu_i \ ,}\]
is an isomorphism of $p$-DG modules. For any $r \in R$ we have
    \[\dif(ru_iv_i\cdot u_i)-\dif(ru_iv_i)\cdot u_i=ru_iv_i\dif(u_i)=0,\]
    by the assumption that $v_i\dif(u_i)=0$. Therefore right multiplication by $u_i$ intertwines the differentials. On the other hand,
    \[\dif(rv_iu_i\cdot v_i)-\dif(rv_iu_i)\cdot v_i=rv_iu_i\dif(v_i)\equiv 0~(\mathrm{mod}~F^{<i}),\]
    since $u_i\dif(v_i)\in F^{<i}$. Thus right multiplication by $v_i$ also intertwines the differentials.
\end{itemize}
\end{proof}

\begin{rmk}
One should think that the filtration gives a $p$-DG structure on the subquotient $Ru_i v_i$, but this differential is unusual (since $Ru_i v_i$ is not $\dif$-closed). More
precisely, the subquotient differential $\overline{\dif}$ satisfies $\overline{\dif}(r u_i v_i) = \dif(r u_i v_i) u_i v_i$. Then, multiplication by $u_i$ and $v_i$ give isomorphisms between $(R u_i
v_i,\overline{\dif})$ and $(R v_i u_i,\dif)$.

For any idempotent $\varepsilon \in R$, one can place a $p$-differential on $R\varepsilon$ by the formula $\overline{\dif}(r\varepsilon) = \dif(r\varepsilon)\varepsilon$. However, this need not appear as a subquotient in any filtration on $R$.
\end{rmk}

In general, one can say very little about the $p$-DG modules $R v_i u_i$, or about $(R\varepsilon,\overline{\dif})$ for a general idempotent $\varepsilon$. These modules need not be cofibrant, and their images in $\mc{D}(R)$ need not be compact. In order for Lemma \ref{lemma-condition-idempotents-to-filtrations} to have an impact on the Grothendieck group, we will restrict our attention to
even nicer filtrations. Suppose that $R \epsilon$ and $R v_i u_i$ are compact in $\mc{D}(R)$. Then we have the relation in $K_0(R)$
\[ [R\epsilon]=\sum_{i \in I}[Rv_iu_i]. \]
Let $\varepsilon_i = v_i u_i$. The $p$-DG modules $R \epsilon$ and $R\varepsilon_i$, and the relation between them in $K_0(R)$, do not depend the choices made - on the specific order $<$ chosen on $I$
and the specific factorizations $u_i$ and $v_i$ involved - so long as those choices satisfied the conditions of Lemma \ref{lemma-condition-idempotents-to-filtrations}.

\begin{rmk} \label{lessgoodfiltration} If $v_i \dif(u_i) \in R v_i u_i$ then $R v_i u_i$ is $\dif$-closed, even though condition \eqref{condition1forfiltration} may fail. In this
case there is an isomorphism in $R_\dif\dmod$ between $F^{\le i}/F^{<i}$ and $(R v_i u_i,\dif')$, where $\dif'(r v_i u_i) = \dif(r v_i u_i) + r v_i \dif(u_i).$ It is quite
possible that $(R v_i u_i,\dif)$ is compact or cofibrant, but $(R v_i u_i,\dif')$ has very different properties. This is akin to the situation in Chapter
\ref{section-symmetric-functions}, where the $p$-DG module $\Lambda$ is obviously compact and cofibrant, but after altering the differential, the $p$-DG module $\Lambda(a)$ for $a
\ne 0$ is not. After all, $\Lambda(a)$ is acyclic, but is not contractible in $\mc{C}(\Lambda)$.
\end{rmk}

We do not have a general criterion for when a module is compact in the derived category. However, one sufficient condition is for the module to be a $p$-DG summand of $R$, i.e.
the image of an idempotent $e$ satisfying $\dif(e)=0$. Such a summand is a finite cell module, and its compactness follows from Theorem \ref{thm-compact-mod}.

\begin{defn} \label{def-standard-fil}
In the situation of Lemma \ref{lemma-condition-idempotents-to-filtrations}, and assume the conditions in the lemma hold. If furthermore  \[\dif(\epsilon) = 0, \quad \quad \textrm{and} \quad \quad \dif(v_i u_i)=0\]
for all $i \in I$, we say that the factorization data form a \emph{fantastic filtration}, or a \emph{Fc-filtration} for short\footnote{One can also think of a Fc-filtration as particular kind of filtration by (representable) \emph{F}inite \emph{c}ell modules, arising from a factorization. See Definition \ref{def-nice-p-DG-mod-over-category}. It is quite a splendid splitting.}, on the $p$-DG module $R\epsilon$.
\end{defn}

\begin{eg}
Continue the setup of Proposition \ref{lemma-splitting-A-modules}, and assume that $A$ is now a $p$-DG algebra and $M$, $N_i$'s are $p$-DG modules. Let $R = \END_A(M \bigoplus (\bigoplus_i N_i))$ which is naturally a $p$-DG algebra. Then $\epsilon = \1_M$ and $v_i u_i = \1_{N_i}$, so that $\dif(\epsilon) = 0$ and $\dif(v_i u_i)=0$.
\end{eg}

We summarize the discussion in the following corollary.

\begin{cor}\label{cor-sufficient-condition-idemp-to-filtration} Let $R$ be a $p$-DG algebra, $I$ a finite set, and for $i \in I$ let $u_i,v_i \in R$ satisfy
\eqref{orthogonalidempotents}. Assume furthermore that $\dif(\epsilon)=0$ and $\dif(v_iu_i)=0$ for all $i \in I$, where $\epsilon=\sum_i u_iv_i$. Then there is a Fc-filtration built from this data if and only if there is a total order on $I$ such that \[ v_i \dif(u_j) = 0 \textrm{ for } j \ge i.\]
If so, we have the relation
\[ [R\epsilon]=\sum_{i \in I}[Rv_iu_i]\]
in $K_0(R)$.
\end{cor}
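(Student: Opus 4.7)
The plan is to recognize that this corollary is essentially an unpacking of Lemma \ref{lemma-condition-idempotents-to-filtrations}(ii) under the extra hypotheses $\dif(\epsilon)=0$ and $\dif(v_iu_i)=0$. My first step would be to record two consequences of these hypotheses, which will be the workhorses in both directions. From $v_i = v_iu_iv_i$ and $\dif(v_iu_i)=0$, applying the Leibniz rule yields the sandwich identity $\dif(v_i) = v_iu_i\dif(v_i)$, and symmetrically $\dif(u_i) = \dif(u_i)v_iu_i$. From $\dif(v_iu_i)=0$ itself one obtains the conversion $\dif(v_i)u_i = -v_i\dif(u_i)$, so that the conditions $v_i\dif(u_i)=0$ and $\dif(v_i)u_i=0$ are equivalent. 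More generally, differentiating $v_iu_j = 0$ for $i\ne j$ gives $\dif(v_i)u_j = -v_i\dif(u_j)$, so the hypothesis of the corollary becomes $\dif(v_i)u_j = 0$ for all $j\ge i$.

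For the forward direction, assume a Fc-filtration with order $<$ exists. Lemma \ref{lemma-condition-idempotents-to-filtrations}(ii) immediately supplies $v_i\dif(u_i)=0$, i.e.\ $\dif(v_i)u_i = 0$, handling $j=i$. For $j > i$, expand $u_i\dif(v_i) = \sum_{k<i} r_k u_kv_k$ (which exists since $u_i\dif(v_i)\in F^{<i}$) and compute
\[
\dif(v_i)u_j \;=\; v_iu_i\dif(v_i)\,u_j \;=\; \sum_{k<i} v_ir_ku_k(v_ku_j) \;=\; 0,
\]
because $k<i\le j$ forces $k\ne j$ and hence $v_ku_j=0$. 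This yields $v_i\dif(u_j)=0$ for all $j\ge i$.

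For the backward direction, I verify the two conditions of Lemma \ref{lemma-condition-idempotents-to-filtrations}(ii). Condition \eqref{condition1forfiltration} is the special case $j=i$ of the hypothesis. For \eqref{condition2forfiltration}, note that $v_i\epsilon = v_i u_iv_i = v_i$, so by $\dif(\epsilon)=0$,
\[
u_i\dif(v_i) \;=\; u_i\dif(v_i\epsilon) \;=\; u_i\dif(v_i)\,\epsilon \;=\; \sum_{k\in I} u_i\dif(v_i)u_k\,v_k.
\]
For any $k\ge i$, the sandwich identity gives $\dif(v_i)u_k = -v_i\dif(u_k) = 0$ by hypothesis, so those terms vanish. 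The remaining terms have $k<i$ and lie in $Ru_kv_k \subset F^{<i}$. Hence $u_i\dif(v_i)\in F^{<i}$, and Lemma \ref{lemma-condition-idempotents-to-filtrations} produces the $p$-DG filtration; since $\dif(\epsilon)=0$ and $\dif(v_iu_i)=0$ are the standing hypotheses, this is a Fc-filtration by Definition \ref{def-standard-fil}.

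Finally, the $K_0$ identity is formal once the filtration is in place: the idempotents $\epsilon$ and $v_iu_i$ are $\dif$-closed, so $R\epsilon$ and each $Rv_iu_i$ are $p$-DG summands of $R$, in particular finite cell modules and thus compact in $\mc{D}(R)$ (Theorem \ref{thm-compact-mod}); the Fc-filtration assembles into a convolution diagram in $\mc{D}^c(R)$ as in Remark \ref{rmk-convolution-finite-cell}, giving $[R\epsilon] = \sum_{i\in I}[Rv_iu_i]$. The subtlest step is the cross-term calculation $\dif(v_i)u_j=0$ in the forward direction: it is exactly the sandwich identity $\dif(v_i)=v_iu_i\dif(v_i)$ that turns the ``lower-triangularity'' of $u_i\dif(v_i)\in F^{<i}$ into a vanishing statement for multiplication by ``higher'' $u_j$'s, via the orthogonality $v_ku_j=0$.
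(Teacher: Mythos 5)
Your argument is correct and, for the ``if'' direction, coincides with the paper's: both reduce to verifying the conditions of Lemma \ref{lemma-condition-idempotents-to-filtrations}, using the conversions $\dif(v_i)u_j = -v_i\dif(u_j)$ (from $\dif(v_iu_j)=0$ for all $i,j$) and the expansion $u_i\dif(v_i) = u_i\dif(v_i)\epsilon = \sum_k u_i\dif(v_i)u_kv_k$ to detect membership in $F^{<i}$. The paper explicitly leaves the converse (``only if'') to the reader; your forward direction supplies it cleanly. The key extra observation is the sandwich identity $\dif(v_i) = v_iu_i\dif(v_i)$ obtained by differentiating $v_i = v_iu_iv_i$ with $\dif(v_iu_i)=0$: combined with $u_i\dif(v_i)\in F^{<i}$ and the orthogonality $v_ku_j=0$ for $k<i\le j$, it forces $\dif(v_i)u_j=0$ and hence $v_i\dif(u_j)=0$ for $j>i$. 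Your closing remarks on compactness (via $\dif(\epsilon)=0$, $\dif(v_iu_i)=0$ making $R\epsilon$ and $Rv_iu_i$ $p$-DG summands of $R$) and the resulting $K_0$ relation are also accurate.
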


\begin{proof} We show the ``if" direction, and leave the converse to the reader. We need to show the conditions of Lemma \ref{lemma-condition-idempotents-to-filtrations}. The
requirement that $v_i\dif(u_i)=0$ follows by assumption. Let us also note that $\dif(v_i u_j) = 0$ when $j > i$ because $v_i u_j = 0$, and $\dif(v_i u_j)=0$ for $j = i$ by
assumption. Therefore $v_i \dif(u_j)=0$ if and only if $\dif(v_i) u_j=0$, for $j \ge i$.

Because $R\epsilon$ is preserved by $\dif$, we know that $u_i \dif(v_i) \in R \epsilon$. Therefore, $u_i \dif(v_i) \in F^{<i}$ if and only if $u_i\dif(v_i)u_jv_j=0$ for
all $j\ge i$. This is implied by the fact that $\dif(v_i)u_j=0$ for $j \ge i$. \end{proof}

In practice, suppose we are given maps $u_i,v_i$ with $\dif(v_iu_i)=0$ and $\dif(\epsilon)=0$, satisfying \eqref{orthogonalidempotents}. We will draw a graph with vertices $I$, and an edge pointing from $i$ to $j$ labeled by $v_i \dif(u_j)$ ( including $i=j$ which is then a loop at $i$ ), omitting any edges which would be labelled by zero. We call this \emph{factorization graph}. If it is acyclic (in particular, has no loops at vertices), then the
conditions of the corollary will hold. Below is an illustration of the graph near two vertices $i,j\in I$.

$$
\xymatrix{ & \vdots \ar@/^0.5pc/[d] && \vdots\ar@/^0.5pc/[d] & \\
&i \ar@(ul,dl)[]_{v_i\dif(u_i)} \ar@/^0.5pc/[u] \ar@/^1pc/[rr]^{v_i\dif(u_j)} && j \ar@(ur,dr)[]^{v_j\dif(u_j)} \ar@/^1pc/[ll]^{v_j\dif(u_i)} \ar@/^0.5pc/[u]&
}
$$

This entire section has an analogous version for right $R$-modules and right $p$-DG $R$-modules, which we leave to the reader. Conveniently enough, given the data in Corollary
\ref{cor-sufficient-condition-idemp-to-filtration}, a right Fc-filtration exists if and only if a left Fc-filtration exists (although the order on $I$
will be reversed). The existence of a right Fc-filtration is governed by the graph with vertices $I$ and an edge from $j$ to $i$ labeled by $\dif(v_i) u_j$. Since
$\dif(v_i)u_j = - v_i \dif(u_j)$, this graph is identical to the left module version with arrows reversed and negated. One will be acyclic if and only if the other is.

%
\subsection{Fantastic filtrations on \texorpdfstring{$\mc{EF}$}{EF} and \texorpdfstring{$\mc{FE}$}{FE}}\label{subsec-EF-decomp}
%

There are two important direct sum decompositions used by Lauda which, under certain conditions on the parameters of the differential, will become Fc-filtrations. We
begin with the decomposition coming from \eqref{IdentityDecomp}.

We introduce some notation for \emph{formal curls} analogous to that for bubbles. Again, this notation is independent of the ambient weight.
\begin{equation}\label{curlstuff}
\curl{R}{U}{no}{$k$}{$0$} = - \sum_{a+b=k} \oneline{$b$}{no} ~\bigcwbubble{$a$}{}
\ , \ \ \
\curl{L}{U}{no}{$k$}{$0$} =  \sum_{a+b=k} \bigccwbubble{$a$}{} \oneline{$b$}{no}
\end{equation}
By definition, a curl of negative degree is $0$. The following relations can also be found in \cite{Lau1}.
\begin{equation}
\curl{R}{U}{$\l$}{no}{$r$} = \curl{R}{U}{$\l$}{$_{r-\l}$}{$0$} \ , \ \ \
\curl{L}{U}{$\l$}{no}{$r$} = \curl{L}{U}{$\l$}{$_{r+\l}$}{$0$} \label{curlrelL}
\end{equation}
The case when $r=0$ is precisely the reduction to bubbles relation \eqref{bubblereduction}.

We begin by treating the case $\l > 0$ for \eqref{IdentityDecompPos}. Consider the set of objects
$$\mathbb{X}_\l:=\{\mc{EF}\1_{\l}, \mc{FE}\1_{\l}, \1_{\l}\{1-\l+2c\}|c=0,\dots, \l-1\},$$
and its endomorphism $p$-DG algebra $R:=\END_{\mc{U}}(\mathbb{X}_\l)$ (see Remark \ref{rmk-shrinking-category-to-algebra}).
We will use Lauda's original factorization of idempotents \cite[Theorem 5.10]{Lau1}. Set
\begin{subequations} \label{original-factorization}
\begin{equation}
u_c:=
\cuppy{CW}{$c$}{no}{$\l$}
\quad (0\leq c \leq \l-1), \quad \quad
u_\l:=
\crossingL{no}{no}{no}{$\l$},
\end{equation}
and
\begin{equation}
v_c:= \bottomcurl{CW}{$^{\l-1-c}$}{no}{no}{$\l$}
\quad (0\leq c \leq \l-1), \quad \quad
v_\l:=-~\crossingR{no}{no}{no}{$\l$} .
\end{equation}
\end{subequations}

Lauda shows that $v_s u_s = 1_\l$ is the identity 2-morphism of $\1_\l$, and that $v_s u_t = 0$ for $s \ne t$. Relation \eqref{IdentityDecompPos} shows that $\epsilon =
1_{\mc{EF}} = \sum_{c=0}^\l u_s v_s$. Therefore, $\dif(v_s u_s) = 0$ and $\dif(\epsilon)=0$. To apply Corollary \ref{cor-sufficient-condition-idemp-to-filtration}, it remains to compute the factorization graph described at the end of the previous section, i.e. to compute $v_s \dif(u_t)$ for all $s,t$.

\begin{subequations} \label{eqn-EF-filtration-req}
We begin by computing $v_c \dif(u_c)$. For $0 \le c \le \l-1$ we have
\begin{align*}
\dif(u_c) = \dif \left( \cuppy{CW}{$c$}{no}{$\l$} \right) = (c+\ox_{\l-2}) \cuppy{CW}{$c+1$}{no}{$\l$} + \oy_{\l-2} \cuppy{CW}{$c$}{CW}{$\l$}
\end{align*}
In particular, for $0 \le c \le \l-2$, the first term is a multiple of $u_{c+1}$, and is orthogonal to $v_c$. Therefore
\begin{equation}\label{eqn-EF-filtration-req-1}
v_c\dif(u_c)=\oy_{\l-2} \cwI \textrm{ for } 0 \le c \le \l-2.
\end{equation}
On the other hand, $v_{\l-1}$ is simply a cap, and it is easy to compute that
\begin{equation}\label{eqn-EF-filtration-req-2}
 v_{\l-1}\dif(u_{\l-1})=(\oy_{\l-2}+\l-1+\ox_{\l-2}) \cwI.
\end{equation}
Meanwhile, we have
\begin{align*}
\dif(u_\l) =
\dif \left( \crossingL{no}{no}{no}{$\l$} \right) = (1 - \ox_{\l-2})
\begin{DGCpicture}
\DGCcoupon*(-.3,-.1)(1.3,1.35){}
\DGCstrand(0,0)(1,0)/d/
\DGCdot*<{0,2}
\DGCstrand/d/(0,1.25)(1,1.25)
\DGCdot*<{1.25,2}
\end{DGCpicture}
 -(1+\oa_{\l-2} + 2\oy_\l) \crossingL{yes}{no}{no}{no} \\ +
 (\oa_{\l-2} - 1 + \ox_{\l-2} - \ox_\l) \crossingL{no}{yes}{no}{no} + (\oy_{\l-2} - \oy_\l) \crossingL{no}{no}{yes}{no}
\end{align*}
so that, using \eqref{IdentityDecompNeg} and the vanishing of negative degree curls, we get
\begin{equation}\label{eqn-EF-filtration-req-3}
v_\l\dif(u_\l) = (-1 - \oa_{\l-2} - 2\oy_\l)
\begin{DGCpicture}
\DGCstrand(0,0)(0,1.5)
\DGCdot*<{0}
\DGCdot{.75}
\DGCstrand(.5,0)(.5,1.5)
\DGCdot*>{1.5}
\end{DGCpicture}
+ (\oa_{\l-2} - 1 + \ox_{\l-2} - \ox_\l)
\begin{DGCpicture}
\DGCstrand(0,0)(0,1.5)
\DGCdot*<{0}
\DGCstrand(.5,0)(.5,1.5)
\DGCdot*>{1.5}
\DGCdot{.75}
\end{DGCpicture}
+ (\oy_{\l-2} - \oy_\l)
\cwI
\begin{DGCpicture}
\DGCstrand(0,0)(0,1.5)
\DGCdot*<{0}
\DGCstrand(.5,0)(.5,1.5)
\DGCdot*>{1.5}
\end{DGCpicture}
\ .
\end{equation}
\end{subequations}

In order for the graph to have no loops at any vertex, each of the terms in \eqref{eqn-EF-filtration-req} must vanish. Thus we obtain the following constraints:
\begin{equation}
\begin{array}{lll}
1 + \oa_{\l-2} + 2\oy_\l=0, && \oa_{\l-2} - 1 + \ox_{\l-2} - \ox_\l=0, \\
\oy_{\l-2} = \oy_\l =0, && \ox_{\l-2}=1-\l.
\end{array}
\end{equation}
Solved together under the conditions \eqref{eqn-parameter-req}, we obtain a unique choice of parameters:
\begin{equation} \label{trueparameters}
\begin{array}{lll}
a_{\l-2}=1, && \oa_{\l-2}=-1, \\
x_{\l-2}=1, && \ox_{\l-2}=1-\l,\\
y_{\l-2} = -1, && \oy_{\l-2}=0.
\end{array}
\end{equation}

Let us temporarily ignore this specialization of parameters, and compute the rest of the graph, i.e. $v_t \dif(u_s)$ for $s \ne t$. We have already seen that $\dif(u_\l)$ contains
four terms; the first term factors through $u_0$, and the rest factor through $u_\l$. Therefore, $v_t \dif(u_\l)=0$ for $t \ne 0,\l$, and
\[
v_0 \dif(u_\l) = (\ox_{\l-2} - 1)\bottomcurl{CCW}{$0$}{no}{no}{$\l$}.
\]
We have also already seen that, for $0 \le s \le \l-2$, $\dif(u_s)$ contains two terms; the first factors through $u_{s+1}$ and the second
through $u_s$. Therefore, $v_t \dif(u_s)=0$ for $t \ne s,s+1$, and $v_{s+1} \dif(u_s) = \ox_{\l - 2} + s$. Finally, the first term of $\dif(u_{\l-1})$ has coefficient $\ox_{\l-2}
+ \l - 1$, and consists of a cup with $\l$ dots, which is not orthogonal to any $v_t$.

We encapsulate the data $v_t \dif(u_s)$ for $t \ne s$ in the following graph:
\begin{equation}
\label{fig-break-graph-1}
\begin{gathered}
\xymatrix@R=1.2em{ & {\phantom{2} \l \phantom{2}} \ar[dr]^{\bar{x}_{\l-2}-1} &&\\
&& 0 \ar[dr]^{\bar{x}_{\l-2}} & \\
&&&1 \ar[dd]^{\bar{x}_{\l-2}+1}\\
\l-1 \ar@{.>}[uuur]_-{\bar{x}_{\l-2}+\l -1}\ar@{.>}[uurr]_-{\bar{x}_{\l-2}+\l -1}\ar@{.>}[urrr]_-{\bar{x}_{\l-2}+\l -1}\ar@{.>}[drrr]^-{\bar{x}_{\l-2}+\l -1} \ar@{.>}[dddr]^-{\bar{x}_{\l-2}+\l -1}&&& \\
&&&2\ar[dl]^{\bar{x}_{\l-2}+2}\\
&&\reflectbox{\rotatebox[origin=c]{135}{$\cdots$}}\ar[dl]^{\bar{x}_{\l-2}+\l -3}&\\
&\l-2 \ar@<1ex>[uuul]^{\bar{x}_{\l-2}+\l-2}&&&.
}
\end{gathered}
\end{equation}
Setting $\ox_{\l-2} = 1-\l$, as in \eqref{trueparameters} above, will cause all the dotted arrows to vanish simultaneously, resulting a simple chain
\begin{equation}\label{fig-break-graph-1-specialized}
\xymatrix{
\l \ar[r]^-{-\l} & 0 \ar[r]^-{1-\l} & 1 \ar[r]^-{2-\l} &2\ar[r]^-{3-\l} &\cdots \ar[r]^-{-2}& \l -2 \ar[r]^-{-1} & \l-1.}
\end{equation}

Specializing $\ox_{\l-2} = 2-\l$ will also eliminate the cycles in this graph, but will create loops because $v_s \dif(u_s) \ne 0$. One can only treat this filtration as in Remark \ref{lessgoodfiltration}, rather than being able to use Corollary \ref{cor-sufficient-condition-idemp-to-filtration}.

\begin{defn}\label{def-special-dif}
Let $\dif_1$ be the $2$-categorical derivation on $\UC$ given by the specialization of parameters in \eqref{trueparameters}. It is defined on generators as follows.
\[
\begin{array}{c}
\dif_1 \left( \onelineshort{$1$}{no} \right) =  \onelineshort{$2$}{no} , \quad \quad \quad
\dif_1 \left( \crossing{$0$}{$0$}{$0$}{$0$}{no} \right) =  \twolines{$0$}{$0$}{no} -2 \crossing{$1$}{$0$}{$0$}{$0$}{no} ,\\ \\
\dif_1 \left( \onelineDshort{$1$}{no} \right) =  \onelineDshort{$2$}{no} , \quad \quad \quad
\dif_1 \left( \crossingD{$0$}{$0$}{$0$}{$0$}{no} \right) = - \twolinesD{$0$}{$0$}{no} -2 \crossingD{$0$}{$1$}{$0$}{$0$}{no},
\end{array}
\]
\[
\begin{array}{c}
\dif_1 \left( \cappy{CW}{$0$}{no}{$\l$} \right)  =  \cappy{CW}{$1$}{no}{$\l$}- \cappy{CW}{$0$}{CW}{$\l$},\quad \quad \quad
\dif_1 \left( \cuppy{CW}{$0$}{no}{$\l$} \right) = (1-\l) \cuppy{CW}{$1$}{no}{$\l$},\\ \\
\dif_1 \left( \cuppy{CCW}{$0$}{no}{$\l$} \right) =  \cuppy{CCW}{$1$}{no}{$\l$} + \cuppy{CCW}{$0$}{CW}{$\l$},\quad \quad \quad
\dif_1 \left( \cappy{CCW}{$0$}{no}{$\l$} \right) = (\l+1 ) \cappy{CCW}{$1$}{no}{$\l$}.
\end{array}
\]
\end{defn}

Thus we have proven the upcoming proposition.

\begin{prop} \label{prop-filtration-EF}
Let $\l > 0$. For the differential $\dif_1$ from Definition \ref{def-special-dif}, the data of $\{u_c,v_c\}$ above yield a Fc-filtrations on $\EC \FC\1_\l$ corresponding to \eqref{IdentityDecompPos}. Moreover, $\dif_1$ is the only specialization of the parameters for which these data yield a Fc-filtration. \hfill$\square$
\end{prop}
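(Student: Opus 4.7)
The plan is to apply Corollary \ref{cor-sufficient-condition-idemp-to-filtration} to the factorization data $\{u_c, v_c\}_{c=0,\ldots,\l}$ inside $R = \END_\UC(\mathbb{X}_\l)$. The orthogonality relations $v_s u_t = \delta_{s,t} 1_\l$ and the completeness $\sum_c u_c v_c = 1_{\EC\FC\1_\l}$ come from Lauda's proof of \eqref{IdentityDecompPos}, so nothing new is needed there. Since $\dif$ acts as a derivation and kills any identity 2-morphism, we obtain $\dif(\epsilon) = 0$ and $\dif(v_s u_s) = \dif(1_\l) = 0$ automatically. The entire content of the proposition therefore reduces to choosing a total order on $\{0, 1, \ldots, \l\}$ making the factorization graph acyclic, and showing that this is possible only at the parameters of Definition \ref{def-special-dif}.

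First I would compute $v_s \dif(u_s)$ (the loops at each vertex) using the generator formulas \eqref{eqn-dif-on-gen}, together with \eqref{bubblereduction} to reduce any resulting curls. For $0 \le c \le \l-2$ this is immediate: $\dif(u_c)$ is a cup with one more dot plus a cup with a clockwise bubble; only the latter contributes to the loop. For $c = \l-1$ the first term is no longer orthogonal to $v_{\l-1}$, producing a second contribution, and for $s = \l$ the derivative of the crossing $u_\l$ gives four terms that one expands via \eqref{IdentityDecompNeg} to extract the coefficient of $v_\l u_\l$. The resulting three loop-killing equations, combined with the three constraints \eqref{eqn-parameter-req} that force $\dif$ to be a derivation, form a linear system which I expect to pin the parameters down uniquely; this would simultaneously establish the ``only if'' half.

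The main obstacle will be the computation of $v_\l \dif(u_\l)$: the crossing derivative generates four terms, each of which interacts nontrivially with the crossing $v_\l$, and one must track signs, use the vanishing of negative-degree curls to clean up the output, and reduce back to the basis of 2-morphisms via Lauda's classification (Remark \ref{rmk-morphisms-in-u}). Once loops are dealt with, I would then compute the off-diagonal entries $v_t \dif(u_s)$, which come precisely from the ``orthogonal-to-the-loop'' summands of $\dif(u_s)$ discarded above. I expect each $u_s$ with $0 \leq s \leq \l-2$ to produce a single outgoing edge to $s+1$ with coefficient $\ox_{\l-2} + s$, while $\dif(u_\l)$ produces a single edge to $0$ with coefficient $\ox_{\l-2} - 1$, and $\dif(u_{\l-1})$ produces edges to \emph{every} vertex with common coefficient $\ox_{\l-2} + \l - 1$ arising from the overdotted cup.

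At the specialization $\dif_1$ of Definition \ref{def-special-dif} one has $\ox_{\l-2} = 1-\l$, so the entire fan of outgoing edges from $u_{\l-1}$ vanishes, leaving the single chain $\l \to 0 \to 1 \to \cdots \to \l-1$. This is manifestly acyclic under the total order $\l < 0 < 1 < \cdots < \l-1$, so Corollary \ref{cor-sufficient-condition-idemp-to-filtration} delivers the Fc-filtration. For uniqueness, the three loop-vanishing equations already determine the parameters uniquely together with \eqref{eqn-parameter-req}, so no other specialization can kill all loops, let alone the edges from $u_{\l-1}$. This would complete both halves of the proposition.
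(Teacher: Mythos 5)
Your proposal is correct and follows essentially the same route as the paper: compute the factorization graph of Corollary \ref{cor-sufficient-condition-idemp-to-filtration} (loops $v_s\dif(u_s)$, off-diagonal entries $v_t\dif(u_s)$), derive the parameter constraints from loop-vanishing, solve them together with \eqref{eqn-parameter-req} to recover $\dif_1$, and observe that the fan of edges from vertex $\l-1$ collapses at $\ox_{\l-2}=1-\l$ to leave a single acyclic chain. All your predicted coefficients ($\oy_{\l-2}$ for the low-index loops, $\ox_{\l-2}+s$ for the off-diagonal edges, $\ox_{\l-2}-1$ for the edge out of $\l$, $\ox_{\l-2}+\l-1$ for the fan) and the total order $\l<0<1<\cdots<\l-1$ agree with the paper's computation.
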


We have now treated \eqref{IdentityDecompPos} for $\l > 0$. When $\l \le 0$, we need only consider $u_\l$ and $v_\l$ in \eqref{IdentityDecompPos}. We have already computed what
conditions are required for $v_\l \dif(u_\l)=0$, and it is clear that $\dif_1$ suffices to produce a Fc-filtration (with a single term). We leave the reader to check the
analogous statements for \eqref{IdentityDecompNeg}. Thankfully, $\dif_1$ also provides the unique specialization yielding a Fc-filtration (for Lauda's chosen factorization of \eqref{IdentityDecompNeg}).

\vspace{0.06in}

The Fc-filtration we have presented here depends on the choice of factorization of idempotents, which is by no means canonical. It is natural to expect that $\dif_1$ is
not the only specialization which will yield the correct Grothendieck group.

Applying the equivalence $\widetilde{\tau}$ (rotation by 180 degrees) one obtains a different factorization with projections \begin{subequations} \label{dual-factorization}
\begin{equation}
u^\prime_c:=
\cappy{CW}{$c$}{no}{$\l$}
\quad (0\leq c \leq \l-1), \quad \quad
u^\prime_\l:=
\crossingR{no}{no}{no}{$\l$},
\end{equation}
and inclusions
\begin{equation}
v^\prime_c:=
\begin{DGCpicture}
\DGCstrand(0,2)/d/(1,1)/d/(0,1)/u/(1,2)/u/
\DGCdot*<{0.8,2}
\DGCcoupon*(0.1,0.6)(0.9,1.4){$_{_{\l-1-c}}$}
\DGCcoupon*(1.1,1)(1.3,1.6){$\l$}
\end{DGCpicture}
\quad (0\leq c \leq \l-1), \quad \quad
v^\prime_\l:=-~\crossingL{no}{no}{no}{$\l$}.
\end{equation}
\end{subequations}
The same exact calculation
will suffice for these idempotents (computing using right modules rather than left modules), except that the parameters should be substituted as in Corollary \ref{cor-symmetries-intertwine-dif}. This yields the dual specialization \begin{equation} \label{trueparametersdual} \begin{array}{lll} a_{\l-2}=-1, &&
\oa_{\l-2}=1, \\ x_{\l-2}=1-\l, && \ox_{\l-2}=1,\\ y_{\l-2} = 0, && \oy_{\l-2}=-1. \end{array} \end{equation} We write this specialization of parameters as $\dif_{-1}$.

\begin{defn}\label{def-special-dif-dual}
Let $\dif_{-1}$ be the $2$-categorical derivation on $\UC$ given by the specialization of parameters in \eqref{trueparametersdual}. It is defined on generators as follows.
\[
\begin{array}{c}
\dif_{-1} \left( \onelineshort{$1$}{no} \right) =  \onelineshort{$2$}{no} , \quad \quad \quad
\dif_{-1} \left( \crossing{$0$}{$0$}{$0$}{$0$}{no} \right) =  - \twolines{$0$}{$0$}{no} - 2 \crossing{$0$}{$1$}{$0$}{$0$}{no} ,\\ \\
\dif_{-1} \left( \onelineDshort{$1$}{no} \right) =  \onelineDshort{$2$}{no} , \quad \quad \quad
\dif_{-1} \left( \crossingD{$0$}{$0$}{$0$}{$0$}{no} \right) = \twolinesD{$0$}{$0$}{no} -2 \crossingD{$1$}{$0$}{$0$}{$0$}{no},
\end{array}
\]
\[
\begin{array}{c}
\dif_{-1} \left( \cappy{CW}{$0$}{no}{$\l$} \right)  =  (1-\l) \cappy{CW}{$1$}{no}{$\l$},\quad \quad \quad
\dif_{-1} \left( \cuppy{CW}{$0$}{no}{$\l$} \right) = \cuppy{CW}{$1$}{no}{$\l$} - \cuppy{CW}{$0$}{CW}{$\l$},\\ \\
\dif_{-1} \left( \cuppy{CCW}{$0$}{no}{$\l$} \right) =  (\l + 1) \cuppy{CCW}{$1$}{no}{$\l$},\quad \quad \quad
\dif_{-1} \left( \cappy{CCW}{$0$}{no}{$\l$} \right) = \cappy{CCW}{$1$}{no}{$\l$} + \cappy{CCW}{$0$}{CW}{$\l$}.
\end{array}
\]
\end{defn}

\begin{prop} \label{prop-filtration-EF-dual} Let $\l > 0$. For the differential $\dif_{-1}$ from Definition \ref{def-special-dif-dual}, the data of $\{u_c', v_c'\}$ above yield a
Fc-filtration on $\EC \FC\1_\l$ corresponding to \eqref{IdentityDecompPos}. Moreover, $\dif_{-1}$ is the only specialization of the parameters for which these data
yield a Fc-filtration. \hfill$\square$ \end{prop}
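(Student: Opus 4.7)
The plan is to deduce this from Proposition \ref{prop-filtration-EF} by invoking the symmetry $\widetilde{\tau}$, which rotates diagrams by $180^\circ$. By construction, the factorization data $\{u'_c, v'_c\}$ of \eqref{dual-factorization} are exactly the images under $\widetilde{\tau}$ of the original data $\{u_c, v_c\}$ of \eqref{original-factorization}. Since $\widetilde{\tau}$ is a $2$-functor $\UC \to \UC^{coop}$, it exchanges left $p$-DG modules for right $p$-DG modules over the $p$-DG endomorphism algebra of $\mathbb{X}_\l$. By the dual (right-module) version of Corollary \ref{cor-sufficient-condition-idemp-to-filtration} (noted at the end of Section \ref{subsec-EF-decomp}), the existence of a right Fc-filtration for $\{u'_c, v'_c\}$ with respect to a differential $\dif$ is equivalent to the existence of a left Fc-filtration for $\{u_c, v_c\}$ with respect to the conjugated differential $\widetilde{\tau}^{-1}\dif\,\widetilde{\tau}$.

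First I would note that the identity decomposition \eqref{IdentityDecompPos} and the orthogonality/completeness relations among the $u_c, v_c$ are preserved by $\widetilde{\tau}$, so $\{u'_c, v'_c\}$ form a valid factorization and $\dif(v'_c u'_c)=0$, $\dif(\sum_c u'_c v'_c)=0$ are equivalent to the analogous statements after conjugation. By Proposition \ref{prop-filtration-EF}, the unique specialization of parameters producing a Fc-filtration from $\{u_c, v_c\}$ is $\dif_1$ as in \eqref{trueparameters}. Applying Corollary \ref{cor-symmetries-intertwine-dif}(iv), the derivation $\widetilde{\tau}\,\dif_1\,\widetilde{\tau}^{-1}$ has parameters
\[
a_{\l-2}^\tau = \oa_{\l-2} = -1,\quad \oa_{\l-2}^\tau = a_{\l-2} = 1,\quad x_{\l-2}^\tau = \ox_{\l-2} = 1-\l,\quad \ox_{\l-2}^\tau = x_{\l-2} = 1,
\]
\[
y_{\l-2}^\tau = \oy_{\l-2} = 0,\quad \oy_{\l-2}^\tau = y_{\l-2} = -1,
\]
which matches \eqref{trueparametersdual} exactly, i.e., this is $\dif_{-1}$.

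Thus the Fc-filtration statement for $\{u'_c, v'_c\}$ under $\dif_{-1}$ is equivalent via $\widetilde{\tau}$ to the Fc-filtration statement for $\{u_c, v_c\}$ under $\dif_1$, which has been established. Uniqueness of the specialization also transfers: since $\dif_1$ is the only choice of parameters producing a Fc-filtration for $\{u_c, v_c\}$, and the correspondence between differentials via $\widetilde{\tau}$ is a bijection on parameter space, $\dif_{-1}$ is the only specialization producing a Fc-filtration for $\{u'_c, v'_c\}$.

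The only real work, should one prefer a direct verification, would be to recompute $v'_t \dif(u'_s)$ for all $s,t$ using the formulas in Definition \ref{def-dif-on-U} and construct the analogue of the factorization graph \eqref{fig-break-graph-1}; the edges and their labels will be the images under $\widetilde{\tau}$ of those in \eqref{fig-break-graph-1}, so the graph becomes acyclic exactly when $x_{\l-2}=1-\l$ (replacing the role of $\ox_{\l-2}$ in the original argument). The main obstacle in a direct approach would be carefully tracking the horizontal/vertical reflection and sign conventions for $\widetilde{\tau}$ on crossings, caps and cups, but this is precisely what Corollary \ref{cor-symmetries-intertwine-dif}(iv) encodes once and for all.
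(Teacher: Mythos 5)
Your proof is correct and matches the paper's approach essentially verbatim: the paper indicates that the dual factorization is obtained by applying $\widetilde{\tau}$, that the same calculation "will suffice for these idempotents (computing using right modules rather than left modules)," and that the parameters transform via Corollary \ref{cor-symmetries-intertwine-dif}. You have carried this out carefully, including the explicit parameter check that $\widetilde{\tau}\,\dif_1\,\widetilde{\tau}^{-1}$ has the values of \eqref{trueparametersdual}, and you correctly invoke the remark at the end of Section \ref{subsec-EF-decomp} that the existence of a left Fc-filtration is equivalent to the existence of a right Fc-filtration once the factorization data is transported across $\widetilde{\tau}$.
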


\begin{rmk} We have now seen two possibilities for the data of a differential and a factorization on $\EC \FC \1_\l$ which give rise to a Fc-filtration. In fact, we have a very strong uniqueness result. It turns out that Lauda's prescient choice of factorization in \cite{Lau1} is not just Lauda-given, but God-given.

Outside of characteristic $2$ there is a ``canonical" choice of a rotation-invariant factorization, though it has not yet appeared
in the literature. In this factorization, $u_\l$ and $v_\l$ are as before. For $0 \le c \le \l-1$ we let
\[u_c = \sum_{j=0}^{\l-1} \rho_j \cuppy{CW}{$c-j$}{no}{no}
\ccwbubble{$j$}{} \]
where for $c \le \frac{\l-1}{2}$ we have
\[
\begin{cases} \rho_0=1 & \\ \rho_k = 0 & k \ne 0 \end{cases}
\]
and for $c > \frac{\l-1}{2}$ we have
\[
\begin{cases} \rho_k = 1 & k < 2c-\l+1 \\ \rho_k = \frac{1}{2} & k = 2c - \l + 1 \\ \rho_k = 0 & k > 2c-\l + 1 \end{cases}
\]
and $v_c$ is the 180 degree rotation of $u_c$. (For
any given $\l$ there may be other rotation-invariant factorizations, but this is the unique formula which stabilizes as $\l$ goes to $\infty$.) Though this may seem at first glance as a more preferable factorization because of the extra symmetry, it will be clear from the next proposition that this is not the case.
\end{rmk}

\begin{prop} \label{prop-only-way-to-do-it} Consider either $\UC_{\even}$ or $\UC_{\odd}$. Suppose that $\dif$ is a differential as in Definition \ref{def-dif-on-U}, and $\{u_c,
v_c\}$ is a factorization of $\EC \FC \1_\l$ which yields a Fc-filtration. Then either $\dif=\dif_1$ and $\{u_c,v_c\}$ are as in \eqref{original-factorization}, or
$\dif=\dif_{-1}$ and $\{u_c,v_c\}$ are as in \eqref{dual-factorization}. \end{prop}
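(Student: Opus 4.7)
The plan is to parameterize all possible factorizations $\{u_c,v_c\}$ of $\EC\FC\1_\l$ using Lauda's classification of $2$-morphisms, and then show that imposing the orthogonality, completeness, and Fc-filtration conditions simultaneously forces both the factorization and the differential to match one of the two listed possibilities. By the $\widetilde{\tau}$-symmetry (Corollary \ref{cor-symmetries-intertwine-dif}(iv)), which interchanges $\dif_1$ with $\dif_{-1}$ and the original factorization with the dual, it suffices to treat one of the two cases; the other follows by rotation.

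First I would normalize $u_\l$ and $v_\l$. Because the direct sum decomposition of $\EC\FC\1_\l$ underlying \eqref{IdentityDecomp} is essentially unique up to isomorphism, and because Lauda's crossings in \eqref{original-factorization} and \eqref{dual-factorization} give valid projection/inclusion maps for the $\FC\EC\1_\l$-summand, after an automorphism of $\EC\FC\1_\l$ (which may be absorbed into a simultaneous rescaling of the other $u_c, v_c$) one may assume that $u_\l$ and $v_\l$ are exactly the crossings from one of \eqref{original-factorization} or \eqref{dual-factorization}. The sign/orientation choice between these distinguishes the two cases of the proposition.

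Next, for $0\le c\le \l-1$, I would parameterize $u_c$ and $v_c$ generically as linear combinations of cups (resp.\ caps) decorated with various numbers of dots and a single counter-clockwise (resp.\ clockwise) bubble factor, using Lauda's classification (Remark \ref{rmk-morphisms-in-u}) to identify the basis of the relevant degree-zero hom spaces. The orthogonality conditions $v_c u_d = \delta_{c,d}\,1_\l$ and completeness $\sum_{c=0}^\l u_c v_c = 1_{\mc{EF}\1_\l}$, after products of bubbles are reduced via the infinite Grassmannian relation \eqref{eqn-infinite-Grassmannian}, become a triangular system in the factorization coefficients. Solving this system pins down the factorization up to a residual triangular rescaling of the coefficients within each summand.

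Finally, I would impose the Fc-filtration conditions $\dif(u_c v_c)=0$ and $v_s\dif(u_t)=0$ (for $s$ preceding $t$ in an admissible total order), as in Corollary \ref{cor-sufficient-condition-idemp-to-filtration}. Expanding $\dif$ via Definition \ref{def-dif-on-U} and applying Corollary \ref{cor-dif-action-on-bubbles} to the bubble factors yields equations coupling the factorization coefficients to the differential parameters. The main obstacle is the combinatorial bookkeeping: a generic $u_c$ contains a cup-with-bubble term for every non-negative bubble degree, and each produces multiple summands under $\dif$. The cleanest strategy is to induct on the highest bubble degree appearing in $u_c$ or $v_c$, at each step exploiting the acyclicity of the factorization graph \eqref{fig-break-graph-1} to force the top-degree correction to vanish. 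Once every correction term is shown to vanish, the factorization collapses to the specific form of \eqref{original-factorization} (in the case chosen), and then Proposition \ref{prop-filtration-EF} closes out the argument by forcing $\dif=\dif_1$; the dual case yields $\dif=\dif_{-1}$ via Proposition \ref{prop-filtration-EF-dual}.
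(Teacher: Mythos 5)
Your plan has the right broad strokes (normalize $u_\l, v_\l$, parameterize the rest of the factorization, impose the Fc-filtration constraints, induct) but is too schematic in places where the proof needs concrete leverage, and it misses the key step that actually pins down the differential.

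The paper's argument begins not with orthogonality and completeness, but with the observation that $u_\l$ and $v_\l$ are canonical up to scalar for degree reasons, so that the single computation $v_\l\dif(u_\l)=0$ (i.e.~\eqref{eqn-EF-filtration-req-3}) is forced for \emph{any} factorization. Combined with \eqref{eqn-parameter-req}, this yields strong constraints: the parameters $a_\l, \oa_\l, y_\l, \oy_\l$ must be independent of $\l$, they must satisfy $a = -\oa = 1 + 2\oy = -1 - 2y$, and the $\ox_\l$ must form an arithmetic progression with step $a-1$. Once these constraints are in hand, computing the parameters at a \emph{single} weight determines them everywhere. This is the step your proposal omits, and without it your ``triangular system'' is vastly underdetermined.

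The second missing piece is the use of very small $\l$ to determine $\dif$. For $\l=1$ the entire factorization is canonical (both $u_0$ and $v_0$ are cup/cap, hence forced up to scalar by degree reasons); so the computation $v_0\dif(u_0) = 0$, $v_0\dif(u_1)$, $v_1\dif(u_0)$ pins down $\ox_{-1} \in \{0,1\}$. For $\l=2$ there is a one-parameter family of factorizations, and a similar computation pins down $(t, \ox_0) \in \{(0,-1),(1,1)\}$. These two cases determine $\dif$ on $\UC_{\odd}$ and $\UC_{\even}$ respectively, before one even attacks the general factorization. Your plan proposes to tackle all $\l$ and all factorizations at once, which is unlikely to be tractable. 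The paper's final induction is on the index $c$ of $u_c$, not on the highest bubble degree: once $u_0, \ldots, u_s$ are known to agree with \eqref{original-factorization}, the vanishing $v_t\dif(u_s)=0$ for $t\le s$ determines $u_{s+1}$. It is not clear that an induction on bubble degree would behave as cleanly.

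Finally, your appeal to $\widetilde{\tau}$-symmetry is in principle valid (indeed \eqref{trueparameters} and \eqref{trueparametersdual} are $\widetilde{\tau}$-conjugate by Corollary \ref{cor-symmetries-intertwine-dif}(iv)), but it does not halve the work as you suggest: the proposition is a uniqueness statement, and symmetry only shows the solution set is $\widetilde{\tau}$-invariant, not that it has exactly two elements. One still has to rule out all other $(\dif, \{u_c,v_c\})$, which is exactly what the small-case computations do. Also, be careful with your claim that the direct sum decomposition being ``essentially unique'' lets you normalize $u_\l, v_\l$ to those of \eqref{original-factorization} or \eqref{dual-factorization}: the two differ not by a sign but by swapping the roles of the two sideways crossings, which is a $180^\circ$ rotation rather than a rescaling. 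The paper's degree argument is the cleaner justification.
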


\begin{proof} In any factorization, the maps $u_\l$ and $v_\l$ are canonical (up to scalar), because they are the only maps of the appropriate degree. The composition $v_\l
\dif(u_\l)$ was computed in \eqref{eqn-EF-filtration-req-3}, and must be zero in a Fc-filtration. Combining this with \eqref{eqn-parameter-req} we obtain the
following conditions: \begin{itemize} \item[(1)] The parameters $a_\l$, $\oa_\l$, $y_\l$ and $\oy_\l$ do not depend on the ambient weight (within either $\UC_{\odd}$ or
$\UC_{\even}$). \item[(2)] They satisfy $a = - \oa = 1 + 2 \oy = -1 - 2 y$. \item[(3)] The parameters $x_\l$ and $\ox_\l$ form an arithmetic series, with $\ox_\l - \ox_{\l-2} =
a-1$. Also, $\ox_\l + x_\l = -\l$. \end{itemize}

In particular, if one can calculate the parameters $\oa_\mu$, $\ox_\mu$ and $\oy_\mu$ for a single value of $\mu$, then one determines all parameters for all $\l$ with $\l-\mu$
even. We now prove that $\dif = \dif_{\pm 1}$, by computing with small values of $\l$.

When $\l > 0$ the map $u_0$ and the map $v_{\l-1}$ are also canonical (up to scalar), for degree reasons. When $\l=1$, the entire factorization is canonical:
\begin{subequations}
\begin{equation}
u_0=
\cuppy{CW}{$0$}{no}{$1$}
\quad  \quad \quad
u_1=
\crossingL{no}{no}{no}{$1$},
\end{equation}
\begin{equation}
v_0= \cappy{CCW}{$0$}{no}{$1$} \quad \quad \quad
v_1=-~\crossingR{no}{no}{no}{$1$} .
\end{equation}
\end{subequations}
We have already computed $v_0 \dif(u_0)$ in \eqref{eqn-EF-filtration-req-2}, which will be zero precisely when $\oy_{-1} = - \ox_{-1}$. Meanwhile, a quick computation shows that
\begin{equation}
v_0\dif(u_1)=(\ox_{-1}-1)\cappy{CW}{$0$}{no}{$1$}, \ \ \ \
v_1\dif(u_0)=-\ox_{-1}\cuppy{CCW}{$0$}{no}{$1$}.
\end{equation}
Therefore, the graph for this filtration will be acyclic when either $\ox_{-1}=0$ or $\ox_{-1}=1$. These two choices correspond precisely to $\dif_1$ and $\dif_{-1}$. Thus we have shown that the differential on $\UC_{\odd}$ is equal to $\dif_{\pm 1}$.

For $\l=2$ there is a one-parameter family of factorizations. For any $t\in \Bbbk$, let
\begin{subequations}
\begin{equation}
u_0(t)=\cuppy{CW}{$0$}{no}{$2$}, \quad \quad \quad u_1(t)=\cuppy{CW}{$1$}{no}{$2$}+t\cuppy{CW}{$0$}{CCW}{$2$},
\end{equation}
\begin{equation}
v_0(t)=\cappy{CW}{$1$}{no}{$2$}+(1+t)\cappy{CW}{$0$}{CCW}{$2$}, \quad \quad \quad v_1(t)=\cappy{CW}{$0$}{no}{$2$},
\end{equation}
\end{subequations}
and let $u_2(t)$ and $v_2(t)$ be sideways crossings as before. We have normalized our choice of projections and inclusions such that the \emph{leading coefficient}, i.e.~the coefficient of the unique term without bubbles, is $1$.

One can easily check, using leading coefficients, that $v_{s+1} \dif(u_s) = \ox_{0} + s$ for $0 \le s \le 1$, and $v_0 \dif(u_2) = \ox_0 - 1$. In order for this cycle to be
broken, one must specialize $\ox_0$ to an element of $\{-1,0,1\}$. We leave the reader to check that $v_0 \dif(u_0)$ has coefficient $\oy_0 + t \ox_0$, and $v_1 \dif(u_1)$ has
coefficient $\ox_0 -t \ox_0 + \oy_0 + 1$; these coefficients must vanish in a Fc-filtration. The only possibilities are $t=0$ and $\ox_0=-1$, or $t=1$ and $\ox_0 =
1$. Once again, these choices correspond precisely to $\dif_1$ (with the factorization of \eqref{original-factorization}) and $\dif_{-1}$ (with the factorization of
\eqref{dual-factorization}). Thus the differential on $\UC_{\even}$ is equal to $\dif_{\pm 1}$.

To show that the factorizations of \eqref{original-factorization} and \eqref{dual-factorization} are the only ones possible, one could generalize the computations above to
arbitrary $\l>0$. We normalize our factorizations so that the leading coefficient of any $u_c$ or $v_c$ is $1$. The number of parameters required to define the complete family of
factorizations gets quite large, so it is impractical to write out the factorization and compute the entire graph at once. Let us outline the calculation when $\dif=\dif_1$, which
we leave as an exercise to the reader. The same style of argument will work for $\dif = \dif_{-1}$ as well.

Using a calculation with leading coefficients, one can check that $v_{s+1} \dif(u_s) = \ox_{\l-2} + s$ for $0 \le s \le \l-1$, and $v_0 \dif(u_{\l}) = \ox_{\l-2} - 1$. Fix $\dif =
\dif_1$. Then the simple chain of \eqref{fig-break-graph-1-specialized} is certainly a subgraph of our factorization graph. Therefore one requires that $v_t \dif(u_s) = 0$ for $0
\le t \le s \le \l-1$, in order for the graph to be acyclic. We know that $u_0$ agrees with \eqref{original-factorization}, being canonical. One need only compute $v_0
\dif(u_0)=0$ to deduce that $u_1$ agrees with \eqref{original-factorization}. Similarly, once we know that $u_k$ agrees with \eqref{original-factorization} for all $k \le s$, one
need only compute $v_t \dif(u_s)=0$ for $t \le s$ to deduce that $u_{s+1}$ agrees as well. \end{proof}

\begin{rmk} The reader who attempts the exercise above may find this technique useful. Let the parameters in this family of factorizations be the coefficients in $u_c$ for
various $0 \le c \le \l-1$; these will determine the coefficients of $v_c$. However, one should not attempt to compute the coefficients in $v_c$. Instead, to compute $v_t
\dif(u_s)$, one should write $\dif(u_s)$ as a linear combination of various $u_t$ for $t \le s+1$, and use orthogonality. Beginning with $t=s+1$ and moving downwards, one can eliminate one parameter at a time. \end{rmk}

%
\subsection{Divided power modules} \label{dividedpowers}
%

By Remark \ref{rmk-morphisms-in-u}, we have $\END_{\mc{U}}(\1_{\l}\mc{E}^n\1_{\lambda-2n})\cong \NH_n\o \Lambda$. Since any $2$-categorical differential preserves local relations of $\UC$, this map is an isomorphism of $p$-DG algebras. We have already studied the $p$-DG algebra $\Lambda$ in Chapter \ref{section-symmetric-functions}, and thus we need only examine the $p$-DG structure on the nilHecke algebra to understand the derived category of the endomorphism ring. This was done in \cite[Chapter 3]{KQ}, and we summarize the results here.

The specialization \eqref{trueparameters} of the previous section implies that the differential is ``$a$-local'', i.~e.~$a_\l = a = 1$ is independent of $\l$. Let us assume until
\textbf{boldly stated otherwise} that the differential is local with $a=1$. The local differential with $a = -1$ (corresponding to \eqref{trueparametersdual}) behaves in a similar
way, and everything stated here will hold with minor modifications.

With the assumption $a=1$, the differential acts on the local generators of the nilHecke algebra via
\begin{equation}\label{eqn-dif-on-gen-of-NH}
\dif\left(~
\begin{DGCpicture}
\DGCstrand(0,0)(0,1)
\DGCdot*>{1}
\DGCdot{0.5}
\end{DGCpicture}
~\right)=
\begin{DGCpicture}
\DGCstrand(0,0)(0,1)
\DGCdot*>{1}
\DGCdot{0.5}[r]{$^2$}
\end{DGCpicture},
\ \ \ \
\dif\left(~
\begin{DGCpicture}
\DGCstrand(0,0)(1,1)
\DGCdot*>{1}
\DGCstrand(1,0)(0,1)
\DGCdot*>{1}
\end{DGCpicture}
~\right)=
\begin{DGCpicture}
\DGCstrand(0,0)(0,1)
\DGCdot*>{1}
\DGCstrand(1,0)(1,1)
\DGCdot*>{1}
\end{DGCpicture}
-2
\begin{DGCpicture}
\DGCstrand(0,0)(1,1)
\DGCdot*>{1}
\DGCstrand(1,0)(0,1)
\DGCdot*>{1}
\DGCdot{0.7}
\end{DGCpicture}.
\end{equation}
We review the explanation in \cite{KQ} for why the quantum divided power relation
\begin{equation}\label{eqn-divided-power}
E^n=[n]!E^{(n)}
\end{equation}
is categorified using this $p$-DG algebra structure on $\NH_n$.

The obvious inclusion of $p$-DG algebras $\sym_n\subset \pol_n$ allows us to regard any $p$-DG module over $\pol_n$ as, via restriction, a module over $\sym_n$. In particular, the module\footnote{This module was called $\mc{P}_n^+$ in \cite{KQ}.}
\begin{equation}
\mc{E}^{(n)}_+:=\pol_n\cdot v_+, \ \ \ \ \dif(v_+)=-\sum_{i=0}^n(n-i)x_iv_+,
\end{equation}
is a $p$-DG module over $\sym_n$, graded so that $\mathrm{deg}(v_+)=\frac{(1-n)n}{2}$. It has a $\sym_n$-basis which spans a $\dif$-stable $\Bbbk$-vector space $U_n^+$:
\begin{equation}
U_n^+:=\Bbbk\langle x_1^{a_1}\cdots x_n^{a_n}v_{+}| 0 \leq a_i \leq n-i,~i=1,\dots, n\rangle,
\end{equation}
so that $\mc{E}^{(n)}_+\cong \sym_n\o U_n^+.$ It follows that $\mc{E}^{(n)}_+$ is a finite cell module over $\sym_n$. Therefore, the $p$-DG structure on its endomorphism ring over $\sym_n$ will correctly describe its endomorphism ring in the derived category (see equation \eqref{eqn-mor-space-homotopy-category}). We have
\begin{equation}
\NH_n\cong \END_{\sym_n}(\mc{E}^{(n)}_+),
\end{equation}
as $p$-DG algebras, i.e.~intertwining the induced differential on endomorphisms with the differential in \eqref{eqn-dif-on-gen-of-NH} (see equations (65) and (66) of \cite{KQ}).

The upshot of the above discussion is that the left regular representation of $\NH_n$ is isomorphic to
\begin{equation}\label{eqn-decomp-of-left-regular-rep}
\NH_n\cong \sym_n \o U_n^+\o (U_n^+)^* \cong \mc{E}^{(n)}_+ \o (U_n^+)^*,
\end{equation} where $(U_n^+)^*$ stands for the dual $p$-complex of $U_n^+$. Starting from this, one can show that the facts below hold. We regard equation \eqref{eqn-decomp-of-left-regular-rep} as a categorical interpretation of the relation \eqref{eqn-divided-power}.
\begin{itemize}
\item[(1)] By introducing a $\dif$-stable filtration on $(U_n^+)$, the left regular representation $\NH_n$ inherits a filtration whose subquotients are isomorphic, as $p$-DG modules, to grading-shifted copies of $\mc{E}^{(n)}_+$.
\item[(2)] The complex $U_n^+$ is acyclic whenever $n\geq p$, so that $\mc{E}_+^{(n)}$, and hence $\NH_n$, is acyclic whenever $n\geq p$. Moreover, $\mc{D}(\NH_n)\cong 0$ for $n \ge p$. See \cite[Proposition 3.15]{KQ}.
\item[(3)] The module $\mc{E}^{(n)}_+$ is cofibrant over $\NH_n$ and compact in the derived category so long as $0\leq n \leq p-1$. See \cite[Proposition 3.26]{KQ}.
\item[(4)] When $0\leq n \leq p-1$, any indecomposable compact module in the derived category of $\NH_n$ is isomorphic to a module of the form $\mc{E}^{(n)}_+\o V$, where $V$ is some non-contractible indecomposable $p$-complex. See \cite[Corollary 3.27]{KQ}.
\end{itemize}
Because of our choice of grading, the symbol of $U_n^+$ and its dual in $K_0(\Bbbk)=\mathbb{O}_p$ satisfy
\begin{equation}
{[U_n^+]}={[(U_n^+)^*]}=[n]!.
\end{equation}
Thus in the Grothendieck group $K_0(\NH_n)$, we have
\begin{equation} \label{desiredgrotheqn}
{[\NH_n]}=[\mc{E}^{(n)}_+\o (U_n^+)^*]=[n]!{[\mc{E}^{(n)}_+]}
\end{equation}
where both sides are zero for $n \ge p$.

Likewise, one has the left $p$-DG module $\mc{E}^{(n)}_{-}$, which can be identified with the $p$-DG $\NH_n$-module with a conjugate action
$$\mc{E}^{(n)}_{-}:=\left(\mc{E}^{(n)}_+\right)^{{\widetilde{\omega}}}.$$
Since $\widetilde{\omega}$ is an automorphism of $\mc{U}$, $\mc{E}^{(n)}_{-}$ shares all the above properties of $\mc{E}^{(n)}_{+}$

\begin{defn} \label{def-divided-power-mod}Fix any $\lambda\in \Z$, and let ${}_\l \UC$ denote the direct sum of all ${}_\l \UC_\mu$ for $\mu \in \Z$, and similarly for $\UC_\l$. Fix $n\in \N$.
\begin{itemize}
\item[(i)] The left $p$-DG module $\1_\l\mc{E}^{(n)}$ over $(\UC,\dif_1)$ is the induced module
\[
\1_\l\mc{E}^{(n)}:=\Ind_{\NH_n}^{_\l \mc{U}}(\mc{E}^{(n)}_+),
\]
where the induction comes from the composition of inclusions
$$\NH_n \lra \NH_n\o \Lambda\cong \END_{\1_\l\mc{U}}(\1_{\l}\mc{E}^{n}) \lra {_{\l}\mc{U}_{\l-2n}}.$$
\item[(ii)] The left $p$-DG module $\mc{F}^{(n)}\1_{\l}$ is the induced module
\[
\mc{F}^{(n)}\1_\l:=\Ind_{\NH_n}^{\mc{U}_\l}(\mc{E}^{(n)}_{-}),
\]
where the induction comes from the composition of inclusions
$$\NH_n \lra \NH_n\o \Lambda\cong \END_{\mc{U}_{\l}}(\1_{\l-2n}\mc{F}^{n}\1_\l) \lra {_{\l-2n}\mc{U}_{\l}}.$$
\end{itemize}
Both modules in the definition will be referred to as the \emph{divided power modules}.
\end{defn}

\begin{cor}\label{cor-compactness-of-divided-power-mod} Fix a weight $\l \in \Z$, and let $n\in \N$.
\begin{itemize}
\item[(i)] The representable module $\1_\l\mc{E}^{n}$ (resp.$\mc{F}^{n}\1_\l$) admits an $n!$-step filtration whose subquotients are isomorphic to grading shifts of the divided power module $\1_\l\mc{E}^{(n)}$ (resp. $\mc{F}^{(n)}\1_\l$).
\item[(ii)]The divided power modules are acyclic whenever $n\geq p$.
\item[(iii)]The $p$-DG module $\1_\l\mc{E}^{(n)}$ (resp. $\mc{F}^{(n)}\1_{\l}$ ) is cofibrant over the $p$-DG category $({_{\l}\UC},\dif_1)$ (resp. $({\UC_\l},\dif_1)$ whenever $0\leq n\leq p-1$, and its image in the derived category $\mc{D}({_\l\UC},\dif_1)$ (resp. $\mc{D}({\UC_\l},\dif_1)$ ) is compact.

\end{itemize}
\end{cor}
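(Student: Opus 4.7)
The plan is to push the structural results about the $p$-DG $\NH_n$-module $\mc{E}^{(n)}_+$, summarized in items $(1)$--$(4)$ before the corollary, across the induction functor $\Ind_{\NH_n}^{{}_\l\UC}$ associated to the chain of inclusions $\NH_n \hookrightarrow \NH_n\otimes\Lambda \cong \END_{{}_\l\UC}(\1_\l\EC^n\1_{\l-2n}) \hookrightarrow {}_\l\UC$. This functor is exact (being tensor with a bimodule), takes $\NH_n$ to the representable module $\1_\l\EC^n$ and $\mc{E}^{(n)}_+$ to $\1_\l\mc{E}^{(n)}$, and hence preserves filtrations, property (P) (free modules map to representable modules), and finite cell modules. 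The statements for $\mc{F}^{(n)}\1_\l$ then follow by applying the symmetry $\widetilde{\omega}$, which exchanges $\EC$ and $\FC$.

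For (i), I start from the isomorphism $\NH_n \cong \mc{E}^{(n)}_+ \otimes (U_n^+)^*$ of left $p$-DG $\NH_n$-modules in \eqref{eqn-decomp-of-left-regular-rep}. The $n!$-dimensional $p$-complex $(U_n^+)^*$ admits a $\dif$-stable filtration with one-dimensional (grading-shifted) subquotients, and tensoring with $\mc{E}^{(n)}_+$ lifts this to a $p$-DG filtration of $\NH_n$ whose successive quotients are grading shifts of $\mc{E}^{(n)}_+$. Applying $\Ind_{\NH_n}^{{}_\l\UC}$ transports this to the desired $n!$-step filtration of $\1_\l\EC^n$ by grading shifts of $\1_\l\mc{E}^{(n)}$.

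For (iii), in the range $0 \le n \le p-1$, cofibrancy of $\mc{E}^{(n)}_+$ over $\NH_n$ (item $(3)$) is transported by induction to cofibrancy of $\1_\l\mc{E}^{(n)}$ over ${}_\l\UC$, since property (P) modules map to property (P) modules. For compactness, inducing the isomorphism of \eqref{eqn-decomp-of-left-regular-rep} gives $\1_\l\EC^n \cong \1_\l\mc{E}^{(n)} \otimes (U_n^+)^*$ as $p$-DG ${}_\l\UC$-modules. Tensoring by $U_n^+$, the composition of the $\dif$-closed unit $\Bbbk \lra (U_n^+)^*\otimes U_n^+$ with the $\dif$-closed evaluation $(U_n^+)^*\otimes U_n^+ \lra \Bbbk$ is $n!\cdot\Id_\Bbbk$, which is invertible for $n\le p-1$; this exhibits $\Bbbk$ as a $\dif$-closed summand of $(U_n^+)^*\otimes U_n^+$ and so $\1_\l\mc{E}^{(n)}$ as a $\dif$-closed summand of the finite cell module $\1_\l\EC^n\otimes U_n^+$. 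Compactness then follows from Theorem \ref{thm-compact-mod}.

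Part (ii) is the main obstacle. For $n \ge p$, the underlying $p$-complex $\mc{E}^{(n)}_+ \cong \sym_n \otimes U_n^+$ is contractible, since $[U_n^+] = [n]! = 0$ in $\mathbb{O}_p$ forces the finite-dimensional $p$-complex $U_n^+$ to be contractible, and item $(2)$ in fact gives $\mc{D}(\NH_n)\cong 0$. The subtlety is that $\mc{E}^{(n)}_+$ need not be cofibrant over $\NH_n$ in this range, so acyclicity is not automatically transported by the naive induction. My strategy is to verify that the bimodule $\1_\l\EC^n$ is sufficiently flat as a right $p$-DG $\NH_n$-module that the bar resolution map $\mathbf{p}(\mc{E}^{(n)}_+) \to \mc{E}^{(n)}_+$ remains a quasi-isomorphism after inducing; the induced module $\Ind(\mathbf{p}(\mc{E}^{(n)}_+))$ is induction of a cofibrant acyclic, hence contractible, module, and so is itself contractible. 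Equivalently, using that $\sym_n$ is central in $\NH_n$, one can try to keep track of the decomposition $\mc{E}^{(n)}_+ \cong \sym_n\otimes U_n^+$ through the tensor product over $\NH_n$ to extract $U_n^+$ as a $p$-complex tensor factor of $\1_\l\mc{E}^{(n)}$. Making either identification precise is where I expect the technical heart of the argument to lie.
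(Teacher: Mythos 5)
The treatment of parts (i) and (iii) is correct, and follows the paper's intended route (the paper's own proof is the single sentence ``Follows from the corresponding properties for $\mc{E}^{(n)}_+$''). Your unit/counit argument for (iii) --- writing $\1_\l\mc{E}^{(n)}$ as a $\dif$-closed summand of the finite cell module $\1_\l\EC^n\otimes U_n^+$ using that $\mathrm{ev}\circ\mathrm{coev} = n!\cdot\mathrm{Id}$ is invertible for $n\le p-1$ --- is a clean, explicit realization of the transport, and in fact gives cofibrancy and compactness simultaneously, so the separate appeal to item (3) is slightly redundant but harmless.

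For (ii) there is a genuine gap, and you were right to flag it. The issue is worse than you may realize: for $n\ge p$ the module $\mc{E}^{(n)}_+$ cannot possibly be \emph{contractible} over $\NH_n$, because an $\NH_n$-linear null-homotopy would be a degree $2-2p<0$ element of $\END_{\NH_n}(\mc{E}^{(n)}_+)\cong e\NH_n e\cong \sym_n$, which is concentrated in non-negative degrees. So the null-homotopy of $U_n^+$ can be propagated $\sym_n$-linearly but never $\NH_n$-linearly, and ``induction preserves acyclicity'' is simply false at this level of generality; the paper's one-line justification glosses over exactly this point. Your two proposed resolutions are both aimed at the right target, but neither is carried out: strategy one (flatness of the bimodule $\1_\l\EC^n$ as a right $p$-DG $\NH_n$-module) requires exhibiting a cofibrant or $K$-flat right module structure, which is not automatic from the left representability; strategy two (pushing the factorization $\mc{E}^{(n)}_+\cong\sym_n\otimes U_n^+$ through $\otimes_{\NH_n}$) amounts to showing that the natural surjection of $p$-complexes $\1_\l\EC^n\otimes U_n^+\cong X\otimes_{\sym_n}\mc{E}^{(n)}_+\twoheadrightarrow X\otimes_{\NH_n}\mc{E}^{(n)}_+=\1_\l\mc{E}^{(n)}$ admits a section in $\Bbbk_\dif\dmod$, which again needs an argument. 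Until one of these is made precise, part (ii) --- which is essential for the shrinking of the generating set $\mathbb{X}^+_\l$ in Section 6.2 and hence for Theorem \ref{thm-itsanalghom} --- remains unproved in your write-up.
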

\begin{proof}Follows from the corresponding properties for $\mc{E}^{(n)}_+$ (resp. $\mc{E}^{(n)}_{-}$).
\end{proof}

\paragraph{Locality of the differential.}
Now we revoke the assumption that $a_\l$ is necessarily independent of $\l$, or equal to $\pm 1$. That is, we equip $\NH_n$ with some other differential $\widetilde{\dif}$ which
disagrees with both $\dif_1$ and $\dif_{-1}$. We expect $K_0(\NH_n,\widetilde{\dif}) \ne K_0(\NH_n,\dif_{\pm 1})$. This would give another reason to assume that the differential for the $a$-parameters on $\UC$ should be independent of regions.

When $n=2$, the $p$-DG structure on $\NH_2$ depends on a single parameter $a$. It is shown in \cite{KQ} that the Grothendieck group is in error unless $a = \pm 1$. So let us
assume that $a_\l = \pm 1$ for all $\l$. In order to show that $a_\l$ is independent of $\l$ (with a given parity) we need only show that $a_\l = 1$ and $a_{\l-2}=-1$ gives the
incorrect $p$-DG Grothendieck group for $\NH_3$.

Here we give a simple example of what may go wrong in characteristic $p=3$. Until the end of this section we omit the orientations on strands, with the understanding that all
strands point up.

Suppose the algebra $\NH_3$ has the non-local differential $\widetilde{\dif}$ defined by
\begin{subequations}
\begin{equation}\label{eqn-non-local-dif-1}
\widetilde{\dif}\left(~
\begin{DGCpicture}
\DGCstrand(0,0)(0.5,1)
\DGCstrand(0.5,0)(0,1)
\DGCstrand(1,0)(1,1)
\end{DGCpicture}
~\right)=
\begin{DGCpicture}
\DGCstrand(0,0)(0.5,1)
\DGCstrand(0.5,0)(0,1)
\DGCdot{0.75}
\DGCstrand(1,0)(1,1)
\end{DGCpicture}
+
\begin{DGCpicture}
\DGCstrand(0,0)(0,1)
\DGCstrand(0.5,0)(0.5,1)
\DGCstrand(1,0)(1,1)
\end{DGCpicture}
\ , \ \ \ \
\widetilde{\dif}~\left(
\begin{DGCpicture}
\DGCstrand(0,0)(0,1)
\DGCstrand(0.5,0)(1,1)
\DGCstrand(1,0)(0.5,1)
\end{DGCpicture}
~\right)=
\begin{DGCpicture}
\DGCstrand(0,0)(0,1)
\DGCstrand(0.5,0)(1,1)
\DGCdot{0.75}
\DGCstrand(1,0)(0.5,1)
\end{DGCpicture}
-
\begin{DGCpicture}
\DGCstrand(0,0)(0,1)
\DGCstrand(0.5,0)(0.5,1)
\DGCstrand(1,0)(1,1)
\end{DGCpicture}
\ .
\end{equation}

As a module over $\pol_3$, $\NH_3$ has a basis
\[
\left\{~
\begin{DGCpicture}
\DGCstrand(0,0)(0,1)
\DGCstrand(0.5,0)(0.5,1)
\DGCstrand(1,0)(1,1)
\end{DGCpicture}\ , \ \
\begin{DGCpicture}
\DGCstrand(0,0)(0.5,1)
\DGCstrand(0.5,0)(0,1)
\DGCstrand(1,0)(1,1)
\end{DGCpicture}\ , \ \
\begin{DGCpicture}
\DGCstrand(0,0)(0,1)
\DGCstrand(0.5,0)(1,1)
\DGCstrand(1,0)(0.5,1)
\end{DGCpicture}\ , \ \
\begin{DGCpicture}
\DGCstrand(0,0)(0.5,1)
\DGCstrand(0.5,0)(1,1)
\DGCstrand(1,0)(0,1)
\end{DGCpicture}\ , \ \
\begin{DGCpicture}
\DGCstrand(0,0)(1,1)
\DGCstrand(0.5,0)(0,1)
\DGCstrand(1,0)(0.5,1)
\end{DGCpicture}\ , \ \
\begin{DGCpicture}[scale=0.5]
\DGCstrand(0,0)(2,2)
\DGCstrand(1,0)(0,1)(1,2)
\DGCstrand(2,0)(0,2)
\end{DGCpicture}
~\right\}.
\]
Since
\begin{equation}\label{eqn-non-local-dif-2}
\widetilde{\dif}\left(~
\begin{DGCpicture}
\DGCstrand(0,0)(0,1)
\DGCstrand(0.5,0)(0.5,1)
\DGCstrand(1,0)(1,1)
\end{DGCpicture}
~\right) = 0,
\end{equation}
the $\pol_3$-module spanned by the the identity forms a $p$-DG subalgebra of $\NH_3$, which we still call $\pol_3$. We compute $\widetilde{\dif}$ for the rest of basis elements above.
\begin{equation}\label{eqn-non-local-dif-3}
\widetilde{\dif}\left(~
\begin{DGCpicture}
\DGCstrand(0,0)(0.5,1)
\DGCstrand(0.5,0)(1,1)
\DGCstrand(1,0)(0,1)
\end{DGCpicture}
~\right)  =
\begin{DGCpicture}
\DGCstrand(0,0)(0.5,1)
\DGCstrand(0.5,0)(1,1)
\DGCstrand(1,0)(0,1)
\DGCdot{0.85}
\end{DGCpicture}
+
\begin{DGCpicture}
\DGCstrand(0,0)(0.5,1)
\DGCstrand(0.5,0)(1,1)
\DGCdot{0.85}
\DGCstrand(1,0)(0,1)
\end{DGCpicture}
+
\begin{DGCpicture}
\DGCstrand(0,0)(0,1)
\DGCstrand(0.5,0)(1,1)
\DGCstrand(1,0)(0.5,1)
\end{DGCpicture}
-
\begin{DGCpicture}
\DGCstrand(0,0)(0.5,1)
\DGCstrand(0.5,0)(0,1)
\DGCstrand(1,0)(1,1)
\end{DGCpicture}
\ ,
\end{equation}
\begin{equation}\label{eqn-non-local-dif-4}
\widetilde{\dif}\left(~
\begin{DGCpicture}
\DGCstrand(0,0)(1,1)
\DGCstrand(0.5,0)(0,1)
\DGCstrand(1,0)(0.5,1)
\end{DGCpicture}
~\right)  =
\begin{DGCpicture}
\DGCstrand(0,0)(1,1)
\DGCstrand(0.5,0)(0,1)
\DGCdot{0.85}
\DGCstrand(1,0)(0.5,1)
\end{DGCpicture}
+
\begin{DGCpicture}
\DGCstrand(0,0)(1,1)
\DGCstrand(0.5,0)(0,1)
\DGCstrand(1,0)(0.5,1)
\DGCdot{0.85}
\end{DGCpicture}
+
\begin{DGCpicture}
\DGCstrand(0,0)(0,1)
\DGCstrand(0.5,0)(1,1)
\DGCstrand(1,0)(0.5,1)
\end{DGCpicture}
-
\begin{DGCpicture}
\DGCstrand(0,0)(0.5,1)
\DGCstrand(0.5,0)(0,1)
\DGCstrand(1,0)(1,1)
\end{DGCpicture}
\ ,
\end{equation}
\begin{equation}\label{eqn-non-local-dif-5}
\widetilde{\dif}\left(~
\begin{DGCpicture}[scale=0.5]
\DGCstrand(0,0)(2,2)
\DGCstrand(1,0)(0,1)(1,2)
\DGCstrand(2,0)(0,2)
\end{DGCpicture}
~\right)
 =
\begin{DGCpicture}[scale=0.5]
\DGCstrand(0,0)(2,2)
\DGCstrand(1,0)(0,1)(1,2)
\DGCstrand(2,0)(0,2)
\DGCdot{1.7}
\end{DGCpicture}
+
\begin{DGCpicture}[scale=0.5]
\DGCstrand(0,0)(2,2)
\DGCstrand(1,0)(0,1)(1,2)
\DGCdot{1.7}
\DGCstrand(2,0)(0,2)
\end{DGCpicture}
+
\begin{DGCpicture}[scale=0.5]
\DGCstrand(0,0)(2,2)
\DGCdot{1.7}
\DGCstrand(1,0)(0,1)(1,2)
\DGCstrand(2,0)(0,2)
\end{DGCpicture}
-
\begin{DGCpicture}[scale=0.5]
\DGCstrand(0,0)(2,2)
\DGCstrand(1,0)(0,2)
\DGCstrand(2,0)(1,2)
\end{DGCpicture}
-
\begin{DGCpicture}[scale=0.5]
\DGCstrand(0,0)(1,2)
\DGCstrand(1,0)(2,2)
\DGCstrand(2,0)(0,2)
\end{DGCpicture}
\ .
\end{equation}
\end{subequations}
It follows from equations \eqref{eqn-non-local-dif-1} through \eqref{eqn-non-local-dif-5} that the $\pol_3$-splitting of $\NH_3$ (in terms of the basis above) is actually a filtration in the category of $p$-DG modules for $\pol_3$. The associated graded $p$-DG modules are contractible, with the exception of $\pol_3$ itself. It follows that the inclusions $\Bbbk \lra \pol_3\lra \NH_3$ are quasi-isomorphisms of $p$-DG algebras. From Corollary \ref{cor-qis-algebra-equivalence-derived-categories}, one finds that $K_0(\NH_3, \widetilde{\dif}) \cong \mathbb{O}_3$, and the symbol $[\NH_3]$ is a generator of the Grothendieck group as an $\mathbb{O}_3$-module.

This clearly disagrees with the desired relation $[\NH_3] = 0$, which is ultimately needed to categorify the equality $E^3 = 0$ in $u^+_q(\mf{sl}_2)$ at a sixth root of unity.

\section{Decategorification} \label{sec-decategorfication}

\subsection{An idempotented form of the small quantum \texorpdfstring{$\mathfrak{sl}(2)$}{sl(2)}}
Following \cite[Chapter 36]{Lus4} we define an idempotented form of the small quantum $\mf{sl}_2$ over the ring $\mathbb{O}_p$, and some of its finite dimensional representations. We first recall the generic version due to Beilinson-Lusztig-MacPherson \cite{BLM}.

\begin{defn}\label{def-big-sl2}
\begin{itemize}
\item[(I)] The quantum algebra $U_q(\mathfrak{sl}_2)$ is a unital associative algebra over $\Z[q^{\pm}]$ generated by $ E, F, K, K^{-1} $ and subject to the relations:
\begin{enumerate}
\item[(i)] $ KK^{-1} = 1 = K^{-1}K$,
\item[(ii)] $ KE = q^{2} E K $, \quad $ KF = q^{-2} F K $,
\item[(iii)] $ EF-FE = \frac{K-K^{-1}}{q-q^{-1}}$.
\end{enumerate}
\item[(II)] The non-unital associative quantum algebra $\dot{U}_q(\mathfrak{sl}_2)$ is obtained from $U_q(\mathfrak{sl}_2)$ by removing $K, K^{-1}$ and adjoining a family of orthogonal idempotents $\{1_\l|\l\in \Z\}$, such that
\begin{enumerate}
\item[(i)] $ 1_\l \cdot 1_{\mu}=\delta_{\l,\mu}1_{\l}$ for any $\l, \mu \in \Z$,
\item[(ii)] $ E 1_{\l} = 1_{\l+2} E $, \quad $ F 1_{\l} = 1_{\l-2} F $,
\item[(iii)] $ EF 1_{\l}-FE1_{\l} =[\l]1_{\l}$,
\end{enumerate}
where in the last equation, $[\l]=\frac{q^{\l}-q^{-\l}}{q-q^{-1}}=\sum_{i=0}^{\l-1} q^{\l -1-2i}$.
\end{itemize}
\end{defn}
Both forms of the quantum algebra above are known as the \emph{generic} quantum $\mf{sl}_2$, as opposed to the quantum group at a root of unity to be discussed below. The second form, abbreviated as $\dot{U}$, is the most convenient for the purpose of categorification (see \cite{Lau1}). It has a collection of $\Z[q^{\pm}]$-integral algebra generators formed by the \emph{divided power elements}
\begin{equation}\label{eqn-div-element}
E^{(n)}1_\l:= \frac{E^n1_\l}{[n]!}, \quad \quad F^{(n)}1_\l:= \frac{F^n1_\l}{[n]!},
\end{equation}
where $\l\in \Z$ and $n\in \N$. Lusztig's \emph{canonical basis} $\dot{\mathbb{B}}$ is an additive $\Z[q^{\pm}]$-basis for $\dot{U}$, which consists of
\begin{itemize}
\item $E^{(a)}F^{(b)}1_\l$, where $a,b \in \N$, $\l \in \Z$ with $\l \leq b-a$;
\item $F^{(b)}E^{(a)}1_\l$, where $a,b \in \N$, $\l \in \Z$ with $\l \geq b-a$,
\end{itemize}
and with the identifications $E^{(a)}F^{(b)}1_{b-a}=F^{(b)}E^{(a)}1_{b-a}$.

Write ${_\l\dot{\mathbb{B}}_\mu}$ for the collection of all canonical basis elements in the set $1_\l \cdot \dot{\mathbb{B}} \cdot 1_\mu$. We will use the following.

\begin{lemma}\label{lemma-canonical-ordered-basis}For fixed weights $\l,\mu\in \Z$, the set of canonical basis elements in ${_\l\dot{\mathbb{B}}_\mu}$ coincides with its intersection with either of the following collections
\begin{itemize}
\item $\dot{\mathbb{B}}^+:=\{E^{(a)}F^{(b)}1_\l|a, b \in \N, \l\leq b-a\}$;
\item $\dot{\mathbb{B}}^-:=\{F^{(b)}E^{(a)}1_\l|a, b \in \N, \l\geq b-a\}$.
\end{itemize}
\end{lemma}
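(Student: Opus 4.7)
The plan is to reduce the claim to a direct calculation with weights. Parse the statement as follows: for each pair $(\l,\mu)$, we assert that at least one of the inclusions ${_\l\dot{\mathbb{B}}_\mu}\subseteq \dot{\mathbb{B}}^+$ or ${_\l\dot{\mathbb{B}}_\mu}\subseteq \dot{\mathbb{B}}^-$ holds (with both holding in the boundary case). This is natural because the two families of canonical basis elements together cover all of $\dot{\mathbb{B}}$, and the question is simply which family each given element $1_\l\cdot x\cdot 1_\mu$ belongs to.

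First I would observe that $_\l\dot{\mathbb{B}}_\mu$ is empty unless $\l-\mu\in 2\Z$, since every generator $E$ or $F$ changes the weight by $\pm 2$. Assuming this, set $n := (\l-\mu)/2$. Now take any $x\in {_\l\dot{\mathbb{B}}_\mu}$. From the defining list of $\dot{\mathbb{B}}$, either (i) $x=E^{(a)}F^{(b)}1_\nu$ with $\nu\le b-a$, or (ii) $x=F^{(b)}E^{(a)}1_\nu$ with $\nu\ge b-a$, with the identification in place when $\nu=b-a$. The right idempotent forces $\nu=\mu$, and the total weight change $2(a-b)$ forces $\l=\mu+2(a-b)$, i.e.\ $a-b=n$.

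In case (i), $\mu\le b-a=-n$ rearranges to $\l+\mu\le 0$; in case (ii), $\mu\ge b-a=-n$ rearranges to $\l+\mu\ge 0$. Hence if $\l+\mu<0$, every $x\in {_\l\dot{\mathbb{B}}_\mu}$ can only arise from case (i) and therefore lies in $\dot{\mathbb{B}}^+$. Dually, if $\l+\mu>0$, every such $x$ lies in $\dot{\mathbb{B}}^-$. When $\l+\mu=0$ we have $\mu=-n$, both presentations apply, and the identification $E^{(a)}F^{(a-n)}1_{-n}=F^{(a-n)}E^{(a)}1_{-n}$ ensures that such $x$ lies simultaneously in $\dot{\mathbb{B}}^+$ and $\dot{\mathbb{B}}^-$. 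Since the trichotomy $\l+\mu<0$, $=0$, $>0$ is exhaustive, the lemma follows.

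There is no real obstacle; the argument is a bookkeeping exercise using the weight constraints built into the definition of $\dot{\mathbb{B}}$. The only place to be careful is to correctly parse what "coincides with its intersection with either" means, and to invoke the boundary identifications $E^{(a)}F^{(b)}1_{b-a}=F^{(b)}E^{(a)}1_{b-a}$ in the case $\l+\mu=0$, so as not to double-count a basis element that occurs in both of the listed presentations.
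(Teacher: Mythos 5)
Your proof is correct. The paper's ``proof'' of this lemma is simply a citation to Lauda's Lemma 2.6 in \cite{Lau1}, so there is no internal argument to compare against; what you have written is exactly the elementary weight-bookkeeping argument that underlies the cited result. The reduction to the trichotomy $\l+\mu<0$, $=0$, $>0$, using $a-b=(\l-\mu)/2$ together with the defining inequalities $\nu\le b-a$ (resp. $\nu\ge b-a$), is the right and essentially unique way to prove this, and your handling of the boundary case via the identification $E^{(a)}F^{(b)}1_{b-a}=F^{(b)}E^{(a)}1_{b-a}$ is exactly what is needed so that no element is double-counted. You have also correctly parsed the somewhat awkward phrasing ``coincides with its intersection with either'' as the disjunction ${_\l\dot{\mathbb{B}}_\mu}\subseteq\dot{\mathbb{B}}^+$ or ${_\l\dot{\mathbb{B}}_\mu}\subseteq\dot{\mathbb{B}}^-$, which is the only reading under which the statement is both true and nontrivial.
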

\begin{proof}
See \cite[Lemma 2.6]{Lau1}.
\end{proof}

\paragraph{The small quantum group.} Let $ l \geq 1 $ be $ 2 $ or an odd integer.  Let $ \zeta_{2l} $ be a fixed primitive $2l$-th root of unity in $\C$, and set $\mc{O}_{2l}:=\Z[\zeta_{2l}]$. Let $\dot{U}_{\zeta_{2l}}$ be the base change of $\dot{U}$ by reduction of coefficients via the surjective ring map
\begin{equation}
\Z[q^{\pm}] \twoheadrightarrow \Z[q^{\pm}]/(\Psi_{2l}(q))\cong \mc{O}_{2l}.
\end{equation}

\begin{defn}\label{def-small-sl2-2l-root}
The non-unital associative quantum algebra $\dot{u}_{\zeta_{2l}}(\mathfrak{sl}_2)$ is the $\mc{O}_{2l}$-subalgebra in $\dot{U}_{\zeta_{2l}}$ generated by the collection of elements
$$\{E1_\l,~F1_\l|\l\in \Z\}.$$
\end{defn}

Set $[k]_{\zeta_{2l}}$ to be the quantum integer $[k]$ evaluated at $\zeta_{2l}$. In $\dot{U}_{\zeta_{2l}}(\mathfrak{sl}_2)$, equation \eqref{eqn-div-element} implies that
\begin{equation}\label{eqn-nilpotent-in-O2l}
E^{k}1_\l=[k]_{\zeta_{2l}}!E^{(k)}1_\l, \ \ \ \ \ F^{k}1_\l=[k]_{\zeta_{2l}}!F^{(k)}1_\l,
\end{equation}
which are non-zero if and only if $0\leq k\leq l-1$. Therefore $E^l 1_\l=0$ in $\dot{u}_{\zeta_{2l}}(\mathfrak{sl}_2)$.

We will refer to $\dot{u}_{\zeta_{2l}}(\mathfrak{sl}_2)$ as the \emph{Lusztig idempotented small quantum group} and denote it by $\dot{u}_{\mc{O}_{2l}}$.

When $l=p$ is a prime number, we also introduce an $\mathbb{O}_p$-integral version of this idempotented algebra. Denote by $\dot{U}_{\mathbb{O}_p}$ the idempotented $\mathbb{O}_p$-algebra obtained from $\dot{U}$ by reduction of coefficients along the ring map
$$\Z[q^{\pm 1}] \twoheadrightarrow \Z[q^{\pm}]/(1+q^2+\cdots+q^{2(p-1)})\cong \mathbb{O}_p.$$
It inherits the canonical basis of $\dot{U}$.

\begin{defn}\label{def-small-sl2-Op-form}
The non-unital associative quantum algebra $\dot{u}_{\mathbb{O}_p}(\mathfrak{sl}_2)$ is the subalgebra of $\dot{U}_{\mathbb{O}_p}$ generated by $\{E1_\l, F1_\l|\l\in \Z\}$. It has a presentation over $\mathbb{O}_p$ as follows
\begin{enumerate}
\item[(i)] $ 1_\l \cdot 1_{\mu}=\delta_{\l,\mu}1_{\l}$ for any $\l, \mu \in \Z$,
\item[(ii)] $ E 1_{\l} = 1_{\l+2} E $, \quad $ F 1_{\l} = 1_{\l-2} F $,
\item[(iii)] $ EF 1_{\l}-FE1_{\l} =[\l]_{\mathbb{O}_p}1_{\l}$,
\item[(iv)] $E^p=0$, \quad $F^p=0$.
\end{enumerate}
where in the third condition, $[\l]_{\mathbb{O}_p}=\sum_{i=0}^{\l-1} q^{\l -1-2i}\in \mathbb{O}_p$.
\end{defn}

Notice that $1+q^2+\cdots+q^{2(p-1)}=\Psi_p(q^2)=\Psi_p(q)\Psi_{2p}(q)$, where $\Psi_N$ stands for the $N$-th cyclotomic polynomial with integer coefficients. It follows that there is a surjective map of rings obtained from the compositions
$$\mathbb{O}_p\cong \Z[q^{\pm}]/(\Psi_p(q^2))\twoheadrightarrow \Z[q^{\pm}]/(\Psi_{2p}(q))\cong \mc{O}_{2p},$$
so that tensor product with $\mc{O}_{2p}$ via this map gives rise to an isomorphism of algebras
$$\dot{u}_{\mathbb{O}_p}\o_{\mathbb{O}_{p}}\mc{O}_{2p}\cong \dot{u}_{\mc{O}_{2p}}.$$

The algebra $\dot{u}_{\mathbb{O}_p}$ acquires an $\mathbb{O}_p$-integral basis by reduction of Lusztig's canonical basis. We will denote it by $\dot{\mathbb{B}}(\mathbb{O}_p)$, which now consists of
\begin{itemize}
\item $E^{(a)}F^{(b)}1_\l$, where $a,b \in \{0,1,\ldots, p-1\}$, $\l \in \Z$ with $\l \leq b-a$;
\item $F^{(b)}E^{(a)}1_\l$, where $a,b \in \{0,1,\ldots, p-1\}$, $\l \in \Z$ with $\l \geq b-a$,
\end{itemize}
with it understood that $E^{(a)}F^{(b)}1_{b-a}=F^{(b)}E^{(a)}1_{b-a}$. As for the generic case, set ${_\l\dot{\mathbb{B}}(\mathbb{O}_p)_\mu}$ to be the collection of elements $1_\l\cdot\dot{\mathbb{B}}(\mathbb{O}_p)\cdot 1_\mu$.

\begin{cor}For fixed weights $\l,\mu\in \Z$, the set of canonical basis elements in ${_\mu\dot{\mathbb{B}}(\mathbb{O}_p)_\l}$ coincides with its intersection with either of the following sets
\begin{itemize}
\item $\dot{\mathbb{B}}(\mathbb{O}_p)^+:=\{E^{(a)}F^{(b)}1_\l|0\leq a, b \leq p-1, \l\leq b-a\}$;
\item $\dot{\mathbb{B}}(\mathbb{O}_p)^-:=\{F^{(b)}E^{(a)}1_\l|0 \leq a, b \leq p-1, \l\geq b-a\}$.
\end{itemize}
\end{cor}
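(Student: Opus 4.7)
The plan is to derive this statement directly from its generic counterpart, Lemma \ref{lemma-canonical-ordered-basis}, by base change along the surjection $\Z[q^{\pm}] \twoheadrightarrow \mathbb{O}_p$. First I would observe that $\dot{\mathbb{B}}(\mathbb{O}_p)$ is defined to be the image in $\dot{u}_{\mathbb{O}_p}$ of the subset of Lusztig's canonical basis $\dot{\mathbb{B}}$ consisting of the elements $E^{(a)}F^{(b)}1_\l$ and $F^{(b)}E^{(a)}1_\l$ with the divided-power indices truncated to $0 \le a,b \le p-1$. In the same way, $\dot{\mathbb{B}}(\mathbb{O}_p)^{\pm}$ are the images of $\dot{\mathbb{B}}^{\pm}$ restricted to this range. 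Since $\dot{\mathbb{B}}(\mathbb{O}_p)$ is by construction an $\mathbb{O}_p$-basis of $\dot{u}_{\mathbb{O}_p}$, no identifications occur under base change beyond the overlap $E^{(a)}F^{(b)}1_{b-a}=F^{(b)}E^{(a)}1_{b-a}$ already present generically.

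Next I would apply Lemma \ref{lemma-canonical-ordered-basis} to the generic algebra. For the fixed weights $\l,\mu\in \Z$, it gives the two containments ${_\mu\dot{\mathbb{B}}_\l}\subseteq \dot{\mathbb{B}}^{+}$ and ${_\mu\dot{\mathbb{B}}_\l}\subseteq \dot{\mathbb{B}}^{-}$. Intersecting both sides of each containment with the subset of $\dot{\mathbb{B}}$ whose indices satisfy $0\leq a,b\leq p-1$ and then passing to $\mathbb{O}_p$-coefficients yields the analogous containments ${_\mu\dot{\mathbb{B}}(\mathbb{O}_p)_\l}\subseteq \dot{\mathbb{B}}(\mathbb{O}_p)^{+}$ and ${_\mu\dot{\mathbb{B}}(\mathbb{O}_p)_\l}\subseteq \dot{\mathbb{B}}(\mathbb{O}_p)^{-}$, which is exactly the assertion of the corollary.

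There is essentially no obstacle: the statement is a direct restriction of Lemma \ref{lemma-canonical-ordered-basis} under a base change that preserves the set-theoretic structure of the canonical basis by construction. The only minor point worth spelling out is the compatibility observation above, namely that the elements $E^{(a)}F^{(b)}1_\l$ and $F^{(b)}E^{(a)}1_\l$ with $0 \le a,b \le p-1$ remain $\mathbb{O}_p$-linearly independent (modulo the generic identification at $\l=b-a$), which is immediate from the defining description of $\dot{\mathbb{B}}(\mathbb{O}_p)$.
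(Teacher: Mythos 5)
Your proof is correct and matches the paper's approach: the paper also deduces the corollary directly from Lemma \ref{lemma-canonical-ordered-basis}, noting it is an immediate consequence. Your extra remarks about compatibility of the truncation $0\le a,b\le p-1$ with base change to $\mathbb{O}_p$ are a reasonable elaboration of the same idea.
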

\begin{proof}This is a direct consequence of Lemma \ref{lemma-canonical-ordered-basis}.
\end{proof}

We will categorify the algebra $\dot{u}_{\mathbb{O}_p}$, together with the basis $\dot{\mathbb{B}}(\mathbb{O}_p)$, in the next section.

\begin{rmk}\label{rmk-restricted-u-dot}
The ring $\mathbb{O}_p$ affords a residue map onto $\F_p$, by sending $q\mapsto 1$. Base changing $\dot{u}_{\mathbb{O}_p}$ along this ring map gives us an idempotented form of the restricted universal enveloping algebra of $\mf{sl}_2$:
\[
\dot{u}_{\F_p}(\mf{sl}_2):=\dot{u}_{\mathbb{O}_p}\o_{\mathbb{O}_p}\F_p.
\]
This form of the restricted Lie algebra was utilized in \cite{GK} to construct modular fusion categories.
\end{rmk}

\paragraph{Some simple representations.} For each weight $\mu \in \{0,1,\ldots, p-1\}$, we define the highest weight module $V^\mu$ of $\dot{u}_{\mathbb{O}_p}$, as follows. As an $\mathbb{O}_p$-module, it is free of rank $\mu+1$:
$$V^\mu\cong \bigoplus_{i=0}^{\mu}\mathbb{O}_p\hat{1}_{\mu-2i},$$
where $\{\hat{1}_{\mu-2i}|i=0,\dots, \mu\}$ is a basis. The operators $E1_\l, F1_\l \in \dot{u}_{\mathbb{O}_p}$ act on $V^\mu$ by
\begin{equation}\label{eqn-module-basis}
F1_\l \cdot \hat{1}_{\mu-2i}=\delta_{\l,\mu-2i}[i+1]\hat{1}_{\mu-2i-2}, \ \ \ \ E1_\l \cdot \hat{1}_{\mu-2i}=\delta_{\l,\mu-2i}[\mu+1-i]\hat{1}_{\mu-2i+2},
\end{equation}
except that we define $[0]:=0$ and $\hat{1}_{\mu+2}=0$. One readily shows the relations in Definition \ref{def-small-sl2-Op-form} are satisfied. The rank $p$ representation $V^{p-1}$ is usually known as the \emph{Steinberg module}.

%
\subsection{Grothendieck ring as small quantum \texorpdfstring{$\mathfrak{sl}(2)$}{sl(2)}}
\label{grothring}
%
For any of the differentials $\dif$ in Definition \ref{def-dif-on-U}, we denote the abelian category of $p$-DG modules over $\UC$ by $\UC_\dif\dmod$. It decomposes into a direct sum of abelian categories
\[
\UC_\dif\dmod=\bigoplus_{\mu, \l \in \Z} ({_{\mu}\UC_{\l}})_\dif\dmod.
\]
There is a natural induction functor, coming from composition of $1$-morphisms: for any weights $\l_1, \l_2, \l_3, \l_4 \in \Z$,
\begin{eqnarray}\label{eqn-induction-abelian}
({_{\l_1}\UC_{\l_2}}\otimes {_{\l_3}\UC_{\l_3}})_\dif\dmod & \lra & \delta_{\l_2, \l_3} ({_{\l_1}\UC_{\l_4}})_\dif\dmod\\
\mc{M} \boxtimes \mc{N} & \mapsto & \mathrm{Ind}(\mc{M}\boxtimes \mc{N}). \nonumber
\end{eqnarray}

Passing to the derived category, the induction functor descends to an exact functor
\begin{equation}\label{eqn-induction-derived}
\mathrm{Ind}: \mc{D}(\UC\o\UC,\dif) \lra \mc{D}(\UC,\dif),
\end{equation}
and hence a map of $\mathbb{O}_p$-modules
\begin{equation}\label{eqn-induction-K0}
{[\mathrm{Ind}]}: K_0(\UC\o\UC,\dif) \lra K_0(\UC,\dif).
\end{equation}

We will not be able to say much about a general differential. Henceforth, we specialize to the case $\dif =\dif_1$. What follows will also apply to $\dif_{-1}$ by applying the automorphism $\widetilde{\tau}$.

We start by generalizing Definition \ref{def-divided-power-mod}.
\begin{defn}\label{def-canonical-mod}
For any $a,b\in \N$ and $\l \in \Z$, let $\mc{E}^{(a)}\mc{F}^{(b)}\1_\l$ be the induced $p$-DG module
\[
\mc{E}^{(a)}\mc{F}^{(b)}\1_\l:=\mathrm{Ind}_{\UC_{\l-2b}\o {\UC_{\l}}}^{\ \UC_\l}\left(\mc{E}^{(a)}\1_{\l-2b}\boxtimes \mc{F}^{(b)}\1_{\l}\right),
\]
where the induction is along the composition
\[\UC_{\l-2b}\o {\UC_{\l}}\lra \UC_{\l}, \ \ \ \  \xi_1\1_{\l-2b} \o {\1_{\mu}\xi_2\1_{\l}} \mapsto \delta_{\l-2b, \mu}\xi_1 \xi_2 \1_\l. \]
One defines $\mc{F}^{(b)}\mc{E}^{(a)}\1_\l$ similarly. We will refer to these modules as the \emph{canonical modules} over $\UC_\l$. Notice that the statements of Corollary \ref{cor-compactness-of-divided-power-mod} hold for these modules as well.
\end{defn}

Now fix a weight $\l \in \Z$, and consider $\UC_\l$.

Assume that $\l \leq p-1$. The Fc-filtrations on $\EC\FC\1_\l$ and $\FC\EC\1_\l$ established in Section \ref{subsec-EF-decomp} can be regarded as an algorithm to filter an
arbitrary representable module of the form $\EC^{r_k}\FC^{s_k}\cdots\EC^{r_1}\FC^{s_1}\1_\l\{k\} \in \UC_\l$ by $p$-DG modules in the set
$\widetilde{\mathbb{X}}_\l:=\{\EC^r\FC^s\1_\l|r,s\in \N, t\in \Z\}$ with possible $\Z$-grading shifts. Furthermore, the modules in $\widetilde{\mathbb{X}}_\l$ can be further
filtered by canonical modules $\EC^{(r)}\FC^{(s)}\1_\l$ with grading shifts. Hence in the derived category, any representable module is presented by a convolution of canonical
modules with grading shifts. Now we may further shrink the size of canonical modules needed, since those of the form $\EC^{(r)}\FC^{(s)}\1_{\l}$ for which either $r\geq p$ or
$s\geq p$ are contractible, using Corollary \ref{cor-compactness-of-divided-power-mod}. It follows that the collection \begin{equation}\label{eqn-compact-cofib-generator}
\mathbb{X}^+_\l:=\{\EC^{(r)}\FC^{(s)}\1_\l|0\leq r,s \leq p-1\} \end{equation} forms another generating set of $\mc{D}(\mc{U}_\l)$. By Corollary
\ref{cor-compactness-of-divided-power-mod} again, $\mathbb{X}^+_\l$ consists of compact cofibrant modules. Using Proposition \ref{prop-criterion-derived-equivalence}, we have a
derived equivalence \begin{equation} \mc{D}(\UC_\l) \cong \mc{D}(\END_{\UC_\l}(\mathbb{X}^+_\l)), \end{equation} where we use the notation introduced in Remark
\ref{rmk-shrinking-category-to-algebra}. Furthermore, the cofibrance of the modules in $\mathbb{X}^+_\l$ allows us to compute the endomorphism algebra as usual. Now we use the
following lemma, which is a direct consequence of \cite[Proposition 9.8]{Lau1}.\footnote{Lauda shows this over a field, and this result is strengthened in \cite[Proposition
5.15]{KLMS} over $\Z$.}

\begin{lemma}\label{lemma-positive-end-algebra}
The endomorphism algebra $\END_{\UC_\l}(\mathbb{X}^+_\l)$ is a strongly positive $p$-DG algebra. \hfill$\square$
\end{lemma}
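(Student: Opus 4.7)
The plan is to combine Lauda's explicit description of Hom spaces between divided power modules with the Fc-filtration structure from Section \ref{subsec-EF-decomp} to verify the four conditions of Definition \ref{def-postive-pdg-algebra}. The cited Hom space computation (\cite[Proposition 9.8]{Lau1} and its integral version \cite[Proposition 5.15]{KLMS}) presents each $\HOM_{\UC_\l}(\EC^{(r)}\FC^{(s)}\1_\l, \EC^{(r')}\FC^{(s')}\1_\l)$ as a free module over $\Lambda$ of finite rank with an explicit basis living in non-negative degrees.

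Condition (ii.1) follows immediately: each graded component of $A := \END_{\UC_\l}(\mathbb{X}^+_\l)$ is a finite direct sum of finite-dimensional pieces of $\Lambda$, and the non-negativity of both $\Lambda$ and the Hom space generators implies that $A$ is supported in $\N$. For conditions (ii.2) and (iii), I would invoke Lauda's indecomposability theorem: in the additive Karoubi envelope of $\UC_\l$, each $\EC^{(r)}\FC^{(s)}\1_\l$ decomposes as a direct sum of grading shifts of indecomposable objects $\mathcal{B}_\beta$ indexed by canonical basis elements of $\dot{u}_{\mathbb{O}_p}$, each $\End(\mathcal{B}_\beta)$ is non-negatively graded with degree-zero part equal to $\Bbbk$, and distinct $\mathcal{B}_\beta$'s have no degree-zero morphisms between them. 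Consequently $A^0 \cong \prod_\beta \mathrm{M}_{n_\beta}(\Bbbk)$, where $n_\beta$ is the total multiplicity of grading shifts of $\mathcal{B}_\beta$ across $\bigoplus_{M \in \mathbb{X}^+_\l} M$; this is simultaneously semisimple and a product of matrix algebras over $\Bbbk$.

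The main obstacle will be condition (ii.3): verifying that $\dif_1$ annihilates $A^0$. Since $\dif_1$ has degree $2$ it maps $A^0$ into $A^2$, and the vanishing is not automatic, because the individual idempotents $u_c v_c$ appearing in a Fc-filtration need not themselves be $\dif_1$-closed even though the subquotient identities $v_c u_c$ and the total identity $\sum_c u_c v_c$ are. My strategy would be to present the matrix units of $A^0$ explicitly as $\dif_1$-closed compositions. By iterating the Fc-filtration of Proposition \ref{prop-filtration-EF} (together with its dual Proposition \ref{prop-filtration-EF-dual}) through all compositions of divided powers via the categorical $\mfsl_2$-relation, each identity map of a canonical summand $\mathcal{B}_\beta$ can be realized as a $v_i u_i$ composition satisfying $\dif_1(v_i u_i) = 0$, and each matrix unit connecting two copies of $\mathcal{B}_\beta$ appearing in different $\EC^{(r)}\FC^{(s)}\1_\l$'s is assembled from such building blocks through further Fc-filtration steps. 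The uniqueness statement Proposition \ref{prop-only-way-to-do-it} -- which shows that $\dif_1$ is precisely the differential tuned so that Fc-filtrations exist on the basic $\EC\FC\1_\l$ splittings -- provides strong evidence that this compatibility propagates systematically to every matrix unit in $A^0$. The bulk of the actual proof is the careful combinatorial bookkeeping required to carry out this iteration uniformly across all pairs of objects in $\mathbb{X}^+_\l$.
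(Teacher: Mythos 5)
Your proof takes a genuinely different route from the paper's. The paper's entire proof is the citation: the lemma is ``a direct consequence of \cite[Proposition 9.8]{Lau1}'' (strengthened to $\Z$ in \cite[Proposition 5.15]{KLMS}), with no further verification. You, on the other hand, unpack the four conditions of Definition~\ref{def-postive-pdg-algebra} and treat (ii.3) as a separate and nontrivial issue. That distinction is worth examining, because you have put your finger on something that the paper's citation does not literally cover: Lauda's Proposition~9.8 is a classical statement about the 2-category $\UC$ (graded positivity and a description of Hom spaces) and predates the $p$-differential entirely, so it can only deliver (ii.1), (ii.2), (iii). Condition (ii.3) is a statement about how $\dif_1$ interacts with $A^0$, and you are right to observe that it is not a formal consequence of the classical positivity result.

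That said, the strategy you sketch for (ii.3) has a real gap. You propose to realize the matrix units of $A^0$ as $\dif_1$-closed compositions assembled from Fc-filtration data. The diagonal matrix units are indeed of the form $v_i u_i$ (which are identities $1_{B_\beta}$ and hence $\dif_1$-closed), but the \emph{off-diagonal} matrix units are the degree-zero inclusions and projections $u_i$ and $v_i$ \emph{themselves}, not the composites $v_i u_i$. The Fc-filtration axioms (Definition~\ref{def-standard-fil}, Corollary~\ref{cor-sufficient-condition-idemp-to-filtration}) give you $\dif(v_i u_i)=0$, $\dif(\sum_i u_i v_i)=0$, and $v_i\dif(u_j)=0$ for $j\geq i$ --- they do \emph{not} give $\dif(u_i)=0$ or $\dif(v_i)=0$ individually, and in general these are nonzero. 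For instance, one sees directly from Definition~\ref{def-special-dif} that $\dif_1$ applied to a counterclockwise cap is generically a nonzero multiple of a dotted cap. So the claim that every off-diagonal matrix unit ``is assembled from such building blocks through further Fc-filtration steps'' does not establish what it needs to. A correct argument for (ii.3) must examine which pairs $\EC^{(r)}\FC^{(s)}\1_\l$, $\EC^{(r')}\FC^{(s')}\1_\l$ admit unshifted common indecomposable summands (equivalently, which structure coefficients in the canonical-basis expansion have nonzero constant term) and check $\dif_1$-closedness of the relevant maps directly; the appeal to Proposition~\ref{prop-only-way-to-do-it} as ``evidence'' is suggestive but not a proof. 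In short: you have correctly identified a point that the paper glosses over, but the mechanism you propose to resolve it does not close the gap.
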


Now suppose $\l >1-p$.

The entire argument above goes through, except with an alternative choice of compact
cofibrant generators $$\mathbb{X}^-_\l:=\{\FC^{(s)}\EC^{(r)}\1_\l| 0 \leq r,s \leq p-1 \}.$$

Combined with Corollary \ref{cor-K-group-positive}, we have established the following.

\begin{cor}\label{cor-K0-of-Ulambda}For any weight $\l \in \Z$, the Grothendieck group of the $p$-DG category $\UC_\l$ is isomorphic to
\[
K_0(\UC_\l) \cong \mathbb{O}_p\langle \ \dot{\mathbb{B}}(\mathbb{O}_p)_\l \rangle,
\]
the free $\mathbb{O}_p$-module spanned by $\dot{\mathbb{B}}(\mathbb{O}_p)_\l$. \hfill$\square$
\end{cor}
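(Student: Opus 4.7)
My plan is to package together the infrastructure already assembled in the preceding discussion. For each $\l \in \Z$, the Fc-filtrations of Section \ref{subsec-EF-decomp} together with the divided-power filtration (Corollary \ref{cor-compactness-of-divided-power-mod}(i)) and the acyclicity of $\EC^{(n)}, \FC^{(n)}$ for $n \geq p$ (Corollary \ref{cor-compactness-of-divided-power-mod}(ii)) establish that $\mathbb{X}^+_\l$ (when $\l \leq p-1$) or $\mathbb{X}^-_\l$ (when $\l \geq 1-p$) is a finite compact-cofibrant generating family for $\mc{D}(\UC_\l)$; at least one of the two always applies for any given $\l$. Proposition \ref{prop-criterion-derived-equivalence} then yields a triangulated equivalence
\[
\mc{D}(\UC_\l) \simeq \mc{D}(A_\l), \qquad A_\l := \END_{\UC_\l}(\mathbb{X}^\pm_\l),
\]
and in particular $K_0(\UC_\l) \cong K_0(A_\l)$.

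Next, Lemma \ref{lemma-positive-end-algebra} asserts that $A_\l$ is strongly positive as a $p$-DG algebra, so Corollary \ref{cor-K-group-positive} gives
\[
K_0(A_\l) \cong K_0^\prime(A_\l) \otimes_{\Z[q,q^{-1}]} \mathbb{O}_p,
\]
where $K_0^\prime$ denotes the ordinary split Grothendieck group of finitely generated graded projective $A_\l$-modules, forgetting the differential entirely. This reduces the problem to a classical, non-$p$-DG Krull--Schmidt calculation.

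Finally, I would identify $K_0^\prime(A_\l)$ with the free $\Z[q,q^{-1}]$-module on $\dot{\mathbb{B}}(\mathbb{O}_p)_\l$ by invoking Lauda's classification of indecomposable $1$-morphisms in $\UC$ \cite[Section 9]{Lau1}. Each $\EC^{(a)}\FC^{(b)}\1_\l \in \mathbb{X}^+_\l$ with $\l \leq b-a$ is already indecomposable and represents the canonical basis element $E^{(a)}F^{(b)}1_\l$; those with $\l > b-a$ decompose via the categorified commutator relation into summands of the form $\FC^{(b^\prime)}\EC^{(a^\prime)}\1_\l$ with $0 \leq a^\prime \leq a$, $0 \leq b^\prime \leq b$, and $\l \geq b^\prime - a^\prime$, each a canonical basis element of $\dot{\mathbb{B}}(\mathbb{O}_p)^-_\l$. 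The analogous statement for $\mathbb{X}^-_\l$, combined with Lemma \ref{lemma-canonical-ordered-basis} (which ensures that the two forms, modulo the identification $E^{(a)}F^{(b)}1_{b-a} = F^{(b)}E^{(a)}1_{b-a}$, yield precisely $\dot{\mathbb{B}}(\mathbb{O}_p)_\l$), completes the identification. The main obstacle is this last matching step: one must verify that the indecomposable summands of the redundant generating set $\mathbb{X}^\pm_\l$ correspond bijectively to the canonical basis, with no spurious classes introduced and none missed. This rests critically on Lauda's prior classification theorem; once it is in hand, the strongly positive framework from Chapter \ref{sec-positivepDGalgebras} converts the classical identification directly into the desired $\mathbb{O}_p$-linear statement.
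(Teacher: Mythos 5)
Your proposal reproduces the paper's own argument: the same generating sets $\mathbb{X}^\pm_\l$ of compact cofibrant divided-power modules, the same Morita reduction via Proposition \ref{prop-criterion-derived-equivalence} to the strongly positive endomorphism $p$-DG algebra $\END_{\UC_\l}(\mathbb{X}^\pm_\l)$ of Lemma \ref{lemma-positive-end-algebra}, followed by Corollary \ref{cor-K-group-positive}. The final matching of $K_0^\prime$ of that algebra with the free $\Z[q^{\pm}]$-module on $\dot{\mathbb{B}}(\mathbb{O}_p)_\l$, which the paper leaves implicit, is exactly as you describe --- it follows from Lauda's classification of indecomposable $1$-morphisms together with the observation that the decomposition of $\EC^{(a)}\FC^{(b)}\1_\l$ with $a,b \le p-1$ produces only summands $\FC^{(b')}\EC^{(a')}\1_\l$ with $a' \le a$, $b' \le b$, so no divided powers $\ge p$ can appear and no canonical basis element with divided powers $< p$ is missed.
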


The strong positivity also establishes the following.

\begin{cor}\label{cor-Kunneth-for-UC}
For any weights $\l_1, \l_2, \l_3, \l_4 \in \Z$, the $p$-DG categories ${_{\l_1}\UC_{\l_2}}$, ${_{\l_3}\UC_{\l_4}}$ enjoy the K\"{u}nneth property
\[
K_0({_{\l_1}\UC_{\l_2}}) \o_{\mathbb{O}_p} K_0({_{\l_3}\UC_{\l_4}})\cong K_0({_{\l_1}\UC_{\l_2}}\o {_{\l_3}\UC_{\l_4}}).
\]
\end{cor}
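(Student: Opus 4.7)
The plan is to reduce the statement to Corollary~\ref{cor-Moritaly-positive-Kunneth}, which supplies the K\"{u}nneth formula for any $p$-DG algebras Morita equivalent to strongly positive ones. So I need to show that each summand ${_\mu\UC_\l}$ is $p$-DG Morita equivalent to a strongly positive $p$-DG algebra, and that this property is preserved under the tensor product of $p$-DG categories.

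First I would localize the analysis preceding Corollary~\ref{cor-K0-of-Ulambda} to a single Hom-category ${_\mu\UC_\l}$. The Fc-filtrations of Section~\ref{subsec-EF-decomp} and the subsequent reduction of representables to canonical modules all preserve the target weight, so when $\l \le p-1$ the finite set
\[
\mathbb{Y}_{\mu,\l} := \{\EC^{(r)}\FC^{(s)}\1_\l \mid 0\le r,s\le p-1,\ \l+2r-2s=\mu\}
\]
(empty unless $\mu-\l$ is even) still forms a set of compact generators of $\mc{D}({_\mu\UC_\l})$; the analogous set $\FC^{(s)}\EC^{(r)}\1_\l$ works when $\l > 1-p$, and the two ranges cover every $\l$. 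The endomorphism $p$-DG algebra $A_{\mu,\l} := \END(\mathbb{Y}_{\mu,\l})$ is an idempotent-summand of the strongly positive algebra $\END_{\UC_\l}(\mathbb{X}^+_\l)$ provided by Lemma~\ref{lemma-positive-end-algebra}, and so is itself strongly positive. Proposition~\ref{prop-criterion-derived-equivalence} then yields a $p$-DG Morita equivalence $\mc{D}({_\mu\UC_\l}) \simeq \mc{D}(A_{\mu,\l})$.

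Next, I would apply this reduction simultaneously to ${_{\l_1}\UC_{\l_2}}$ and ${_{\l_3}\UC_{\l_4}}$, producing strongly positive $p$-DG algebras $A$ and $B$ which are $p$-DG Morita equivalent to them. The outer tensor product $\mathbb{Y}_{\l_1,\l_2} \boxtimes \mathbb{Y}_{\l_3,\l_4}$ then furnishes a finite collection of compact generators of the tensor product $p$-DG category ${_{\l_1}\UC_{\l_2}} \o {_{\l_3}\UC_{\l_4}}$ whose endomorphism algebra is $A \o B$. By Corollary~\ref{cor-Kunneth-strongly-positive}, $A \o B$ is again strongly positive; hence the tensor product category is Morita equivalent to a strongly positive $p$-DG algebra, and Corollary~\ref{cor-Moritaly-positive-Kunneth} delivers the desired K\"{u}nneth isomorphism.

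The main obstacle will be the bookkeeping that guarantees the outer tensor product of compact cofibrant generators from each side remains compact and cofibrant over the $p$-DG tensor product category, with the expected endomorphism $p$-DG algebra $A \o B$. This is a formal hopfological fact---cofibrance and $\HOM$ commute with $\boxtimes$ on representable modules---but must be stated carefully because the $p$-DG categories ${_{\l_i}\UC_{\l_j}}$ have infinitely many objects. Once that compatibility is in hand, the proof becomes a direct invocation of the Morita-theoretic machinery assembled in Chapter~\ref{sec-positivepDGalgebras}.
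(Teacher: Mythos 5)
Your argument takes essentially the same route as the paper's proof, which is a one-sentence citation of Lemma \ref{lemma-positive-end-algebra} and Corollary \ref{cor-Moritaly-positive-Kunneth}. You simply spell out the implicit steps: that each $_\mu\UC_\l$ is $p$-DG Morita equivalent to a strongly positive $p$-DG algebra (as an idempotent factor of $\END_{\UC_\l}(\mathbb{X}^+_\l)$) via Proposition \ref{prop-criterion-derived-equivalence}, and that this property passes to the tensor product of the two Hom-categories so that Corollary \ref{cor-Moritaly-positive-Kunneth} applies.
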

\begin{proof} This follows from Lemma \ref{lemma-positive-end-algebra} and Corollary \ref{cor-Moritaly-positive-Kunneth}.
\end{proof}

It follows that taking $K_0$ commutes with tensor products for $\UC$ in equation \eqref{eqn-induction-K0},
\[
K_0(\UC\o \UC) \cong K_0(\UC) \o_{\mathbb{O}_p} K_0(\UC),
\]
and the symbol of the induction functor equips $K_0(\UC)=\bigoplus_{\mu,\l\in \Z}K_0({_\mu\UC_\l})$ with an idempotented $\mathbb{O}_p$-algebra structure, whose multiplication is given by the induction functor:
\begin{align}
[\mathrm{Ind}]: K_0(\UC) \otimes_{\mathbb{O}_p} K_0(\UC)  \lra   K_0(\UC).
\end{align}

Now we have our main theorem.

\begin{thm}\label{thm-itsanalghom}
There is an isomorphism of $\mathbb{O}_p$-algebras
$$\dot{u}_{\mathbb{O}_p}(\mathfrak{sl}(2)) \lra K_0(\UC,\dif_{\pm 1})$$
sending $E1_\l \mapsto [\mc{E\1_\l}]$ and $1_\l F \mapsto [\1_\l \mc{F}]$ for any weight $\l \in \Z$.
\end{thm}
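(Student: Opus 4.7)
The plan is to define a candidate map $\phi\colon \dot{u}_{\mathbb{O}_p}(\mf{sl}_2) \to K_0(\UC,\dif_{\pm 1})$ on generators by $\phi(1_\lambda) := [\1_\lambda]$, $\phi(E1_\lambda) := [\mathcal{E}\1_\lambda]$, $\phi(F1_\lambda) := [\mathcal{F}\1_\lambda]$, and extend it multiplicatively via the induction product on $K_0$. This product is a well-defined associative $\mathbb{O}_p$-algebra structure by Corollary~\ref{cor-Kunneth-for-UC} (supplying the K\"unneth isomorphism $K_0(\UC\otimes\UC)\cong K_0(\UC)\otimes_{\mathbb{O}_p}K_0(\UC)$), which applies because the relevant endomorphism algebras are strongly positive (Lemma~\ref{lemma-positive-end-algebra}). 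Once $\phi$ is shown to be a well-defined algebra homomorphism, bijectivity will follow by matching bases.

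The first task is to verify the defining relations (i)--(iv) of Definition~\ref{def-small-sl2-Op-form} hold in $K_0$. Relations (i) and (ii) are immediate from the structure of $\UC$. For (iii), I pass the Fc-filtrations established in Chapter~\ref{sec-standardfiltrations} to the Grothendieck group: Proposition~\ref{prop-filtration-EF} (for $\lambda>0$, coming from \eqref{IdentityDecompPos}) and its analog for \eqref{IdentityDecompNeg} (for $\lambda<0$) yield
\[ [\mathcal{EF}\1_\lambda] - [\mathcal{FE}\1_\lambda] \;=\; \sum_{c=0}^{\lambda-1} q^{1-\lambda+2c}\,[\1_\lambda] \;=\; [\lambda]_{\mathbb{O}_p}\,[\1_\lambda] \]
when $\lambda>0$, and symmetrically for $\lambda<0$; the case $\lambda=0$ is vacuous. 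For (iv), Corollary~\ref{cor-compactness-of-divided-power-mod}(i) produces a filtration of $\mathcal{E}^p\1_\lambda$ by shifted copies of the divided power $\mathcal{E}^{(p)}\1_\lambda$, whose class vanishes by Corollary~\ref{cor-compactness-of-divided-power-mod}(ii) (acyclicity once $n\ge p$); thus $[\mathcal{E}^p\1_\lambda]=0$, and dually $[\mathcal{F}^p\1_\lambda]=0$. The symmetry $\widetilde{\tau}$ (Corollary~\ref{cor-symmetries-intertwine-dif}) exchanges $\dif_1$ with $\dif_{-1}$ and interchanges the roles of \eqref{IdentityDecompPos}/\eqref{IdentityDecompNeg}, so both specializations yield the same statement.

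For bijectivity, both sides respect the weight bigrading, reducing to $\phi\colon 1_\mu\dot{u}_{\mathbb{O}_p}1_\lambda \to K_0({_\mu\UC_\lambda})$. Corollary~\ref{cor-K0-of-Ulambda} exhibits the target as a free $\mathbb{O}_p$-module with basis $\{[\mathcal{E}^{(a)}\mathcal{F}^{(b)}\1_\lambda]\}\cup\{[\mathcal{F}^{(b)}\mathcal{E}^{(a)}\1_\lambda]\}$ indexed precisely by $\dot{\mathbb{B}}(\mathbb{O}_p)_\lambda$. Using the divided-power filtration of Corollary~\ref{cor-compactness-of-divided-power-mod}(i), $\phi(E^aF^b1_\lambda) = [\mathcal{E}^a\mathcal{F}^b\1_\lambda] = [a]![b]![\mathcal{E}^{(a)}\mathcal{F}^{(b)}\1_\lambda]$. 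Since $[k]_{\mathbb{O}_p}$ is a unit for $1\le k\le p-1$, we have $E^{(a)}F^{(b)}1_\lambda = ([a]![b]!)^{-1}E^aF^b1_\lambda$ in $\dot u_{\mathbb{O}_p}$, whence $\phi(E^{(a)}F^{(b)}1_\lambda) = [\mathcal{E}^{(a)}\mathcal{F}^{(b)}\1_\lambda]$ for $0\le a,b\le p-1$, and analogously on the $\FC\EC$-side. Thus $\phi$ carries the canonical basis $\dot{\mathbb{B}}(\mathbb{O}_p)_\lambda$ bijectively onto the $\mathbb{O}_p$-basis of $K_0({_\mu\UC_\lambda})$, finishing the proof.

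The main obstacle is step (iii): Lauda's direct sum decompositions \eqref{IdentityDecomp} are \emph{a priori} only abelian-categorical and do not automatically descend to equalities in $K_0(\UC,\dif)$ for an arbitrary differential. The point of Chapter~\ref{sec-standardfiltrations}, and in particular Propositions~\ref{prop-filtration-EF} and \ref{prop-filtration-EF-dual}, is exactly that the specializations $\dif_{\pm 1}$ are the unique choices for which these decompositions lift to Fc-filtrations (Definition~\ref{def-standard-fil}), ensuring the subquotients are compact cofibrant $p$-DG modules whose classes actually contribute in the Grothendieck group. Everything else in the proof is bookkeeping on top of this filtration property together with the rank computation of Corollary~\ref{cor-K0-of-Ulambda}.
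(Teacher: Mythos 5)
Your argument is correct and follows the same route as the paper's proof: the algebra structure via the K\"unneth isomorphism, relation (iii) via the Fc-filtrations of Propositions~\ref{prop-filtration-EF}/\ref{prop-filtration-EF-dual} combined with Remark~\ref{rmk-convolution-finite-cell}, relation (iv) via Corollary~\ref{cor-compactness-of-divided-power-mod}, and bijectivity by matching $\dot{\mathbb{B}}(\mathbb{O}_p)_\l$ with the $\mathbb{O}_p$-basis of $K_0$ given by Corollary~\ref{cor-K0-of-Ulambda}. The only added material is the explicit verification that $\phi$ sends divided powers to divided powers (using that $[k]_{\mathbb{O}_p}$ is a unit for $1\le k\le p-1$), a step the paper leaves implicit; and one slight imprecision: in justifying (iv), the reason $[\EC^{(p)}\1_\l]=0$ is that $\EC^{(p)}\1_\l$ is quasi-isomorphic to zero, hence vanishes as an object of the derived category, not merely that it is "acyclic" as a $p$-complex in a weaker sense.
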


\begin{proof} We first need to show that the defining relations for $\dot{u}_{\mathbb{O}_p}$ as in Definition \ref{def-small-sl2-2l-root} hold in $K_0(\UC)$, and the non-trivial relations to check are (iii) and (iv).

By Proposition \ref{prop-filtration-EF}, in the derived category $\mc{D}(\UC,\dif_1)$, the representable modules $\mc{\EC\FC\1_\l}$ fits into a convolution diagram (see Remark \ref{rmk-convolution-finite-cell}):
\begin{equation*}
\begin{gathered}
\xymatrix@C=0.75em{0=F_{0} \ar[rr] && F_{1} \ar[rr] \ar[dl] && F_{2} \ar@{-}[r]\ar[dl] &\cdots\ar[r] &F_{\l-1} \ar[rr] && F_\l=\EC\FC\1_\l, \ar[dl]\\
& \1_{\l}\ar[ul]_{[1]}\{\l-1\} && \1_{\l}\{\l-3\} \ar[ul]_{[1]} && && \FC\EC\1_\l\ar[ul]_{[1]} &
}
\end{gathered}
\end{equation*}
Therefore in the Grothendieck group the desired relation holds. Relation (iv) follows from Corollary \ref{cor-compactness-of-divided-power-mod} of the previous chapter.

Moreover, the map in the statement sends $\dot{\mathbb{B}}(\mathbb{O}_p)_\l$ to the symbols of modules in $\mathbb{X}_\l^{\pm}$, which form a basis for $K_0(\UC)$ by Corollary \ref{cor-K0-of-Ulambda}. Therefore, it is an isomorphism.
\end{proof}

\begin{rmk}\label{rmk-catefy-restricted-udot}
Forgetting about the gradings, the ungraded $p$-differential $2$-category $(\UC,\dif_1)$ categorifies the idempotented restricted universal enveloping algebra $\dot{u}_{\F_p}(\mf{sl}_2)$ of \cite{GK}. We regard this as a categorical analogue of the base change construction in Remark \ref{rmk-restricted-u-dot}. See \cite[Remark 3.36]{KQ} for more comments.
\end{rmk}

%
\subsection{Cyclotomic quotients}
%

In this section we prove that $p$-DG cyclotomic quotients categorify simple $u_q(\mf{sl}_2)$-modules. The argument will be very similar to that of the previous section.

For any weight $\mu \in \N$, consider the category
$$\UC_\mu\cong \bigoplus_{\l\in \Z}{_\l\UC_\mu}.$$
It carries a natural module-category structure over $\UC$:
$$\UC\o \UC_\mu \lra \UC_\mu , \quad \quad \quad \xi_1\1_{\l^\prime} \o {\1_{\l}\xi_2\1_{\mu}} \mapsto \delta_{\l^\prime, \l}\xi_1 \xi_2 \1_\mu.$$

Following ideas of Khovanov-Lauda \cite{KL1} and Rouquier \cite{Rou2} (see also \cite[Section 1.4]{Web}), we define the \emph{cyclotomic quotient category} $\VC^{\mu}$ to be the quotient category of $\UC_\mu$ by morphisms in the two-sided ideal which is left monoidally generated by
\begin{itemize}
\item[(i)] Any morphism that contains the following subdiagram on the far right:
$$\begin{DGCpicture}
\DGCstrand(0,0)(0,1)
\DGCdot*>{1}
\DGCcoupon*(0.4,0.4)(0.7,0.8){$\mu$}
\end{DGCpicture}~.$$
\item[(ii)] All positive degree bubbles on the far right region $\mu$.
\end{itemize}
Here by ``two-sided'' we mean concatenating diagrams vertically from top and bottom to those in the relations, while by ``left monoidally generated'' we mean adding pictures from $\UC$ to the left of those generators. Schematically we depict elements in the ideal as follows.

\[
\begin{DGCpicture}
\DGCstrand(0.55,1)(0.55,2)
\DGCdot*>{1.5}
\DGCcoupon(0,0)(1.1,1){$\UC_\mu$}
\DGCcoupon(0,2)(1.1,3){$\UC_\mu$}
\DGCcoupon(-2,0)(0,3){$\UC$}
\DGCcoupon*(1.2,1)(2,1.5){$\mu$}
\end{DGCpicture}, \quad \quad
\begin{DGCpicture}
\DGCbubble(0.6,1.5){0.4}
\DGCdot*>{1.5,R}
\DGCcoupon*(0.4,1.3)(0.8,1.7){$k$}
\DGCcoupon(0,0)(1.1,1){$\UC_\mu$}
\DGCcoupon(0,2)(1.1,3){$\UC_\mu$}
\DGCcoupon(-2,0)(0,3){$\UC$}
\DGCcoupon*(1.2,1)(2,1.5){$\mu$}
\end{DGCpicture}.
\]

One implication of these relations is that
\begin{equation}\label{cyclotomiccurl}
0 = \curl{R}{D}{$\mu$}{no}{$0$} = \sum_{a+b=\mu}
\onelineD{$b$}{no} ~\bigccwbubble{$a$}{$\mu$}.
\end{equation}
Moreover, every term with $a>0$ is also in the ideal, so that
\begin{equation}\label{eqn-cyclotomic-rel}
\begin{DGCpicture}
\DGCstrand(0,0)(0,1)
\DGCdot{.4}[r]{\small{$\mu$}}
\DGCdot*<{0}
\DGCcoupon*(0.7,0.4)(1.0,0.8){$\mu$}
\end{DGCpicture}~=0.
\end{equation}
The quotient of the nilHecke algebra $\NH_r$ by the two-sided ideal generated by $\mu$ dots on the rightmost strand $(x_r^\mu)$ is called the \emph{cyclotomic nilHecke algebra}.

\begin{rmk}\label{rmk-universal-quotient} This version of the cyclotomic quotient category $\mc{V}^\mu$ was due to Rouquier using his $2$-category in \cite{Rou2}, which is related to Khovanov-Lauda's construction using the negative half of $\UC$ in \cite{KL1} by equation \eqref{eqn-cyclotomic-rel}. Technically, Rouquier's $2$-category is slightly different from that of Khovanov-Lauda \cite{KL3,Lau1}, but a recent work of Brundan \cite{Brundan2KM}, building on the earlier work of Cautis-Lauda \cite{CauLau}, has established the equivalence of the two seemingly different definitions. 

There is also a \emph{universal cyclotomic quotient} category $\widetilde{\VC}^\mu$ introduced by Rouquier \cite{Rou2}.  It is constructed from $\UC_\mu$ as the quotient by relation (i), but not relation (ii).  Equivalently, any morphism factoring through an object $\l$ with $\l>\mu$ is killed. The parallel construction on Khovanov-Lauda's $2$-category is considered by  Webster in \cite{Web}. Note that counterclockwise bubbles of degree $\ge \mu+1$ are real bubbles, and lie inside this kernel, but counterclockwise bubbles of degree $\le \mu$ are not in the kernel. We will study the universal cyclotomic quotient in more detail in upcoming works.
\end{rmk}

Now let $\dif$ be a differential as in Definition \ref{def-dif-on-U}. By Corollary \ref{cor-dif-action-on-bubbles}, $\dif$ preserves the ideal above, so that it
induces a quotient differential on $\VC^\mu$.

\begin{defn}\label{def-cyclo-quo}
The \emph{cyclotomic quotient $p$-DG category} $(\VC^\mu,\dif)$ is the category $\VC^\mu$ equipped with the induced differential.
It is equipped with the obvious left action of $(\UC,\dif)$.
\end{defn}

To avoid potential confusion, we denote the regions in $\VC^\mu$ by $\hat{\1}_\l$ for $\l\in \Z$.

From now on, we specialize $\dif=\dif_1$, and we restrict to the case when $\mu \in \{0,1,\ldots,p-1\}$.

As a quotient category, $(\VC^\mu,\dif_1)$ inherits the $\mc{EF}$ and $\mc{FE}$ Fc-filtrations. It follows that any representable module $p$-DG module $\EC^{r_k}\FC^{s_k}\cdots\EC^{r_1}\FC^{s_1}\hat{\1}_\mu \{k\}$ over $\VC^\mu$ can be filtered by representable modules of the form $\FC^r\EC^s\hat{\1}_\mu$, where $r,s\in \N$. If $s>0$, $\FC^r\EC^s\hat{\1}_\mu\cong 0$; while if $r\geq p$, $\FC^r\EC^s\hat{\1}_\mu\cong 0$ is acyclic. Therefore, as for $\UC_\mu$, one may choose a set of compact cofibrant generators of $\VC^\mu$ to consist of
\begin{equation}
\mathbb{X}^\mu:=\{\hat{\1}_{\mu-2r}\FC^{(r)}\hat{\1}_\mu|0\leq r \leq p-1\}
\end{equation}

Now we compute the endomorphism ring $\END_{\VC^\mu}(\mathbb{X}^\mu)$. Let $H_{r,\mu}:=H^*(\mathbb{G}(r,\mu),\Bbbk)$ be the cohomology of the Grassmannian of $r$-planes in $\mu$-space. It has a presentation
$$H_{r,\mu}\cong\sym_r/(h_k | k \ge \mu+1-r).$$
With the induced quotient differential coming from $\sym_r$, it is a positive $p$-DG algebra. We claim that it is isomorphic to $\END_{\VC^\mu}(\FC^{(r)} \hat{\1}_\mu)$. In particular, $H_{\mu+1,\mu}=0$ so that $\FC^{(\mu + 1)} \hat{\1}_\mu = 0$ in $\VC^\mu$.

Let $x_r^\mu \in \NH_r$ denote the $\mu$-th power of a dot on the rightmost strand. One has an explicit presentation of the nilHecke algebra $\NH_r$ as a matrix algebra over $\sym_r$ \cite[Proposition 2.16]{KLMS}, which after tensoring with $\Lambda$ coincides with the endomorphism ring of $\FC^r \1_\mu$ in $\mc{U}$. On the one hand, it follows from the cyclotomic relation \eqref{eqn-cyclotomic-rel} that there is a surjection of $p$-DG algebras\footnote{Technically, we need the fact that the matrix presentation is preserved under $\dif_1$, see \cite[3.24]{KQ}.}
\begin{equation}\label{eqn-surj-cyclo-to-endo}
\NH_r/(x_r^\mu)\twoheadrightarrow \END_{\VC^\mu}(\FC^r\hat{\1}_\mu).
\end{equation}
On the other hand, the $2$-representation of $\UC$ on the flag category of highest weight $\mu$ constructed by Lauda \cite[Section 7]{Lau1}, when restricted to $\UC_\mu$, factors through the ideal in the definition of $\VC^\mu$. Furthermore, it is shown there that $\FC^{r}\1_\mu$ is sent under the representation to the cohomology of an $r$-step flag variety in $\mu$-space. It follows that the above surjection \eqref{eqn-surj-cyclo-to-endo} is an isomorphism of graded vector spaces, and hence actually an isomorphism of $p$-DG algebras. The two-sided ideal $(x_r^\mu)$ is identified, under the matrix presentation, with the ideal $(h_k | k \ge \mu+1-r) \subset \sym_r \cong Z(\NH_r)$ generated by diagonal matrices. Therefore, the endomorphism ring of the column module $\FC^{(r)} \hat{\1}_\mu$ is precisely $H_{r,\mu}$. We also refer the reader to \cite[Section 5]{Lau3} for a more explicit identification of the algebra $\END_{\VC^\mu}(\FC^r \hat{\1}_\mu)$ with an $r!\times r!$-matrix algebra with coefficients in $H_{r,\mu}$. 

Granted this, the endomorphism algebra of the direct sum of the compact generators can be identified with
\begin{equation}
\END_{\VC^\mu}(\mathbb{X}^\mu)\cong\prod_{r=0}^{p-1} \END_{\VC^\mu}(\FC^{(r)}\hat{\1}^\mu)\cong \prod_{r=0}^{p-1}H_{r,\mu},
\end{equation}
and therefore it is strongly positive. It follows that
$$\mc{D}(\VC^\mu)\cong \mc{D}(\END_{\VC^\mu}(\mathbb{X}^\mu))\cong \prod_{r=0}^{p-1}\mc{D}(H_{r,\mu}),$$
so that we may compute the Grothendieck group of $\mc{V}^\mu$ from Corollary \ref{cor-K-group-positive}.

Now the natural left action
$$\UC \o \VC^\mu \lra \VC^\mu$$
gives rise to an induction functor on the corresponding derived categories,
$$\mc{D}(\UC \o \VC^\mu) \lra \mc{D}(\VC^\mu),$$
which in turn descends to a map of $\mathbb{O}_p$-modules on the Grothendieck group level:
\[
K_0(\UC\o\VC^\mu) \lra K_0(\VC^\mu).
\]
Since both $\UC$ and $\VC^\mu$ are locally $p$-DG Morita equivalent to positive $p$-DG algebras, the K\"{u}nneth formula implies that the action on Grothendieck groups is $\mathbb{O}_p$-linear, and it equips $K_0(\VC^\mu)$ with the structure of $\dot{u}_{\mathbb{O}_p}(\mathfrak{sl}(2))$-module.

\begin{thm}\label{thm-cyclo-quot}
Let $\mu\in \{0,1\dots, p-1\}$. There is an isomorphism
\[
 K_0(\VC^\mu) \cong V^\mu
\]
of $u_q(\mf{sl}_2)$-modules, where $V^\mu$ is the irreducible module of highest weight $\mu$ defined over $\mathbb{O}_p$.
\end{thm}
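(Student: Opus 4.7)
The plan is to construct the $\mathbb{O}_p$-linear map $\phi\colon V^\mu \to K_0(\VC^\mu)$ sending $\hat{1}_{\mu-2r} \mapsto b_r := [\FC^{(r)}\hat{\1}_\mu]$ for $0 \leq r \leq \mu$, and to verify that it intertwines the $\dot{u}_{\mathbb{O}_p}(\mf{sl}_2)$-actions. That $\phi$ is an $\mathbb{O}_p$-linear bijection already follows from the preceding paragraphs: the derived equivalence $\mc{D}(\VC^\mu) \simeq \prod_{r=0}^{p-1}\mc{D}(H_{r,\mu})$, combined with $H_{r,\mu}=0$ for $r>\mu$ and Corollary~\ref{cor-K-group-positive}, identifies $K_0(\VC^\mu)$ with a free $\mathbb{O}_p$-module of rank $\mu+1$ with basis $\{b_r\}_{r=0}^{\mu}$. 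Moreover, since each $H_{r,\mu}$ is strongly positive, the K\"{u}nneth formula (Corollary~\ref{cor-Kunneth-for-UC}) applies, and the categorical left action $\UC \otimes \VC^\mu \to \VC^\mu$ induces on $K_0(\VC^\mu)$ an $\mathbb{O}_p$-linear action of $K_0(\UC) \cong \dot{u}_{\mathbb{O}_p}$ (by Theorem~\ref{thm-itsanalghom}), compatible with the quotient map $K_0(\UC_\mu) \twoheadrightarrow K_0(\VC^\mu)$.

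It then remains to check that the $E$- and $F$-actions on $K_0(\VC^\mu)$ match those prescribed in \eqref{eqn-module-basis}. For $F$, the divided power identity $F \cdot F^{(r)} = [r+1]F^{(r+1)}$ holds in $\dot{u}_{\mathbb{O}_p}$, hence in $K_0(\UC)$, and descends to $F \cdot b_r = [r+1]b_{r+1}$ in $K_0(\VC^\mu)$. For $E$, I would invoke the Serre--Lusztig commutator
\[ EF^{(r)}1_\mu = F^{(r)}E1_\mu + [\mu-r+1]F^{(r-1)}1_\mu \]
in $\dot{u}_{\mathbb{O}_p}$, which descends to the corresponding identity of classes in $K_0(\VC^\mu)$. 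The crucial simplification comes from the cyclotomic quotient itself: the identity $2$-morphism of $\EC\hat{\1}_\mu$ is precisely the subdiagram generating relation~(i) of the cyclotomic ideal, so $\EC\hat{\1}_\mu \cong 0$ in $\VC^\mu$ and $[\FC^{(r)}\EC\hat{\1}_\mu] = 0$. This leaves $E \cdot b_r = [\mu-r+1]b_{r-1}$, exactly matching the action on $V^\mu$ (with the boundary case $r=0$ giving zero on both sides, consistent with the convention $\hat{1}_{\mu+2}=0$).

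The hard part is already behind us: building the strongly positive endomorphism algebras $H_{r,\mu}$ and establishing the resulting derived equivalence is what makes $K_0(\VC^\mu)$ computable and places it within the K\"{u}nneth framework required to define the $\dot{u}_{\mathbb{O}_p}$-action. Given these inputs, the theorem reduces to a formal consequence of Theorem~\ref{thm-itsanalghom} together with the single vanishing $\EC\hat{\1}_\mu \cong 0$ furnished by the cyclotomic relation~(i).
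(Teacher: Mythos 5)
Your proposal is correct and follows essentially the same route as the paper's proof, which (after the preceding derived equivalence $\mc{D}(\VC^\mu)\simeq\prod_{r}\mc{D}(H_{r,\mu})$, Corollary~\ref{cor-K-group-positive}, and the K\"unneth formula) simply identifies $[\FC^{(r)}\hat{\1}_\mu]$ with $\hat{1}_{\mu-2r}$. Your explicit verification of the $E$- and $F$-actions via the divided power identity, the Serre--Lusztig commutator, and the key vanishing $\EC\hat{\1}_\mu\cong 0$ furnished by relation (i) of the cyclotomic ideal spells out what the paper's terse proof leaves implicit.
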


\begin{proof} The computation of the Grothendieck group as an $\mathbb{O}_p$-module follows from Corollary \ref{cor-K-group-positive}. In fact we can identify a basis of $K_0(\VC^\mu)$ with that of the module $V^\mu$ by identifying $[\hat{\1}_{\mu-2r}\FC^{(r)}\hat{\1}_\mu]$ with the basis element of $\hat{1}_{\mu-2r}$ defined in equation (\ref{eqn-module-basis}).
\end{proof}


\appendix

\section{Classification of derivations on \texorpdfstring{$\mc{U}$}{U}}\label{sec-classification-of-dif}

In this chapter we outline the tedious calculation required to prove Proposition \ref{prop-classification-of-dif-on-U}. We also give a cursory discussion of some degenerate
differentials which can appear in characteristic $2$. Most of the calculations will be left to the reader as exercises, but we will describe what can be gleaned from each
computation.

Before starting, we provide the reader with some useful \emph{bubble slide} relations, which describe how to move bubbles across a strand. The general bubble slide relation can be
found in \cite[Proposition 5.6]{Lau1}. We will only need to slide bubbles of degree $2k$ for $k=1,2$.

\begin{subequations} \label{bubbleslidesonetwo}
\begin{align}
\cwbubble{$1$}{} \onelineshort{$0$}{no} - \onelineshort{$0$}{no} \cwbubble{$1$}{} =2 \onelineshort{$1$}{no} \label{bubslidedeg1}
\end{align}
\begin{align}
\ccwbubble{$1$}{}
\onelineshort{$0$}{no} - \onelineshort{$0$}{no} \ccwbubble{$1$}{} = -2 \onelineshort{$1$}{no}
\end{align}
\begin{align}
\cwbubble{$2$}{} \onelineshort{$0$}{no} - \onelineshort{$0$}{no} \cwbubble{$2$}{} =2 \onelineshort{$1$}{no} \cwbubble{$1$}{} + 3 \onelineshort{$2$}{no}
\label{bubslidedeg2}
\end{align}
\begin{align}
\ccwbubble{$2$}{} \onelineshort{$0$}{no} - \onelineshort{$0$}{no} \ccwbubble{$2$}{} = -2 \onelineshort{$1$}{no} \ccwbubble{$1$}{} + \onelineshort{$2$}{no}
\end{align}
\end{subequations}

We begin the classification of differentials on $\UC$. We restrict our attention to either $\UC_{\even}$ or $\UC_{\odd}$. For each relation, we describe what properties a degree
$2$ derivation must have in order to preserve that relation.

\paragraph{NilHecke relations and preliminaries.}

For an arbitrary map $\dif$ of degree $2$ on $\UC$, we must have
\begin{align*} \dif \left( \onelineshort{$1$}{$\l$} \right) = b_{\l} \onelineshort{$2$}{$\l$} + \a_{\l} \cwbubble{$1$}{} \onelineshort{$1$}{$\l$} + \b_{\l} \cwbubble{$1$}{}
\cwbubble{$1$}{} \onelineshort{$0$}{$\l$} + \g_{\l} \cwbubble{$2$}{} \onelineshort{$0$}{$\l$}, \end{align*}
\begin{align*} \dif \left( \crossing{$0$}{$0$}{$0$}{$0$}{$\l$} \right) = c_\l \twolines{$0$}{$0$}{$\l$} + \mu_\l \crossing{$1$}{$0$}{$0$}{$0$}{$\l$} + \rho_\l
\crossing{$0$}{$1$}{$0$}{$0$}{$\l$} + \delta_\l \cwbubble{$1$}{} \crossing{$0$}{$0$}{$0$}{$0$}{$\l$}. \end{align*}
This is because, by Lauda's classification of morphisms, the terms on the right hand side form a basis for morphisms in the appropriate degree.

We leave the reader to compute $\dif$ of both sides of \eqref{NHreldotforce}, under the assumption that $\dif$ is a 2-categorical derivation. The bubble slide relations will be
useful. If $\dif$ preserves this relation, then one can immediately deduce a number of facts: \begin{itemize} \item $b_\l = b_{\l+2}$, $\a_\l = \a_{\l+2}$, $\b_{\l} =
\b_{\l+2}$, $\g_{\l} = \g_{\l+2}$ \item $2\a_\l=0$, $4\b_\l=0$, $\g_\l=0$ \item $\delta_\l = -\a_\l$. \item $\mu_\l = -b_\l-c_\l$ and $\rho_\l = -b_\l+c_\l$. \end{itemize} We write $b=b_\l$ because it does not depend on the weight $\l$, and similarly for the variables $\a, \b, \g, \delta$.

\begin{rmk} 
Outside of characteristic $2$ we must have $\a=\b=\delta=\g=0$, which implies that $\dif$ preserves the nilHecke algebra $\NH_n \subset \END_{\UC}(\mc{E}^n)$. In
characteristic $2$, however, it is possible for $\a = \delta$ and $\b$ to be nonzero. Note that when $2 \a = 4 \b = 0$, the bubbles created by the differential will slide freely
through any strand (using \eqref{bubslidedeg1}), making these bubbles relatively unobtrusive. Lest one think that, over $\Z$ instead of $\Bbbk$, there may be differentials where
$4 \b = 0$ but $2 \b \ne 0$, the reduction to bubbles relation will imply that $2 \b = 0$. There will be additional restrictions on the values of $\a$ and $\b$ imposed by the
requirement that $\dif^2 = 0$, but nonetheless there are degenerate $2$-differentials with a nonzero parameter $\a$ or $\b$.

We have not studied these degenerate $2$-differentials in any detail. We do not expect them to give rise to Fc-filtrations, or to induce a differential on $\NH_n \o
\Lambda$ which allows for a nice study of the Grothendieck group.

We will discuss some of the additional properties of these degenerate $2$-differentials within remarks throughout this chapter, all notated with \textbf{(Char $p=2$)}. Outside of
these remarks, we will assume henceforth that $\a = \b = 0$. \end{rmk}

We will show soon enough that if $b = 0$ and $\dif^p=0$ then $c_\l = 0$ as well, for all $\l$ (see Lemma \ref{pnilpotentNH}). We may as well write $a_\l = \frac{c_\l}{b}$ when $b \ne 0$, and $a_\l
=0$ when $b = 0$. Thus $\dif$ has the following form:
\begin{align*} \dif \left( \onelineshort{$1$}{$\l$} \right) = b \onelineshort{$2$}{$\l$}, \end{align*}
\begin{align*} \dif \left( \crossing{$0$}{$0$}{$0$}{$0$}{$\l$} \right) = b a_\l \twolines{$0$}{$0$}{$\l$} + b (-1-a_\l) \crossing{$1$}{$0$}{$0$}{$0$}{$\l$} + b(-1 + a_\l)
\crossing{$0$}{$1$}{$0$}{$0$}{$\l$}, \end{align*}
depending on the parameters $b$ and $a_\l$. In particular, it agrees with the differential of Definition \ref{def-dif-on-U} up to multiplication by $b$.

Relations \eqref{NHrelR2} and \eqref{NHrelR3} live within $\NH_3$. That these relations are preserved by any differential of the above form was shown in \cite{KQ}. Technically, the calculation for \eqref{NHrelR3} in \cite{KQ} was done under the assumption that $a_\l = a_{\l+2}$, but it is not hard to redo the computation in the general case.

The same discussion applies to the downwards pointing nilHecke algebra inside $\END_{\UC}(\mc{F}^n)$. As usual, we denote by $\overline{b}$, $\oa_\l$ the corresponding parameters.

\paragraph{Biadjointness relations.}
Let us note that the cup with a dot and the cup with a bubble are a basis for the degree $2$ morphisms from $\1$ to $\mc{E}\mc{F}$ (resp. $\mc{F}\mc{E}$). Therefore, when $\dif$ acts on a cup or cap, the result has the same form as in Definition \ref{def-dif-on-U}, though with unknown coefficients.

\begin{itemize}
\item[\eqref{biadjoint1}] Because $\dif$ of the identity is always zero, we need to show that $\dif$ of each other diagram is zero. This check will relate the coefficients for $\dif$ of a clockwise cap with $\dif$ of a counterclockwise cup, and vice versa. We leave it as an exercise to confirm that the coefficients for counterclockwise cups and caps are determined from the coefficients of clockwise cups and caps precisely as in Definition \ref{def-dif-on-U}. (This exercise will use the bubble sliding relation \eqref{bubslidedeg1}.) Henceforth we will name the coefficients $x_\l$, $y_\l$, $\ox_\l$, and $\oy_\l$ as in Definition \ref{def-dif-on-U}.

\item[\eqref{biadjointdot}] This check gives the relation between $\dif$ of an upward-pointing and downward-pointing dot. It is easy, and implies that $b=\overline{b}$.

\textbf{(Char $p=2$):} This also implies that $\a = \overline{\a}$ and $\b = \overline{\b}$.

\item[\eqref{biadjointcrossing}] This is a simple but annoying computation, using the dot forcing relation \eqref{NHreldotforce} and the bubble sliding relation
\eqref{bubslidedeg1}. It is an excellent warmup exercise for the reader. The equality of both sides here amounts to \begin{equation} b(a_\l - \oa_\l) = x_{\l+2} - x_\l - 2 y_\l.
\label{paramAalt} \end{equation} The reader should compare this to \eqref{paramA}. Similarly, the upside-down version gives \begin{equation} b(\oa_\l - a_\l) = \ox_{\l+2} - \ox_\l - 2 \oy_\l. \label{paramAalt2} \end{equation} \end{itemize}

\paragraph{Positivity and Normalization of bubbles.} Let us consider what happens when $\dif$ is applied to a clockwise bubble of degree $2k$. Ignore, for the moment, the distinction between real and fake bubbles.
\begin{align*}
& \qquad \dif \left( \bigcwbubble{$k$}{$\l$} \right) = \dif \left(
\begin{DGCpicture}
\DGCbubble(0,0){0.5}
\DGCdot*<{0.25,L}
\DGCdot{-0.25,R}[r]{$_{k+\l-1}$}
\DGCdot*.{0.25,R}[r]{$\l$}
\end{DGCpicture}
~\right)
= \\ & (b(k+\l-1)+x_{\l-2} + \ox_{\l-2})~
\begin{DGCpicture}
\DGCbubble(0,0){0.5}
\DGCdot*<{0.25,L}
\DGCdot{-0.25,R}[r]{$_{k+\l}$}
\DGCdot*.{0.25,R}[r]{$\l$}
\end{DGCpicture}
~+ (y_{\l-2} + \oy_{\l-2})
\begin{DGCpicture}
\DGCbubble(0,0){0.5}
\DGCdot*<{0.25,L}
\DGCdot{-0.25,R}[r]{$_{k+\l-1}$}
\DGCdot*.{0.25,R}[r]{$\l$}
\end{DGCpicture}
\bigcwbubble{$1$}{} = \\ &
(b(k+\l-1)+x_{\l-2} + \ox_{\l-2}) \bigcwbubble{$k+1$}{$\l$} + (y_{\l-2} + \oy_{\l-2}) \bigcwbubble{$k$}{$\l$} \bigcwbubble{$1$}{}
\end{align*}

The positivity and normalization relations imply that the LHS is zero when $k \le 0$, since the bubble of degree $2(k)$ is a scalar (either $0$ or $1$). When $k < -1$, the RHS is
zero for the same reason. When $k=-1$, the RHS is equal to $b(\l-2) + x_{\l-2} + \ox_{\l-2}$, which is zero if and only if \begin{equation} \label{paramXalt} x_{\l-2} + \ox_{\l-2}
= -b (\l-2).\end{equation} When $k=0$, the RHS is equal to a bubble of degree $2(1)$ with coefficient $b(\l-1) + x_{\l-2} + \ox_{\l-2} + y_{\l-2} + \oy_{\l-2}$. Combined with
\eqref{paramXalt}, this is zero if and only if \begin{equation} y_{\l-2} + \oy_{\l-2} = -b. \label{paramYalt} \end{equation} The reader should compare these equations with
\eqref{paramX} and \eqref{paramY}.

With these equalities in place, our formula is quite simple, and no longer depends on the ambient weight.
\begin{subequations} \label{difbubformula}
\begin{align}
\dif \left( \bigcwbubble{$k$}{} \right) = b(k+1) \bigcwbubble{$k+1$}{} - b \bigcwbubble{$k$}{} \bigcwbubble{$1$}{} \label{difcwbub}
\end{align}

Technically, this discussion only applies to real bubbles. The bubble of degree $2(-1)$ is real when $\l \ge 2$, so that we have a relation between $x_\l$ and $\ox_\l$ (resp.
$y_\l$ and $\oy_\l$) when $\l \ge 0$. When $\l = 1$, the bubble of degree $0$ is real, so that \[x_{-1} + \ox_{-1} + y_{-1} + \oy_{-1} = 0.\] However, combining \eqref{paramAalt}
with \eqref{paramAalt2} one can deduce that \[x_{-1} + \ox_{-1} + 2y_{-1} + 2y_{-1} = x_{1} + \ox_{1} = -b.\] Combining these two equations, we deduce \eqref{paramXalt} and
\eqref{paramYalt} for $\l=1$ as well.

We leave the reader to perform the analogous computation for counterclockwise bubbles. The degree $2(-1)$ bubble is real for $\l \le -2$, and the coefficients have subscript $\l$ instead of $\l-2$. One can deduce \eqref{paramXalt} and \eqref{paramYalt} for $\l \le 0$, so that these formula hold for all $\l \in \Z$. Again, we deduce that
\begin{align}
\dif \left( \bigccwbubble{$k$}{} \right) = b(k+1) \bigccwbubble{$k+1$}{} - b \bigccwbubble{$k$}{} \bigccwbubble{$1$}{} \label{difccwbub}
\end{align}
\end{subequations}
At this point we have only shown that the formulas \eqref{difcwbub} and \eqref{difccwbub} hold for real bubbles.

\textbf{(Char $p=2$):} The inclusion of non-zero $\a$ or $\b$ terms will throw a wrench into these computations. For instance, \eqref{paramYalt} will be replaced with
\[
y_{\l-2} +\oy_{\l-2} = -b + (1 - \l) \a
\]
and \eqref{difcwbub} will be replaced by
\[
\begin{array}{ll}
\dif \left( \bigcwbubble{$k$}{} \right) & = b(k+1) \bigcwbubble{$k+1$}{} + (k \a-b) \bigcwbubble{$k$}{}\bigcwbubble{$1$}{} \\ &
+ (k+\l-1)\b \bigcwbubble{$k-1$}{} \bigcwbubble{$1$}{} \bigcwbubble{$1$}{}.
\end{array}
\]
On the surface it appears that these formula depend on the ambient weight,
although the fact that we work within $\UC_{\even}$ or $\UC_{\odd}$ and the fact that $2 \b = 2\a =0$ implies that $\l \b$ and $\l \a$ are constants. We have not studied this
differential on $\Lambda$ in any detail.  Surprisingly enough, the parameters $\a$ and $\b$ make no trouble in the remaining relations.

\paragraph{Infinite Grassmannian relations.} We will check the infinite Grassmannian relations \eqref{infgrass} under the assumption that \eqref{difbubformula} holds for all
bubbles, real and fake. The first several relations serve as the definition of fake bubbles, so that this check will confirm \eqref{difbubformula} for fake bubbles.

Suppose that $m \in \Z, m \ge 1$.
\begin{align*}
 \dif \left( \sum_{k+l=m} \bigcwbubble{$k$}{} \hspace{-0.1in} \bigccwbubble{$l$}{} \right) = &
 \sum_{k+l=m} b(k+1) \bigcwbubble{${k+1}$}{}\hspace{-0.1in} \bigccwbubble{$l$}{} + b(l+1)\bigcwbubble{$k$}{} \hspace{-0.1in} \bigccwbubble{${l+1}$}{}
 \\ &
 - b \bigcwbubble{$k$}{} \hspace{-0.1in} \bigccwbubble{$l$}{} \left( \bigcwbubble{$1$}{} + \bigccwbubble{$1$}{} \right)
\end{align*}
The parenthetical in the final term is equal to zero. A diagram $\cwbubble{$_{k'}$}{} \ccwbubble{$_{l'}$}{}$ with $k'+l' = m+1$ appears in the RHS with coefficient exactly $bk'$ from the first term and $bl'$ from the second term (even when either $k'$ or $l'$ is zero). Therefore, the final result is equal to
\begin{align*}
b(m+1) \sum_{k' + l' = m+1} \bigcwbubble{${k'}$}{} \hspace{-0.1in} \bigccwbubble{${l'}$}{}
\end{align*}
This is zero by the infinite Grassmannian relation \eqref{infgrass}.

\paragraph{Reduction to bubbles.} Let us compute $\dif$ of a formal curl.
\begin{align*}
& \dif \left( \curl{R}{U}{no}{$k$}{$0$} \hspace{-0.1in} \right) = \dif \left( - \sum_{i+j=k} \oneline{$i$}{no} \bigcwbubble{$j$}{} \right) = \\
& - \sum_{i+j=k} bi \oneline{$i+1$}{no} \bigcwbubble{$j$}{} + b(j+1) \oneline{$i$}{no} \bigcwbubble{${j+1}$}{} - b \oneline{$i$}{no} \bigcwbubble{$j$}{} \hspace{-0.1in} \bigcwbubble{$1$}{} \\
& = bk \curl{R}{U}{no}{$_{k+1}$}{$0$} - b \oneline{$0$}{no} \bigcwbubble{${k+1}$}{} - b \curl{R}{U}{no}{$k$}{$0$} \hspace{-0.1in} \bigcwbubble{$1$}{}
\end{align*}

The first two terms of the middle line combine to equal the first two terms of the bottom line, by an argument similar to that used in the check of the infinite Grassmannian relation. Similarly, we have:

\begin{align*}
\dif \left( \hspace{-0.1in} \curl{L}{U}{no}{$k$}{$0$} \right) = bk \curl{L}{U}{no}{$_{k+1}$}{$0$} + b \bigccwbubble{${k+1}$}{} \oneline{$0$}{no} - b \bigccwbubble{$1$}{}\hspace{-0.1in}
\curl{L}{U}{no}{$k$}{$0$}
 \end{align*}

On the other hand, consider what happens to a normal curl.

\begin{align*}
\dif \left( \curl{R}{U}{$\l$}{no}{$0$} \hspace{-0.1in} \right) = &(-2b+x_{\l-2} + \ox_{\l-2}) \curl{R}{U}{$\l$}{no}{$1$} - b \oneline{$0$}{$\l$} \bigcwbubble{}{} \\ & +
(y_{\l-2} + \oy_{\l-2}) \curl{R}{U}{$\l$}{no}{$0$} \hspace{-0.1in} \bigcwbubble{$1$}{}
\end{align*}

Note the following three observations. All the terms with $a_{\l}$ will cancel out, when we force all dots into the curl. The bubble with no dots has degree $2(-\l+1)$. Also,
$-2b+x_{\l-2}+\ox_{\l-2} = -b\l$ by \eqref{paramXalt} and $y_{\l-2}+\oy_{\l-2}=-b$ by \eqref{paramYalt}. Combining these observations, we apply \eqref{curlstuff} to obtain:

\begin{align*}
\dif \left( \curl{R}{U}{${\l}$}{no}{$0$} \hspace{-0.1in} \right) = b(-\l) \curl{R}{U}{${\l}$}{$_{1-\l}$}{$0$} -b \curl{R}{U}{no}{$_{-\l}$}{$0$}\hspace{-0.1in} \bigcwbubble{$1$}{} -b
\oneline{$0$}{$\l$} \bigcwbubble{${1-\l}$}{}
\end{align*}

This agrees precisely with the computation for a formal curl of degree $-\l$. The check for a left curl is similar.

\paragraph{Identity decomposition.} The differential of the LHS is zero, so we need to check the same for the RHS. Consider the first diagram on the RHS. We have not
written down a formula for the differential of a sideways crossing, because it is unenlightening. On the other hand, when two sideways crossings meet as below, there is a miraculous cancellation. We leave this as an exercise.
\begin{subequations} \label{difIdDecomp}
\begin{align} \dif \left(
\begin{DGCpicture}
\DGCstrand(0,0)(1,1.25)(0,2.5)
\DGCdot*>{0.25}
\DGCdot*>{1.25}
\DGCdot*>{2.25}
\DGCstrand(1,0)(0,1.25)(1,2.5)
\DGCdot*.{1.66}[l]{$\l$}
\DGCdot*<{0.25}
\DGCdot*<{1.25}
\DGCdot*<{2.25}
\end{DGCpicture}
\right) = (b - \ox_{\l-2})~
\begin{DGCpicture}
\DGCstrand(0,0)(1,1)/u/(0,1)/d/(1,0)/d/
\DGCdot*>{1.5}
\DGCstrand(0,2.5)/d/(1,2.5)
\DGCdot*<{2}
\end{DGCpicture}~ + (b - x_{\l-2})~
\begin{DGCpicture}
\DGCstrand(0,0)(1,0)/d/
\DGCdot*>{.5}
\DGCstrand(0,2.5)/d/(1,1.5)/d/(0,1.5)/u/(1,2.5)
\DGCdot*<{1}
\end{DGCpicture}
\label{difIdDecompPos} \end{align}
\begin{align} \dif \left(
\begin{DGCpicture}
\DGCstrand(0,0)(1,1.25)(0,2.5)
\DGCdot*<{0.25}
\DGCdot*<{1.25}
\DGCdot*<{2.25}
\DGCstrand(1,0)(0,1.25)(1,2.5)
\DGCdot*.{1.66}[l]{$\l$}
\DGCdot*>{0.25}
\DGCdot*>{1.25}
\DGCdot*>{2.25}
\end{DGCpicture}
\right) = (-b - x_{\l}-2y_{\l})~
\begin{DGCpicture}
\DGCstrand(0,0)(1,1)/u/(0,1)/d/(1,0)/d/
\DGCdot*<{1.5}
\DGCstrand(0,2.5)/d/(1,2.5)
\DGCdot*>{2}
\end{DGCpicture}~ + (-b - \ox_{\l}-2\oy_{\l})~
\begin{DGCpicture}
\DGCstrand(0,0)(1,0)/d/
\DGCdot*<{.5}
\DGCstrand(0,2.5)/d/(1,1.5)/d/(0,1.5)/u/(1,2.5)
\DGCdot*>{1}
\end{DGCpicture}
\label{difIdDecompNeg} \end{align}
\end{subequations}

Let us now check \eqref{IdentityDecompPos}. When $\l < 0$, the sum in \eqref{IdentityDecompPos} vanishes, and the curls in \eqref{difIdDecompPos} also vanish,
so all is well. When $\l=0$, the sum in \eqref{IdentityDecompPos} vanishes, and each curl in \eqref{difIdDecompPos} is equal to a cup/cap with no dots.
Therefore the two terms in \eqref{difIdDecompPos} combine with coefficient $2b - x_{-2} - \ox_{-2} = 2b + b(-2) = 0$. So assume that $\l > 0$.

\begin{align} & \dif \left( \sum_{k+l+m=\l-1} \cwcapbubcup{$k$}{$l$}{$m$}{$\l$} \right) = \nonumber \\ & \sum_{k+l+m=\l-1} (bk+x_{\l-2})
\cwcapbubcup{$k+1$}{$l$}{$m$}{$\l$} + (bl+b) \cwcapbubcup{$k$}{$_{l+1}$}{$m$}{$\l$} + (bm+\ox_{\l-2}) \cwcapbubcup{$k$}{$l$}{$m+1$}{$\l$} \label{foobar}
\end{align}

The additional terms with $\cwbubble{$1$}{}$ all cancel out, having coefficient $b + y_{\l-2} + \oy_{\l-2} = 0$. Now look in \eqref{foobar} at all the terms with at least one dot
on both the cup and the cap. Suppose that $k+l+m = \l-2$. The coefficient of \begin{align*} \cwcapbubcup{$k+1$}{$l$}{$m+1$}{$\l$}\end{align*} is precisely $bk + x_{\l-2} + bm +
\ox_{\l-2} + bl = b(k+l+m) - b(\l-2) = 0$. Therefore the only terms which survive have either no dots on top or
no dots on bottom. This should begin to smack of an equation like \eqref{difIdDecompPos}.

Let us compare coefficients for \begin{align*} \cwcapbubcup{$k+1$}{$l$}{$0$}{$\l$} \end{align*} when $k+l=\l-1$. The coefficient in \eqref{foobar} is $bk + x_{\l-2} + bl = b(\l-1)
+ x_{\l-2}$. Using \eqref{paramXalt}, this equals $b-\ox_{\l-2}$, which is precisely the coefficient of the same
term in \eqref{difIdDecompPos}, since the curl on bottom contributes this term once. A similar calculation suffices for terms with a dot on top but no dots on bottom. Finally,
let us compare coefficients for \begin{align*} \cwcapbubcup{$0$}{$\l$}{$0$}{$\l$} \end{align*} The coefficient in \eqref{foobar} is just $b\l$, while both curls contribute to the
coefficient in \eqref{difIdDecompPos}, which is $2b - x_{\l-2} - \ox_{\l-2} = b\l$ as desired.

The computation for \eqref{IdentityDecompNeg} is similar. This concludes the proof that $\dif$ is a derivation.

\paragraph{$p$-Nilpotence.} Now we ask when a derivation $\dif$ of the above form is actually a $p$-differential, when $\Bbbk$ has characteristic $p$. This amounts to showing that
$\dif^p=0$ on every generator.

Since $\dif$ preserves the nilHecke algebra, we can quote the following result from \cite[Lemma 3.6]{KQ}.

\begin{lemma} On a dot or crossing (with neighboring region $\l$), $\dif^p=0$ if and only if $b,a_\l \in \F_p \subset \Bbbk$, and either $b \ne 0$ or
$a_\l=b=0$. \hfill$\square$ \label{pnilpotentNH}\end{lemma}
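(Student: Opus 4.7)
The plan is to reduce the check to a computation inside the two-strand nilHecke algebra $\NH_2$ and then invoke the corresponding result already proved in the prequel paper. The 2-categorical derivation $\dif$ restricts to a well-defined $\Bbbk$-linear derivation on each upward-pointing nilHecke subalgebra $\NH_n \subset \END_\UC(\EC^n\1_\l)$, and by the formulas established when checking \eqref{NHrels}, this restriction is determined entirely by the two scalars $b$ and $a_\l$ (the latter attached to the rightmost weight region $\l$). Consequently, asking whether $\dif^p$ annihilates a single dot or a single crossing in $\UC$ is equivalent to asking whether the induced derivation on $\NH_2$ is $p$-nilpotent on the corresponding generator.

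For the dot $x$, a short induction on $k$ using the Leibniz rule yields
\[
\dif^k(x^n) \;=\; n(n+1)\cdots (n+k-1)\,b^k\,x^{n+k}.
\]
At $k=p$ the product of $p$ consecutive integers is divisible by $p$, so $\dif^p$ annihilates every power of $x$ in characteristic $p$ regardless of $b$. Hence the dot generator imposes no constraint on the parameters, and the entire content of the lemma is concentrated in the crossing case.

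For the crossing $\partial$ we appeal to \cite[Lemma 3.6]{KQ}, which analyzes precisely this two-parameter family of derivations on $\NH_n$. The iterated computation of $\dif^p(\partial)$ carried out there produces a $\Bbbk$-linear combination of monomials of the form $u_1^i u_2^j \partial$ and $u_1^i u_2^j$, whose coefficients are polynomial expressions in $b$ and $a_\l$ with integer coefficients; these polynomials vanish simultaneously in $\Bbbk$ exactly when $b, a_\l$ lie in the prime subfield $\F_p$. The degenerate case $b=0$ makes $\dif$ vanish on both generators of $\NH_2$ and is therefore trivially $p$-nilpotent; the parameter $a_\l$ never appears in the formula for $\dif$ and is set to zero by the convention stipulated above, reconciling that case with the stated condition. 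The main obstacle, which we do not reproduce, is the tedious combinatorial induction computing $\dif^p(\partial)$; the key ingredients are the nilHecke relations $\partial u_1 = u_2\partial + 1$, $\partial u_2 = u_1\partial - 1$, and $\partial^2 = 0$, which one uses repeatedly to bring each iterate into the normal form above so that the obstruction polynomials in $b$ and $a_\l$ may be read off.
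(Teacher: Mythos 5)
Your overall approach matches the paper's, which also offers no argument beyond invoking \cite[Lemma 3.6]{KQ}; the paper's proof is literally ``Since $\dif$ preserves the nilHecke algebra, we can quote the following result from \cite[Lemma 3.6]{KQ}.''  Your added computation $\dif^k(x^n) = n(n+1)\cdots(n+k-1)\,b^k x^{n+k}$ is correct, and the observation that the dot therefore imposes no constraint in characteristic $p$ is a genuine and worthwhile clarification that the paper leaves implicit.

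One caution about the $b=0$ case.  You assert that ``$b=0$ makes $\dif$ vanish on both generators of $\NH_2$,'' but this is only true \emph{after} the paper's convention that sets $a_\l = 0$ (equivalently $c_\l = ba_\l = 0$) when $b=0$.  The paper's remark just before the lemma --- ``we will show soon enough that if $b=0$ and $\dif^p = 0$ then $c_\l = 0$'' --- signals that the substantive content of the $b=0$ case is precisely to \emph{derive} $c_\l=0$ from $p$-nilpotence, not to assume it.  In the pre-convention parametrization $\dif(\partial) = c_\l \cdot 1 + (-b-c_\l)\,x_1\partial + (-b+c_\l)\,x_2\partial$, taking $b=0$ yields $\dif(\partial) = c_\l\bigl(1 - (x_1-x_2)\partial\bigr)$, and one checks directly by iteration (the dots are $\dif$-killed, so $\dif$ commutes through $w = x_1 - x_2$) that $\dif^p(\partial) = \pm\,c_\l^{\,p}\bigl(w^{p-1} - w^p\partial\bigr) \ne 0$ unless $c_\l = 0$.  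Your proof invokes the convention to close this case, which is circular; the missing step is the displayed computation or an explicit appeal to the corresponding part of \cite[Lemma 3.6]{KQ}.
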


Now let us iteratively apply $\dif$ to a clockwise cap. It is easy to show inductively that the coefficient of \begin{align*} \cappy{CW}{$k$}{no}{$\l$} \qquad
\textrm{inside} \qquad \dif^k \left( \cappy{CW}{$0$}{no}{$\l$} \right) \end{align*} is $x_{\l-2}(x_{\l-2}+b)(x_{\l-2}+2b) \cdots (x_{\l-2}+b(k-1))$. This coefficient
will vanish in $\dif^p$ if and only if $x_{\l-2} = -lb$ for some $l \in \F_p$. This holds if and only if $x_{\l-2} \in \F_p$, and $x_{\l-2}=0$ when $b=0$. Similarly, the
coefficient of a cap with $k$ degree $2(1)$ clockwise bubbles in $\dif^k$ is $y_{\l-2}(y_{\l-2}-b)(y_{\l-2}-2b) \cdots (y_{\l-2}-b(k-1))$. This coefficient
vanishes in $\dif^p$ iff $y_{\l-2} \in \F_p$, and $y_{\l-2}=0$ when $b=0$. The same computation for a clockwise cup gives the same results for $\ox_\l$ and
$\oy_\l$.

Therefore, if $\dif$ is a $p$-differential and $b=0$ then $\dif = 0$. We assume henceforth that $b \ne 0$. We may divide $\dif$ by $b$ and rename $\frac{x_\l}{b}$ by $x_\l$ (and
so forth). This is identical to assuming that $b=1$. In this case, we obtain a differential of the form of Definition \ref{def-dif-on-U} with requirements
\eqref{eqn-parameter-req}.

\begin{lemma}
Assume that $b=1$. If $x_{\l-2}$ and $y_{\l-2}$ are both in $\F_p$, then
\begin{align*}
\dif^p \left( \cappy{CW}{$0$}{no}{$\l$} \right) = 0.
\end{align*}
\end{lemma}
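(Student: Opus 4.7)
The plan is to exhibit a small $\dif$-invariant subspace containing the undecorated clockwise cap $C_0$, and to show that on this subspace $\dif$ decouples into two commuting $p$-nilpotent operators. Let $C_k$ denote the clockwise cap carrying $k$ dots. By Remark \ref{rmk-morphisms-in-u} together with the discussion following Corollary \ref{cor-dif-action-on-bubbles}, the clockwise bubbles in the ambient region $\l$ generate a subalgebra of $\END(\1_\l)$ isomorphic to the $p$-DG algebra $\Lambda$, via the identification of the degree-$2k$ clockwise bubble with $h_k$. I would consider the subspace $V \subset \HOM(\EC\FC\1_\l,\1_\l)$ spanned by elements $C_k f$ for $k \ge 0$ and $f \in \Lambda$, with $f$ placed to the right of the cap in region $\l$. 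Remark \ref{rmk-morphisms-in-u} guarantees linear independence, giving a vector-space identification $V \cong \Bbbk[t]\otimes \Lambda$ via $t^k\otimes f \mapsto C_k f$.

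Next, I would compute $\dif|_V$ via Leibniz. Starting from Definition \ref{def-dif-on-U} (with $b=1$), which gives $\dif(C_0) = x_{\l-2}\,C_1 + y_{\l-2}\, C_0\,h_1$, and combining with $\dif(\mathrm{dot}^k) = k\cdot \mathrm{dot}^{k+1}$ (from $\dif(\mathrm{dot}) = \mathrm{dot}^2$) and the fact that $\dif$ restricted to bubbles agrees with $\dif_\Lambda$, one arrives at
\begin{equation*}
\dif(t^k \otimes f) \;=\; (x_{\l-2}+k)\, t^{k+1}\otimes f \;+\; t^k \otimes \bigl(\dif_\Lambda(f) + y_{\l-2}\, h_1\, f\bigr).
\end{equation*}
Thus $V$ is $\dif$-stable, and the restriction factors as $\dif|_V = D\otimes 1 + 1\otimes \tilde\dif$, where $D(t^k):=(x_{\l-2}+k)\,t^{k+1}$ and $\tilde\dif(f):=\dif_\Lambda(f) + y_{\l-2}\,h_1\,f$. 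Since $D$ and $\tilde\dif$ act on disjoint tensor factors, they commute, so the Frobenius identity in characteristic $p$ yields $\dif^p|_V = D^p \otimes 1 + 1 \otimes \tilde\dif^p$.

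It remains to verify that each factor is $p$-nilpotent. For $D$, iteration gives $D^p(t^k) = \bigl(\prod_{i=0}^{p-1}(x_{\l-2}+k+i)\bigr)\,t^{k+p}$; since $x_{\l-2} \in \F_p$, the $p$ consecutive factors exhaust all residues mod $p$, so the product vanishes and $D^p=0$. For $\tilde\dif$, I would recognize it as the differential on the rank-one $p$-DG module $\Lambda(y_{\l-2})$ from Chapter \ref{section-symmetric-functions}: using $e_1 = h_1$ in $\Lambda$, the formula $\tilde\dif(f) = \dif_\Lambda(f) + y_{\l-2}\,h_1\,f$ is precisely $\dif_{y_{\l-2}}$, and the hypothesis $y_{\l-2}\in\F_p$ is exactly the condition recalled just before Lemma \ref{lemma-small-rank-one-ideal-contractible} that makes $\Lambda(y_{\l-2})$ a bona fide $p$-DG module, so $\tilde\dif^p=0$. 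Combining, $\dif^p(C_0) = \dif^p(1\otimes 1) = 0$, as required.

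The only step demanding genuine insight is recognizing that the cross term $y_{\l-2}\,h_1\,f$ in the bubble-factor differential is exactly the twist parametrizing the $p$-DG module $\Lambda(y_{\l-2})$ studied in Chapter \ref{section-symmetric-functions}; this lets us import $p$-nilpotence for $\tilde\dif$ directly rather than re-derive it. The rest is a mechanical unwinding of the Leibniz rule and the characteristic-$p$ binomial theorem.
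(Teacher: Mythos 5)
Your argument is correct, but takes a genuinely different route from the paper's proof. The paper gives a direct combinatorial count: it indexes the terms of $\dif^p$ applied to the undotted clockwise cap by the number of dots $\alpha$ and the numbers $\beta_i$ of clockwise bubbles of degree $2i$, so that $\alpha + \sum_i i\beta_i = p$; it disposes of the three edge cases $\alpha = p$, $\beta_1 = p$, $\beta_p = 1$ using the earlier coefficient computations, and argues that every remaining term carries a nontrivial multinomial coefficient divisible by $p$. Your proof replaces this bookkeeping with a structural factorization: after identifying the relevant hom-space with $\Bbbk[t] \otimes \Lambda$ and writing $\dif$ there as $D \otimes 1 + 1 \otimes \tilde\dif$ with the two summands acting on separate tensor factors, the Frobenius identity $(A+B)^p = A^p + B^p$ for commuting operators absorbs all cross terms in one stroke. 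The paper's edge cases correspond to your $D^p = 0$ (from $x_{\l-2} \in \F_p$) and $\tilde\dif^p = 0$ (from $y_{\l-2} \in \F_p$), and the paper's multinomial-coefficient argument is subsumed by the vanishing of $\binom{p}{k}$ modulo $p$ for $0 < k < p$. One small point worth making explicit: your assertion that $\tilde\dif^p = 0$ on $\Lambda(y_{\l-2})$ invokes a fact the paper records only for $\sym_n(a)$ (namely that $\dif_a^p = 0$ iff $a \in \F_p$); extending it to $\Lambda(a)$ deserves a sentence, for instance because $\Lambda(a)$ surjects $\dif_a$-equivariantly onto each $\sym_n(a)$ and $\Lambda$ injects into $\prod_n \sym_n$. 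With that noted, your argument is complete and arguably more illuminating than the combinatorial count, since it exhibits the differential explicitly as a sum of two commuting $p$-nilpotent operators.
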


\begin{proof} A general term in $\dif^p$ has $\a$ dots, $\b_1$ bubbles of degree $2(1)$, $\b_2$ bubbles of degree $2(2)$, and so forth, such that $\a + \sum_i
i \b_i = p$. We have already shown that the coefficient is zero when $\a=p$ and the rest are zero, or when $\b_1=p$ and the rest are zero. Similarly, if
$\b_p=1$ and the rest are zero, the coefficient is also zero. This is because the degree $2p$ bubble appeared from \eqref{difcwbub}, applying $\dif$ to a degree $2(p-1)$
bubble, and in this equation it appears with coefficient $p$.

Now consider every other term. Each term can be achieved within $\dif^p$ in a number of ways: we must choose $\a$ places where $\dif$ adds a dot, $\b_1$ places
to add a bubble which is never promoted to a higher bubble, $\b_2$ size $2$ subsets where a bubble is added and then promoted to a degree $2(2)$ bubble, $\b_3$
size $3$ subsets, and so forth. The number of choices is a non-trivial multinomial coefficient, and therefore is a multiple of $p$. \end{proof}

\begin{cor}
Assume that $b=1$. The derivation $\dif$ is a $p$-differential if and only if every parameter lies in $\F_p$.  \hfill $\square$
\end{cor}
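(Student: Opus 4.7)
The plan is to reduce the $p$-nilpotence of $\dif$ on the entire $2$-category to $p$-nilpotence on each of the generating $2$-morphisms, and to then show that on each generator, $\dif^p$ vanishes precisely when the parameters controlling $\dif$ on that generator lie in $\F_p$. Because $\dif$ is a $2$-categorical derivation satisfying the Leibniz rule for both horizontal and vertical composition, the validity of $\dif^p=0$ on any composite $2$-morphism follows formally (in characteristic $p$) from $\dif^p=0$ on the generators, so this reduction is automatic.

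For the ``only if'' direction, I would reuse the coefficient computations already carried out in the appendix. The remarks just before Lemma \ref{pnilpotentNH} compute the coefficient of $k$-fold iterated patterns inside $\dif^k$ applied to a clockwise cap: the coefficient of a cap with $k$ extra dots is $x_{\l-2}(x_{\l-2}+1)\cdots(x_{\l-2}+k-1)$ (setting $b=1$), and the coefficient of a cap with $k$ degree-$2$ bubbles is $y_{\l-2}(y_{\l-2}-1)\cdots(y_{\l-2}-k-1)$. For these to vanish at $k=p$, one needs $x_{\l-2}, y_{\l-2}\in\F_p$. The identical calculation applied to a clockwise cup forces $\ox_{\l-2},\oy_{\l-2}\in\F_p$. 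Lemma \ref{pnilpotentNH} forces $a_\l\in\F_p$, and its downward analogue (obtained by applying the biadjointness computation, or equivalently by rerunning Lemma \ref{pnilpotentNH} for the downward nilHecke algebra) forces $\oa_\l\in\F_p$. One still has to check that there are no mixed obstructions coming from the counter-clockwise caps and cups, but the same coefficient argument yields analogous necessary conditions matching those already extracted (up to the sign/parameter substitutions in Definition \ref{def-dif-on-U}).

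For the ``if'' direction, the previous lemma already verifies that $\dif^p$ kills the clockwise cap once $x_{\l-2},y_{\l-2}\in\F_p$. The key combinatorial mechanism is that a general monomial in $\dif^p \left( \cappy{CW}{$0$}{no}{$\l$} \right)$ is produced in $\binom{p}{\a,\b_1,\b_2,\ldots}$-many ways, and this multinomial coefficient is divisible by $p$ whenever none of the $\a,\b_i$ equals $p$. The boundary cases $\a=p$, $\b_1=p$, and $\b_p=1$ are then handled individually by the degenerate product formula (for the last, by the factor $(k+1)=p$ in \eqref{difcwbub}). I would run the exact same multinomial argument on each of the remaining generators---clockwise cup, counter-clockwise cap, counter-clockwise cup---using the explicit formulas \eqref{eqn-dif-on-gen} together with the bubble formulas \eqref{difbubformula}, verifying at each step that the three boundary cases cancel for the analogous reasons. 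For the upward and downward dots and crossings, $p$-nilpotence is immediate from Lemma \ref{pnilpotentNH}.

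The main obstacle is purely bookkeeping: one has four cap/cup generators and for each the multinomial coefficient argument has to be carefully rechecked, paying attention to the sign differences between clockwise and counter-clockwise types (where $\dif$ picks up factors of $-(\ox_\l+2\oy_\l)$ and $-\oy_\l$ rather than $x_{\l-2}$ and $y_{\l-2}$). One convenient shortcut is to invoke the symmetries $\widetilde\omega,\widetilde\sigma,\widetilde\psi,\widetilde\tau$ of Corollary \ref{cor-symmetries-intertwine-dif}: each such symmetry is a $2$-equivalence that intertwines $\dif$ with another derivation of the same form but with the parameters permuted, so $p$-nilpotence on one type of cap/cup immediately transfers to the others under a change of parameters that preserves membership in $\F_p$. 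With this shortcut the corollary follows from the single computation already performed in the preceding lemma, completing the proof.
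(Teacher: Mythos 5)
Your proof is correct and follows the approach the paper intends (the paper itself leaves the corollary without an explicit proof, treating it as a summary of the preceding Lemma \ref{pnilpotentNH} and coefficient computations). You add two useful explicit steps: first, the reduction to generators, which you correctly justify via the standard observation that $\dif^p$ is again a derivation in characteristic $p$ (since the interior binomial coefficients $\binom{p}{i}$ vanish), so $\dif^p=0$ propagates from generators to all composites; and second, the symmetry shortcut via Corollary \ref{cor-symmetries-intertwine-dif}, which is a genuinely economical observation that the paper does not make explicit -- since $\widetilde\tau$ (and the other symmetries) map caps to cups etc.~while permuting the parameters among themselves within $\F_p$, one really does only need the lemma for the clockwise cap. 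One small slip: the paper's coefficient for a cap with $k$ degree-$2$ bubbles ends with the factor $y_{\l-2}-(k-1)$ rather than $y_{\l-2}-k-1$; this is a typo in your write-up, not a flaw in the argument.
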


The concludes the proof of Proposition \ref{prop-classification-of-dif-on-U}.

\textbf{(Char $p=2$):} When $\a$ or $\b$ is nonzero, the nilHecke algebra is not preserved by $\dif$, and the computations above are not correct. Nonetheless, it is quick to
compute $\dif^2$ of all the generators. Assuming that $\dif$ is $2$-nilpotent, we have two possibilities: 
\begin{enumerate} 
\item If $b=0$, then $a_\l = x_\l = \ox_\l = y_\l =
\oy_\l = 0$ for all $\l$, and $\a \b = 0$. However, there are nonzero $2$-differentials even when $b=0$! 
\item If $b=1$ then all the usual parameters live in $\F_2$. In addition,
$\a=0$ and $\b(x_\l + y_\l \l)=0$. 
\end{enumerate}


\bibliographystyle{alpha}
\bibliography{qy-bib}

%

\vspace{0.1in}

\noindent B.~E.: {\sl \small Department of Mathematics, University of Oregon, Eugene, OR 97403, USA} 
\newline\noindent  {\tt \small email: belias@uoregon.edu}

\vspace{0.1in}

\noindent Y.~Q.: { \sl \small Department of Mathematics, Yale University, New
Haven, CT 06511, USA} \newline \noindent {\tt \small email: you.qi@yale.edu}

%
\end{document}